\newcommand{\lyxdot}{.}
\numberwithin{equation}{section}
\numberwithin{figure}{section}
\theoremstyle{plain}
\newtheorem{thm}{\protect\theoremname}[section]
\newtheorem{lem}[thm]{\protect\lemmaname}
\newtheorem{prop}[thm]{\protect\propositionname}
\theoremstyle{remark}
\newtheorem{rem}[thm]{\protect\remarkname}
\newtheorem{notation}[thm]{\protect\notationname}
\newtheorem*{notation*}{\protect\notationname}
\theoremstyle{definition}
\newtheorem{defn}[thm]{\protect\definitionname}
\theoremstyle{remark}
\newtheorem*{acknowledgement*}{\protect\acknowledgementname}
\setlist[itemize]{leftmargin=*}
\setlist[enumerate]{leftmargin=*}
\DeclareFontFamily{U}{matha}{\hyphenchar\font45}
\DeclareFontShape{U}{matha}{m}{n}{
      <5> <6> <7> <8> <9> <10> gen * matha
      <10.95> matha10 <12> <14.4> <17.28> <20.74> <24.88> matha12
      }{}
\DeclareSymbolFont{matha}{U}{matha}{m}{n}
\DeclareFontFamily{U}{mathx}{\hyphenchar\font45}
\DeclareFontShape{U}{mathx}{m}{n}{
      <5> <6> <7> <8> <9> <10>
      <10.95> <12> <14.4> <17.28> <20.74> <24.88>
      mathx10
      }{}
\DeclareSymbolFont{mathx}{U}{mathx}{m}{n}
\DeclareMathDelimiter{\vvvert}{0}{matha}{"7E}{mathx}{"17}
\DeclareMathAlphabet{\scal}{U}{dutchcal}{m}{n}
\numberwithin{equation}{section}
\def\th@plain{\thm@notefont{}\itshape}
\def\th@definition{\thm@notefont{}\normalfont}
\providecommand{\acknowledgementname}{Acknowledgement}
\providecommand{\definitionname}{Definition}
\providecommand{\lemmaname}{Lemma}
\providecommand{\notationname}{Notation}
\providecommand{\propositionname}{Proposition}
\providecommand{\remarkname}{Remark}
\providecommand{\theoremname}{Theorem}
\begin{document}
\title[Approximation Method and Metastability of Non-rev. 2D Ising/Potts
Models]{Approximation Method to Metastability: An Application to Non-reversible,
Two-dimensional Ising and Potts Models Without External Fields}
\author{Seonwoo Kim and Insuk Seo}
\address{S. Kim. Department of Mathematical Sciences, Seoul National University,
Republic of Korea.}
\email{ksw6leta@snu.ac.kr}
\address{I. Seo. Department of Mathematical Sciences and R.I.M., Seoul National
University, Republic of Korea.}
\email{insuk.seo@snu.ac.kr}
\begin{abstract}
The main contribution of the current study is two-fold. First, we
investigate the energy landscape of the Ising and Potts models on
finite two-dimensional lattices without external fields in the low
temperature regime. The complete analysis of the energy landscape
of these models was unknown because of its complicated \emph{plateau}
saddle structure between the ground states. We characterize this structure
completely in terms of a random walk on the set of sub-trees of a
ladder graph. Second, we provide a considerable simplification of
the well-known potential-theoretic approach to metastability. In particular,
by replacing the role of variational principles such as the Dirichlet
and Thomson principles with an \emph{$H^{1}$-approximation of the
equilibrium potential}, we develop a new method that can be applied
to \emph{non-reversible dynamics} as well in a simple manner. As an
application of this method, we analyze metastable behavior of not
only the reversible Metropolis--Hastings dynamics, but also of several
interesting non-reversible dynamics associated with the low-temperature
Ising and Potts models explained above, and derive the Eyring--Kramers
law and the Markov chain model reduction of these models.
\end{abstract}

\maketitle

\section{\label{sec1}Introduction}

Metastability is a universal low-temperature phenomenon that appears
in stochastic models with multiple locally stable states. Important
models that exhibit this phenomenon include ferromagnetic spin systems
at low temperatures (e.g., \cite{BAC,BBI,BM,LS Potts,NZ,NS Ising1,NS Ising2}),
interacting particle systems with attracting/sticky interactions (e.g.,
\cite{BL ZRP,BDG,Kim,KS1,Landim TAZRP,Seo NRZRP}) and small random
perturbation of dynamical systems (e.g., \cite{BEGK,FW RPDS,LMS DT,LeeS NR1,LeeS NR2,RezSeo}).
We also refer to the monographs \cite{BdenH Meta,OV meta} and references
therein for the extensive literature on the mathematical study of
metastability.

Among the multitude of such models, this study is concerned with the
two-dimensional Ising and Potts models \emph{without external fields
}in the low temperature regime. These are metastable spin systems
on which analyses of the energy landscape and metastable behavior
have only partially been conducted.

\subsection{Energy landscape and metastability of Ising and Potts models}

An in-depth understanding of the energy landscape is required for
the precise analysis of the metastability. Hence, we first analyze
the energy landscape of the Ising and Potts Hamiltonian without external
fields, which is defined later in \eqref{e_Ham}. 

Existing literature states that, in the very low temperature regime
(i.e., the regime when the temperature tends to $0$), the monochromatic
configurations (i.e., the configurations such that all spins are the
same) are metastable (e.g., \cite{NZ,NS Ising1}). A crucial aspect
of the analysis of the energy landscape lies in the understanding
of the\emph{ saddle structure} between these ground states.

\subsubsection*{Energy landscape: case of non-zero external field}

The saddle structure has been studied comprehensively for the case
when a \emph{non-zero} external field is acting on the system. In
particular, the saddle structure of the Ising model with a non-zero
external field was fully investigated in \cite{NS Ising1,NS Ising2}
and \cite{AC,BAC} for two- and three-dimensional lattices, respectively.
The saddle structures of these models are characterized solely by
the appearances of a certain form of \emph{critical droplets}, and
thus, the energy landscapes exhibit sharp and simple structures (although,
of course, the proof thereof is complicated). We note that the corresponding
analysis of the $d$-dimensional Ising model is open for $d\ge4$.
A similar analysis was recently carried out on the two-dimensional
Potts model \cite{BGN Neg,BGN Pos}, which verified that the Potts
model with an external field acting on one spin exhibits the same
type of saddle structure.

\subsubsection*{Energy landscape: case of zero external field}

The Ising and Potts models with a\emph{ zero external field} are known
to have a highly complicated saddle structure, and thus, no complete
analysis has been conducted to date. In particular, unlike the non-zero
external field model, which has a sharp and simple saddle structure,
the zero external field model has a \emph{complicated and huge plateau
saddle structure}. The energy barrier between the metastable valleys
was calculated in \cite{NZ} and the gate structure was characterized
in \cite{BGN} for this model. This level of analysis already provides
large deviation-type results. However, to establish a precise metastability
analysis, such as the so-called Eyring--Kramers law (see Section
\ref{sec3.2} for further details), it is necessary to characterize
all configurations that belong to the plateau and to understand their
connectivity structure fully. The completion of these tasks is the
first main contribution of the current study. We reveal that the bulk
and edge parts of the saddle plateau can be described in terms of
a standard random walk and a random walk on a space of sub-trees of
a periodic ladder graph, respectively.

\subsubsection*{Analysis of metastability}

A complete understanding on the saddle structure enables the quantitative
analysis of the metastable behavior of the stochastic Ising and Potts
models, which are Markovian dynamics in which the invariant measure
is the Gibbs measure associated with the Hamiltonian.

For the model with a non-zero external field, a precise quantitative
metastability result known as the Eyring--Kramers law was obtained
in \cite{BM} for the well-known Metropolis--Hastings dynamics associated
with the two- and three- dimensional Ising models, based on the characterization
of the saddle structure in terms of the critical droplets.

In this study, we prove the Eyring--Kramers law for the zero external
field model based on the precise characterization of the plateau structure.
Moreover, since all monochromatic configurations are ground states
in the zero external field model, we expect successive metastable
transitions among these. In this case, these transitions can be described
in terms of a Markov chain. This result is known as the Markov chain
model reduction of metastable behavior (cf. \cite{BL TM,BL TM2,BL MG,Landim MMC}).
We obtain this result for the Ising and Potts models. In this study,
we consider the Metropolis--Hastings dynamics and several \emph{non-reversible}
dynamics as the stochastic systems.

\subsection{New strategy for analyzing metastability}

One of the most successful and classical methodologies in the quantitative
study of metastability is the potential-theoretic approach first developed
in \cite{BEGK}, which is comprehensively summarized in the monograph
\cite{BdenH Meta}. Another method is the martingale approach that
was developed in \cite{BL TM,BL TM2} and is summarized in \cite{Landim MMC}.
Both methods rely heavily on the estimate of a potential-theoretic
notion known as the \emph{capacity} between metastable sets; thus,
the estimation of the capacity is the central task in the application
of these approaches.

A standard means of carrying out this task is to use variational expressions
for the capacity. More precisely, the Dirichlet and Thomson principles
express the capacity as the values of minimization and maximization
problems, which can be used to derive the upper and lower bounds of
the capacity by approximating the optimizers of the two problems.
This strategy provides a sharp estimate of the capacity if we can
construct nice proxies of the optimizers so that the upper and lower
bounds are sharp.

\subsubsection*{Difficulty of current problem}

A deep understanding of the saddle structure and the behavior of the
dynamics thereon is required for the construction of proxies of optimizers
of the Dirichlet and Thomson principles. This task is particularly
difficult when the dynamics is non-reversible. The two principles
were extended to the non-reversible case in a more general form in
\cite{GL,LMS DT,Seo NRZRP,Slowik}, and several important non-reversible
models were also successfully analyzed based on this approach; for
example, \cite{Berglund,Landim TAZRP,LMS DT,LS Potts,LS NR1}. The
main difficulty of the non-reversible model arises from the fact that
these principles are constructed on the so-called flow space, which
is difficult to handle, especially when the energy landscape has a
complicated structure, as in the model that is considered in this
study.

\subsubsection*{Capacity estimate via $H^{1}$-approximation}

To overcome the above difficulty, we develop an intuitive and simple
means of estimating the capacity via the $H^{1}$-approximation, which
is the second main contribution of the study. 

The idea is quite simple. Suppose that a Markov process on a state
space $\mathfrak{S}$ is associated with a Dirichlet form $D(f)$
that acts on functions $f:\mathfrak{S}\rightarrow\mathbb{R}$. The
Dirichlet form $D(f)$ can be regarded as the (square of) the associated
$H^{1}$-norm of the function $f$. Subsequently, the capacity between
two disjoint sets $A,\,B\subset\mathfrak{S}$ is then defined as $D(h_{A,\,B})$,
where $h_{A,\,B}:\mathfrak{S}\rightarrow\mathbb{R}$ is the function
known as the \emph{equilibrium potential} between $A$ and $B$ (refer
to Section \ref{sec4.1} for details). Thus, the capacity estimate
can be formulated by an asymptotic relation 
\begin{equation}
D(h_{A,\,B})\simeq M\;.\label{e_H1.1}
\end{equation}
We observe that the proof of \eqref{e_H1.1} is reduced to constructing
a test function $\widetilde{h}$ that approximates $h_{A,\,B}$ in
the sense of the $H^{1}$-norm, that is,
\begin{equation}
D(\widetilde{h}-h_{A,\,B})\simeq0\;\;\;\;\text{and}\;\;\;\;D(\widetilde{h})\simeq M\;.\label{e_H1.2}
\end{equation}
An in-depth understanding of the energy landscape is necessary for
the construction of the function $\widetilde{h}$. The verification
of the second condition of \eqref{e_H1.2} is performed through a
direct computation, whereas that of the first condition requires large
deviation-type estimates on each metastable set. Note that the non-reversibility
does not play any role in this method, and therefore, its implementation
in the non-reversible model does not have particular difficulty, in
comparison with to that of the reversible one. Furthermore, we emphasize
that the flow space structure is not involved, even in the non-reversible
case. This new method is explained in further detail in Section \ref{sec4.5}.
Subsequently, we apply the method to the reversible and non-reversible
dynamics associated with the Ising and Potts models.

\section{\label{sec2}Models}

In this section, we introduce the Ising and Potts models and the associated
Markovian dynamics that exhibit metastability in the low temperature
regime in detail. 

\subsection{\label{sec2.1}Ising and Potts models}

\subsubsection*{Lattice}

For two positive integers $K$ and $L$, we denote by $\Lambda=\Lambda_{K,\,L}$
a two-dimensional rectangular lattice of size $K\times L$ on which
the spin system is defined. For simplicity, we only consider the case
in which $\Lambda$ is given periodic boundary conditions so that
we can write 
\begin{equation}
\Lambda=\mathbb{T}_{K}\times\mathbb{T}_{L}\;,\label{e_Lambda}
\end{equation}
where $\mathbb{T}_{n}:=\mathbb{Z}/(n\mathbb{Z})$ represents a discrete
one-dimensional torus. \emph{We remark that our arguments can also
be directly applied to the same model with open boundary conditions;
see Section \ref{sec3.5} for a summary.}

\subsubsection*{Spin configurations}

For an integer $q\ge2$, we denote by $S=\{1,\,\dots,\,q\}$ the collection
of $q$ spins and by $\mathcal{X}:=S^{\Lambda}$ the space of the
spin configurations on the lattice $\Lambda$. We express a configuration
\begin{equation}
\sigma\in\mathcal{X}\;\;\;\;\text{as}\;\;\;\;\sigma=(\sigma(x))_{x\in\Lambda}\;,\label{e_sigma}
\end{equation}
where $\sigma(x)\in S$ represents the spin of configuration $\sigma$
at site $x\in\Lambda$. 

\subsubsection*{Hamiltonian and Gibbs measure}

We define a Hamiltonian $H:\mathcal{X}\rightarrow\mathbb{R}$ as
\begin{equation}
H(\sigma)=\sum_{\{x,\,y\}\subset\Lambda:\,x\sim y}\mathbf{1}\{\sigma(x)\ne\sigma(y)\}\;\;\;\;;\;\sigma\in\mathcal{X}\;,\label{e_Ham}
\end{equation}
where $x\sim y$ if $x$ and $y$ are nearest neighbors in $\Lambda$.
Note that according to \eqref{e_Ham}, \emph{no external magnetic
field exists.}

Denote by $\mu_{\beta}$ the Gibbs measure on $\mathcal{X}$, which
realizes the energy function $H$ that is subjected to the inverse
temperature $\beta>0$:
\begin{equation}
\mu_{\beta}(\sigma)=\frac{1}{Z_{\beta}}e^{-\beta H(\sigma)}\;\;\;\;;\;\sigma\in\mathcal{X}\;,\label{e_mu}
\end{equation}
where $Z_{\beta}$ is the partition function defined by 
\begin{equation}
Z_{\beta}=\sum_{\zeta\in\mathcal{X}}e^{-\beta H(\zeta)}\;.\label{e_Zbeta}
\end{equation}
This probability measure $\mu_{\beta}$ is referred to as the Ising
measure if $q=2$ and the Potts measure if $q\ge3$.

\subsubsection*{Ground states}

For each $a\in S$, we use the bold-faced letter $\mathbf{a}\in\mathcal{X}$
to denote the monochromatic configuration in which all spins are $a$;
that is, (cf. \eqref{e_sigma})
\[
\mathbf{a}(x)=a\;\;\;\;\text{for all }x\in\Lambda\;.
\]
Subsequently, we collect the $q$ monochromatic configurations in
\begin{equation}
\mathcal{S}:=\{\mathbf{1},\,\mathbf{2},\,\dots,\,\mathbf{q}\}\;.\label{e_S}
\end{equation}
According to definition \eqref{e_Ham} of the Hamiltonian, $\mathcal{S}$
is precisely the collection of the minimizers, that is, the \textit{ground
states}, of the Hamiltonian $H$. The next theorem asserts that the
Gibbs measure $\mu_{\beta}$ is concentrated on the ground states
in the low temperature regime $\beta\rightarrow\infty$.
\begin{thm}
\label{t_mu}It holds that $Z_{\beta}=q+O(e^{-2\beta})$\footnote{We state that $f(\beta)=O(g(\beta))$ if there exists a constant $C$
independent of $\beta$ such that $|f(\beta)|\le Cg(\beta)$ for all
$\beta>0$.}. In turn,
\[
\lim_{\beta\to\infty}\mu_{\beta}(\mathcal{S})=1\;\;\;\text{and}\;\;\;\lim_{\beta\to\infty}\mu_{\beta}(\mathbf{s})=\frac{1}{q}\text{ for all }\mathbf{s}\in\mathcal{S}\;.
\]
\end{thm}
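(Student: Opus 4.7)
The plan is to split the partition function into the contribution from the ground states and the remainder, and to show that every non-ground-state configuration has energy at least $2$, so that the remainder is $O(e^{-2\beta})$.

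By definition \eqref{e_Ham}, $H(\mathbf{a})=0$ for every $a\in S$, so the $q$ monochromatic configurations contribute exactly $q$ to $Z_\beta$. The crux is therefore the claim
\[
H(\sigma)\ge 2\quad\text{for every }\sigma\in\mathcal{X}\setminus\mathcal{S}.
\]
To establish this, I would use the fact that the torus $\Lambda=\mathbb{T}_K\times\mathbb{T}_L$ is $4$-regular. For any $a\in S$, let $A_a=\{x\in\Lambda:\sigma(x)=a\}$. Summing degrees over $A_a$ gives $4|A_a|=2|E(A_a)|+|\partial A_a|$, where $E(A_a)$ is the set of edges inside $A_a$ and $\partial A_a$ the set of edges with exactly one endpoint in $A_a$; hence $|\partial A_a|$ is even. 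When $\sigma$ is non-monochromatic, we can choose $a$ so that $A_a$ is a nontrivial proper subset of $\Lambda$, whence $|\partial A_a|\ge 2$. Since every edge in $\partial A_a$ is a disagreement edge for $\sigma$, we conclude $H(\sigma)\ge|\partial A_a|\ge 2$. (Alternatively, one may argue directly: a single disagreement edge cannot exist because removing one edge leaves the torus connected, and the endpoints of that edge would then be joined by a monochromatic path.)

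With this lower bound in hand,
\[
Z_\beta=\sum_{\sigma\in\mathcal{X}}e^{-\beta H(\sigma)}=q+\sum_{\sigma\in\mathcal{X}\setminus\mathcal{S}}e^{-\beta H(\sigma)}\le q+|\mathcal{X}|\,e^{-2\beta}=q+q^{KL}e^{-2\beta},
\]
so $Z_\beta=q+O(e^{-2\beta})$ with an implicit constant depending only on $K,L,q$. From $H(\mathbf{s})=0$ we immediately read off
\[
\mu_\beta(\mathbf{s})=\frac{1}{Z_\beta}=\frac{1}{q+O(e^{-2\beta})}\xrightarrow[\beta\to\infty]{}\frac{1}{q},
\]
and summing over the $q$ ground states gives $\mu_\beta(\mathcal{S})\to 1$.

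The only genuine step is the energy lower bound $H(\sigma)\ge 2$; the rest of the argument is a one-line computation. The parity argument via $4$-regularity is clean, and I would expect it to be the form chosen in the paper, but the 2-edge-connectivity argument is an equally short substitute if one prefers to avoid the degree-sum identity.
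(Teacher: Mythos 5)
Your proof is correct and complete; the paper itself omits the argument entirely, stating only that ``the proof is elementary,'' so your write-up simply supplies the missing details. The parity argument via $4$-regularity (or the $2$-edge-connectivity alternative) correctly yields $H(\sigma)\ge 2$ for all $\sigma\notin\mathcal{S}$, and the rest follows as you say; in fact the true energy gap on the torus is $2K$ rather than $2$, but the weaker bound is all the theorem asserts.
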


The proof is elementary, and we omit the details.
\begin{rem}
We present some remarks on the companion articles.
\begin{enumerate}
\item The same model on a three-dimensional lattice is considered in \cite{KS 3D}. 
\item In the current study and in \cite{KS 3D}, the regime $\beta\rightarrow\infty$
on the fixed lattice (i.e., fixed $K$ and $L$) is considered. The
model such that $K$ and $L$ increase together with $\beta$ is considered
in \cite{KS Growing}. 
\end{enumerate}
\end{rem}

\subsection{\label{sec2.2}Reversible and non-reversible Metropolis-type dynamics}

We introduce the stochastic Ising and Potts models, for which the
metastable behavior in the regime $\beta\rightarrow\infty$ is the
main concern of this study. These dynamics are continuous-time Markov
processes, the invariant measure of which is represented by the Gibbs
measure $\mu_{\beta}$, as introduced in \eqref{e_mu}. 

\subsubsection*{Reversible Metropolis--Hastings dynamics }

For $x\in\Lambda$, $a\in S$ and $\sigma\in\mathcal{X}$, we denote
by $\sigma^{x,\,a}\in\mathcal{X}$ the configuration obtained from
$\sigma$ by updating the spin at site $x$ to $a$. Then, we define
a continuous-time Markov chain $\{\sigma_{\beta}^{\mathrm{MH}}(t)\}_{t\ge0}$
on the configuration space $\mathcal{X}$, whose jump rates are given
by
\[
r_{\beta}^{\mathrm{MH}}(\sigma,\,\zeta)=\begin{cases}
e^{-\beta[H(\zeta)-H(\sigma)]_{+}} & \text{if }\zeta=\sigma^{x,\,a}\ne\sigma\text{ for some }x\in\Lambda\text{ and }a\in S\;,\\
0 & \text{otherwise}\;,
\end{cases}
\]
where $[t]_{+}=\max\{t,\,0\}$. This dynamics is known as the continuous-time
\emph{Metropolis--Hastings (MH) dynamics}, which is the standard
dynamics (cf. \cite{NS Ising1,NS Ising2}) in the context of the metastability
of spin systems in the very low temperature regime. Note that the
Markov process $\{\sigma_{\beta}^{\mathrm{MH}}(t)\}_{t\ge0}$ is reversible
with respect to the Gibbs measure $\mu_{\beta}$ since the following
detailed balance condition holds: 
\begin{equation}
\mu_{\beta}(\sigma)r_{\beta}^{\mathrm{MH}}(\sigma,\,\zeta)=\mu_{\beta}(\zeta)r_{\beta}^{\mathrm{MH}}(\zeta,\,\sigma)=\begin{cases}
\min\{\mu_{\beta}(\sigma),\,\mu_{\beta}(\zeta)\} & \text{if }\zeta=\sigma^{x,\,a}\ne\sigma\;,\\
0 & \text{otherwise}\;.
\end{cases}\label{e_cdtMH}
\end{equation}
In particular, $\mu_{\beta}$ is the invariant measure of the MH dynamics. 

\subsubsection*{Non-reversible cyclic dynamics}

We next introduce a \emph{non-reversible} spin-flipping dynamics,
also of a Metropolis type, under which the spin updates occur in a
cyclic manner. To define this dynamics, we denote by $\tau_{x}\sigma\in\mathcal{X}$
the configuration obtained from $\sigma$ by \emph{rotating} the spin
at $x$: 
\[
(\tau_{x}\sigma)(y)=\begin{cases}
\sigma(x)+1 & \text{if }y=x\;,\\
\sigma(y) & \text{otherwise}\;,
\end{cases}
\]
where we use the convention that $q+1=1$. Moreover, denote by $\{\sigma_{\beta}^{\mathrm{cyc}}(t)\}_{t\ge0}$
the continuous-time Markov process on $\mathcal{X}$ with jump rates
\[
r_{\beta}^{\mathrm{cyc}}(\sigma,\,\zeta)=\begin{cases}
R_{\beta}(\sigma;\,x) & \text{if }\zeta=\tau_{x}\sigma\text{ for some }x\in\Lambda\;,\\
0 & \text{otherwise}\;,
\end{cases}
\]
where
\begin{equation}
R_{\beta}(\sigma;\,x):=\exp\big\{-\beta\big[\max_{a\in S}H(\sigma^{x,\,a})-H(\sigma)\big]\big\}\;.\label{e_Rbeta}
\end{equation}
We refer to $\sigma_{\beta}^{\mathrm{cyc}}(\cdot)$ as the \emph{cyclic
dynamics}. Later, it is verified in Proposition \ref{p_muinv} that
the Gibbs measure $\mu_{\beta}(\cdot)$ is the invariant measure of
the model. 

For $q=2$, that is, for the Ising model, we obtain $\sigma_{\beta}^{\mathrm{cyc}}(\cdot)\equiv\sigma_{\beta}^{\mathrm{MH}}(\cdot)$.
However, for $q\ge3$, the cyclic dynamics is\textbf{\emph{ }}\emph{non-reversible}
as a rotation of a spin cannot be reversed, and thus, the cyclic dynamics
behaves in a fundamentally different manner to the reversible MH dynamics.
In summary, we can regard the cyclic dynamics as a \emph{non-reversible
generalization of the standard MH dynamics of the Potts model.} 

Finally, we remark that the metastability of a similar cyclic (non-reversible)
dynamics for the mean-field Potts model was studied in \cite{LS Potts,Lee}.

\subsubsection*{Notation of two dynamics}

We omit the explicit labels in $\sigma_{\beta}^{\mathrm{MH}}(\cdot)$
(resp. $r_{\beta}^{\textup{MH}}(\cdot,\,\cdot)$) and $\sigma_{\beta}^{\mathrm{cyc}}(\cdot)$
(resp. $r_{\beta}^{\textup{cyc}}(\cdot,\,\cdot)$) and simply denote
$\sigma_{\beta}(\cdot)$ (resp. $r_{\beta}(\cdot,\,\cdot)$) for both
dynamics when no risk of confusion arises. Moreover, we denote by
$\mathbb{P}_{\sigma}=\mathbb{P}_{\sigma}^{\beta}$ and $\mathbb{E}_{\sigma}=\mathbb{E}_{\sigma}^{\beta}$
the law and corresponding expectation of the process $\sigma_{\beta}(\cdot)$
starting from $\sigma\in\mathcal{X}$.

\subsubsection*{Other models}

All metastability results that are explained in the following section
for the MH/cyclic dynamics can be extended to several other interesting
non-reversible models. An introduction of these models and an explanation
of the corresponding main results are presented in Section \ref{sec5}
without detailed proofs.

\section{\label{sec3}Main Results}

In this section, we explain our main results. We fix the size $K,\,L$
of the lattice box (cf. \eqref{e_Lambda}) and assume, without loss
of generality, that $K\le L$. We also assume that $K,\,L\ge11$ for
a few technical reasons.

\subsection{\label{sec3.1}Energy barrier}

To investigate the metastable behavior of the dynamical system, first
one has to quantify the energy barrier between metastable sets. Since
the ground states operate as metastable states in the $\beta\rightarrow\infty$
regime, we compute the energy barrier between them.

\subsubsection*{Edge structure}

For $\sigma,\,\zeta\in\mathcal{X}$, we write $\sigma\sim\zeta$ if
the dynamics can jump from $\sigma$ to $\zeta$ or vice versa, that
is,
\begin{equation}
r_{\beta}(\sigma,\,\zeta)+r_{\beta}(\zeta,\,\sigma)>0\;.\label{e_edge}
\end{equation}
Note that $\sigma\sim\zeta$ if and only if $\zeta\sim\sigma$. This
relation depends on the selection of the dynamics, but not on the
value of $\beta>0$.

\subsubsection*{Height of paths}

We first define \textit{height} $H(\sigma,\,\zeta)$ between $\sigma$
and $\zeta$ such that $\sigma\sim\zeta$ as 
\begin{equation}
H(\sigma,\,\zeta)=\begin{cases}
\max\{H(\sigma),\,H(\zeta)\} & \text{for the MH dynamics}\;,\\
\max_{a\in S}H(\sigma^{x,\,a})\;\;\text{where }\zeta=\tau_{x}\sigma\text{ or }\sigma=\tau_{x}\zeta & \text{for the cyclic dynamics}\;.
\end{cases}\label{e_Hsigmazeta}
\end{equation}
Note that this height is defined so that for both dynamics and for
$\sigma,\,\zeta\in\mathcal{X}$ with $r_{\beta}(\sigma,\,\zeta)>0$,
\begin{equation}
\mu_{\beta}(\sigma)r_{\beta}(\sigma,\,\zeta)=Z_{\beta}^{-1}e^{-\beta H(\sigma,\,\zeta)}\;.\label{e_Hcdt}
\end{equation}

A sequence $\omega=(\omega_{n})_{n=0}^{N}$ of configurations is defined
as a \textit{path} from $\omega_{0}$ to $\omega_{N}$ if $r_{\beta}(\omega_{n},\,\omega_{n+1})>0$
for each $n\in\llbracket0,\,N-1\rrbracket$ (According to \eqref{e_edge},
this is a stronger requirement than $\omega_{n}\sim\omega_{n+1}$).
We write $\omega:\sigma\to\zeta$ if $\omega$ is a path from $\sigma$
to $\zeta$. Subsequently, the \textit{height} $\Phi_{\omega}$ of
a path $\omega=(\omega_{n})_{n=0}^{N}$ is defined as
\begin{equation}
\Phi_{\omega}:=\begin{cases}
\max_{n\in\llbracket0,\,N-1\rrbracket}H(\omega_{n},\,\omega_{n+1}) & \text{if }N\ge1\;,\\
H(\omega_{0}) & \text{if }N=0\;.
\end{cases}\label{e_height}
\end{equation}
We note that, particularly in the MH dynamics, it holds that
\begin{equation}
\Phi_{\omega}=\max_{n\in\llbracket0,\,N-1\rrbracket}\max\{H(\omega_{n}),\,H(\omega_{n+1})\}=\max_{n\in\llbracket0,\,N\rrbracket}H(\omega_{n})\;,\label{e_MHheight}
\end{equation}
but this result does not hold in the cyclic dynamics.

\subsubsection*{Communication height and energy barrier}

For $\sigma,\,\zeta\in\mathcal{X}$ (not necessarily $\sigma\sim\zeta$),
the \textit{communication height} $\Phi(\sigma,\,\zeta)$ from $\sigma$
to $\zeta$ is defined as
\begin{equation}
\Phi(\sigma,\,\zeta):=\min_{\omega:\sigma\to\zeta}\Phi_{\omega}\;.\label{e_commheight}
\end{equation}
It can be easily checked that
\begin{equation}
\Phi(\sigma,\,\sigma)=H(\sigma)\;\;\;\;\text{for all }\sigma\in\mathcal{X}\;.\label{e_commheight2}
\end{equation}
Moreover, for two disjoint subsets $\mathcal{P}$ and $\mathcal{Q}$
we define
\[
\Phi(\mathcal{P},\,\mathcal{Q}):=\min_{\sigma\in\mathcal{P}}\min_{\zeta\in\mathcal{Q}}\Phi(\sigma,\,\zeta)\;,
\]
where we use the convention $\Phi(\{\sigma\},\,\mathcal{Q})=\Phi(\sigma,\,\mathcal{Q})$
and $\Phi(\mathcal{P},\,\{\zeta\})=\Phi(\mathcal{P},\,\zeta)$. For
each $\mathbf{a}\in\mathcal{S}$, we write 
\begin{equation}
\breve{\mathbf{a}}:=\mathcal{S}\setminus\{\mathbf{a}\}\;.\label{e_brevedef}
\end{equation}
Then, the \textit{energy barrier} $\Gamma$ between ground states
is defined as 
\[
\Gamma:=\Phi(\mathbf{a},\,\breve{\mathbf{a}})\;\;\;\;;\;\mathbf{a}\in\mathcal{S}\;.
\]
Note that $\Gamma$ does not depend on the selection of $\mathbf{a}\in\mathcal{S}$
owing to the symmetry of the models. The following result characterizes
the energy barrier.
\begin{thm}
\label{t_EB}For all $\mathbf{a},\,\mathbf{b}\in\mathcal{S}$, it
holds that 
\begin{equation}
\Gamma=\Phi(\mathbf{a},\,\mathbf{b})=\begin{cases}
2K+2 & \text{for MH dynamics}\;,\\
2K+4 & \text{for cyclic dynamics with }q\ge3\;.
\end{cases}\label{e_GammaPhi}
\end{equation}
\end{thm}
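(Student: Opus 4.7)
The identity $\Gamma=\Phi(\mathbf{a},\mathbf{b})$ is immediate from the color-permutation symmetry of the Hamiltonian $H$, so it suffices to prove \eqref{e_GammaPhi} by matching an upper and a lower bound on $\Phi(\mathbf{a},\mathbf{b})$.

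\emph{Upper bound.} I would exhibit the explicit \emph{row-by-row nucleation} path from $\mathbf{a}$ to $\mathbf{b}$: starting at $\mathbf{a}$, flip $a$-spins to $b$ one at a time so that the $b$-cluster is successively a $1\times j$ rectangle for $j=1,\dots,K-1$, then a width-$1$ strip wrapping the $K$-direction of $\Lambda$, and then a strip plus a growing partial row above it; iterate to fill the torus. A direct bookkeeping of the Hamiltonian along this path shows $H(\omega_{n})\le 2K+2$ at every step, with the peak value $2K+2$ attained precisely when extending a partial row above a completed strip; in view of \eqref{e_MHheight} this gives the MH height $\Phi_{\omega}=2K+2$. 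For the cyclic dynamics I would use the formula
\[
\max_{c\in S}H(\sigma^{x,c})\;=\;H(\sigma)+k_{x}\;,
\]
valid whenever the neighborhood of $x$ in $\sigma$ omits at least one color of $S$, where $k_{x}$ denotes the number of neighbors of $x$ sharing the same color as $x$. Since $q\ge3$ and the construction uses only the two colors $a$ and $b$, this formula applies at every step; a case analysis shows $k_{x}\le 2$ in each transition with $H(\sigma)=2K+2$, so the cyclic height of the constructed path is $\le 2K+4$.

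\emph{Lower bound.} For an arbitrary path $\omega=(\omega_{n})_{n=0}^{N}:\mathbf{a}\to\mathbf{b}$, set $B_{n}:=\{x\in\Lambda:\omega_{n}(x)\ne a\}$, so that $B_{0}=\emptyset$ and $B_{N}=\Lambda$. Every edge in the edge boundary $\partial B_{n}$ has one endpoint of color $a$ and one of a different color, hence is disagreeing, and consequently $H(\omega_{n})\ge|\partial B_{n}|$. The isoperimetric inequality on $\mathbb{T}_{K}\times\mathbb{T}_{L}$ (using $K\le L$) gives $|\partial B_{n}|\ge 2K$ whenever $B_{n}\notin\{\emptyset,\Lambda\}$, with equality iff $B_{n}$ is a clean strip wrapping the shorter cycle. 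The crucial step is to examine the single-spin flip at the first moment $n^{\ast}$ at which $B_{n}$ acquires a component wrapping around the torus: comparing $\omega_{n^{\ast}-1}$ with $\omega_{n^{\ast}}$ via the equality case of the isoperimetric inequality shows that at least one of these two configurations has $H\ge 2K+2$, yielding $\Phi_{\mathrm{MH}}(\mathbf{a},\mathbf{b})\ge 2K+2$. For the cyclic dynamics, one observes that at this same critical step the flipped spin $x$ necessarily satisfies $k_{x}\ge 2$, whence $\max_{c}H(\sigma^{x,c})\ge H(\sigma)+2\ge 2K+4$ by the formula above.

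\emph{Main obstacle.} The hardest part is the sharp refinement $H\ge 2K+2$ in the lower bound. For Ising ($q=2$) this is immediate from the parity of $H$ under spin flips, but for Potts ($q\ge3$) it requires a careful geometric case analysis to rule out configurations of energy $2K+1$ lying along the path, and, for the cyclic bound, to further exclude multi-color detours that attempt to reduce $k_{x}$ by temporarily introducing a third color near the critical strip. The assumption $K,L\ge11$ is used here to rule out small-lattice degeneracies in which strips of widths $1$ and $L-1$ become comparable in the relevant counting arguments.
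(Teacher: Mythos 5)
Your upper bound is essentially the paper's canonical-path construction (Definition \ref{d_canpath} and, for the cyclic case, Definition \ref{d_canpathcyc} together with the explicit table \eqref{e_canpathcyc}), and the bookkeeping via $\max_{c}H(\sigma^{x,c})=H(\sigma)+k_{x}$ is sound. However, two points do not go through as written. First, the claim that $\Gamma=\Phi(\mathbf{a},\mathbf{b})$ is ``immediate from the color-permutation symmetry of $H$'' is not immediate for the cyclic dynamics: the cyclic dynamics is invariant only under the cyclic shift of $S$, not under arbitrary permutations, so a permuted path need not be a path. One must either convert permuted edges into legitimate sequences of rotations within an orbit and check the height is preserved (this is exactly the paper's Lemma \ref{l_Phiissym} / display \eqref{e_Phiissym}), or, as the paper does, prove the upper bound $\Phi(\mathbf{a},\mathbf{b})\le 2K+4$ directly for every ordered pair and combine it with the lower bound on $\Phi(\mathbf{a},\breve{\mathbf{a}})$.

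The more serious gap is in the lower bound. The inequality ``$|\partial B_{n}|\ge 2K$ whenever $B_{n}\notin\{\emptyset,\Lambda\}$'' is false (a single site has boundary $4$), and more importantly the critical moment you propose does not produce the sharp constant: at the first time $n^{*}$ that $B_{n}=\{x:\omega_{n}(x)\ne a\}$ acquires a wrapping component, that component may be a width-one ring of volume $K$, for which $|\partial B_{n^{*}}|=2K$ exactly while $B_{n^{*}-1}$ does not wrap and has boundary of order $\sqrt{K}$; neither configuration is forced to have energy $\ge 2K+2$, so the ``equality case of the isoperimetric inequality'' comparison yields only $\Phi\ge 2K$. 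Moreover, bounding $H(\omega_{n})$ by $|\partial B_{n}|$ discards all disagreements between distinct non-$a$ colors, which is precisely what must be controlled in the Potts case. The mechanism that actually closes the gap (in \cite{NZ} for MH and in the paper's Proposition \ref{p_EBlb} for the cyclic case) is different: one fixes a volume level $\lfloor KL/2\rfloor+3$, whose residue modulo $K$ is not $0,\pm1,\pm2$ when $K\ge 11$, looks at the (maximal) time the $b$-volume equals this level, and uses the bridge count $\sum_{a}B_{a}$ together with the row/column decomposition \eqref{e_decH} to force that configuration into $\mathcal{Q}_{v}^{a_{0},b}$ with a protuberance of length in $\llbracket2,K-2\rrbracket$, hence energy exactly $2K+2$ and, since $q\ge3$ guarantees a missing third color, $\max_{c}H(\cdot^{x,c})\ge 2K+4$ at every site. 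You correctly identify this refinement as the hard part, but the argument you sketch for it would fail, so the lower bound remains unproved in your proposal.
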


The proof of this theorem for the MH dynamics has already been obtained
in \cite[Theorem 2.1]{NZ}. The proof for the cyclic dynamics is presented
in Section \ref{sec8.1}. We note that the relation $\Gamma=\Phi(\mathbf{a},\,\mathbf{b})$
for any $\mathbf{a},\,\mathbf{b}\in\mathcal{S}$ follows directly
from the symmetry of the dynamics in the case of the MH dynamics.
On the other hand, this needs to be proved separately in the case
of the cyclic dynamics.

\subsection{\label{sec3.2}Eyring--Kramers law}

In view of Theorem \ref{t_mu}, when $\beta$ is very large the Gibbs
measure is concentrated on the ground states in $\mathcal{S}$, and
therefore, the associated Markovian dynamics that was introduced in
the previous section is expected to exhibit metastable behavior. More
precisely, the process $\sigma_{\beta}(\cdot)$ starting from a ground
state is expected to make a transition to another one on a large time
scale. To describe this hopping dynamics among the ground states precisely,
first, the mean of the transition time from one ground state to another
must be calculated. This result is referred to as the\emph{ Eyring--Kramers
(EK) law}. 

Our first main result for the metastability of the dynamics defined
above is the EK law. For a set $\mathcal{P}\subseteq\mathcal{X}$,
we denote by $\tau_{\mathcal{P}}$ the hitting time of the set $\mathcal{P}$.
When $\mathcal{P}=\{\sigma\}$ is a singleton, we write $\tau_{\sigma}$
instead of $\tau_{\{\sigma\}}$. We again emphasize that $K\le L$
is assumed. The following theorem holds for both the MH and cyclic
dynamics.
\begin{thm}[Eyring--Kramers law]
\label{t_EK} There exists a constant $\kappa=\kappa(K,\,L)>0$ such
that for all $a,\,b\in S$ (cf. \eqref{e_brevedef} and \eqref{e_GammaPhi}),
\begin{equation}
\lim_{\beta\to\infty}e^{-\Gamma\beta}\mathbb{E}_{\mathbf{a}}[\tau_{\breve{\mathbf{a}}}]=\frac{\kappa}{q-1}\;\;\;\;\text{and}\;\;\;\;\lim_{\beta\to\infty}e^{-\Gamma\beta}\mathbb{E}_{\mathbf{a}}[\tau_{\mathbf{b}}]=\kappa\;.\label{e_EK}
\end{equation}
Moreover, the constant $\kappa$ satisfies
\begin{equation}
\kappa=\frac{\nu_{0}}{4}+o_{K}(1)\;,\label{e_kappaest}
\end{equation}
where $o_{K}(1)$ is a term that vanishes as $K\rightarrow\infty$
and where $\nu_{0}$ is a constant defined as
\begin{equation}
\nu_{0}=\begin{cases}
1 & \text{if }K<L\;,\\
1/2 & \text{if }K=L\;.
\end{cases}\label{e_nu0}
\end{equation}
\end{thm}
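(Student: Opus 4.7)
The plan is to invoke a potential-theoretic identity that expresses mean hitting times in terms of capacities and equilibrium potentials, and then to estimate both quantities by the $H^{1}$-approximation strategy advertised in the introduction. Writing $h_{\mathbf{a},\,\breve{\mathbf{a}}}$ for the equilibrium potential between $\{\mathbf{a}\}$ and $\breve{\mathbf{a}}$, the classical identity writes $\mathbb{E}_{\mathbf{a}}[\tau_{\breve{\mathbf{a}}}]$ as the integral of $h_{\mathbf{a},\,\breve{\mathbf{a}}}$ against $\mu_{\beta}$ divided by the capacity $\mathrm{cap}(\mathbf{a},\,\breve{\mathbf{a}})$; for the cyclic dynamics the analogue is identical but with $h_{\mathbf{a},\,\breve{\mathbf{a}}}$ replaced by the equilibrium potential of the adjoint chain. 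The problem then reduces to (i) a sharp asymptotic $\mathrm{cap}(\mathbf{a},\,\breve{\mathbf{a}})\sim C_{q,\kappa}\,e^{-\Gamma\beta}$ with an explicit prefactor $C_{q,\kappa}$ incorporating $(q-1)/\kappa$, and (ii) showing that $h_{\mathbf{a},\,\breve{\mathbf{a}}}$ is essentially the indicator of the valley around $\mathbf{a}$, so that the numerator of the identity is asymptotically $\mu_{\beta}(\mathbf{a})\sim 1/q$ by Theorem~\ref{t_mu}.

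The capacity estimate is the heart of the proof and proceeds via the $H^{1}$-approximation scheme. I would construct an explicit test function $\widetilde{h}:\mathcal{X}\to[0,1]$ equal to $1$ on the valley of $\mathbf{a}$, equal to $0$ on the valley of every $\mathbf{b}\in\breve{\mathbf{a}}$, and interpolating across the $\Gamma$-plateau as the harmonic extension associated with the effective random walks on the plateau described in Section~\ref{sec1}: a standard random walk on the bulk configurations and a random walk on sub-trees of a ladder graph on the edge configurations. The Dirichlet energy $D(\widetilde{h})$ then decomposes as a sum of local edge contributions, each of weight $Z_{\beta}^{-1}e^{-\Gamma\beta}$ by \eqref{e_Hcdt}, and the combinatorial enumeration of saddle configurations together with an effective-resistance computation on the ladder-graph sub-tree chain produces the prefactor proportional to $(q-1)/\kappa$ with $\kappa=\nu_{0}/4+o_{K}(1)$ as in \eqref{e_kappaest}. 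The matching closeness $D(\widetilde{h}-h_{\mathbf{a},\,\breve{\mathbf{a}}})=o(e^{-\Gamma\beta})$ and the indicator-like behaviour of $h_{\mathbf{a},\,\breve{\mathbf{a}}}$ both follow from standard large-deviation bounds ensuring that the dynamics equilibrates within each valley before attempting a transition across the plateau.

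The second identity in \eqref{e_EK} I would derive not by a separate capacity calculation, but from the Markov chain model reduction that the capacity estimate automatically provides: on the time scale $e^{\Gamma\beta}$ the trace process on $\mathcal{S}$ converges to a continuous-time chain on $\{\mathbf{1},\ldots,\mathbf{q}\}$ in which every ordered pair exchanges at the common rate $\kappa^{-1}$, the equality of rates being forced by the permutation symmetry of both the Hamiltonian and the dynamics. A one-line first-step analysis on this symmetric $q$-state chain then yields $\mathbb{E}_{\mathbf{a}}[\tau_{\breve{\mathbf{a}}}]=\kappa/(q-1)$ and $\mathbb{E}_{\mathbf{a}}[\tau_{\mathbf{b}}]=\kappa$, which is exactly \eqref{e_EK}.

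The main obstacle is the explicit prefactor $\kappa=\nu_{0}/4+o_{K}(1)$. Unlike the non-zero external field regime, in which the saddle consists of a handful of well-understood critical droplets, the $\Gamma$-plateau here is a huge connected set whose effective random walk must be tracked quantitatively, and the transition between the $K<L$ and $K=L$ cases reflects a genuine geometric degeneracy in the enumeration of plateau pathways (two preferred strip orientations versus one). A secondary subtlety is that for the cyclic dynamics the whole programme has to proceed without appealing to reversibility; this is precisely where the virtue of the $H^{1}$-approximation lies, since the construction of $\widetilde{h}$ never enters the flow-space framework of the Thomson principle and the non-reversibility is absorbed transparently into the adjoint equilibrium potential in the hitting-time identity.
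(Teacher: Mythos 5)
Your treatment of the first limit is essentially the paper's argument: the mean hitting time identity $\mathbb{E}_{\mathbf{a}}[\tau_{\breve{\mathbf{a}}}]=\mathrm{Cap}_{\beta}(\mathbf{a},\breve{\mathbf{a}})^{-1}\sum_{\zeta}\mu_{\beta}(\zeta)h^{*}_{\mathbf{a},\breve{\mathbf{a}}}(\zeta)$, the observation that the numerator is $1/q+o(1)$ (which here is in fact trivial, since $h^{*}_{\mathbf{a},\breve{\mathbf{a}}}$ equals $1$ at $\mathbf{a}$ and $0$ at every other ground state by definition, and the off-$\mathcal{S}$ contribution is killed by $\mu_{\beta}(\mathcal{X}\setminus\mathcal{S})=o(1)$), and the capacity asymptotics obtained by the $H^{1}$-approximation with a test function built from the one-dimensional bulk walk and the ladder-graph sub-tree walk. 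That all matches Sections \ref{sec4}, \ref{sec6}--\ref{sec9}.

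The gap is in your derivation of the second limit $\lim_{\beta}e^{-\Gamma\beta}\mathbb{E}_{\mathbf{a}}[\tau_{\mathbf{b}}]=\kappa$ from the Markov chain model reduction. Convergence of the trace process $Y_{\beta}(\cdot)$ to $Y(\cdot)$ in the Skorokhod topology, even combined with the negligibility of excursions, gives at best distributional information about $e^{-\Gamma\beta}\tau_{\mathbf{b}}$; it does not yield convergence of its \emph{expectation} without a separate uniform integrability argument, which you do not supply. Moreover the route is not actually shorter: the identification of the limiting rates $r_{Y}(\mathbf{a},\mathbf{b})=\kappa^{-1}$ rests on the formula \eqref{e_MCMR}, whose evaluation requires the asymptotics $h_{\mathbf{b},\,\mathcal{S}\setminus\{\mathbf{a},\mathbf{b}\}}(\mathbf{a})\to 1/(q-1)$, i.e., precisely the equilibrium-potential estimates at intermediate ground states (Theorem \ref{t_eqp}) that your proposal never addresses. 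The clean way to get $\mathbb{E}_{\mathbf{a}}[\tau_{\mathbf{b}}]$ is to apply the same hitting-time identity with target $\mathcal{Q}=\{\mathbf{b}\}$; then the numerator is \emph{not} $\mu_{\beta}(\mathbf{a})\sim 1/q$ but rather $\frac{1}{q}+\frac{q-2}{q}\cdot\frac{1}{2}+o(1)=\frac{1}{2}+o(1)$, because the $q-2$ spectator ground states carry $h^{*}_{\mathbf{a},\mathbf{b}}(\mathbf{c})=\frac{1}{2}+o(1)$. Establishing that value is a genuinely nontrivial step for non-reversible dynamics; the paper obtains it as a by-product of the $H^{1}$-approximation via the lower bound $D_{\beta}(F)\ge Ce^{-\beta\Gamma}$ of Lemma \ref{l_DFC} applied to the normalized error $(h^{*}-\widetilde{h}^{*})/\delta$. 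Your proposal should either carry out this equilibrium-potential estimate and use the identity with $\mathcal{Q}=\{\mathbf{b}\}$, or supply the uniform integrability needed to pass from weak convergence of the reduced chain to convergence of mean hitting times.
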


\begin{notation*}
In the remainder of this paper, any appearance of $\nu_{0}$ refers
to definition \eqref{e_nu0}.
\end{notation*}
We remark that in the case of $K<L$, the metastable transition occurs
in only one direction of the lattice, whereas in the case of $K=L$,
two possible (horizontal and vertical) directions exist. This difference
induces definition \eqref{e_nu0} of $\nu_{0}$. Furthermore, the
constants $\kappa(K,\,L)$ for the MH and cyclic dynamics differ (although
their limits in the regime $K\rightarrow\infty$ are equal). Refer
to Section \ref{sec3.4} for comments on the proof of Theorem \ref{t_EK}. 

\subsection{\label{sec3.3}Markov chain model reduction}

The aforementioned EK law provides a sharp estimate of the mean transition
time from one ground state to another. Furthermore, since the hopping
transitions between ground states occur successively, these sequential
jumps need to be understood in a more systematic manner. This can
be achieved using the method of Markov chain model reduction developed
in \cite{BL TM,BL TM2}, and this characterization is the next main
result of this study.

\subsubsection*{Approximation by trace process}

Since it can be guessed from the EK law (cf. Theorem \ref{t_EK})
that the transitions among the ground states occur in the time scale
$e^{\Gamma\beta}$, the original process first needs to be accelerated
by this factor to observe the inter-ground state jumps in the ordinary
time scale. Hence, we define $\widetilde{\sigma}_{\beta}(\cdot)=\sigma_{\beta}(e^{\Gamma\beta}\cdot)$.
Moreover, let $(Y_{\beta}(t))_{t\ge0}$ be the\emph{ trace process
}of $\widetilde{\sigma}_{\beta}(\cdot)$ on the set $\mathcal{S}$
of ground states, which is a continuous-time Markov process obtained
from the process $\widetilde{\sigma}_{\beta}(\cdot)$ by\emph{ turning
off the clock} when the process does not stay on $\mathcal{S}$. For
a more detailed explanation regarding this process, refer to \cite{BL TM}.

The following estimate verifies that the accelerated process $\widetilde{\sigma}_{\beta}(\cdot)$
does not spend meaningful time outside of $\mathcal{S}$, and therefore,
the trace process $Y_{\beta}(\cdot)$ successfully captures the inter-ground
state dynamics of the process $\widetilde{\sigma}_{\beta}(\cdot)$. 
\begin{thm}[Negligibility of excursions]
\label{t_neg} For both the MH and cyclic dynamics, we have 
\begin{equation}
\lim_{\beta\rightarrow\infty}\max_{\mathbf{a}\in\mathcal{S}}\mathbb{E}_{\mathbf{a}}\Big[\int_{0}^{T}\mathbf{1}\{\widetilde{\sigma}_{\beta}(u)\notin\mathcal{S}\}du\Big]=0\;\;\;\;\text{for all }T>0\;.\label{e_neg}
\end{equation}
\end{thm}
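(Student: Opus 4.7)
The plan is to exploit the stationarity of $\mu_\beta$ together with the sharp concentration $\mu_\beta(\mathcal{S})\to 1$ from Theorem \ref{t_mu}. The crucial observation is that non-reversibility plays no role here, since the argument uses only that $\mu_\beta$ is an invariant measure for both dynamics (the MH case is standard, and for the cyclic dynamics it is the content of Proposition \ref{p_muinv}). Since time-rescaling preserves the invariant measure, the accelerated process $\widetilde{\sigma}_\beta(\cdot)=\sigma_\beta(e^{\Gamma\beta}\cdot)$ is also stationary under $\mu_\beta$.

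First I would apply Fubini's theorem and stationarity to the process started from the Gibbs measure to obtain
\[
\mathbb{E}_{\mu_\beta}\Big[\int_0^T \mathbf{1}\{\widetilde{\sigma}_\beta(u) \notin \mathcal{S}\}\,du\Big] \;=\; \int_0^T \mu_\beta(\mathcal{S}^c)\,du \;=\; T\,\mu_\beta(\mathcal{S}^c),
\]
which by Theorem \ref{t_mu} is of order $O(Te^{-2\beta})$ and hence vanishes as $\beta\to\infty$.

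Next I would transfer this bound from the stationary initial law to each deterministic starting point $\mathbf{a}\in\mathcal{S}$. Decomposing $\mu_\beta=\sum_{\sigma\in\mathcal{X}}\mu_\beta(\sigma)\delta_\sigma$ and keeping only the term indexed by $\mathbf{a}$, one gets
\[
\mu_\beta(\mathbf{a})\,\mathbb{E}_\mathbf{a}\Big[\int_0^T \mathbf{1}\{\widetilde{\sigma}_\beta(u) \notin \mathcal{S}\}\,du\Big] \;\le\; T\,\mu_\beta(\mathcal{S}^c).
\]
Since Theorem \ref{t_mu} also gives $\mu_\beta(\mathbf{a})\to 1/q>0$, so that $\mu_\beta(\mathbf{a})\ge 1/(2q)$ for all sufficiently large $\beta$, dividing yields
\[
\max_{\mathbf{a}\in\mathcal{S}} \mathbb{E}_\mathbf{a}\Big[\int_0^T \mathbf{1}\{\widetilde{\sigma}_\beta(u) \notin \mathcal{S}\}\,du\Big] \;\le\; 2q\,T\,\mu_\beta(\mathcal{S}^c) \;\longrightarrow\; 0,
\]
which is the desired conclusion \eqref{e_neg}.

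There is essentially no hard step in this argument: the whole content reduces to the invariance of $\mu_\beta$ plus the elementary bound $\mu_\beta(\mathbf{a})\gtrsim 1/q$. The only mild point to verify carefully is that the cyclic dynamics, although non-reversible, indeed admits $\mu_\beta$ as invariant measure, but this is granted by the forward reference to Proposition \ref{p_muinv}. In particular, no estimate of capacities, mean hitting times, or the finer plateau geometry is needed for this negligibility statement — those enter only when one goes on to identify the limiting trace Markov chain.
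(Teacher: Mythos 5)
Your proof is correct and follows essentially the same route as the paper: both arguments rest on the invariance of $\mu_{\beta}$ (Proposition \ref{p_muinv} for the cyclic case), the comparison $\mu_{\beta}(\mathbf{a})\,\mathbb{P}_{\mathbf{a}}[\cdot]\le\mathbb{P}_{\mu_{\beta}}[\cdot]$, Fubini, and Theorem \ref{t_mu}. The only difference is the order of operations (you integrate under the stationary law first and then restrict to the starting point $\mathbf{a}$, while the paper bounds the pointwise probability first and then integrates), which is immaterial.
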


\begin{proof}
\noindent Denote by $\mathbb{P}_{\mu_{\beta}}$ the law of the process
$\sigma_{\beta}(\cdot)$ with starting distribution $\mu_{\beta}$.
Then for any $u>0$, we obtain
\[
\mathbb{P}_{\mathbf{a}}[\sigma_{\beta}(u)\notin\mathcal{S}]\le\frac{1}{\mu_{\beta}(\mathbf{a})}\mathbb{P}_{\mu_{\beta}}[\sigma_{\beta}(u)\notin\mathcal{S}]=\frac{\mu_{\beta}(\mathcal{X}\setminus\mathcal{S})}{\mu_{\beta}(\mathbf{a})}\;,
\]
where the final identity holds because $\mu_{\beta}$ is the invariant
distribution. Hence, by the Fubini theorem, 
\[
\mathbb{E}_{\mathbf{a}}\Big[\int_{0}^{t}\mathbf{1}\{\widetilde{\sigma}_{\beta}(u)\notin\mathcal{S}\}du\Big]=\int_{0}^{t}\mathbb{P}_{\mathbf{a}}[\sigma_{\beta}(e^{\Gamma\beta}u)\notin\mathcal{S}]du\le t\cdot\frac{\mu_{\beta}(\mathcal{X}\setminus\mathcal{S})}{\mu_{\beta}(\mathbf{a})}\;,
\]
which vanishes as $\beta\rightarrow\infty$ according to Theorem \ref{t_mu}. 
\end{proof}

\subsubsection*{Convergence of trace process}

We prove that the process $Y_{\beta}(\cdot)$ converges to a certain
continuous-time Markov process $Y(\cdot)$ on $\mathcal{S}$. This
limiting Markov chain $\{Y(t)\}_{t\ge0}$ is defined as a continuous-time
Markov chain on $\mathcal{S}$ with a uniform jump rate $r_{Y}(\cdot,\,\cdot)$
given by 
\begin{equation}
r_{Y}(\mathbf{a},\,\mathbf{b})=\kappa^{-1}\;\;\;\;\text{for all }\mathbf{a},\,\mathbf{b}\in\mathcal{S}\;,\label{e_LMC}
\end{equation}
where $\kappa=\kappa(K,\,L)$ is the constant that appears in Theorem
\ref{t_EK}. We subsequently obtain the following convergence result
for both the MH and cyclic dynamics. 
\begin{thm}[Markov chain model reduction]
\label{t_MC} Suppose that the original process $\sigma_{\beta}(\cdot)$
starts from $\mathbf{a}\in\mathcal{S}$. Then, the law of the Markov
chain $Y_{\beta}(\cdot)$ converges to the law of the limiting Markov
chain $Y(\cdot)$ starting from $\mathbf{a}$ as $\beta\rightarrow\infty$,
where the convergence occurs in the sense of the usual Skorokhod topology.
\end{thm}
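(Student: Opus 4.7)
The plan is to apply the martingale/trace-process framework of Beltr\'an and Landim \cite{BL TM,BL TM2}, which reduces the Skorokhod convergence of $Y_{\beta}(\cdot)$ to $Y(\cdot)$ to two sufficient conditions: first, the negligibility of the time $\widetilde{\sigma}_{\beta}(\cdot)$ spends outside of $\mathcal{S}$ up to any finite horizon; and second, the convergence of the rates of the trace of $\sigma_{\beta}(\cdot)$ on $\mathcal{S}$, that is,
\[
\lim_{\beta\to\infty}e^{\Gamma\beta}\,r_{\beta}^{\mathcal{S}}(\mathbf{a},\,\mathbf{b})=r_{Y}(\mathbf{a},\,\mathbf{b})=\kappa^{-1}\;\;\;\;\text{for all distinct }\mathbf{a},\,\mathbf{b}\in\mathcal{S}\;,
\]
where $r_{\beta}^{\mathcal{S}}$ denotes the jump rate of the trace process in the original time scale. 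The first condition is precisely Theorem \ref{t_neg}, so the real content of the proof is the rate convergence above; once this is in hand, the Skorokhod convergence follows from the general trace-process convergence theorem in the cited references.

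To establish the rate convergence I would exploit the full $S_{q}$-symmetry of the Hamiltonian $H$ and of both jump kernels $r_{\beta}^{\mathrm{MH}}$ and $r_{\beta}^{\mathrm{cyc}}$ under relabeling of spin values, namely the action $\sigma\mapsto\pi\circ\sigma$ for $\pi\in S_{q}$, which preserves the Gibbs measure and commutes with the dynamics. Consequently, $r_{\beta}^{\mathcal{S}}(\mathbf{a},\,\mathbf{b})$ takes a common value $r_{\beta}^{*}$ for all $\mathbf{a}\ne\mathbf{b}$. Combining Theorem \ref{t_neg}, which guarantees that the mean holding time at $\mathbf{a}$ of the trace process coincides with $\mathbb{E}_{\mathbf{a}}[\tau_{\breve{\mathbf{a}}}]$ up to a factor $1+o(1)$ after acceleration, with the Eyring--Kramers estimate $\mathbb{E}_{\mathbf{a}}[\tau_{\breve{\mathbf{a}}}]\sim(\kappa/(q-1))\,e^{\Gamma\beta}$ from Theorem \ref{t_EK}, the relation $(q-1)r_{\beta}^{*}\cdot\mathbb{E}_{\mathbf{a}}[\tau_{\breve{\mathbf{a}}}]\to 1$ yields $e^{\Gamma\beta}r_{\beta}^{*}\to\kappa^{-1}$, exactly matching the rates of $Y(\cdot)$ in \eqref{e_LMC}.

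The main obstacle is that the classical Beltr\'an--Landim convergence theorem is formulated in the reversible setting, so while the MH dynamics is covered directly, the cyclic dynamics requires the non-reversible extension developed in \cite{BL MG,Landim MMC}. The hypotheses of that extension are phrased in terms of sharp asymptotics for capacities between the metastable wells, which is precisely the point where the $H^{1}$-approximation method announced in Section \ref{sec4.5} becomes indispensable: it replaces the flow-space constructions (which are very awkward in the present highly degenerate plateau geometry) by direct test-function estimates and simultaneously supplies the capacity asymptotics underlying Theorem \ref{t_EK}. Once these capacity estimates are available, tightness of $\{Y_{\beta}\}$ in the Skorokhod topology and uniqueness of the subsequential limit both follow from the abstract framework, completing the argument uniformly for the MH and cyclic dynamics.
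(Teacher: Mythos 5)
Your reduction of the Skorokhod convergence to the convergence of the trace-process jump rates is correct and is exactly how the paper proceeds: since $Y_{\beta}(\cdot)$ is itself a Markov chain on the fixed finite set $\mathcal{S}$, convergence of its rates to those of $Y(\cdot)$ is all that is required, and no separate tightness argument is needed. For the reversible MH dynamics your symmetry argument is also essentially sound, because the MH kernel does commute with every relabeling $\sigma\mapsto\pi\circ\sigma$, $\pi\in S_{q}$, so all off-diagonal rates coincide and the total rate is pinned down by the Eyring--Kramers law.

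The argument breaks down, however, at the claim of ``full $S_{q}$-symmetry of both jump kernels.'' The cyclic dynamics is \emph{not} $S_{q}$-invariant: a permutation $\pi$ satisfies $\pi\circ(\tau_{x}\sigma)=\tau_{x}(\pi\circ\sigma)$ only when $\pi$ commutes with the rotation $a\mapsto a+1$, i.e., only when $\pi$ is itself a cyclic shift. Hence for the cyclic dynamics symmetry yields only $r_{Y_{\beta}}(\mathbf{a},\,\mathbf{b})=f_{\beta}(b-a\bmod q)$, and your holding-time identity determines only the sum $\sum_{i=1}^{q-1}f_{\beta}(i)$, not the individual values. That the individual limits can genuinely depend on $b-a$ is visible in Theorem \ref{t_pcyc}, where the $p$-cyclic dynamics has limiting rates $\kappa_{|b-a|}^{-1}$; the uniformity asserted in Theorem \ref{t_MC} for $p=1$ is precisely the nontrivial ``coarse-graining effect'' remarked after the statement, and cannot follow from an exact symmetry of the prelimit chain. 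The paper instead uses the identity \eqref{e_MCMR}, $r_{Y_{\beta}}(\mathbf{a},\,\mathbf{b})=e^{\beta\Gamma}\mathrm{Cap}_{\beta}(\mathbf{a},\,\breve{\mathbf{a}})\mu_{\beta}(\mathbf{a})^{-1}h_{\mathbf{b},\,\mathcal{S}\setminus\{\mathbf{a},\mathbf{b}\}}(\mathbf{a})$ from \cite{BL TM2,LS NR1}, so the individual rates require the equilibrium-potential asymptotics $h_{\mathbf{b},\,\mathcal{S}\setminus\{\mathbf{a},\mathbf{b}\}}(\mathbf{a})\to1/(q-1)$ of Theorem \ref{t_eqp} --- exactly the quantity your route tries to bypass, and the reason the paper develops the $H^{1}$-approximation of equilibrium potentials in the first place. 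A secondary, reparable gap: identifying the mean holding time of the trace process at $\mathbf{a}$ with $\mathbb{E}_{\mathbf{a}}[\tau_{\breve{\mathbf{a}}}]$ requires controlling the expected time spent outside $\mathcal{S}$ before the random time $\tau_{\breve{\mathbf{a}}}$, which is not literally Theorem \ref{t_neg} (that theorem concerns a fixed horizon $T$); it does follow from the standard formula expressing the total trace rate as $\mathrm{Cap}_{\beta}(\mathbf{a},\,\breve{\mathbf{a}})/\mu_{\beta}(\mathbf{a})$, but that step should be made explicit.
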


By combining this result with Theorem \ref{t_neg}, we can describe
the metastable behavior (i.e., the inter-ground state) dynamics of
$\sigma_{\beta}(\cdot)$ on the time scale $e^{\beta\Gamma}$ using
the Markovian movement expressed by $Y(\cdot)$. Therefore, the package
of results in Theorems \ref{t_neg} and \ref{t_MC} is known in the
literature as the Markov chain model reduction of the metastable behavior.
This method provides a powerful explanation for the metastable behavior,
especially when multiple ground states exist, as in the current model.
Refer to \cite{Landim MMC} for further details on this methodology.
\begin{rem}[Coarse-graining effect]
 In the cyclic dynamics, the spin can be updated in only one direction
$1\rightarrow2\rightarrow\cdots\rightarrow q\rightarrow1$. However,
Theorem \ref{t_MC} states that the macroscopic jumps among the ground
states $\mathbf{1},\,\mathbf{2},\,\dots,\,\mathbf{q}$ occur in a
uniform manner. Thus, it can be inferred that a \emph{coarse-graining
effect} removes the cyclic feature of the microscopic world on the
macroscopic scale. 
\end{rem}

The proof of this type of result for non-reversible dynamics is much
more difficult than that for reversible dynamics, and to the authors'
knowledge, rigorous results have only been established for four models
\cite{KS1,LS NR1,LeeS NR2,Seo NRZRP}. In this study, we obtain the
fifth result.

\subsection{\label{sec3.4}Comments on proof}

It was demonstrated in \cite{BEGK} and \cite{BL TM2} that the proof
of the EK laws for both the reversible and non-reversible dynamics
is reduced to the estimation of a potential-theoretic notion known
as the capacity, which is defined in Section \ref{sec4.1}. The proof
of the Markov chain model reduction for reversible dynamics also relies
on the estimation of the capacity (cf. \cite{BL TM}), whereas \cite{LS NR1}
shows that the non-reversible case additionally requires the estimate
of the equilibrium potential on each metastable set.

In Section \ref{sec4}, we present a novel and robust strategy to
derive the estimates of the capacity as well as the equilibrium potential
based on the $H^{1}$-approximation. According to this strategy, the
proofs of Theorems \ref{t_EK} and \ref{t_MC} are reduced to constructions
of certain \emph{test functions}. This is a decent simplification
of the already-known methodologies, in that:
\begin{enumerate}
\item Classical potential-theoretic approaches (e.g., \cite{LS NR1,Seo NRZRP})
require the construction of a (divergence-free) test flow, which is
in general very difficult to achieve. 
\item This new method provides the estimate of the equilibrium potential
as a by-product (which is crucial in the proof of the Markov chain
model reduction for non-reversible dynamics).
\end{enumerate}
We expect that this strategy will be applicable to a wide range of
non-reversible models. The details of this method are discussed in
Section \ref{sec4.5}.

\subsubsection*{Analysis of energy landscape }

A deep understanding of the energy landscape as well as the behavior
of the dynamics thereon is required to construct a test function to
employ our strategy. The main difficulty of the zero external field
model that is considered in this study lies on the complexity of the
saddle structure. This structure consists of a large plateau with
a high magnitude of complexity (compared to the non-zero external
field model, in which the saddle structure is sharp and fully characterized
by the existence of a special form of critical droplets). This is
another major difficulty that we aim to overcome in this study.

However, it is impossible to describe the saddle structure without
the definition of numerous notions. Hence, we provide a detailed explanation
of the saddle structure for the MH and cyclic dynamics in Sections
\ref{sec6} and \ref{sec8}, respectively. Thereafter, we construct
the test functions for the MH and cyclic dynamics in Sections \ref{sec7}
and \ref{sec9}, respectively.

\subsection{\label{sec3.5}Remarks on open-boundary models}

A careful investigation of our proof reveals that all results explained
above can also be extended to the open-boundary model. In the open-boundary
model, the energy barrier is given by 
\[
\Gamma=\begin{cases}
K+1 & \text{for MH dynamics}\;,\\
K+3 & \text{for cyclic dynamics with }q\ge3\;,
\end{cases}
\]
and the constant $\kappa$ that appears in Theorem \ref{t_EK} satisfies
$\kappa(K,\,L)=\frac{1}{4KL}[\nu_{0}+o_{K}(1)]$. With these modifications,
Theorems \ref{t_EB}, \ref{t_EK}, \ref{t_neg} and \ref{t_MC} also
hold with these modifications.

\section{\label{sec4}General Strategy Based on $H^{1}$-Approximation}

In this section, we introduce the new strategy to prove the EK law
and Markov chain model reduction. We explain our strategy in terms
of the process $\sigma_{\beta}(\cdot)$ which is either the MH or
cyclic dynamics.

\subsection{\label{sec4.1}Preliminaries}

We first provide a brief but self-contained summary of the essential
background on the potential theory of Markov processes. For more detailed
explanation, we refer to \cite{BL TM2,GL,Slowik}. Note again that
we do not assume reversibility of the underlying process $\sigma_{\beta}(\cdot)$.

\subsubsection*{Equilibrium potential and capacity}

Let $\mathcal{P}$ and $\mathcal{Q}$ be disjoint and non-empty subsets
of $\mathcal{X}$.
\begin{enumerate}
\item The \textit{equilibrium potential} between $\mathcal{P}$ and $\mathcal{Q}$
is the function $h_{\mathcal{P},\,\mathcal{Q}}=h_{\mathcal{P},\,\mathcal{Q}}^{\beta}:\mathcal{X}\rightarrow[0,\,1]$
defined by 
\begin{equation}
h_{\mathcal{P},\,\mathcal{Q}}(\sigma)=\mathbb{P}_{\sigma}[\tau_{\mathcal{P}}<\tau_{\mathcal{Q}}]\;\;\;\;;\;\sigma\in\mathcal{X}\;.\label{e_eqp}
\end{equation}
This function can also be characterized as the unique solution of
the following equation: 
\begin{equation}
\begin{cases}
h_{\mathcal{P},\,\mathcal{Q}}\equiv1 & \text{on }\mathcal{P}\;,\\
h_{\mathcal{P},\,\mathcal{Q}}\equiv0 & \text{on }\mathcal{Q}\;,\\
\mathcal{L}_{\beta}h_{\mathcal{P},\,\mathcal{Q}}\equiv0 & \text{on }(\mathcal{P}\cup\mathcal{Q})^{c}\;,
\end{cases}\label{e_eqpsol}
\end{equation}
where $\mathcal{L}_{\beta}$ denotes the \emph{infinitesimal generator}
associated with the process $\sigma_{\beta}(\cdot)$ that acts on
each $f:\mathcal{X}\to\mathbb{R}$ as 
\begin{equation}
(\mathcal{L}_{\beta}f)(\sigma)=\sum_{\zeta\in\mathcal{X}}r_{\beta}(\sigma,\,\zeta)[f(\zeta)-f(\sigma)]\;.\label{e_gen}
\end{equation}
\item Denote by $D_{\beta}(\cdot)$ the \textit{Dirichlet form} associated
with the process $\sigma_{\beta}(\cdot)$; i.e., for each $f:\mathcal{X}\to\mathbb{R}$,
\begin{equation}
D_{\beta}(f)=\langle f,\,-\mathcal{L}_{\beta}f\rangle_{\mu_{\beta}}=\frac{1}{2}\sum_{\sigma,\,\zeta\in\mathcal{X}}\mu_{\beta}(\sigma)r_{\beta}(\sigma,\,\zeta)[f(\zeta)-f(\sigma)]^{2}\;,\label{e_Diri}
\end{equation}
where $\langle\cdot,\,\cdot\rangle_{\mu_{\beta}}$ denotes the $L^{2}(\mu_{\beta})$-inner
product defined by 
\[
\langle f,\,g\rangle_{\mu_{\beta}}:=\sum_{\sigma\in\mathcal{X}}f(\sigma)g(\sigma)\mu_{\beta}(\sigma)\;.
\]
\item The \textit{capacity} between $\mathcal{P}$ and $\mathcal{Q}$ is
then defined as
\begin{equation}
\mathrm{Cap}_{\beta}(\mathcal{P},\,\mathcal{Q}):=D_{\beta}(h_{\mathcal{P},\,\mathcal{Q}})\;.\label{e_Capdef}
\end{equation}
\end{enumerate}

\subsubsection*{Adjoint dynamics}

We denote by $r_{\beta}^{*}(\cdot,\,\cdot)$ the adjoint jump rate
with respect to the original transition rate $r_{\beta}(\cdot,\,\cdot)$;
i.e., for all $\sigma,\,\zeta\in\mathcal{X}$, 
\begin{equation}
r_{\beta}^{*}(\sigma,\,\zeta):=\frac{\mu_{\beta}(\zeta)r_{\beta}(\zeta,\,\sigma)}{\mu_{\beta}(\sigma)}\;.\label{e_adjrate}
\end{equation}
The adjoint dynamics $(\sigma_{\beta}^{*}(t))_{t\ge0}$ is the continuous-time
Markov process on $\mathcal{X}$ with jump rate $r_{\beta}^{*}(\cdot,\,\cdot)$.
The process $\sigma_{\beta}^{*}(\cdot)$ also admits $\mu_{\beta}$
as its unique invariant distribution (cf. \cite[Section 2]{GL}).
Note that if the process $\sigma_{\beta}(\cdot)$ is reversible, we
have $r_{\beta}^{*}\equiv r_{\beta}$ by the detailed balance condition
and hence it holds that $\sigma_{\beta}(\cdot)\equiv\sigma_{\beta}^{*}(\cdot)$.

For disjoint and non-empty subsets $\mathcal{P}$ and $\mathcal{Q}$
of $\mathcal{X}$, we can define the equilibrium potential $h_{\mathcal{P},\,\mathcal{Q}}^{*}$
and the capacity $\mathrm{Cap}_{\beta}^{*}(\mathcal{P},\,\mathcal{Q})$
with respect to the adjoint dynamics $\sigma_{\beta}^{*}(\cdot)$
in the same manner as above. It is well known, see e.g. \cite[display (2.4)]{GL},
that $\mathrm{Cap}_{\beta}(\mathcal{P},\,\mathcal{Q})=\mathrm{Cap}_{\beta}^{*}(\mathcal{P},\,\mathcal{Q})$. 

\subsection{\label{sec4.2}Reduction to potential-theoretic estimates}

In this subsection, we explain the relation between the potential-theoretic
notions explained above and our main theorems (namely, the EK law
and the Markov chain model reduction).

\subsubsection*{General strategy for EK law}

The proof of the EK law relies on the so-called magic formula discovered
for the reversible case in \cite{BEGK} and then extended to the non-reversible
case in \cite[Proposition A.2]{BL TM2}. This formula asserts that
for $\mathbf{a}\in\mathcal{S}$ and $\mathcal{Q}\subseteq\breve{\mathbf{a}}$
(cf. \eqref{e_brevedef}), we have the following expression for the
mean transition time: 
\begin{equation}
\mathbb{E}_{\mathbf{a}}[\tau_{\mathcal{Q}}]=\frac{1}{\mathrm{Cap}_{\beta}(\mathbf{a},\,\mathcal{Q})}\sum_{\zeta\in\mathcal{X}}\mu_{\beta}(\zeta)h_{\mathbf{a},\,\mathcal{Q}}^{*}(\zeta)\;.\label{e_Etau}
\end{equation}
By Theorem \ref{t_mu} and a trivial bound $0\le h_{\mathbf{a},\,\mathcal{Q}}^{*}\le1$
(cf. \eqref{e_eqp}), we have\footnote{Writing $f(\beta)=o(g(\beta))$ implies that $\lim_{\beta\to\infty}f(\beta)/g(\beta)=0$.}
\[
0\le\sum_{\zeta\in\mathcal{X}\setminus\mathcal{S}}\mu_{\beta}(\zeta)h_{\mathbf{a},\,\mathcal{Q}}^{*}(\zeta)\le\mu_{\beta}(\mathcal{X}\setminus\mathcal{S})=o(1)\;.
\]
Inserting this to \eqref{e_Etau} and applying Theorem \ref{t_mu}
again, we obtain
\begin{equation}
\mathbb{E}_{\mathbf{a}}[\tau_{\mathcal{Q}}]=\frac{1}{\mathrm{Cap}_{\beta}(\mathbf{a},\,\mathcal{Q})}\Big[\sum_{\mathbf{s}\in\mathcal{S}}\frac{1}{q}h_{\mathbf{a},\,\mathcal{Q}}^{*}(\mathbf{s})+o(1)\Big]\;.\label{e_Etau2}
\end{equation}
Therefore, in order to prove the EK law, \textbf{\emph{it suffices
to deduce sharp asymptotics of $\mathrm{Cap}_{\beta}(\mathbf{a},\,\mathcal{Q})$
and $h_{\mathbf{a},\,\mathcal{Q}}^{*}(\mathbf{s})$ for each $\mathbf{s}\in\mathcal{S}$.}}

\subsubsection*{General strategy for Markov chain model reduction}

Denote by $r_{Y_{\beta}}(\cdot,\,\cdot):\mathcal{S}\times\mathcal{S}\rightarrow[0,\,\infty)$
the jump rate of the trace process $Y_{\beta}(\cdot)$ which is indeed
a Markov process on $\mathcal{S}$. Then, to prove the convergence
of $Y_{\beta}(\cdot)$ to the process $Y(\cdot)$ with jump rate \eqref{e_LMC},
it suffices to prove that 
\begin{equation}
\lim_{\beta\rightarrow\infty}r_{Y_{\beta}}(\mathbf{a},\,\mathbf{b})=\frac{1}{\kappa}\;.\label{e_rYbeta}
\end{equation}
It has been discovered in \cite{BL TM2,LS NR1} that
\begin{equation}
r_{Y_{\beta}}(\mathbf{a},\,\mathbf{b})=e^{\beta\Gamma}\frac{\mathrm{Cap}_{\beta}(\mathbf{a},\,\breve{\mathbf{a}})}{\mu_{\beta}(\mathbf{a})}h_{\mathbf{b},\,\mathcal{S}\setminus\{\mathbf{a},\,\mathbf{b}\}}(\mathbf{a})\;\;\;\;;\;\mathbf{a},\,\mathbf{b}\in\mathcal{S}\;.\label{e_MCMR}
\end{equation}
Thus, in view of Theorem \ref{t_mu}, to prove \eqref{e_rYbeta},
\textbf{\emph{it suffices to estimate $\mathrm{Cap}_{\beta}(\mathbf{a},\,\breve{\mathbf{a}})$
and $h_{\mathbf{b},\,\mathcal{S}\setminus\{\mathbf{a},\,\mathbf{b}\}}(\mathbf{a})$
for $\mathbf{a},\,\mathbf{b}\in\mathcal{S}$.}}

Summing up, we can observe that Theorems \ref{t_EK} and \ref{t_MC}
are consequences of sharp estimates of $\text{Cap}_{\beta}(\mathcal{P},\,\mathcal{Q})$,
$h_{\mathcal{P},\,\mathcal{Q}}(\mathbf{s})$ and $h_{\mathcal{P},\,\mathcal{Q}}^{*}(\mathbf{s})$
for disjoint, non-empty subsets $\mathcal{P},\,\mathcal{Q}\subset\mathcal{S}$
and $\mathbf{s}\in\mathcal{S}$. These estimates will be established
in the next subsection (cf. Theorems \ref{t_Cap} and \ref{t_eqp}). 

\subsection{Estimates of capacity and equilibrium potential}

\subsubsection*{Auxiliary constants}

To state the main estimates, we first need to introduce the bulk and
edge constants which stand for the bulk and edge parts, respectively,
of the saddle structure.

The \emph{bulk constant} $\mathfrak{b}=\mathfrak{b}(K,\,L)$ is defined
by (cf. \eqref{e_nu0})
\begin{equation}
\mathfrak{b}:=\begin{cases}
\nu_{0}\frac{(K+2)(L-4)}{4KL} & \text{for MH dynamics}\;,\\
\nu_{0}\frac{(K-2)(L-4)}{4KL} & \text{for cyclic dynamics with }q\ge3\;,
\end{cases}\label{e_bdef}
\end{equation}
while the \emph{edge constant} $\mathfrak{e}=\mathfrak{e}(K,\,L)$
is defined by
\begin{equation}
\mathfrak{e}:=\frac{\nu_{0}}{L}\mathfrak{e}_{0}\;,\label{e_edef}
\end{equation}
where a complicated expression for the constant $\mathfrak{e}_{0}$
will be given in \eqref{e_e0def} and \eqref{e_e0cycdef} for the
MH and cyclic dynamics, respectively. Finally, the constant $\kappa$
that appears in Theorem \ref{t_EK} is defined by 
\begin{equation}
\kappa:=\mathfrak{b}+2\mathfrak{e}\;.\label{e_kappadef}
\end{equation}

\begin{lem}
\label{l_kappa}The constant $\kappa$ satisfies \eqref{e_kappaest}.
\end{lem}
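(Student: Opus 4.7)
The plan is to treat the two summands in $\kappa = \mathfrak{b} + 2\mathfrak{e}$ separately. The term $\mathfrak{b}$ will supply the leading value $\nu_0/4$, and the term $\mathfrak{e}$ will contribute only to the error.

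For the bulk term, I would simply expand the product in the numerator of \eqref{e_bdef}. For the MH dynamics,
\[
\mathfrak{b} \;=\; \nu_0\,\frac{(K+2)(L-4)}{4KL} \;=\; \frac{\nu_0}{4} + \nu_0\Bigl(\frac{1}{2K} - \frac{1}{L} - \frac{2}{KL}\Bigr),
\]
and an entirely analogous identity holds in the cyclic case with $+\tfrac{1}{2K}$ replaced by $-\tfrac{1}{2K}$ and the last term by $+\tfrac{2}{KL}$. Since $K \le L$ is standing throughout the paper, the condition $K \to \infty$ forces $L \to \infty$ as well, and each of the three correction terms is $o_K(1)$ uniformly in $L$. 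Hence $\mathfrak{b} = \nu_0/4 + o_K(1)$ in both models.

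For the edge term $\mathfrak{e} = \nu_0 \mathfrak{e}_0/L$, the content is that $\mathfrak{e}_0/L = o_K(1)$. The constant $\mathfrak{e}_0$ will be introduced in \eqref{e_e0def} and \eqref{e_e0cycdef} in terms of hitting quantities of an auxiliary random walk on the space of sub-trees of a ladder graph of width $K$. The crucial structural input is that this auxiliary chain lives on a state space whose size and geometry depend only on $K$, so $\mathfrak{e}_0 = \mathfrak{e}_0(K)$ is independent of $L$. Granting this, the constraint $K \le L$ gives
\[
\mathfrak{e} \;=\; \frac{\nu_0\,\mathfrak{e}_0(K)}{L} \;\le\; \frac{\nu_0\,\mathfrak{e}_0(K)}{K},
\]
and a polynomial-in-$K$ bound for $\mathfrak{e}_0(K)$ extracted from its explicit expression yields $\mathfrak{e}_0(K)/K \to 0$, hence $\mathfrak{e} = o_K(1)$. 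Adding the two estimates gives $\kappa = \nu_0/4 + o_K(1)$, which is \eqref{e_kappaest}.

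The hard part is not the bulk expansion, which is a one-line calculation, but the control of $\mathfrak{e}_0(K)/K$. That estimate depends on the forward references to the explicit closed forms of $\mathfrak{e}_0$, which in turn rest on the sub-tree random-walk description of the saddle plateau developed in Sections \ref{sec6} and \ref{sec8}. In this sense Lemma \ref{l_kappa} is a deferred corollary of the energy-landscape analysis: once the $L$-independence and the growth rate of $\mathfrak{e}_0(K)$ are in hand, the lemma becomes a direct algebraic check.
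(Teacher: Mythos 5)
Your decomposition is the same as the paper's: the bulk constant supplies the leading value $\nu_{0}/4$ by direct expansion of \eqref{e_bdef}, and the edge constant is absorbed into the $o_{K}(1)$ error. The bulk computation and the observation that $\mathfrak{e}_{0}$ depends only on $K$ (not on $L$) are both correct.

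The gap is in your treatment of the edge term. You reduce the claim to $\mathfrak{e}_{0}(K)/K\to 0$ and then assert that ``a polynomial-in-$K$ bound for $\mathfrak{e}_{0}(K)$ extracted from its explicit expression'' yields this. As stated this does not follow: a polynomial bound of degree one or higher gives nothing, so you actually need $\mathfrak{e}_{0}(K)=o(K)$, and you provide no argument for any bound at all. Moreover, the ``explicit expression'' \eqref{e_e0def} is $\mathfrak{e}_{0}=1/(|\mathscr{V}|\cdot\mathfrak{cap}(\mathbf{a},\,\xi_{\ell,\,2}^{a,\,b}))$, where $|\mathscr{V}|$ counts sub-trees of a ladder graph of width $K$ and hence grows exponentially; an upper bound on $\mathfrak{e}_{0}$ therefore requires a \emph{lower} bound on the capacity of the auxiliary chain, which is not an algebraic check. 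The paper supplies exactly this in Propositions \ref{p_e0est} and \ref{p_e0barabest}: a disjoint-path Cauchy--Schwarz argument gives $\mathfrak{cap}(\mathbf{a},\,\xi_{\ell,\,2}^{a,\,b})\ge|\mathscr{V}|^{-1}$, hence the uniform bounds $\mathfrak{e}_{0}\le1$ (MH) and $\overline{\mathfrak{e}}_{0}^{a,\,b}<2$ (cyclic), from which $\mathfrak{e}\le\nu_{0}/L\le\nu_{0}/K=o_{K}(1)$ (and similarly $<2\nu_{0}/L$) is immediate. To close your argument you must either invoke those propositions or reproduce such a capacity lower bound; the claim cannot be extracted from the formula for $\mathfrak{e}_{0}$ by inspection.
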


\begin{proof}
By Propositions \ref{p_e0est} and \ref{p_e0barabest}, we have $0<\mathfrak{e}\le\frac{1}{L}$
and $0<\mathfrak{e}<\frac{2}{L}$ for the MH and cyclic dynamics,
respectively. Thus, $\mathfrak{e}=o_{K}(1)$ and therefore the conclusion
of the lemma follows immediately from the expression \eqref{e_bdef}
of the bulk constant.
\end{proof}

\subsubsection*{Main estimates}

We start from the capacity estimate. 
\begin{thm}
\label{t_Cap} For two non-empty, disjoint subsets $\mathcal{P}$
and $\mathcal{Q}$ of $\mathcal{S}$, it holds that 
\begin{equation}
\mathrm{Cap}_{\beta}(\mathcal{P},\,\mathcal{Q})=(1+o(1))\cdot c_{0}(\mathcal{P},\,\mathcal{Q})e^{-\Gamma\beta}\;,\label{e_CapMH}
\end{equation}
where $c_{0}(\mathcal{P},\,\mathcal{Q}):=\frac{|\mathcal{P}||\mathcal{Q}|}{\kappa(|\mathcal{P}|+|\mathcal{Q}|)}$.
\end{thm}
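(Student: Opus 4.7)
The plan is to apply the $H^{1}$-approximation strategy of Section \ref{sec4.5}: construct a test function $\widetilde{h}=\widetilde{h}_{\mathcal{P},\,\mathcal{Q}}:\mathcal{X}\to[0,\,1]$ that agrees with $h_{\mathcal{P},\,\mathcal{Q}}$ on $\mathcal{P}\cup\mathcal{Q}$ and satisfies
\[
D_{\beta}(\widetilde{h})=(1+o(1))\,c_{0}(\mathcal{P},\,\mathcal{Q})\,e^{-\Gamma\beta}
\qquad\text{and}\qquad
D_{\beta}(\widetilde{h}-h_{\mathcal{P},\,\mathcal{Q}})=o(e^{-\Gamma\beta})\;.
\]
Since $D_{\beta}(\cdot)^{1/2}$ is a seminorm on $\mathcal{X}$-functions and $\mathrm{Cap}_{\beta}(\mathcal{P},\,\mathcal{Q})=D_{\beta}(h_{\mathcal{P},\,\mathcal{Q}})$ by \eqref{e_Capdef}, the triangle inequality then yields \eqref{e_CapMH}. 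The natural choice of boundary values on $\mathcal{S}$, dictated by the anticipated Markov chain reduction, is $\widetilde{h}\equiv1$ on $\mathcal{P}$, $\widetilde{h}\equiv0$ on $\mathcal{Q}$, and $\widetilde{h}\equiv\alpha:=|\mathcal{P}|/(|\mathcal{P}|+|\mathcal{Q}|)$ on $\mathcal{S}\setminus(\mathcal{P}\cup\mathcal{Q})$; these are precisely the values of the equilibrium potential on the complete graph $K_{q}$ with uniform conductances, and a routine electrical-network calculation there recovers the combinatorial factor $|\mathcal{P}||\mathcal{Q}|/(|\mathcal{P}|+|\mathcal{Q}|)$ appearing in $c_{0}(\mathcal{P},\,\mathcal{Q})$.

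I would then extend $\widetilde{h}$ to all of $\mathcal{X}$ in two steps. On each metastable well surrounding a ground state $\mathbf{s}\in\mathcal{S}$, defined as the set of configurations from which $\sigma_{\beta}(\cdot)$ is overwhelmingly likely to reach $\mathbf{s}$ before any other element of $\mathcal{S}$, set $\widetilde{h}$ equal to the constant $\widetilde{h}(\mathbf{s})$. On the saddle plateau connecting two distinct ground states $\mathbf{a}$ and $\mathbf{b}$ at height $\Gamma$, interpolate linearly between $\widetilde{h}(\mathbf{a})$ and $\widetilde{h}(\mathbf{b})$ using the bulk random-walk coordinate for the interior strip portion and the random walk on sub-trees of the ladder for the two edge portions, following the plateau decomposition to be developed in Sections \ref{sec6} and \ref{sec8}. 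The direct computation of $D_{\beta}(\widetilde{h})$ then reduces to a localized calculation: by \eqref{e_Hcdt}, each plateau edge contributes a factor of order $e^{-\Gamma\beta}$, and summing the squared gradients of the linear interpolation across the bulk and the two edge pieces produces a contribution of $(1+o(1))q^{-1}\kappa^{-1}(\widetilde{h}(\mathbf{a})-\widetilde{h}(\mathbf{b}))^{2}e^{-\Gamma\beta}$ for each unordered pair $\{\mathbf{a},\,\mathbf{b}\}\subset\mathcal{S}$, where $\kappa=\mathfrak{b}+2\mathfrak{e}$ aggregates the bulk and edge resistances via \eqref{e_kappadef}. Summing over all pairs using the three possible values of $\widetilde{h}|_{\mathcal{S}}$ reproduces $c_{0}(\mathcal{P},\,\mathcal{Q})$.

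The main obstacle is the approximation $D_{\beta}(\widetilde{h}-h_{\mathcal{P},\,\mathcal{Q}})=o(e^{-\Gamma\beta})$. Since $\widetilde{h}$ is locally constant inside each metastable well while $h_{\mathcal{P},\,\mathcal{Q}}$ is only harmonic off $\mathcal{P}\cup\mathcal{Q}$, this amounts to showing that the equilibrium potential is asymptotically constant on each well, in the sense of the $D_{\beta}$-seminorm at the scale $e^{-\Gamma\beta}$. My plan is to combine three ingredients: (i) the a priori upper bound $\mathrm{Cap}_{\beta}(\mathbf{a},\,\breve{\mathbf{a}})=O(e^{-\Gamma\beta})$, which follows by testing the Dirichlet form on any fixed path from $\mathbf{a}$ to $\breve{\mathbf{a}}$ of height $\Gamma$; (ii) a Poincar\'e-type inequality inside each well with an $O(1)$ spectral gap, since all intra-well saddles have energy strictly below $\Gamma$; and (iii) the harmonicity of $h_{\mathcal{P},\,\mathcal{Q}}$, which together with (i) and (ii) pins $h_{\mathcal{P},\,\mathcal{Q}}$ to $\widetilde{h}(\mathbf{s})$ throughout the well of $\mathbf{s}$. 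The truly delicate point is the matching of $\widetilde{h}$ and $h_{\mathcal{P},\,\mathcal{Q}}$ at the interface between each well and the incident plateau, because the edge-region structure (random walk on sub-trees of the ladder) couples the plateau to the wells in a nontrivial way; this is where the heaviest combinatorial input from the energy landscape analysis, and therefore the separate treatments for the MH and cyclic dynamics, will be required.
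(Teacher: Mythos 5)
Your overall architecture is exactly the paper's: reduce \eqref{e_CapMH} to the two estimates \eqref{e_eqpappr1}--\eqref{e_eqpappr2} for a test function $\widetilde{h}$ with boundary values \eqref{e_eqpappr3}, conclude by the triangle inequality for the seminorm $D_{\beta}(\cdot)^{1/2}$, and build $\widetilde{h}$ by freezing it on the wells and interpolating across the plateau via the bulk one-dimensional coordinate and the equilibrium potential of the sub-tree random walk on the edge pieces. The computation of $D_{\beta}(\widetilde{h})$ you describe is the paper's Propositions \ref{p_testf} and \ref{p_testfcyc}.

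The gap is in your plan for $D_{\beta}(h_{\mathcal{P},\,\mathcal{Q}}-\widetilde{h})=o(e^{-\Gamma\beta})$. Since $\widetilde{h}$ is constant on each well, the intra-well contribution to $D_{\beta}(h-\widetilde{h})$ is $\sum\mu_{\beta}(\eta)r_{\beta}(\eta,\,\xi)[h(\eta)-h(\xi)]^{2}$ over edges inside the well, and by \eqref{e_Hcdt} an edge at height $H(\eta,\,\xi)=\Gamma-k$ carries weight of order $e^{-(\Gamma-k)\beta}$, which is exponentially \emph{larger} than $e^{-\Gamma\beta}$ (e.g.\ edges incident to $\mathbf{s}$ itself have weight $\asymp e^{-4\beta}$). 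To make each such term $o(e^{-\Gamma\beta})$ you need $|h(\eta)-h(\xi)|^{2}=o(e^{-k\beta})$, i.e.\ \emph{exponentially} small oscillation of the equilibrium potential across low-lying intra-well edges. Your ingredients (i)--(iii) deliver at best an $o(1)$ pointwise estimate: a Poincar\'e inequality bounds the variance of $h$ by its (restricted) Dirichlet form, which is only $O(e^{-\Gamma\beta})$ a priori, and this direction of the inequality cannot upgrade $O(e^{-\Gamma\beta})$ to $o(e^{-\Gamma\beta})$. Closing this would require pointwise capacity-ratio bounds of the form $|h(\eta)-h(\mathbf{s})|\le\mathrm{Cap}_{\beta}(\eta,\,\partial)/\mathrm{Cap}_{\beta}(\eta,\,\mathbf{s})$ at every depth of the well, which is the classical heavy machinery (and is delicate for the non-reversible cyclic dynamics). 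The paper avoids the issue entirely with the algebraic identity $D_{\beta}(h-\widetilde{h})=D_{\beta}(\widetilde{h})-\langle h,\,-\mathcal{L}_{\beta}\widetilde{h}\rangle_{\mu_{\beta}}$ (valid because $h\equiv\widetilde{h}$ on $\mathcal{P}\cup\mathcal{Q}$ and $\mathcal{L}_{\beta}h\equiv0$ elsewhere): one then only needs that $-\mathcal{L}_{\beta}\widetilde{h}$ vanishes on the plateau (discrete harmonicity of the interpolants, Lemmas \ref{l_fl3}--\ref{l_fl6}) and is supported, up to negligible terms, on $\mathcal{N}(\mathcal{S})$, where the crude $o(1)$ estimate $h(\eta)=h(\mathbf{s})+o(1)$ of Lemma \ref{l_eqpot} suffices because it is tested against the $O(e^{-\Gamma\beta})$ mass of $\phi_{1}$ (Lemma \ref{l_fl7}). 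You should replace your step (i)--(iii) by this integration-by-parts argument; it is the actual content of the new method.
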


Next, we state the estimate of equilibrium potentials. 
\begin{thm}
\label{t_eqp}For non-empty, disjoint subsets $\mathcal{P}$ and $\mathcal{Q}$
of $\mathcal{S}$ and $\mathbf{s}\in\mathcal{S}\setminus(\mathcal{P}\cup\mathcal{Q})$,
it holds that
\[
h_{\mathcal{P},\,\mathcal{Q}}(\mathbf{s}),\;h_{\mathcal{P},\,\mathcal{Q}}^{*}(\mathbf{s})=(1+o(1))\cdot\frac{|\mathcal{P}|}{|\mathcal{P}|+|\mathcal{Q}|}\;.
\]
\end{thm}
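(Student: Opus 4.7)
Write $\theta := |\mathcal{P}|/(|\mathcal{P}|+|\mathcal{Q}|)$. My plan is to apply the $H^{1}$-approximation framework outlined in Section \ref{sec4.5} directly to the equilibrium potential. The main step is to build a test function $\widetilde{h}:\mathcal{X}\to\mathbb{R}$ with $\widetilde{h}\equiv 1$ on $\mathcal{P}$, $\widetilde{h}\equiv 0$ on $\mathcal{Q}$, and $\widetilde{h}(\mathbf{r})=\theta$ for every $\mathbf{r}\in\mathcal{S}\setminus(\mathcal{P}\cup\mathcal{Q})$, with an interpolation on the saddle plateau that mirrors the test function used in the proof of Theorem \ref{t_Cap}. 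The specific value $\theta$ is forced by an effective-network heuristic: by the symmetric plateau structure between any two distinct ground states, the reduced network on $\mathcal{S}$ is a complete graph in which every pair of nodes carries identical effective conductance. The harmonic extension of the boundary data $(1,\,0)$ on $(\mathcal{P},\,\mathcal{Q})$ to $\mathcal{S}\setminus(\mathcal{P}\cup\mathcal{Q})$ on such a complete graph is exactly $\theta$, and this is simultaneously the constant that minimizes $D_{\beta}(\widetilde{h})$ at leading order and produces a value matching $\mathrm{Cap}_{\beta}(\mathcal{P},\,\mathcal{Q})$ from Theorem \ref{t_Cap}.

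The heart of the argument is the $H^{1}$-approximation estimate
\[
D_{\beta}(\widetilde{h}-h_{\mathcal{P},\,\mathcal{Q}})=o(e^{-\Gamma\beta}).
\]
In the reversible MH setting this is immediate: with $f:=\widetilde{h}-h_{\mathcal{P},\,\mathcal{Q}}$ vanishing on $\mathcal{P}\cup\mathcal{Q}$ and $h_{\mathcal{P},\,\mathcal{Q}}$ harmonic off that set, the orthogonality $\langle h_{\mathcal{P},\,\mathcal{Q}},\,-\mathcal{L}_{\beta}f\rangle_{\mu_{\beta}}=0$ gives $D_{\beta}(\widetilde{h})=\mathrm{Cap}_{\beta}(\mathcal{P},\,\mathcal{Q})+D_{\beta}(f)$, so the $H^{1}$-bound reduces to the matching upper bound $D_{\beta}(\widetilde{h})\le(1+o(1))\mathrm{Cap}_{\beta}(\mathcal{P},\,\mathcal{Q})$ already needed for Theorem \ref{t_Cap}. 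For the non-reversible cyclic dynamics this orthogonality fails for $h_{\mathcal{P},\,\mathcal{Q}}$ itself, but using $D_{\beta}=D_{\beta}^{*}$ and the fact that $\mathcal{L}_{\beta}^{*}h_{\mathcal{P},\,\mathcal{Q}}^{*}\equiv 0$ off $\mathcal{P}\cup\mathcal{Q}$, one obtains instead $\langle h_{\mathcal{P},\,\mathcal{Q}}^{*},\,-\mathcal{L}_{\beta}f\rangle_{\mu_{\beta}}=0$; combined with $\mathrm{Cap}_{\beta}=\mathrm{Cap}_{\beta}^{*}$ and a parallel adjoint test function $\widetilde{h}^{*}$, also taking the value $\theta$ on $\mathcal{S}\setminus(\mathcal{P}\cup\mathcal{Q})$ by the same complete-graph analysis, the argument closes.

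To upgrade the $H^{1}$-closeness $D_{\beta}(f)=o(e^{-\Gamma\beta})$ to pointwise estimates, I would invoke the Poincaré-type inequality
\[
\sum_{\zeta\in\mathcal{X}}\mu_{\beta}(\zeta)\,f(\zeta)^{2}\le C\,e^{\Gamma\beta}\,D_{\beta}(f)
\]
valid for every $f$ vanishing on $\mathcal{P}\cup\mathcal{Q}$, which amounts to a lower bound of order $e^{-\Gamma\beta}$ on the smallest Dirichlet eigenvalue of $-\mathcal{L}_{\beta}$ killed at $\mathcal{P}\cup\mathcal{Q}$; this is a standard consequence of the capacity bound in Theorem \ref{t_Cap} together with the mean-hitting-time identity \eqref{e_Etau}. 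Combining yields $\|f\|_{L^{2}(\mu_{\beta})}^{2}=o(1)$, and since $\mu_{\beta}(\mathbf{s})\to 1/q>0$ by Theorem \ref{t_mu}, we conclude $|f(\mathbf{s})|^{2}=o(1)$, i.e., $h_{\mathcal{P},\,\mathcal{Q}}(\mathbf{s})=\theta+o(1)=(1+o(1))\theta$. The estimate for $h_{\mathcal{P},\,\mathcal{Q}}^{*}(\mathbf{s})$ is identical after replacing the dynamics by its adjoint. I expect the main obstacle to be the $H^{1}$-orthogonality step in the cyclic case: producing matched test functions $\widetilde{h}$ and $\widetilde{h}^{*}$ on the saddle plateau whose interpolations are jointly compatible with both the original and the adjoint flows is delicate precisely because the cyclic rotation $1\to 2\to\cdots\to q$ a priori breaks the full $S_{q}$-symmetry underlying the complete-graph picture.
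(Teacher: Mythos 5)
Your final step---converting the $H^{1}$-closeness $D_{\beta}(\widetilde{h}-h_{\mathcal{P},\,\mathcal{Q}})=o(e^{-\Gamma\beta})$ into a pointwise estimate at $\mathbf{s}$---is essentially the paper's argument. The paper does not need the full Poincar\'e inequality you invoke: it only needs its restriction to the single term $\zeta=\mathbf{s}$, namely $D_{\beta}(F)\ge Ce^{-\Gamma\beta}$ for any $F$ with $F(\mathbf{s})=1$ and $F\equiv0$ on $\mathcal{P}\cup\mathcal{Q}$ (Lemma \ref{l_DFC}), which it proves by an elementary single-path Cauchy--Schwarz bound rather than by appealing to spectral estimates; your "standard consequence of the capacity bound" claim for the global $L^{2}$ inequality is more than is needed and would itself require justification for the non-reversible generator, but the special case you actually use is fine. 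The prescribed value $\theta=|\mathcal{P}|/(|\mathcal{P}|+|\mathcal{Q}|)$ on $\mathcal{S}\setminus(\mathcal{P}\cup\mathcal{Q})$ also matches condition \eqref{e_eqpappr3} of Proposition \ref{p_eqpappr}.

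The genuine gap is in how you propose to obtain the $H^{1}$-approximation itself. You reduce $D_{\beta}(\widetilde{h}-h_{\mathcal{P},\,\mathcal{Q}})=o(e^{-\Gamma\beta})$ to the statement that $D_{\beta}(\widetilde{h})$ matches $\mathrm{Cap}_{\beta}(\mathcal{P},\,\mathcal{Q})$ to leading order, i.e.\ to a \emph{two-sided} capacity estimate. Within this paper's architecture that is circular: Theorem \ref{t_Cap} is deduced \emph{from} the $H^{1}$-approximation \eqref{e_eqpappr1}, not the other way around. To break the circularity you would need an independent lower bound on $\mathrm{Cap}_{\beta}(\mathcal{P},\,\mathcal{Q})$, which for the cyclic dynamics means the generalized Thomson principle on divergence-free flows---precisely the machinery the method is designed to avoid, and you correctly flag but do not resolve this for the non-reversible case (the identity $\langle h^{*},\,-\mathcal{L}_{\beta}f\rangle_{\mu_{\beta}}=0$ does not by itself control $D_{\beta}(f)$). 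The paper's route is different: it expands $D_{\beta}(h_{\mathcal{P},\,\mathcal{Q}}-\widetilde{h})=D_{\beta}(\widetilde{h})-\langle h_{\mathcal{P},\,\mathcal{Q}},\,-\mathcal{L}_{\beta}\widetilde{h}\rangle_{\mu_{\beta}}$ and evaluates the second term \emph{directly}, using that $-\mathcal{L}_{\beta}\widetilde{h}$ is supported (up to negligible terms) on the neighborhoods $\mathcal{N}(\mathbf{s})$, where the equilibrium potential is asymptotically constant by a large-deviation estimate (Lemmas \ref{l_eqpot} and \ref{l_eqpotcyc}). That near-constancy of $h_{\mathcal{P},\,\mathcal{Q}}$ on each metastable well, combined with the explicit local computation of $\sum_{\eta\in\mathcal{N}(\mathbf{s})}\phi_{1}(\eta)$ in Lemmas \ref{l_fl7} and \ref{l_flcyc6}, is the key ingredient missing from your outline, and it is what makes the argument go through identically in the reversible and non-reversible settings.
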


Up to this point, we followed a standard argument from the study of
metastability. The main difficulty in this approach lies on the proof
of these two theorems, especially when the dynamics is non-reversible.
Our main achievement in this study is developing a novel and simple
argument proving these two theorems. This will be explained in Section
\ref{sec4.5}.

\subsection{\label{sec4.4}Proof of the main results}

Before proceeding to the proof of Theorems \ref{t_Cap} and \ref{t_eqp},
we assume that these theorems hold and then prove the main results.
\begin{proof}[Proof of Theorem \ref{t_EK}]
 We fix $\mathbf{a}\in\mathcal{S}$ and $\mathcal{Q}\subseteq\breve{\mathbf{a}}$,
so that the estimate \eqref{e_Etau2} on $\mathbb{E}_{\mathbf{a}}[\tau_{\mathcal{Q}}]$
holds. Applying Theorems \ref{t_Cap} and \ref{t_eqp} to this estimate,
we obtain
\[
\mathbb{E}_{\mathbf{a}}[\tau_{\mathcal{Q}}]=\frac{1}{(1+o(1))\cdot\frac{1\cdot|\mathcal{Q}|}{\kappa(1+|\mathcal{Q}|)}e^{-\Gamma\beta}}\Big[\frac{1}{q}+\frac{1}{q}\cdot\frac{q-1-|\mathcal{Q}|}{1+|\mathcal{Q}|}+o(1)\Big]=(1+o(1))\cdot\frac{\kappa}{|\mathcal{Q}|}e^{\Gamma\beta}\;.
\]
Inserting $\mathcal{Q}=\breve{\mathbf{a}}$ or $\{\mathbf{b}\}$ completes
the proof.
\end{proof}
\begin{proof}[Proof of Theorem \ref{t_MC}]
 Applying Theorems \ref{t_mu}, \ref{t_Cap} and \ref{t_eqp} to
the expression \eqref{e_MCMR} of the jump rate $r_{Y_{\beta}}(\cdot,\,\cdot)$
yields that, for all $\mathbf{a},\,\mathbf{b}\in\mathcal{S}$, 
\[
r_{Y_{\beta}}(\mathbf{a},\,\mathbf{b})=e^{\Gamma\beta}\cdot\frac{(1+o(1))\cdot\frac{1\cdot(q-1)}{\kappa(1+(q-1))}e^{-\Gamma\beta}}{\frac{1}{q}+o(1)}\cdot\Big(\frac{1}{q-1}+o(1)\Big)=\frac{1}{\kappa}+o(1)\;.
\]
This completes the proof.
\end{proof}

\subsection{\label{sec4.5}New strategy based on $H^{1}$-approximation}

We now explain our new strategy to prove Theorems \ref{t_Cap} and
\ref{t_eqp}. 

\subsubsection*{$H^{1}$-approximation of equilibrium potentials}

In view of the expression \eqref{e_Capdef}, the natural first step
to estimate the capacity is to find good approximations of the equilibrium
potentials $h_{\mathcal{P},\,\mathcal{Q}}$ and $h_{\mathcal{P},\,\mathcal{Q}}^{*}$.
We noted at the end of the introduction that the Dirichlet form $D_{\beta}(f)$
can be regarded as the square of the $H^{1}$-norm, and thus the approximation
should be carried out in the sense of this norm.
\begin{prop}
\label{p_eqpappr}Let $\mathcal{P}$ and $\mathcal{Q}$ be two disjoint
and non-empty subsets of $\mathcal{S}$. Then, there exist $\widetilde{h}=\widetilde{h}_{\mathcal{P},\,\mathcal{Q}}$
and $\widetilde{h}^{*}=\widetilde{h}_{\mathcal{P},\,\mathcal{Q}}^{*}$
such that all of the following properties hold. 
\begin{enumerate}
\item Two functions $\widetilde{h}$ and $\widetilde{h}^{*}$ approximate
$h_{\mathcal{P},\,\mathcal{Q}}$ and $h_{\mathcal{P},\,\mathcal{Q}}^{*}$
in the sense that 
\begin{equation}
D_{\beta}(h_{\mathcal{P},\,\mathcal{Q}}-\widetilde{h}),\;D_{\beta}(h_{\mathcal{P},\,\mathcal{Q}}^{*}-\widetilde{h}^{*})=o(e^{-\beta\Gamma})\;.\label{e_eqpappr1}
\end{equation}
\item We have
\begin{equation}
D_{\beta}(\widetilde{h}),\;D_{\beta}(\widetilde{h}^{*})=(1+o(1))\cdot c_{0}(\mathcal{P},\,\mathcal{Q})e^{-\Gamma\beta}\;,\label{e_eqpappr2}
\end{equation}
where $c_{0}(\mathcal{P},\,\mathcal{Q})$ is the constant that appears
in Theorem \ref{t_Cap}.
\item The values of $\widetilde{h}$ and $\widetilde{h}^{*}$ on the set
$\mathcal{S}$ of ground states are given as
\begin{equation}
\widetilde{h}(\mathbf{s})=\widetilde{h}^{*}(\mathbf{s})=\begin{cases}
1 & \text{if }\mathbf{s}\in\mathcal{P}\;,\\
0 & \text{if }\mathbf{s}\in\mathcal{Q}\;,\\
\frac{|\mathcal{P}|}{|\mathcal{P}|+|\mathcal{Q}|} & \text{if }\mathbf{s}\in\mathcal{S}\setminus(\mathcal{P}\cup\mathcal{Q})\;.
\end{cases}\label{e_eqpappr3}
\end{equation}
\end{enumerate}
\end{prop}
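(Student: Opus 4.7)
The plan is to construct $\widetilde h$ and $\widetilde h^*$ from the saddle-plateau analysis of Sections \ref{sec6} and \ref{sec8}. The plateau between two ground states $\mathbf{a},\mathbf{b}\in\mathcal{S}$ is described there by an effective Markov chain whose bulk part is a one-dimensional random walk on widths of monochromatic strips, and whose two edge parts are random walks on sub-trees of a ladder graph. Let $h^{\mathrm{eff}}_{\mathbf{a},\mathbf{b}}$ denote the harmonic function of this effective chain with boundary values $1$ near $\mathbf{a}$ and $0$ near $\mathbf{b}$. I set $\widetilde h$ equal to $1$, $0$, or $|\mathcal{P}|/(|\mathcal{P}|+|\mathcal{Q}|)$ on the well (the connected component of $\{H<\Gamma\}$) around each ground state, according as that state lies in $\mathcal{P}$, $\mathcal{Q}$, or $\mathcal{S}\setminus(\mathcal{P}\cup\mathcal{Q})$; across each saddle plateau between $\mathbf{a}$ and $\mathbf{b}$ I extend by $\widetilde h(\mathbf{b})+(\widetilde h(\mathbf{a})-\widetilde h(\mathbf{b}))\,h^{\mathrm{eff}}_{\mathbf{a},\mathbf{b}}$; on the remaining configurations (of energy $>\Gamma$) I set $\widetilde h$ to any value compatible with the above. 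I take $\widetilde h^*=\widetilde h$. Property (3) is then immediate by construction.

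For property (2), I compute $D_\beta(\widetilde h)=\frac{1}{2}\sum_{\sigma,\zeta}\mu_\beta(\sigma)r_\beta(\sigma,\zeta)[\widetilde h(\zeta)-\widetilde h(\sigma)]^2$ by edge-by-edge inspection using \eqref{e_Hcdt}. Edges with $H(\sigma,\zeta)<\Gamma$ lie inside wells on which $\widetilde h$ is constant; edges with $H(\sigma,\zeta)>\Gamma$ give strictly subdominant contributions; only plateau edges (at height exactly $\Gamma$) contribute at order $e^{-\beta\Gamma}$. Decomposing the plateau sum by unordered pair $\{\mathbf{a},\mathbf{b}\}$ of ground states, the harmonicity of the effective chain makes the bulk summation telescope to $\mathfrak{b}[\widetilde h(\mathbf{a})-\widetilde h(\mathbf{b})]^2$, while each of the two edge sub-tree parts contributes $\mathfrak{e}[\widetilde h(\mathbf{a})-\widetilde h(\mathbf{b})]^2$. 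Summing over pairs, using \eqref{e_kappadef}, invoking Theorem \ref{t_mu} for the asymptotics of $Z_\beta$, and a direct count from the boundary values of $\widetilde h$ yields $D_\beta(\widetilde h)=(1+o(1))\,c_0(\mathcal{P},\mathcal{Q})\,e^{-\Gamma\beta}$.

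The main obstacle is the $H^1$-closeness \eqref{e_eqpappr1}. My plan is to decompose $D_\beta(\widetilde h-h_{\mathcal{P},\mathcal{Q}})$ into contributions from well-internal edges and from plateau edges. On each well $\mathcal{W}_\mathbf{s}$ the function $\widetilde h$ is constant, so the well contribution reduces to $\frac{1}{2}\sum_{\sigma,\zeta\in\mathcal{W}_\mathbf{s}}\mu_\beta(\sigma)r_\beta(\sigma,\zeta)[h_{\mathcal{P},\mathcal{Q}}(\sigma)-h_{\mathcal{P},\mathcal{Q}}(\zeta)]^2$; one then shows, via a large-deviation estimate on the dynamics trapped inside $\mathcal{W}_\mathbf{s}$, that $h_{\mathcal{P},\mathcal{Q}}$ is exponentially close to the constant $\widetilde h(\mathbf{s})$, with an error sharp enough to make the well contribution $o(e^{-\beta\Gamma})$. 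On the plateaus, $\widetilde h - h_{\mathcal{P},\mathcal{Q}}$ is controlled by comparing the full dynamics with the effective chain: the exit distribution from a plateau entry point is captured by $h^{\mathrm{eff}}_{\mathbf{a},\mathbf{b}}$ up to $o(1)$ corrections (stemming from the negligible probability of leaving the plateau via a non-effective direction), so that the increments of $\widetilde h$ and $h_{\mathcal{P},\mathcal{Q}}$ across plateau edges differ by $o(1)$ and the edge-by-edge squared difference sums to $o(e^{-\beta\Gamma})$. The hardest ingredient is this effective-chain comparison on the edge part of the plateau, because the sub-tree random-walk structure means the harmonic function is not given by a simple closed form and requires the combinatorial analysis of Sections \ref{sec6} and \ref{sec8} to extract sharp asymptotics. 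The adjoint case follows from the same argument applied to the adjoint rates, using $\mathrm{Cap}_\beta=\mathrm{Cap}_\beta^*$ together with \eqref{e_Hcdt}.
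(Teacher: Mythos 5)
Your construction of $\widetilde{h}$ and the verification of properties (2)--(3) follow the paper's route (constant on the wells, interpolation by the effective harmonic functions on the bulk and edge parts of the plateau), but there are two genuine gaps. First, setting $\widetilde{h}^{*}=\widetilde{h}$ fails for the cyclic dynamics. The adjoint process rotates spins in the opposite direction, so on each orbit (and on the sub-tree graph $\overline{\mathcal{Z}}_{\ell}^{a,b}$) the function that is harmonic for the adjoint effective chain is \emph{not} the one that is harmonic for the forward chain: the paper's Definition \ref{d_testfcyc} assigns the orbit values for $\widetilde{h}^{*}$ in the reversed order and uses $\overline{\mathfrak{f}}^{*,a,b}$ in place of $\overline{\mathfrak{f}}^{a,b}$. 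The two assignments differ by $\Theta(1)$ on $\Theta(1)$-many plateau edges of conductance $\asymp e^{-\Gamma\beta}$, so $D_{\beta}(\widetilde{h}-\widetilde{h}^{*})\asymp e^{-\Gamma\beta}$; consequently at most one of the two choices can satisfy \eqref{e_eqpappr1} for $h_{\mathcal{P},\mathcal{Q}}^{*}$, and it is not yours. (For the reversible MH dynamics $\widetilde{h}^{*}=\widetilde{h}$ is of course fine.)

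Second, your plan for \eqref{e_eqpappr1} expands $D_{\beta}(h_{\mathcal{P},\mathcal{Q}}-\widetilde{h})$ edge by edge, which forces you to control $h_{\mathcal{P},\mathcal{Q}}$ pointwise everywhere. On a well-internal edge $\{\sigma,\zeta\}$ the conductance is $Z_{\beta}^{-1}e^{-\beta H(\sigma,\zeta)}$ with $H(\sigma,\zeta)$ as small as $4$, so making that edge's contribution $o(e^{-\Gamma\beta})$ requires $|h_{\mathcal{P},\mathcal{Q}}(\sigma)-h_{\mathcal{P},\mathcal{Q}}(\zeta)|=o(e^{-(\Gamma-H(\sigma,\zeta))\beta/2})$, an exponentially sharp, depth-dependent bound; and on the plateau you need the increments of $h_{\mathcal{P},\mathcal{Q}}$ to match those of the effective harmonic function up to $o(1)$ on \emph{every} edge, i.e., pointwise asymptotics of the equilibrium potential across the whole saddle plateau. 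Neither estimate is supplied (the "large-deviation estimate" and the "effective-chain comparison" are exactly the hard, unproved steps), and in the non-reversible case no off-the-shelf tool gives them. The paper's argument is designed precisely to avoid this: writing $D_{\beta}(h_{\mathcal{P},\mathcal{Q}}-\widetilde{h})=D_{\beta}(\widetilde{h})-\langle h_{\mathcal{P},\mathcal{Q}},-\mathcal{L}_{\beta}\widetilde{h}\rangle_{\mu_{\beta}}$ (using harmonicity of $h_{\mathcal{P},\mathcal{Q}}$ and agreement on $\mathcal{P}\cup\mathcal{Q}$), the exact discrete harmonicity of $\widetilde{h}$ on the plateau interior makes $-\mathcal{L}_{\beta}\widetilde{h}$ vanish there (Lemmas \ref{l_fl3}--\ref{l_fl6}), so the only information needed about $h_{\mathcal{P},\mathcal{Q}}$ is its $o(1)$ constancy on $\mathcal{N}(\mathbf{s})$ (Lemma \ref{l_eqpot}), a soft estimate. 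To repair your proof you should replace the edge-by-edge expansion by this identity, and construct $\widetilde{h}^{*}$ from the adjoint effective harmonic functions.
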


The proof of this proposition, i.e., the construction of $\widetilde{h}$
and $\widetilde{h}^{*}$ satisfying all properties above will be given
in Sections \ref{sec7} and \ref{sec9} for the MH and cyclic dynamics,
respectively. We note that the properties \eqref{e_eqpappr2} and
\eqref{e_eqpappr3} are inspired from Theorems \ref{t_Cap} and \ref{t_eqp},
respectively.

We now conclude this section by explaining the general strategy to
derive Theorems \ref{t_Cap} and \ref{t_eqp} from Proposition \ref{p_eqpappr}.
In the remainder of the current section, we fix two disjoint and non-empty
subsets $\mathcal{P}$ and $\mathcal{Q}$ of $\mathcal{S}$ and abbreviate
$h=h_{\mathcal{P},\,\mathcal{Q}}$ and $h^{*}=h_{\mathcal{P},\,\mathcal{Q}}^{*}$.
Let us first look at Theorem \ref{t_Cap}.
\begin{proof}[Proof of Theorem \ref{t_Cap}]
 In principle, the estimate of $\mathrm{Cap}_{\beta}(\mathcal{P},\,\mathcal{Q})=D_{\beta}(h)$
should follow from that of $D_{\beta}(h-\widetilde{h})$ and $D_{\beta}(\widetilde{h})$
obtained in \eqref{e_eqpappr1} and \eqref{e_eqpappr2}, respectively,
since $D_{\beta}(\cdot)^{1/2}$ defines a seminorm. Indeed, by the
triangle inequality, we have 
\[
D_{\beta}(\widetilde{h})^{1/2}-D_{\beta}(h-\widetilde{h})^{1/2}\le D_{\beta}(h)^{1/2}\le D_{\beta}(\widetilde{h})^{1/2}+D_{\beta}(h-\widetilde{h})^{1/2}\;,
\]
and thus by \eqref{e_eqpappr1} and \eqref{e_eqpappr2}, we obtain
\[
D_{\beta}(h)=(1+o(1))\cdot c_{0}(\mathcal{P},\,\mathcal{Q})e^{-\Gamma\beta}\;.
\]
This completes the proof since $\mathrm{Cap}_{\beta}(\mathcal{P},\,\mathcal{Q})=D_{\beta}(h)$.
\end{proof}
\begin{rem}
It should be emphasized here that condition (3) of Proposition \ref{p_eqpappr}
is not used in the proof of the capacity estimate. It will only be
used in the proof of Theorem \ref{t_eqp}.
\end{rem}

Next, we explain the robust proof of Theorem \ref{t_eqp}. It should
be remarked that there was no robust method known in the literature
to estimate the equilibrium potential.
\begin{proof}[Proof of Theorem \ref{t_eqp}]
 We only prove the theorem for $h=h_{\mathcal{P},\,\mathcal{Q}}$
since the proof for $h^{*}$ is identical. Let us fix $\mathbf{s}\in\mathcal{S}\setminus(\mathcal{P\cup\mathcal{Q}})$.
Write 
\[
\delta(\mathbf{s}):=h(\mathbf{s})-\widetilde{h}(\mathbf{s})=h(\mathbf{s})-\Big(\frac{|\mathcal{P}|}{|\mathcal{P}|+|\mathcal{Q}|}+o(1)\Big)\;,
\]
so that we wish to prove $\delta(\mathbf{s})=o(1)$. Note that there
is nothing to prove if $\delta(\mathbf{s})=0$. Otherwise, write 
\[
G(\sigma)=\frac{1}{\delta(\mathbf{s})}\big[h(\sigma)-\widetilde{h}(\sigma)\big]\;\;\;\;;\;\sigma\in\mathcal{X}\;.
\]
By \eqref{e_eqpappr1}, we have 
\[
D_{\beta}(G)=\frac{1}{\delta(\mathbf{s})^{2}}D_{\beta}(h-\widetilde{h})=\frac{1}{\delta(\mathbf{s})^{2}}o(e^{-\beta\Gamma})\;.
\]
Therefore, it suffices to prove that there exists a constant $C>0$
independent of $\beta$ such that $D_{\beta}(G)\ge Ce^{-\beta\Gamma}$.
This follows from the fact that $G(\mathbf{s})=1$, $G(\sigma)=0$
for all $\sigma\in\mathcal{P\cup\mathcal{Q}}$, and the next lemma. 
\end{proof}
\begin{lem}
\label{l_DFC}Let $\mathbf{s}\in\mathcal{S}\setminus(\mathcal{P}\cup\mathcal{Q})$.
Then, there exists a constant $C>0$ independent of $\beta$ such
that 
\begin{equation}
D_{\beta}(F)\ge Ce^{-\beta\Gamma}\label{e_DF}
\end{equation}
for all $F:\mathcal{X}\rightarrow\mathbb{R}$ which satisfy $F(\mathbf{s})=1$
and $F(\sigma)=0$ for all $\sigma\in\mathcal{P\cup\mathcal{Q}}$. 
\end{lem}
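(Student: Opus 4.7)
Our plan is the standard one-path lower bound for the Dirichlet form. The idea is to pick a single optimal path from $\mathbf{s}$ into $\mathcal{P} \cup \mathcal{Q}$, convert the telescoping identity $F(\mathbf{s}) - F(\text{endpoint}) = 1$ via Cauchy--Schwarz into an $\ell^2$-type lower bound for the increments of $F$ along the path, and then invoke the Boltzmann identity \eqref{e_Hcdt} to bound the Gibbs--rate weights from below. Notably, this argument uses neither reversibility nor any detailed feature of $\mathcal{P}, \mathcal{Q}$ beyond what Theorem \ref{t_EB} already provides, which is consistent with the claim that our new method treats the reversible and non-reversible cases uniformly.

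The first step is to choose the path. By Theorem \ref{t_EB}, $\Phi(\mathbf{s}, \mathbf{r}) = \Gamma$ for every $\mathbf{r} \in \mathcal{S} \setminus \{\mathbf{s}\}$, so in particular $\Phi(\mathbf{s}, \mathcal{P} \cup \mathcal{Q}) = \Gamma$. We fix a path $\omega = (\omega_n)_{n=0}^N$ with $\omega_0 = \mathbf{s}$, $\omega_N \in \mathcal{P} \cup \mathcal{Q}$, and $\Phi_\omega = \Gamma$, shortening $\omega$ by removing cycles so that it is simple. Since $K, L, q$ are fixed, $\mathcal{X}$ is a fixed finite set, hence $N$ is bounded by a constant $N_0 = N_0(K, L, q)$ independent of $\beta$. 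The second step combines telescoping with Cauchy--Schwarz: using $F(\omega_0) = 1$ and $F(\omega_N) = 0$,
$$1 = \sum_{n=0}^{N-1} \bigl[F(\omega_n) - F(\omega_{n+1})\bigr] \le \sqrt{N_0}\,\Bigl(\sum_{n=0}^{N-1} [F(\omega_{n+1}) - F(\omega_n)]^2\Bigr)^{1/2},$$
so $\sum_{n=0}^{N-1} [F(\omega_{n+1}) - F(\omega_n)]^2 \ge N_0^{-1}$.

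The third step bounds the weights uniformly along $\omega$. By \eqref{e_Hcdt} and the bound $H(\omega_n, \omega_{n+1}) \le \Phi_\omega = \Gamma$, we have $\mu_\beta(\omega_n) r_\beta(\omega_n, \omega_{n+1}) = Z_\beta^{-1} e^{-\beta H(\omega_n, \omega_{n+1})} \ge (q+1)^{-1} e^{-\beta \Gamma}$ for all sufficiently large $\beta$ (using $Z_\beta \le q+1$, cf. Theorem \ref{t_mu}). Since $\omega$ is simple, each oriented edge of $\omega$ appears at most once in the sum defining $D_\beta(F)$ in \eqref{e_Diri}, so restricting that sum to edges of $\omega$ is a bona fide lower bound, and we obtain
$$D_\beta(F) \ge \frac{1}{2} \sum_{n=0}^{N-1} \mu_\beta(\omega_n) r_\beta(\omega_n, \omega_{n+1}) [F(\omega_{n+1}) - F(\omega_n)]^2 \ge \frac{1}{2(q+1) N_0}\, e^{-\beta \Gamma},$$
which gives the lemma with $C = [2(q+1) N_0]^{-1}$.

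We do not anticipate any real obstacle. The only subtlety is simplicity of $\omega$, which is needed to ensure no Dirichlet-form edge is double-counted upon restriction; this is automatic from the finiteness of $\mathcal{X}$. All other inputs (the value of $\Gamma$, the Boltzmann identity for $\mu_\beta r_\beta$, and the bound on $Z_\beta$) are already established earlier in the paper.
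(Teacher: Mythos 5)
Your proof is correct and follows essentially the same route as the paper: a single $\Gamma$-path guaranteed by Theorem \ref{t_EB}, the lower bound $\mu_{\beta}(\omega_{n})r_{\beta}(\omega_{n},\,\omega_{n+1})\ge ce^{-\beta\Gamma}$ from \eqref{e_Hcdt} and Theorem \ref{t_mu}, and Cauchy--Schwarz on the telescoping sum. The only cosmetic differences are the orientation of the path (the paper goes from $\mathcal{P}\cup\mathcal{Q}$ to $\mathbf{s}$) and your explicit remark on simplicity of the path to avoid double-counting edges, which the paper leaves implicit.
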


Now, we finally discuss the proof of Lemma \ref{l_DFC} which is the
last ingredient in our robust argument. First, we present few standard
strategies regarding the proof of Lemma \ref{l_DFC}.
\begin{enumerate}
\item[\textbf{(1)}]  If $\sigma_{\beta}(\cdot)$ is reversible (e.g., the MH dynamics),
by the Dirichlet principle for reversible Markov chains (cf. \cite{BdenH Meta}),
we have $D_{\beta}(F)\ge\mathrm{Cap}_{\beta}(\mathbf{s},\,\mathcal{P}\cup\mathcal{Q})$.
Thus, the lemma is a direct consequence of our capacity estimate,
i.e., Theorem \ref{t_Cap}.\smallskip{}
\item[\textbf{(2)}]  If $\sigma_{\beta}(\cdot)$ is non-reversible, we can define the
symmetrized dynamics $\sigma_{\beta}^{s}(\cdot)$ with jump rate $\frac{1}{2}(r_{\beta}+r_{\beta}^{*})(\cdot,\,\cdot)$.
This dynamics is reversible with respect to $\mu_{\beta}$. Denote
by $\mathrm{Cap}_{\beta}^{s}(\cdot,\,\cdot)$ the capacity with respect
to $\sigma_{\beta}^{s}(\cdot)$. Since the Dirichlet form associated
with the process $\sigma_{\beta}^{s}(\cdot)$ is again $D_{\beta}(\cdot)$,
by the Dirichlet principle for reversible Markov chains, we have
\begin{equation}
D_{\beta}(F)\ge\mathrm{Cap}_{\beta}^{s}(\mathbf{s},\,\mathcal{P}\cup\mathcal{Q})\;.\label{e_DFs}
\end{equation}
Hence, it suffices to estimate $\mathrm{Cap}_{\beta}^{s}(\mathbf{s},\,\mathcal{P}\cup\mathcal{Q})$.\smallskip{}
\\
\textbf{(2-1)} Since the capacity estimate of reversible dynamics
cannot be more difficult than non-reversible ones, we can readily
derive a result similar to Theorem \ref{t_Cap} for $\mathrm{Cap}_{\beta}^{s}(\cdot,\,\cdot)$
(of course, the constant $c_{0}(\mathcal{P},\,\mathcal{Q})$ in the
right-hand side must be modified). Such a result along with \eqref{e_DFs}
directly implies \eqref{e_DF}.\smallskip{}
\\
\textbf{(2-2)} On the other hand if, for all $\beta>0$, the process
$\sigma_{\beta}(\cdot)$ satisfies the so-called \emph{sector condition}
(cf. \cite[Section 2]{GL}) with a certain constant $C_{0}>0$ independent
of $\beta$, then by \cite[Lemma 2.6]{GL},
\[
C_{0}\mathrm{Cap}_{\beta}^{s}(\mathbf{s},\,\mathcal{P}\cup\mathcal{Q})\ge\mathrm{Cap}_{\beta}(\mathbf{s},\,\mathcal{P}\cup\mathcal{Q})\;.
\]
Thus, in this case Theorem \ref{t_Cap} along with \eqref{e_DFs}
immediately implies \eqref{e_DF}.
\end{enumerate}
In our model, it is possible to use strategy \textbf{(2-2)} since
an elementary computation reveals that the cyclic dynamics satisfies
the sector condition with constant $C_{0}=q^{2}$. However, instead
of explaining this tedious detail, we provide another simple proof
computing $D_{\beta}(F)$ directly, which is also suitable for our
model.
\begin{proof}[Proof of Lemma \ref{l_DFC}]
 Let us take $\mathbf{a}\in\mathcal{P}\cup\mathcal{Q}$. By Theorem
\ref{t_EB}, there exists a path $\omega=(\omega_{n})_{n=0}^{N}$
from $\mathbf{a}$ to $\mathbf{s}$ such that $H(\omega_{n},\,\omega_{n+1})\le\Gamma$
for all $n\in\llbracket0,\,N-1\rrbracket$\footnote{We define  $\llbracket a,\,b\rrbracket:=[a,\,b]\cap\mathbb{Z}$.}.
Note that this path, and thus $N$, is independent of the inverse
temperature $\beta$. 

Since $r_{\beta}(\omega_{n},\,\omega_{n+1})>0$ and $H(\omega_{n},\,\omega_{n+1})\le\Gamma$,
it holds from Theorem \ref{t_mu} and \eqref{e_Hcdt} that
\[
\mu_{\beta}(\omega_{n})r_{\beta}(\omega_{n},\,\omega_{n+1})=\frac{1}{Z_{\beta}}e^{-\beta H(\omega_{n},\,\omega_{n+1})}\ge ce^{-\beta\Gamma}
\]
for some $c>0$. Therefore, by \eqref{e_eqpappr1} and the formula
\eqref{e_Diri} for the Dirichlet form, it holds that
\[
D_{\beta}(F)\ge\frac{1}{2}\sum_{n=0}^{N-1}\mu_{\beta}(\omega_{n})r_{\beta}(\omega_{n},\,\omega_{n+1})[F(\omega_{n+1})-F(\omega_{n})]^{2}\ge\frac{c}{2}e^{-\beta\Gamma}\sum_{n=0}^{N-1}[F(\omega_{n+1})-F(\omega_{n})]^{2}\;.
\]
Since $F(\omega_{N})=F(\mathbf{s})=1$ and $F(\omega_{0})=F(\mathbf{a})=0$,
by the Cauchy--Schwarz inequality, the last summation is bounded
from below by $N^{-1}$. This completes the proof. 
\end{proof}

\subsection{Concluding remark}

The idea of finding proxies of equilibrium potentials is not novel.
Such proxies already played a significant role in the previous potential-theoretic
approach to metastability as well, since they provide approximations
of the optimizers of the Dirichlet and Thomson principles. The point
here is that the new method suggested above looks at these proxies
in a completely different point of view, and in turn provides much
more straightforward way to handle non-reversible situations and also
the equilibrium potential. We refer to Remark \ref{r_analogue} for
more detail.

\section{\label{sec5}Other Non-reversible Models}

Before proceeding to the analysis of the energy landscape and construction
of the test functions, we explain some other interesting \textbf{\emph{non-reversible}}
dynamics associated with the Ising and Potts models that fall into
the framework of Section \ref{sec4}. Proofs of the EK law and the
Markov chain model reduction for these models are similar with that
of the cyclic model, and thus we will omit repetition of the proof.

\subsection{\label{sec5.1}Generalized cyclic dynamics on the Potts model }

In the cyclic dynamics defined in Section \ref{sec2.2}, a spin can
only be rotated in the increasing order $(1\rightarrow2\rightarrow\cdots)$.
One can also imagine a model for which the spin can rotate in the
opposite direction as well. To introduce this model, let us define
an operator $\tau_{x}^{-1}:\mathcal{X}\rightarrow\mathcal{X}$ by

\[
(\tau_{x}^{-1}\sigma)(y)=\begin{cases}
\sigma(x)-1 & \text{if }y=x\;,\\
\sigma(y) & \text{otherwise}\;,
\end{cases}
\]
where we use the convention $1-1=q$. Then, for $p\in[0,\,1]$, define
the jump rate $r_{\beta,\,p}(\cdot,\,\cdot)$ on $\mathcal{X}\times\mathcal{X}$
as

\[
r_{\beta,\,p}(\sigma,\,\zeta)=\begin{cases}
pR_{\beta}(\sigma;\,x) & \text{if }\zeta=\tau_{x}\sigma\;,\;x\in\Lambda\;,\\
(1-p)R_{\beta}(\sigma;\,x) & \text{if }\zeta=\tau_{x}^{-1}\sigma\;,\;x\in\Lambda\;,\\
0 & \text{otherwise}\;,
\end{cases}
\]
where $R_{\beta}(\sigma;\,x)$ is defined in \eqref{e_Rbeta}. We
denote by \textit{$p$-cyclic dynamics} the continuous-time Markov
process with jump rate $r_{\beta,\,p}(\cdot,\,\cdot)$. Note that
the original cyclic dynamics corresponds to the $1$-cyclic dynamics
(i.e., $p=1$). 
\begin{prop}
\label{p_muinv}For all $p\in[0,\,1]$, the unique invariant measure
of the $p$-cyclic dynamics is the Gibbs measure $\mu_{\beta}$.
\end{prop}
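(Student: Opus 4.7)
The plan is to directly verify the stationarity equation $\mu_\beta \mathcal{L}_{\beta,p} = 0$, exploiting a simple structural feature of the rates $R_\beta(\sigma;x)$ which makes $\mu_\beta$ invariant for every $p\in[0,1]$ \emph{for the same reason}. The key observation is that $\max_{a\in S}H(\sigma^{x,a})$ depends only on the restriction $\sigma|_{\Lambda\setminus\{x\}}$, since varying $a$ already sweeps through every possible spin at $x$. Consequently $\max_{a\in S}H((\tau_x\sigma)^{x,a})=\max_{a\in S}H(\sigma^{x,a})$, and combining with the definition of $\mu_\beta$ one gets
\[
\mu_\beta(\sigma)\,R_\beta(\sigma;x)\;=\;\frac{1}{Z_\beta}\exp\!\Big\{-\beta\max_{a\in S}H(\sigma^{x,a})\Big\}\;=:\;M_\beta(\sigma;x),
\]
a quantity that is unchanged by any rotation at $x$, i.e.\ $M_\beta(\tau_x^k\sigma;x)=M_\beta(\sigma;x)$ for every $k\in\mathbb{Z}$.

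With this identity in hand I would check the global balance at an arbitrary $\sigma\in\mathcal{X}$. The incoming transitions to $\sigma$ come, for each site $x\in\Lambda$, from $\tau_x^{-1}\sigma$ at rate $p\,R_\beta(\tau_x^{-1}\sigma;x)$ and from $\tau_x\sigma$ at rate $(1-p)\,R_\beta(\tau_x\sigma;x)$; the outgoing transitions from $\sigma$ leave at total rate $R_\beta(\sigma;x)$ (split as $p$ and $1-p$ in the two directions). Thus
\[
\sum_{\zeta}\mu_\beta(\zeta)r_{\beta,p}(\zeta,\sigma)=\sum_{x\in\Lambda}\bigl[p\,M_\beta(\tau_x^{-1}\sigma;x)+(1-p)\,M_\beta(\tau_x\sigma;x)\bigr]=\sum_{x\in\Lambda}M_\beta(\sigma;x),
\]
where the last equality uses the rotational invariance of $M_\beta(\,\cdot\,;x)$, while
\[
\mu_\beta(\sigma)\sum_{\zeta}r_{\beta,p}(\sigma,\zeta)=\sum_{x\in\Lambda}M_\beta(\sigma;x).
\]
These two sums match, which proves that $\mu_\beta$ is stationary. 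Note that $p$ drops out entirely, so the same proof handles the whole family.

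For uniqueness it suffices to observe that the chain is irreducible on the finite state space $\mathcal{X}$, which yields a unique invariant measure. If $0<p<1$ both $\tau_x$ and $\tau_x^{-1}$ are available, so any configuration can be transformed into any other by a finite sequence of single-site rotations. At the boundary values $p\in\{0,1\}$ only one direction of rotation is available, but since $\tau_x^{\,q}=\mathrm{id}$ we have $\tau_x^{-1}=\tau_x^{q-1}$ (and vice versa), so irreducibility is preserved. The main subtlety, and really the only substantive point in the argument, is the observation that $\mu_\beta(\sigma)R_\beta(\sigma;x)$ is invariant under rotations at $x$; everything else is a routine bookkeeping of incoming versus outgoing mass.
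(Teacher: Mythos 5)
Your proof is correct and follows essentially the same route as the paper: the paper also verifies global balance site by site and observes that both the incoming and outgoing mass at $\sigma$ equal $\frac{1}{Z_{\beta}}\sum_{x\in\Lambda}\exp\{-\beta\max_{a\in S}H(\sigma^{x,\,a})\}$, with uniqueness from irreducibility. You merely spell out the "direct computation" the paper leaves implicit, namely that $\max_{a\in S}H(\sigma^{x,\,a})$ depends only on $\sigma|_{\Lambda\setminus\{x\}}$ and is therefore invariant under rotations at $x$.
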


\begin{proof}
Since it is clear that the $p$-cyclic dynamics is irreducible, a
unique invariant measure exists. In order to show that this is indeed
$\mu_{\beta}$, it suffices to check that for all $\sigma\in\mathcal{X}$,
\begin{equation}
\sum_{x\in\Lambda}\mu_{\beta}(\sigma)r_{\beta,\,p}(\sigma,\,\tau_{x}^{\pm1}\sigma)=\sum_{x\in\Lambda}\mu_{\beta}(\tau_{x}^{\pm1}\sigma)r_{\beta,\,p}(\tau_{x}^{\pm1}\sigma,\,\sigma)\;.\label{e_muinv}
\end{equation}
By a direct computation based on the explicit formula for $\mu_{\beta}$
and $r_{\beta,\,p}$, we can check that both sides equal
\[
\frac{1}{Z_{\beta}}\sum_{x\in\Lambda}\exp\big\{-\beta\max_{a\in S}H(\sigma^{x,\,a})\big\}\;,
\]
and therefore the proof is completed.
\end{proof}
\textbf{\emph{Suppose from now on that $q\ge3$, so that the $p$-cyclic
dynamics is not the MH dynamics.}} Then, one can readily check that
the $p$-cyclic dynamics is reversible if and only if $p=1/2$, and
the symmetrization (cf. strategy \textbf{(2-1)} after Lemma \ref{l_DFC})
of any $p$-cyclic dynamics is the $1/2$-cyclic dynamics. All results
for the cyclic dynamics can be extended to the $p$-cyclic dynamics
as follows.
\begin{thm}
\label{t_pcyc}For all $p\in[0,\,1]$, the following statements hold
for the $p$-cyclic dynamics.
\begin{enumerate}
\item The energy barrier is $\Gamma=2K+4$.
\item (EK law) There exist constants $\kappa_{i}=\kappa_{i}(K,\,L,\,p)>0$,
$i\in\llbracket1,\,q-1\rrbracket$, such that
\begin{equation}
\lim_{\beta\to\infty}e^{-\Gamma\beta}\mathbb{E}_{\mathbf{a}}[\tau_{\breve{\mathbf{a}}}]=\frac{1}{\kappa_{1}^{-1}+\cdots+\kappa_{q-1}^{-1}}\;\;\;\text{and}\;\;\;\lim_{\beta\to\infty}e^{-\Gamma\beta}\mathbb{E}_{\mathbf{a}}[\tau_{\mathbf{b}}]=\kappa_{|b-a|}\label{e_EKpcyc}
\end{equation}
for all $a,\,b\in S$. The constants $(\kappa_{i})_{i=1}^{q-1}$ satisfy
$\kappa_{i}=\kappa_{q-i}$ for each $i\in\llbracket1,\,q-1\rrbracket$
and 
\begin{equation}
\kappa_{i}(K,\,L,\,p)=\nu_{0}\frac{(1+\hat{p})(1+\cdots+\hat{p}^{i-1})(1+\cdots+\hat{p}^{q-i-1})}{4(1+\cdots+\hat{p}^{q-1})}+o_{K}(1)\;,\label{e_kappaiest}
\end{equation}
 where $\nu_{0}$ is the constant defined in \eqref{e_nu0} and $\hat{p}=\frac{\min\{p,\,1-p\}}{\max\{p,\,1-p\}}$.
\item (Markov chain model reduction) Theorems \ref{t_neg} and \ref{t_MC}
hold with limiting Markov chain $Y(\cdot)$ defined by the jump rate
$r_{Y}(\mathbf{a},\,\mathbf{b})=\kappa_{|b-a|}^{-1}$.
\end{enumerate}
\end{thm}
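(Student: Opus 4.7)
My plan is to apply the $H^{1}$-approximation framework of Section \ref{sec4} directly to the $p$-cyclic dynamics, relying on the fact that this framework handles non-reversibility uniformly. First, I would verify the energy barrier $\Gamma=2K+4$. The height $H(\sigma,\zeta)=\max_{a\in S}H(\sigma^{x,a})$ governing the Dirichlet-form weights is the same function as in the cyclic dynamics. For $p\in(0,1)$ the $p$-cyclic dynamics admits additional direct ``backward'' edges $\sigma\to\tau_{x}^{-1}\sigma$, but each can be realized by $q-1$ forward rotations at $x$ of identical maximum height, and all forward edges are always present. Hence every communication height equals its cyclic counterpart, and Theorem \ref{t_EB} yields $\Gamma=2K+4$.

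Parts (2) and (3) then reduce, via Sections \ref{sec4.2} and \ref{sec4.4}, to proving the $p$-cyclic analogues of Theorems \ref{t_Cap} and \ref{t_eqp}. I would follow the strategy of Section \ref{sec4.5} and construct test functions $\widetilde{h},\widetilde{h}^{*}$ satisfying the three properties of Proposition \ref{p_eqpappr}, but with two substantive modifications compared to the cyclic case of Section \ref{sec9}. On each one-dimensional ``edge piece'' of the saddle plateau connecting two consecutive ground states, the interpolation is taken to be harmonic for the birth--death chain on $\{0,\dots,q-1\}$ with forward rate $p$ and backward rate $1-p$; the explicit form of this harmonic function produces the geometric factors $1+\hat{p}+\cdots+\hat{p}^{k}$ appearing in \eqref{e_kappaiest}. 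The target boundary values of $\widetilde{h}$ on $\mathcal{S}\setminus(\mathcal{P}\cup\mathcal{Q})$ are set to the equilibrium-potential values of the limiting chain $Y$ with rates $\kappa_{|b-a|}^{-1}$, rather than to the uniform value $|\mathcal{P}|/(|\mathcal{P}|+|\mathcal{Q}|)$ used in the cyclic case.

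Computing $D_{\beta}(\widetilde{h})$ then splits into bulk and edge contributions analogous to \eqref{e_bdef}--\eqref{e_edef}. Each contribution now carries extra factors in $\hat{p}$: a multiplicative $(1+\hat{p})$ from the fact that every plateau transition is now realized by both a forward edge (weight $p$) and a backward edge (weight $1-p$), together with ratios of geometric series coming from the edge-piece harmonic interpolation; packaging these and inverting gives \eqref{e_kappaiest}. The approximation condition $D_{\beta}(h-\widetilde{h})=o(e^{-\Gamma\beta})$ is obtained from the same large-deviation-type estimates used in Section \ref{sec9}, since those arguments only involve the energies $H(\sigma,\zeta)$ and not the orientation of the transitions. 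The symmetry $\kappa_{i}=\kappa_{q-i}$ is inherited from the spin relabeling $a\mapsto q+1-a$, which conjugates the $p$- and $(1-p)$-cyclic dynamics and preserves $\hat{p}$. With the $p$-cyclic analogues of Theorems \ref{t_Cap} and \ref{t_eqp} in hand, the proofs of parts (2) and (3) go through exactly as in Section \ref{sec4.4}, with $\kappa$ replaced by $\kappa_{|b-a|}$.

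The main obstacle I anticipate is the self-consistent determination of the boundary values of $\widetilde{h}$ on $\mathcal{S}\setminus(\mathcal{P}\cup\mathcal{Q})$: these values are prescribed by the equilibrium potential of the limiting chain $Y$ whose rates $\kappa_{i}^{-1}$ are themselves the output of the Dirichlet-form computation one is carrying out. In the cyclic case the uniform symmetry of $Y$ made this immediate, but here it amounts to solving a $q$-state linear system whose coefficients depend on $p$, and then checking that the resulting $\widetilde{h}$ is compatible with the asymmetric rates $pR_{\beta}$ and $(1-p)R_{\beta}$ in the required $H^{1}$ sense. Once this is done, Lemma \ref{l_DFC} transfers verbatim, since its proof only uses the existence of a path of height $\Gamma$ and Theorem \ref{t_mu}.
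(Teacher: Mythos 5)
Your overall plan is the one the paper itself intends (it omits the proof of Theorem \ref{t_pcyc}, saying only that it is ``similar'' to the cyclic case), and the two substantive modifications you identify are exactly the right ones: the harmonic (geometric) interpolation along each orbit, viewed as an asymmetric walk on a cycle of length $q$ split into arcs of lengths $i$ and $q-i$, is precisely what produces the factors $1+\hat p+\cdots+\hat p^{k}$ in \eqref{e_kappaiest}; and the boundary values of $\widetilde h$ at ground states in $\mathcal{S}\setminus(\mathcal{P}\cup\mathcal{Q})$ must indeed be taken from the equilibrium potential of the non-uniform limiting chain rather than the value $|\mathcal{P}|/(|\mathcal{P}|+|\mathcal{Q}|)$. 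Your worry about circularity, however, is not a real obstruction: the rates $\kappa_i^{-1}$ are determined by the two-ground-state computation alone (bulk resistance of the $\mathbf{a}$--$\mathbf{b}$ plateau times the orbit resistance factor $(1+\hat p)S_iS_{q-i}/S_q$, with $S_k=1+\cdots+\hat p^{k-1}$), since the gateways for distinct pairs of ground states are disjoint away from $\mathcal{S}$; the $q$-state linear system for the equilibrium potential of $Y$ is solved afterwards, so the procedure is two-step, not self-referential. Two small points: ``all forward edges are always present'' fails at $p=0$ (and backward edges are absent at $p=1$), so for the energy barrier you should either invoke the relabeling $a\mapsto q+1-a$, which conjugates the $p$- and $(1-p)$-dynamics, or the reversal argument of Lemma \ref{l_Phiissym}; and your attribution of the factor $(1+\hat p)$ to ``both a forward and a backward edge'' is loose --- it arises from the identity $\sum_{k=0}^{i-1}\hat p^{2k}/S_i^2=(1+\hat p^{i})/((1+\hat p)S_i)$ in the Dirichlet form of the geometric interpolant.

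The genuine gap is your final claim that parts (2) and (3) ``go through exactly as in Section \ref{sec4.4}, with $\kappa$ replaced by $\kappa_{|b-a|}$.'' The algebra in Section \ref{sec4.4} uses the uniformity of the limiting chain in an essential way. Once the limiting chain $Y$ has non-uniform rates $r_Y(\mathbf{a},\mathbf{b})=\kappa_{|b-a|}^{-1}$, the magic formula \eqref{e_Etau}--\eqref{e_Etau2}, fed with the $p$-cyclic analogues of Theorems \ref{t_Cap} and \ref{t_eqp}, produces
\[
\lim_{\beta\to\infty}e^{-\Gamma\beta}\mathbb{E}_{\mathbf{a}}[\tau_{\mathbf{b}}]
=\frac{\sum_{\mathbf{s}\in\mathcal{S}}\tfrac1q\,h^{Y,*}_{\mathbf{a},\mathbf{b}}(\mathbf{s})}{\mathrm{cap}_{Y}(\mathbf{a},\mathbf{b})}
=\mathbb{E}^{Y}_{\mathbf{a}}[\tau_{\mathbf{b}}]\;,
\]
the mean hitting time of the limiting chain, and this is \emph{not} equal to $1/r_Y(\mathbf{a},\mathbf{b})=\kappa_{|b-a|}$ once the $\kappa_i$ are not all equal, because $\mathrm{cap}_Y(\mathbf{a},\mathbf{b})$ sees the indirect routes through the other ground states. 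Concretely, for $q=4$ one computes $\mathbb{E}^{Y}_{\mathbf{0}}[\tau_{\mathbf{2}}]=2/(\kappa_1^{-1}+\kappa_2^{-1})$, which equals $\kappa_2$ only when $\kappa_1=\kappa_2$, i.e.\ only when $\hat p=0$; by \eqref{e_kappaiest} the $\kappa_i$ are genuinely distinct for $q\ge4$ and $p\notin\{0,1\}$. (For $q=3$ the symmetry $\kappa_1=\kappa_2$ makes the chain uniform again and the issue disappears.) So you must carry out the Section \ref{sec4.4} computation for the non-uniform chain explicitly and identify the second limit in \eqref{e_EKpcyc} with $\mathbb{E}^{Y}_{\mathbf{a}}[\tau_{\mathbf{b}}]$; as stated, the identification of that limit with $\kappa_{|b-a|}$ given by \eqref{e_kappaiest} cannot be obtained by the verbatim substitution you propose, and you should be aware that for $q\ge4$ the two displayed assertions of part (2) together with part (3) are in tension and need to be reconciled before the proof can be completed.
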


\begin{rem}[Comparison of transition rates]
 By an elementary computation, we can verify that the fraction in
the right-hand side of \eqref{e_kappaiest} is increasing in $p\in[0,\,1/2]$
and decreasing in $p\in[1/2,\,1]$. Therefore, in view of the EK law
\eqref{e_EKpcyc}, we can observe that the speed of transition becomes
slower if $p$ approaches to $1/2$, i.e., to reversibility. We can
draw the same conclusion when we investigate the Markov chain model
reduction. \textbf{\emph{This verifies a widely-believed fact that
a non-reversible dynamics runs faster than the symmetrized reversible
dynamics.}}
\end{rem}

\subsection{\label{sec5.2}Directed dynamics on the Ising model}

One can readily observe that the $p$-cyclic dynamics defined above
is identical to the reversible MH dynamics if $q=2$. Therefore, so
far we did not consider non-reversible dynamics associated with the
Ising model. In this subsection, we introduce such a non-reversible
dynamics, first constructed in \cite{GB}, for which the analysis
of metastable behavior can be done in a similar manner. 

Suppose from now on that $q=2$, so that $S=\{1,\,2\}$. For each
edge $e=\{x,\,y\}$ in $\Lambda$ (i.e., with $x\sim y$), write 
\[
H_{e}(\sigma):=\mathbf{1}\{\sigma(x)\ne\sigma(y)\}\;,
\]
so that by definition we can write $H(\sigma)=\sum_{e:\,\text{edge of }\Lambda}H_{e}(\sigma)$.

For $\sigma\in\mathcal{X}$, we denote by $\sigma^{x}$, $x\in\Lambda$,
the configuration obtained from $\sigma$ by flipping the spin at
site $x$. Then for $p\in[0,\,1]$, the \emph{$p$-directed dynamics}
$(\sigma_{\beta,\,p}^{\mathrm{dir}}(t))_{t\ge0}$ is defined as a
continuous-time Markov process with jump rate $r_{\beta,\,p}^{\mathrm{dir}}(\cdot,\,\cdot)$
defined by 
\[
r_{\beta,\,p}^{\mathrm{dir}}(\sigma,\,\zeta)=\begin{cases}
\exp\big\{-\beta\Delta_{p}(\sigma,\,\zeta)\big\} & \text{if }\zeta=\sigma^{x}\;,\;x\in\Lambda\;,\\
0 & \text{otherwise}\;,
\end{cases}
\]
where
\begin{align*}
\Delta_{p}(\sigma,\,\sigma^{x}):= & \;p\big[H_{e_{x}}(\sigma^{x})-H_{e_{x}}(\sigma)+H_{n_{x}}(\sigma^{x})-H_{n_{x}}(\sigma)\big]\\
 & \;+(1-p)\big[H_{w_{x}}(\sigma^{x})-H_{w_{x}}(\sigma)+H_{s_{x}}(\sigma^{x})-H_{s_{x}}(\sigma)\big]\;.
\end{align*}
Here, $e_{x}$, $n_{x}$, $w_{x}$ and $s_{x}$ denote four edges
emanating from $x$ towards eastern, northern, western and southern
directions, respectively. 

Note that $\Delta_{1/2}(\sigma,\,\sigma^{x})=\frac{1}{2}[H(\sigma^{x})-H(\sigma)]$
and therefore for $p=\frac{1}{2}$, the dynamics is reduced to the
well-known reversible heat-bath Glauber dynamics. On the other hand,
for $p\neq\frac{1}{2}$, the dynamics puts different masses on the
north/east edges and on the south/west edges, and thereby exhibits
a \textbf{\emph{spatial non-reversibility}}. It is verified in \cite[Section 3.3]{GB}
that the Ising measure $\mu_{\beta}(\cdot)$ is the unique invariant
measure of the $p$-directed dynamics for all $p\in[0,\,1]$. 

We can also analyze metastability of the $p$-directed model, and
thus deduce the following results.
\begin{thm}
For all $p\in[0,\,1]$, the following statements hold for the $p$-directed
model.
\begin{enumerate}
\item The energy barrier is $\Gamma=2K+4\min\{p,\,1-p\}$.
\item (EK law) There exists a constant $\kappa_{0}=\kappa_{0}(K,\,L,\,p)>0$
such that 
\[
\lim_{\beta\to\infty}e^{-\Gamma\beta}\mathbb{E}_{\mathbf{1}}[\tau_{\mathbf{2}}]=\lim_{\beta\to\infty}e^{-\Gamma\beta}\mathbb{E}_{\mathbf{2}}[\tau_{\mathbf{1}}]=\kappa_{0}\;.
\]
The constant $\kappa_{0}$ satisfies $\kappa_{0}(K,\,L,\,p)=\nu_{0}+o_{K}(1)$.
\item (Markov chain model reduction) Theorems \ref{t_neg} and \ref{t_MC}
hold with limiting Markov chain $Y(\cdot)$ defined by the jump rate
$r_{Y}(\mathbf{1},\,\mathbf{2})=r_{Y}(\mathbf{2},\,\mathbf{1})=\kappa_{0}^{-1}$.
\end{enumerate}
\end{thm}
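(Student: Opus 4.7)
The plan is to reduce the proof to the $H^1$-approximation framework of Section \ref{sec4}, exactly as is done for the MH and cyclic dynamics. The Gibbs measure $\mu_\beta$ is the invariant measure of the $p$-directed dynamics (verified in \cite[Section 3.3]{GB} by a computation analogous to Proposition \ref{p_muinv}), so Sections \ref{sec4.1}--\ref{sec4.2} apply without modification. Thus it suffices to construct test functions $\widetilde{h},\widetilde{h}^*$ satisfying the conclusion of Proposition \ref{p_eqpappr} with the constant $\kappa_0$ in place of $\kappa$. Since $q=2$ and hence $\mathcal{S}=\{\mathbf{1},\mathbf{2}\}$, the only nontrivial case is $\mathcal{P}=\{\mathbf{1}\}$, $\mathcal{Q}=\{\mathbf{2}\}$, and the $\mathbf{1}\leftrightarrow\mathbf{2}$ spin-flip symmetry is preserved by the $p$-directed dynamics.

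The first task is to characterize the energy barrier. A natural symmetric edge height, controlling $\mu_\beta(\sigma) r_{\beta,p}^{\mathrm{dir}}(\sigma,\sigma^x)$ in both directions via an upper-bound analogue of \eqref{e_Hcdt}, is
\[H(\sigma, \sigma^x) := \max\{H(\sigma) + \Delta_p(\sigma,\sigma^x),\; H(\sigma^x) - \Delta_p(\sigma,\sigma^x)\},\]
with the corresponding communication height defined as in \eqref{e_commheight}. Reusing the saddle-path analysis of \cite[Section 3]{NZ} for the MH case, the optimal path from $\mathbf{1}$ to $\mathbf{2}$ still proceeds via a sequence of stripe-and-protuberance configurations, and a direct bookkeeping of the four possible orientations of the critical ``stripe-to-protuberance'' transition under the anisotropic cost $\Delta_p$ shows that the maximum edge height along this path equals $2K + 4\min\{p,1-p\}$. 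A matching lower bound follows by essentially the same isoperimetric argument on the torus as in the MH case, yielding part (1).

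The second task is to construct $\widetilde{h}$ and $\widetilde{h}^*$. The geometry of the saddle plateau is the same as in the MH case (since the underlying edge relation $\sim$ is the same), and the test functions take values $1$ and $0$ in neighborhoods of $\mathbf{1}$ and $\mathbf{2}$ respectively. On the plateau, $\widetilde{h}$ and $\widetilde{h}^*$ are defined via a coarse-grained reduction to a nearest-neighbor random walk describing the motion of the critical droplet's boundary, paralleling Section \ref{sec7}; however, the anisotropy of $\Delta_p$ induces a drift in this walk, with forward/backward transition rates weighted by $p$ and $1-p$. A direct evaluation of the Dirichlet form \eqref{e_Diri} in terms of this drifted walk yields
\[D_\beta(\widetilde{h}),\; D_\beta(\widetilde{h}^*) = (1+o(1))\cdot \frac{1}{2\kappa_0}\, e^{-\Gamma \beta},\]
which identifies $\kappa_0 = \nu_0 + o_K(1)$.

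The main obstacle is the analysis of the drifted edge random walk: unlike the symmetric walk of the MH case, the harmonic profile of $\widetilde{h}$ on the plateau is no longer affine in the droplet-growth coordinate, so the equilibrium and hitting-time computations needed to control both the Dirichlet form and its approximation error require care. In particular, verifying \eqref{e_eqpappr1}, i.e.\ $D_\beta(h-\widetilde{h})=o(e^{-\beta\Gamma})$, follows the scheme of Section \ref{sec9} for the cyclic dynamics but demands large-deviation estimates for a drifted rather than symmetric walk. Once these are in place, parts (2) and (3) follow from the general reductions of Section \ref{sec4.4} exactly as in the proofs of Theorems \ref{t_EK} and \ref{t_MC}, with $\kappa_0$ replacing $\kappa$.
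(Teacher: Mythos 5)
Your overall architecture (verify invariance of $\mu_\beta$, then reduce everything to constructing $\widetilde{h},\widetilde{h}^*$ as in Proposition \ref{p_eqpappr}) is the intended one, but your treatment of the energy barrier contains a concrete error that propagates through the rest. The two directed heights of an edge are $H(\sigma)+\Delta_p(\sigma,\sigma^x)$ and $H(\sigma^x)+\Delta_p(\sigma^x,\sigma)=H(\sigma)+\Delta_{1-p}(\sigma,\sigma^x)$; the quantity governing the capacity is their \emph{minimum} (the symmetrized conductance $\tfrac12[\mu_\beta(\sigma)r(\sigma,\sigma^x)+\mu_\beta(\sigma^x)r(\sigma^x,\sigma)]$ is of order $Z_\beta^{-1}e^{-\beta\min\{\cdots\}}$), whereas you take the maximum. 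This is not cosmetic. Attaching a new spin-$2$ site whose already-flipped neighbors are to the south and west gives $\Delta_p=4p-2$ and reverse increment $2-4p$, so your height is $2K+2+|4p-2|=2K+4\max\{p,1-p\}$, while for the two neutral corner orientations (south$+$east, north$+$west) one gets $\Delta_p=0$ and your height is $2K+2$. Hence the min--max of your height over paths is $2K+2$ (create a protuberance, then always extend through a neutral corner), and your ``direct bookkeeping'' cannot yield $2K+4\min\{p,1-p\}$. With the minimum (equivalently, with the directed height, which satisfies \eqref{e_Hcdt} exactly, minimized over directed paths) the favored orientation gives exactly $2K+4\min\{p,1-p\}$ and the stated $\Gamma$ comes out. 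Relatedly, the lower bound cannot be ``the same isoperimetric argument as in the MH case'': along the optimal path the configurations themselves reach energy $2K+2>\Gamma$, so the bound must be proved at the level of edge conductances, as in Proposition \ref{p_EBlb} for the cyclic dynamics, not at the level of configuration energies.

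The second gap is your description of the plateau as the MH plateau carrying a drifted random walk with rates weighted by $p$ and $1-p$. The four corner orientations for growing the droplet have conductances of orders $e^{-\beta(2K+4p)}$, $e^{-\beta(2K+2)}$, $e^{-\beta(2K+2)}$ and $e^{-\beta(2K+4(1-p))}$; for $p\ne\tfrac12$ exactly one of these equals $e^{-\beta\Gamma}$ and the other three are $o(e^{-\beta\Gamma})$. Thus in the reduced resistor network the protuberance grows in only one horizontal direction and new rows attach on only one side of the strip: the effective graph is a union of essentially disjoint one-directional chains, not the MH graph with an $O(1)$ drift, and the transition rates on the plateau differ by factors $e^{\pm\beta(2-4p)}$ rather than by $p$ versus $1-p$. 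Both the construction of $\widetilde{h},\widetilde{h}^*$ on the plateau and the evaluation of $D_\beta(\widetilde{h})$ (hence the identification of $\kappa_0$) must be built on this pruned, orientation-dependent structure; as written, your asserted value of the Dirichlet form is unsupported.
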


We remark that the analysis of the $p$-directed dynamics is valid
only under periodic boundary conditions.

\section{\label{sec6}Energy Landscape Analysis: MH Dynamics}

In this section, we investigate the energy landscape subjected to
the MH dynamics, and in the next section, we construct the test functions
to complete the proof of Proposition \ref{p_eqpappr} for the MH dynamics.
\begin{notation*}
In Sections \ref{sec6} and \ref{sec7} where we consider the MH dynamics,
we use Greek letters $\eta$ and $\xi$ to denote the spin configurations.
On the other hand, in Sections \ref{sec8} and \ref{sec9} where we
consider the cyclic dynamics, we alternatively use Greek letters $\sigma$
and $\zeta$ to denote the spin configurations.
\end{notation*}
\begin{rem}
The following remarks are important in the upcoming sections.
\begin{enumerate}
\item In Sections \ref{sec6} and \ref{sec7}, we always consider the \emph{MH
dynamics}\textit{. }\textit{\emph{In particular, we have $\Gamma=2K+2$
and we know from }}\cite[Theorem 2.1]{NZ} that this $\Gamma$ is
indeed the energy barrier, i.e., Theorem \ref{t_EB} holds. 
\item In order to focus on the explanation of the saddle structure, which
is quite complicated, we postpone detailed combinatorial proofs of
the technical results to Appendix \ref{secB}.
\end{enumerate}
\end{rem}

\subsection{Neighborhoods}

In the analysis of the energy landscape, the following notions are
importantly used. 
\begin{defn}[Neighborhood of configurations]
\label{d_nhd} For $\eta\in\mathcal{X}$, we define the neighborhoods
of $\sigma$ as 
\[
\mathcal{N}(\eta):=\{\xi\in\mathcal{X}:\Phi(\eta,\,\xi)<\Gamma\}\;\;\;\;\text{and}\;\;\;\;\widehat{\mathcal{N}}(\eta):=\{\xi\in\mathcal{X}:\Phi(\eta,\,\xi)\le\Gamma\}\;.
\]
Then, for $\mathcal{P}\subseteq\mathcal{X}$, we define 
\begin{align*}
\mathcal{N}(\mathcal{P}) & :=\bigcup_{\eta\in\mathcal{P}}\mathcal{N}(\eta)\;\;\;\;\text{and}\;\;\;\;\widehat{\mathcal{N}}(\mathcal{P}):=\bigcup_{\eta\in\mathcal{P}}\widehat{\mathcal{N}}(\eta)\;.
\end{align*}
\end{defn}

\begin{rem}
\label{r_nhd}The following statements hold for neighborhoods. 
\begin{enumerate}
\item By \eqref{e_commheight2}, the set $\mathcal{N}(\eta)$ (resp. $\widehat{\mathcal{N}}(\eta)$)
does not contain $\sigma$ provided that $H(\eta)\ge\Gamma$ (resp.
$H(\eta)>\Gamma$). 
\item By \eqref{e_MHheight} and \eqref{e_commheight}, each configuration
$\xi\in\mathcal{N}(\eta)$ (resp. $\widehat{\mathcal{N}}(\eta)$)
satisfies $H(\xi)<\Gamma$ (resp. $H(\xi)\le\Gamma$). 
\end{enumerate}
\end{rem}

With this notation, the fact that $\Gamma$ is the energy barrier
can be reformulated in the following manner. We say that a path $\omega$
is a $c$\emph{-}\textit{\emph{path}}\textbf{ }if $\Phi_{\omega}\le c$. 
\begin{prop}
\label{p_nhd}For any $\mathbf{a},\,\mathbf{b}\in\mathcal{S}$, we
have $\mathcal{N}(\mathbf{a})\cap\mathcal{N}(\mathbf{b})=\emptyset$
and $\widehat{\mathcal{N}}(\mathbf{a})=\widehat{\mathcal{N}}(\mathbf{b})$. 
\end{prop}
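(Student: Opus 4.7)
The plan is to reduce both claims to the single fact (Theorem \ref{t_EB}) that $\Phi(\mathbf{a}, \mathbf{b}) = \Gamma$ for any two distinct ground states, by exploiting a standard ultrametric-type inequality for communication heights together with the symmetry $\Phi(\eta,\xi) = \Phi(\xi,\eta)$ in the MH dynamics (which follows from \eqref{e_MHheight} since reversing a path preserves its height).

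The key lemma I would record first is the concatenation inequality: for any $\eta, \xi, \zeta \in \mathcal{X}$,
\[
\Phi(\eta, \zeta) \le \max\{\Phi(\eta, \xi),\, \Phi(\xi, \zeta)\}.
\]
This is immediate by glueing an optimal path from $\eta$ to $\xi$ with an optimal path from $\xi$ to $\zeta$ and taking the maximum height along the concatenation. Note this holds equally well for both dynamics, but the symmetry of $\Phi$ that I will use afterwards is specific to MH.

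For the first claim, suppose for contradiction there exists $\xi \in \mathcal{N}(\mathbf{a}) \cap \mathcal{N}(\mathbf{b})$. Then $\Phi(\mathbf{a}, \xi) < \Gamma$ and $\Phi(\mathbf{b}, \xi) = \Phi(\xi, \mathbf{b}) < \Gamma$, so the concatenation inequality gives
\[
\Phi(\mathbf{a}, \mathbf{b}) \le \max\{\Phi(\mathbf{a}, \xi),\, \Phi(\xi, \mathbf{b})\} < \Gamma,
\]
contradicting Theorem \ref{t_EB}. For the second claim, let $\xi \in \widehat{\mathcal{N}}(\mathbf{a})$, so $\Phi(\mathbf{a}, \xi) \le \Gamma$. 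Since Theorem \ref{t_EB} gives $\Phi(\mathbf{b}, \mathbf{a}) = \Gamma$, the concatenation inequality yields
\[
\Phi(\mathbf{b}, \xi) \le \max\{\Phi(\mathbf{b}, \mathbf{a}),\, \Phi(\mathbf{a}, \xi)\} \le \Gamma,
\]
so $\xi \in \widehat{\mathcal{N}}(\mathbf{b})$. Interchanging the roles of $\mathbf{a}$ and $\mathbf{b}$ gives the reverse inclusion, hence equality.

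Since the argument relies only on Theorem \ref{t_EB}, the concatenation inequality, and the MH symmetry of $\Phi$, there is no real obstacle here; the only potential subtlety is verifying that $\Phi(\xi, \mathbf{b}) = \Phi(\mathbf{b}, \xi)$ in the MH case, which is why I invoke \eqref{e_MHheight} explicitly. (One should keep in mind that this symmetry is exactly what fails for the cyclic dynamics, which is why an analogous statement in that setting will require a separate argument in Section \ref{sec8}.)
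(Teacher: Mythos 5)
Your proof is correct and follows essentially the same route as the paper: the paper also deduces both claims by concatenating optimal paths with the two facts from Theorem \ref{t_EB} (equivalently \cite[Theorem 2.1-(i)]{NZ}) that every path between ground states has height at least $\Gamma$ and that a $\Gamma$-path exists. The only inaccuracy is your parenthetical claim that the symmetry of $\Phi$ \emph{fails} for the cyclic dynamics; it does not fail but merely requires a separate argument (Lemma \ref{l_Phiissym}), though this does not affect your proof of the MH case.
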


\begin{proof}
By \cite[Theorem 2.1-(i)]{NZ}, any path connecting $\mathbf{a}$
and $\mathbf{b}$ has height at least $\Gamma$, and there indeed
exists a $\Gamma$-path from $\mathbf{a}$ to $\mathbf{b}$. The first
assertion indicates by contradiction that $\mathcal{N}(\mathbf{a})\cap\mathcal{N}(\mathbf{b})=\emptyset$,
while the second one implies that $\widehat{\mathcal{N}}(\mathbf{a})=\widehat{\mathcal{N}}(\mathbf{b})$. 
\end{proof}
We will now investigate what happens on the boundary of the set $\widehat{\mathcal{N}}(\eta)$. 
\begin{lem}
\label{l_Nhatbdry}Let $\eta\in\mathcal{X}$ and let $\xi_{1}\in\widehat{\mathcal{N}}(\eta)$,
$\xi_{2}\notin\widehat{\mathcal{N}}(\eta)$, and $\xi_{1}\sim\xi_{2}$.
Then, $H(\xi_{2})>\Gamma$. 
\end{lem}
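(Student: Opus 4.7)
The plan is a short contradiction argument exploiting the MH-specific formula $H(\xi_1,\xi_2)=\max\{H(\xi_1),H(\xi_2)\}$ from \eqref{e_Hsigmazeta}, combined with the fact that path heights in the MH setting reduce to maxima of configuration energies along the path (cf. \eqref{e_MHheight}). I would suppose towards a contradiction that $H(\xi_2)\le\Gamma$ and then build an explicit $\Gamma$-path from $\eta$ to $\xi_2$, which would force $\xi_2\in\widehat{\mathcal{N}}(\eta)$.

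Concretely, since $\xi_1\in\widehat{\mathcal{N}}(\eta)$, the definition of $\widehat{\mathcal{N}}$ and of communication height \eqref{e_commheight} give a path $\omega=(\omega_0,\dots,\omega_N)$ from $\eta$ to $\xi_1$ with $\Phi_\omega\le\Gamma$. Next I would observe that the edge relation $\xi_1\sim\xi_2$ is automatically symmetric for the MH dynamics (the allowed one-step transitions from any $\sigma$ are the spin updates $\sigma^{x,a}\ne\sigma$, a symmetric neighbor relation), so $r_\beta^{\mathrm{MH}}(\xi_1,\xi_2)>0$ and the concatenation $\omega':=(\omega_0,\dots,\omega_N,\xi_2)$ is itself a valid path from $\eta$ to $\xi_2$.

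The remaining step is to compute
\[
\Phi_{\omega'}=\max\{\Phi_\omega,\,H(\xi_1,\xi_2)\}=\max\{\Phi_\omega,\,H(\xi_1),\,H(\xi_2)\},
\]
where the second equality uses \eqref{e_Hsigmazeta} for the MH dynamics. By Remark \ref{r_nhd}(2), the hypothesis $\xi_1\in\widehat{\mathcal{N}}(\eta)$ yields $H(\xi_1)\le\Gamma$; the contradiction hypothesis gives $H(\xi_2)\le\Gamma$; and $\Phi_\omega\le\Gamma$ by construction. Hence $\Phi_{\omega'}\le\Gamma$, so $\Phi(\eta,\xi_2)\le\Gamma$, i.e., $\xi_2\in\widehat{\mathcal{N}}(\eta)$, contradicting the assumption $\xi_2\notin\widehat{\mathcal{N}}(\eta)$. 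Therefore $H(\xi_2)>\Gamma$, as claimed.

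There is no real obstacle in this argument; it is essentially the observation that $\widehat{\mathcal{N}}(\eta)$ is the $\Gamma$-sublevel set of a path metric whose edge weights are $\max$-symmetric in the MH case. The only point requiring mild care is to invoke the correct formula \eqref{e_Hsigmazeta} for the MH dynamics (and the corresponding collapse \eqref{e_MHheight}), which are the reason the argument is this clean; the analogous statement for the cyclic dynamics would need a separate treatment since there $H(\xi_1,\xi_2)$ is no longer expressible as a function of $H(\xi_1)$ and $H(\xi_2)$ alone.
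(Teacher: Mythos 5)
Your proposal is correct and follows essentially the same argument as the paper: assume $H(\xi_{2})\le\Gamma$, take a $\Gamma$-path $\omega:\eta\to\xi_{1}$, concatenate the edge $(\xi_{1},\,\xi_{2})$, and observe via \eqref{e_Hsigmazeta} that the resulting path still has height at most $\Gamma$, contradicting $\xi_{2}\notin\widehat{\mathcal{N}}(\eta)$. Your closing remark that the cyclic case requires separate treatment is also exactly right (cf. Lemma \ref{l_Nhatbdrycyc}).
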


\begin{proof}
Since $\xi_{1}\in\mathcal{\widehat{N}}(\eta)$, there exists a path
$\omega:\eta\rightarrow\xi_{1}$ such that $\Phi_{\omega}\le\Gamma$.
Suppose to the contrary that $H(\xi_{2})\le\Gamma$. Then, we can
concatenate the directed edge $(\xi_{1},\,\xi_{2})$ at the end of
the path $\omega$ to form a new path $\widetilde{\omega}:\eta\rightarrow\xi_{2}$
satisfying $\Phi_{\widetilde{\omega}}\le\Gamma$. This contradicts
the assumption that $\xi_{2}\notin\widehat{\mathcal{N}}(\eta)$.
\end{proof}
For $\mathcal{P}\subseteq\mathcal{X}$, a path $\omega=(\omega_{n})_{n=0}^{N}$
is called\textbf{ }a path in\textbf{\emph{ $\mathcal{P}$}} if $\omega_{n}\in\mathcal{P}$
for all $n\in\llbracket0,\,N\rrbracket$. The following notion is
also important in our investigation. 
\begin{defn}[Restricted neighborhood]
\label{d_rnhd} Let $\mathcal{Q}\subseteq\mathcal{X}$. For $\eta\in\mathcal{X}\setminus\mathcal{Q}$,
we define
\[
\widehat{\mathcal{N}}(\eta;\,\mathcal{Q}):=\{\xi\in\mathcal{X}:\exists\omega:\eta\rightarrow\xi\text{ in }\mathcal{Q}^{c}\text{ such that }\Phi_{\omega}\le\Gamma\}\;.
\]
For $\mathcal{P}\subseteq\mathcal{X}$ disjoint with $\mathcal{Q}$,
we define $\widehat{\mathcal{N}}(\mathcal{P};\,\mathcal{Q}):=\bigcup_{\eta\in\mathcal{P}}\widehat{\mathcal{N}}(\eta;\,\mathcal{Q})$.
\end{defn}

By definition, the following lemma is immediate.
\begin{lem}
\label{l_hatNPempty}For all $\mathcal{P}\subseteq\mathcal{X}$, it
holds that
\[
\widehat{\mathcal{N}}(\mathcal{P})=\widehat{\mathcal{N}}(\mathcal{P};\,\emptyset)\;.
\]
\end{lem}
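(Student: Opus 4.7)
The plan is to observe that the claimed equality is purely a matter of unwinding the definitions, so there is essentially no technical content to address. First I would note that when $\mathcal{Q} = \emptyset$, we have $\mathcal{Q}^c = \mathcal{X}$, so the requirement in Definition \ref{d_rnhd} that the path $\omega$ lie in $\mathcal{Q}^c$ is automatically satisfied by every path in $\mathcal{X}$. Hence for each $\eta \in \mathcal{X}$,
\[
\widehat{\mathcal{N}}(\eta;\,\emptyset) = \{\xi \in \mathcal{X} : \exists\, \omega:\eta \to \xi \text{ such that } \Phi_\omega \le \Gamma\}\;.
\]

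Next I would invoke the definition \eqref{e_commheight} of the communication height, $\Phi(\eta,\,\xi) = \min_{\omega:\eta \to \xi} \Phi_\omega$. The condition $\Phi(\eta,\,\xi) \le \Gamma$ is therefore equivalent to the existence of some path $\omega : \eta \to \xi$ with $\Phi_\omega \le \Gamma$ (using the obvious fact that the minimum is attained, since $\mathcal{X}$ is finite and a $\Gamma$-path exists whenever the infimum is $\le \Gamma$). Combining this with the display above yields $\widehat{\mathcal{N}}(\eta;\,\emptyset) = \widehat{\mathcal{N}}(\eta)$ for every $\eta$.

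Finally I would take the union over $\eta \in \mathcal{P}$ on both sides, which by Definitions \ref{d_nhd} and \ref{d_rnhd} gives $\widehat{\mathcal{N}}(\mathcal{P};\,\emptyset) = \widehat{\mathcal{N}}(\mathcal{P})$, as desired. There is no substantial obstacle: the lemma serves merely to reconcile two notations, so that later arguments can treat $\widehat{\mathcal{N}}(\mathcal{P})$ as the special case $\mathcal{Q} = \emptyset$ of the restricted-neighborhood construction without comment.
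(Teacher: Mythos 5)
Your argument is correct and matches the paper, which simply declares the lemma ``immediate by definition'': setting $\mathcal{Q}=\emptyset$ makes the restriction to $\mathcal{Q}^{c}=\mathcal{X}$ vacuous, and $\Phi(\eta,\xi)\le\Gamma$ is equivalent to the existence of a $\Gamma$-path since the minimum in \eqref{e_commheight} is attained. (Note the nontrivial work only arises in the cyclic-dynamics analogue, Lemma \ref{l_hatNPemptycyc}, where the restricted neighborhood is defined via $\omega_n\sim\omega_{n+1}$ rather than via paths.)
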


\subsection{\label{sec6.2}Canonical configurations}

First, we introduce the special form of configurations which will
be called \emph{canonical configurations}. These configurations are
inspired by the expansion algorithm given in \cite[Proposition 2.3]{NZ}.
We note that we shall consistently refer to Figure \ref{fig6.1} for
illustrations of the configurations introduced in the following definition.
In the figures of this article, each square corresponds to a site
(or vertex) of the lattice $\Lambda$.

\begin{figure}
\includegraphics[width=13cm]{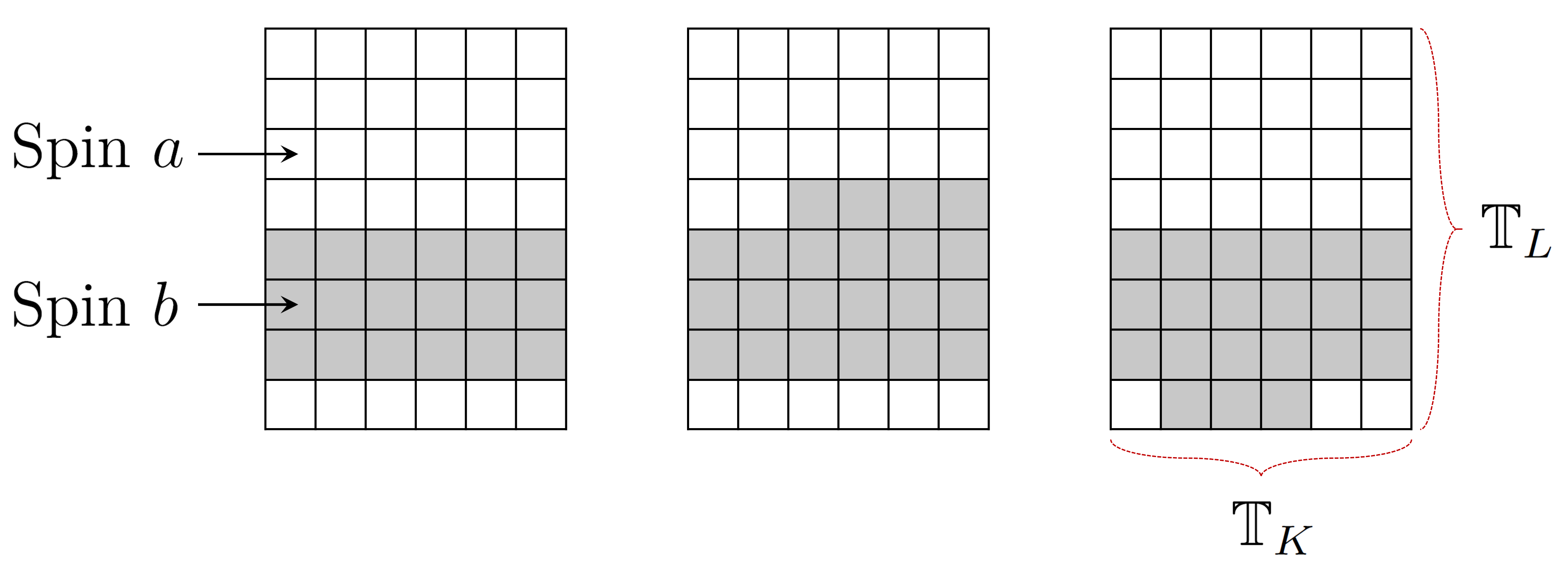}\caption{\label{fig6.1}\textbf{Canonical configurations for the case $(K,\,L)=(6,\,8)$.}
These configurations illustrate $\xi_{2,\,3}^{a,\,b}$ (left), $\xi_{2,\,3;\,3,\,4}^{a,\,b,\,+}$
(middle) and $\xi_{2,\,3;\,2,\,3}^{a,\,b,\,-}$ (right), respectively
(see Definition \ref{d_precanreg}). Notice that our labeling of coordinates
follows the two-dimensional coordinate system, instead of the matrix
system.}
\end{figure}

\begin{defn}[Pre-canonical and pre-regular configurations]
\label{d_precanreg} We fix $a,\,b\in S$ and construct pre-canonical
and pre-regular configurations between two ground states $\mathbf{a}$
and $\mathbf{b}$. We remark that $k$ and $\ell$ are used throughout
to represent elements of $\mathbb{T}_{K}$ and $\mathbb{T}_{L}$,
respectively. In addition, $v$ and $h$ are used to denote vertical
and horizontal lengths, respectively.
\begin{itemize}
\item For $\ell\in\mathbb{T}_{L}$ and $v\in\llbracket0,\,L\rrbracket$,
we denote by $\xi_{\ell,\,v}^{a,\,b}\in\mathcal{X}$ the spin configuration
whose spins are $b$ on the sites in
\[
\mathbb{T}_{K}\times\{\ell+n\in\mathbb{T}_{L}:n\in\llbracket0,\,v-1\rrbracket\subset\mathbb{Z}\}
\]
and $a$ on the remainder. Hence, we have $\xi_{\ell,\,0}^{a,\,b}=\mathbf{a}$
and $\xi_{\ell,\,L}^{a,\,b}=\mathbf{b}$ for all $\ell\in\mathbb{T}_{L}$.
The configurations $\xi_{\ell,\,v}^{a,\,b}$ are called \textit{pre-regular
configuration}s.
\item For $\ell\in\mathbb{T}_{L}$, $v\in\llbracket0,\,L-1\rrbracket$,
$k\in\mathbb{T}_{K}$ and $h\in\llbracket0,\,K\rrbracket$, we denote
by $\xi_{\ell,\,v;\,k,\,h}^{a,\,b,\,+}\in\mathcal{X}$ the configuration
whose spins are $b$ on the sites in
\[
\big\{ x\in\Lambda:\xi_{\ell,\,v}^{a,\,b}(x)=b\big\}\cup\big[\{k+n\in\mathbb{T}_{K}:n\in\llbracket0,\,h-1\rrbracket\subset\mathbb{Z}\}\times\{\ell+v\}\big]
\]
and $a$ on the remainder. Similarly, we denote by $\xi_{\ell,\,v;\,k,\,h}^{a,\,b,\,-}\in\mathcal{X}$
whose spins are $b$ on the sites in
\[
\big\{ x\in\Lambda:\xi_{\ell,\,v}^{a,\,b}(x)=b\big\}\cup\big[\{k+n\in\mathbb{T}_{K}:n\in\llbracket0,\,h-1\rrbracket\subset\mathbb{Z}\}\times\{\ell-1\}\big]
\]
and $a$ on the remainder. Namely, we obtain $\xi_{\ell,\,v;\,k,\,h}^{a,\,b,\,+}$
(resp. $\xi_{\ell,\,v;\,k,\,h}^{a,\,b,\,-}$) from $\xi_{\ell,\,v}^{a,\,b}$
by attaching a protuberance of spins $b$ of size $h$ at its upper
(resp. lower) side of the cluster of spin $b$ starting from the $k$-th
location. It is clear that $\xi_{\ell,\,v;\,k,\,0}^{a,\,b,\,+}=\xi_{\ell,\,v;\,k,\,0}^{a,\,b,\,-}=\xi_{\ell,\,v}^{a,\,b}$,
$\xi_{\ell,\,v;\,k,\,K}^{a,\,b,\,+}=\xi_{\ell,\,v+1}^{a,\,b}$ and
$\xi_{\ell,\,v;\,k,\,K}^{a,\,b,\,-}=\xi_{\ell-1,\,v+1}^{a,\,b}$.
\end{itemize}
\end{defn}

Now, we are ready to define the canonical configurations of $\mathcal{X}$.
\begin{defn}[Canonical and regular configurations]
\label{d_canreg} The definition of canonical configurations differs
between the cases of $K<L$ and $K=L$. Indeed, this is the main reason
why the prefactor of the EK law given in Theorem \ref{t_EK} differs
between these two cases. 
\begin{itemize}
\item \textbf{(Case $K<L$)} For $a,\,b\in S$, we define the collection
$\mathcal{C}^{a,\,b}$ of \textit{canonical configurations} between
$\mathbf{a}$ and $\mathbf{b}$ as
\[
\mathcal{C}^{a,\,b}:=\bigcup_{\ell\in\mathbb{T}_{L}}\bigcup_{v\in\llbracket0,\,L\rrbracket}\{\xi_{\ell,\,v}^{a,\,b}\}\cup\bigcup_{\ell\in\mathbb{T}_{L}}\bigcup_{v\in\llbracket0,\,L-1\rrbracket}\bigcup_{k\in\mathbb{T}_{K}}\bigcup_{h\in\llbracket1,\,K-1\rrbracket}\{\xi_{\ell,\,v;\,k,\,h}^{a,\,b,\,+},\,\xi_{\ell,\,v;\,k,\,h}^{a,\,b,\,-}\}\;.
\]
One can easily see that the right-hand side is a decomposition of
$\mathcal{C}^{a,\,b}$. Then, we define the collection $\mathcal{C}$
of \textit{canonical configurations} as 
\begin{equation}
\mathcal{C}:=\bigcup_{a,\,b\in S}\mathcal{C}^{a,\,b}\;.\label{e_C}
\end{equation}
Similarly, we define
\[
\mathcal{R}_{v}^{a,\,b}:=\bigcup_{\ell\in\mathbb{T}_{L}}\{\xi_{\ell,\,v}^{a,\,b}\}\;\;\;\;\text{and}\;\;\;\;\mathcal{Q}_{v}^{a,\,b}:=\bigcup_{\ell\in\mathbb{T}_{L}}\bigcup_{k\in\mathbb{T}_{K}}\bigcup_{h\in\llbracket1,\,K-1\rrbracket}\{\xi_{\ell,\,v;\,k,\,h}^{a,\,b,\,\pm}\}\;,
\]
and then define $\mathcal{R}_{v}$ and $\mathcal{Q}_{v}$ as in \eqref{e_C}.
A configuration belonging to $\mathcal{R}_{v}$ for some $v\in\llbracket0,\,L\rrbracket$
is called a \textit{regular configuration}. 
\item \textbf{(Case $K=L$)} For $a,\,b\in S$, we temporarily denote by
$\widetilde{\mathcal{C}}^{a,\,b}$, $\widetilde{\mathcal{R}}_{v}^{a,\,b}$
and $\widetilde{\mathcal{Q}}_{v}^{a,\,b}$ the collections $\mathcal{C}^{a,\,b}$,
$\mathcal{R}_{v}^{a,\,b}$ and $\mathcal{Q}_{v}^{a,\,b}$ defined
in the previous case of $K<L$. Define a transpose operator $\Theta:\mathcal{X}\rightarrow\mathcal{X}$
by
\begin{equation}
(\Theta(\sigma))(k,\,\ell)=\sigma(\ell,\,k)\;\;\;\;;\;k\in\mathbb{T}_{K}\text{ and }\ell\in\mathbb{T}_{L}\;.\label{e_transpose}
\end{equation}
Then, we define $\mathcal{A}=\widetilde{\mathcal{A}}\cup\Theta(\mathcal{A})$
where $\mathcal{A}=\mathcal{C}^{a,\,b}$, $\mathcal{R}_{v}^{a,\,b}$
or $\mathcal{Q}_{v}^{a,\,b}$. The sets $\mathcal{C}$, $\mathcal{R}_{v}$
and $\mathcal{Q}_{v}$ are defined as in \eqref{e_C}. These modified
definitions for the case of $K=L$ reflect the fact that the transitions
can occur in both horizontal and vertical directions of the lattice.
\end{itemize}
\end{defn}

We can readily verify from the definition of the Hamiltonian $H$
that $H(\eta)\le\Gamma$ for all $\eta\in\mathcal{C}^{a,\,b}$, and
moreover for $a,\,b\in S$,
\[
H(\eta)=\begin{cases}
\Gamma-2 & \text{if }\eta\in\mathcal{R}_{v}^{a,\,b}\text{ for }v\in\llbracket1,\,L-1\rrbracket\;,\\
\Gamma & \text{if }\eta\in\mathcal{Q}_{v}^{a,\,b}\text{ for }v\in\llbracket1,\,L-2\rrbracket\;.
\end{cases}
\]
We stress that the definition of canonical configurations is not symmetric
between rows and columns if $K<L$, as they play different roles.
Indeed, the roles are fundamentally different in the metastable transitions,
and that leads to the fact that the energy barrier $\Gamma$ depends
only on $K$, not on $L$.

The following notation will be used in the construction of canonical
paths. 
\begin{notation}
\label{n_frakS}Suppose that $N\ge2$ is a positive integer.
\begin{itemize}
\item Define $\mathfrak{S}_{N}$ as the collection of connected subsets
of $\mathbb{T}_{N}$, including the empty set.
\item For $P,\,P'\in\mathfrak{S}_{N}$, we write $P\prec P'$ if $P\subset P'$
and $|P'|=|P|+1$.
\item A sequence $(P_{m})_{m=0}^{N}$ of sets in $\mathfrak{S}_{N}$ is
called an \textit{increasing sequence} if it satisfies 
\[
\emptyset=P_{0}\prec P_{1}\prec\cdots\prec P_{N}=\mathbb{T}_{N}\;,
\]
so that $|P_{m}|=m$ for all $m\in\llbracket0,\,N\rrbracket$.
\end{itemize}
\end{notation}

\begin{figure}
\includegraphics[width=14cm]{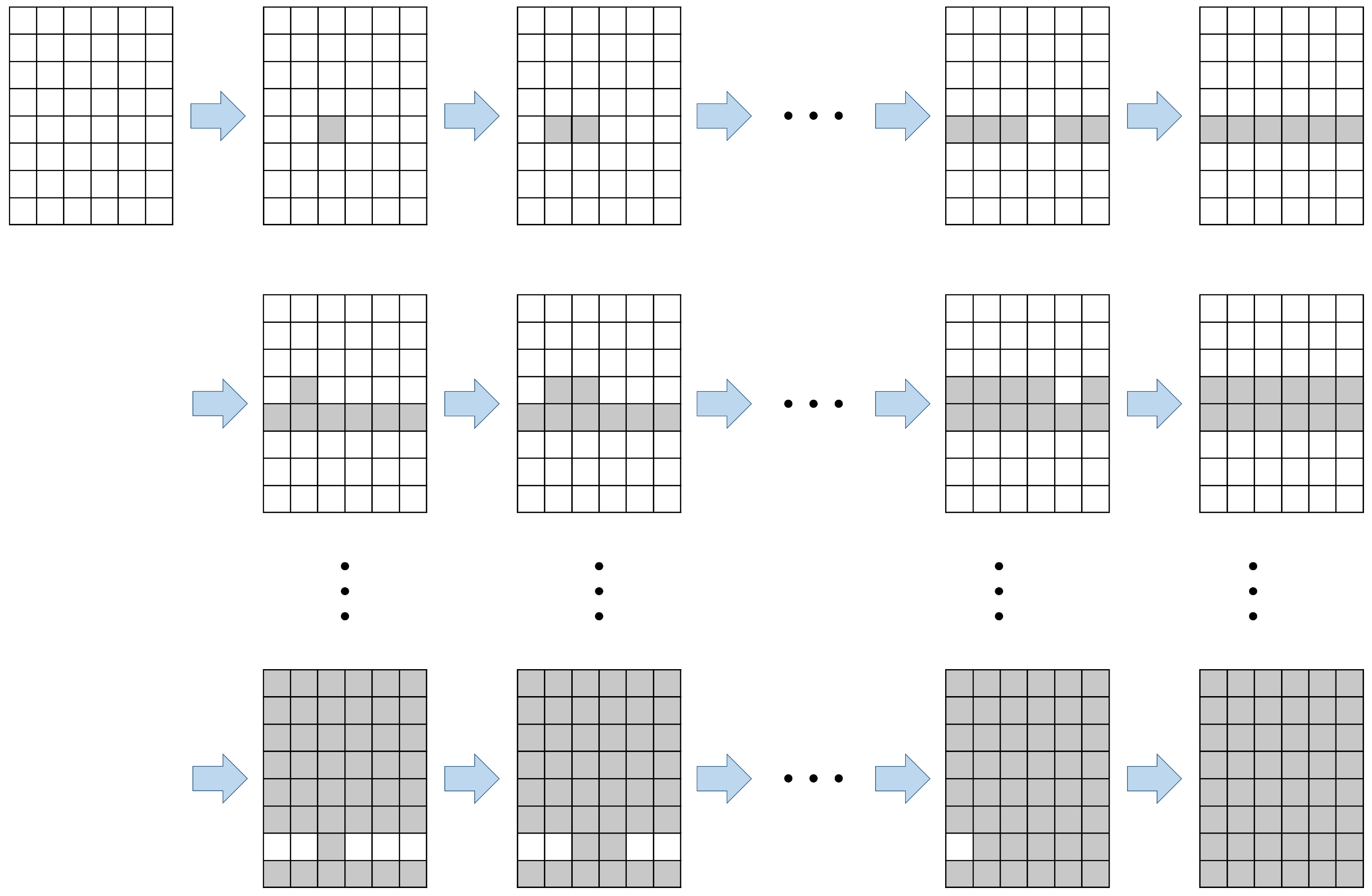}\caption{\label{fig6.2}\textbf{Example of a canonical path for the case of
$(K,\,L)=(6,\,8)$.}}
\end{figure}

Now, we construct the so-called canonical paths between two ground
states.
\begin{defn}[Canonical paths]
\label{d_canpath} $ $
\begin{enumerate}
\item We first introduce a \textit{standard sequence} of subsets of $\Lambda$
connecting the empty set $\emptyset$ and the full set $\Lambda$.
\begin{enumerate}
\item First, for $P,\,P'\in\mathfrak{S}_{L}$ with $P\prec P'$, we say
that a sequence $(A_{h})_{h=0}^{K}$ of subsets of $\Lambda$ is a
standard sequence connecting $\mathbb{T}_{K}\times P$ and $\mathbb{T}_{K}\times P'$
if there exists an increasing sequence $(Q_{h})_{h=0}^{K}$ in $\mathfrak{S}_{K}$
such that 
\[
A_{h}=(\mathbb{T}_{K}\times P)\cup\big[Q_{h}\times(P'\setminus P)\big]\;\;\;\;;\;h\in\llbracket0,\,K\rrbracket\;.
\]
\item Now, a sequence $(A_{m})_{m=0}^{KL}$ of subsets of $\Lambda$ is
a standard sequence connecting $\emptyset$ and $\Lambda$ if there
exists an increasing sequence $(P_{v})_{v=0}^{L}$ in $\mathfrak{S}_{L}$
such that $A_{Kv}=\mathbb{T}_{K}\times P_{v}$ for all $v\in\llbracket0,\,L\rrbracket$,
and furthermore for each $v\in\llbracket0,\,L-1\rrbracket$ the subsequence
$(A_{h})_{h=Kv}^{K(v+1)}$ is a standard sequence connecting $\mathbb{T}_{K}\times P_{v}$
and $\mathbb{T}_{K}\times P_{v+1}$.
\end{enumerate}
\item For $a,\,b\in S$, a sequence of configurations $\omega=(\omega_{m})_{m=0}^{KL}$
in $\mathcal{X}$ is called a \textit{pre-canonical path} (between
$\mathbf{a}$ and $\mathbf{b}$) if there exists a standard sequence
$(A_{m})_{m=0}^{KL}$ connecting $\emptyset$ and $\Lambda$ such
that 
\[
\omega_{m}(x)=\begin{cases}
a & \text{if }x\notin A_{m}\;,\\
b & \text{if }x\in A_{m}\;.
\end{cases}
\]
It is easy to verify that indeed, $\omega$ is a path connecting $\omega_{0}=\mathbf{a}$
and $\omega_{KL}=\mathbf{b}$.
\item Moreover, $(\omega_{m})_{m=0}^{KL}$ in $\mathcal{X}$ is called a
\textit{canonical path }connecting $\mathbf{a}$ and $\mathbf{b}$
(i.e., $\omega_{0}=\mathbf{a}$ and $\omega_{KL}=\mathbf{b}$) if
there exists a pre-canonical path $(\widetilde{\omega}_{m})_{m=0}^{KL}$
such that 
\begin{enumerate}
\item \textbf{(Case $K<L$)} $\omega_{m}=\widetilde{\omega}_{m}$ for all
$m\in\llbracket0,\,KL\rrbracket$,
\item \textbf{(Case $K=L$)} $\omega_{m}=\widetilde{\omega}_{m}$ for all
$m\in\llbracket0,\,KL\rrbracket$ or $\omega_{m}=\Theta(\widetilde{\omega}_{m})$
for all $m\in\llbracket0,\,KL\rrbracket$.
\end{enumerate}
\end{enumerate}
\end{defn}

An example of a canonical path is given in Figure \ref{fig6.2}. It
is direct from the construction that a canonical path consists of
canonical configurations only. In particular, the following properties
are immediate from the construction.
\begin{lem}
\label{l_canpath}For any canonical path $(\omega_{m})_{m=0}^{KL}$
connecting $\mathbf{a}$ and $\mathbf{b}$, the following statements
hold.
\begin{enumerate}
\item For all $v\in\llbracket0,\,L\rrbracket$, we have $\omega_{Kv}\in\mathcal{R}_{v}^{a,\,b}$.
\item For all $v\in\llbracket0,\,L-1\rrbracket$ and $h\in\llbracket1,\,K-1\rrbracket$,
we have $\omega_{Kv+h}\in\mathcal{Q}_{v}^{a,\,b}$.
\item It holds that $\Phi_{\omega}=\Gamma$.
\end{enumerate}
\end{lem}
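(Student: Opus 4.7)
The plan is to unwrap the recursive construction in Definition \ref{d_canpath} and read off properties (1)--(2) directly from the combinatorics of the standard sequence, then use the MH identity \eqref{e_MHheight} for (3). We begin with the case $K<L$ and handle $K=L$ at the end by invoking the transpose operator $\Theta$.

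First, I would fix a pre-canonical path $(\widetilde{\omega}_m)_{m=0}^{KL}$ associated with a standard sequence $(A_m)_{m=0}^{KL}$, so that by definition $A_{Kv}=\mathbb{T}_K\times P_v$ for an increasing sequence $(P_v)_{v=0}^{L}$ in $\mathfrak{S}_L$. Since $P_v\in\mathfrak{S}_L$ has size $v$, it is of the form $\{\ell,\,\ell+1,\,\dots,\,\ell+v-1\}\subset\mathbb{T}_L$ for some $\ell=\ell(v)\in\mathbb{T}_L$. Comparing with Definition \ref{d_precanreg} gives $\widetilde{\omega}_{Kv}=\xi_{\ell,\,v}^{a,\,b}\in\mathcal{R}_v^{a,\,b}$, which is (1). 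For (2), the intermediate level uses the inner standard sequence $(Q_h)_{h=0}^{K}$ in $\mathfrak{S}_K$ connecting $\emptyset$ to $\mathbb{T}_K$, so $A_{Kv+h}=(\mathbb{T}_K\times P_v)\cup\bigl(Q_h\times(P_{v+1}\setminus P_v)\bigr)$ with $|Q_h|=h$ and $Q_h$ connected. The singleton row $P_{v+1}\setminus P_v$ is either $\{\ell+v\}$ (in which case we attach an upper protuberance and land in a $\xi_{\ell,\,v;\,k,\,h}^{a,\,b,\,+}$) or $\{\ell-1\}$ (lower protuberance, giving $\xi_{\ell,\,v;\,k,\,h}^{a,\,b,\,-}$). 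In either case $\widetilde{\omega}_{Kv+h}\in\mathcal{Q}_v^{a,\,b}$ for $h\in\llbracket1,\,K-1\rrbracket$, proving (2).

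For (3), I would compute $H(\widetilde{\omega}_m)$ case by case using the definition \eqref{e_Ham} of the Hamiltonian as a count of edges separating different spins. At the checkpoints $\widetilde{\omega}_{Kv}\in\mathcal{R}_v^{a,\,b}$ the configuration has at most two row-boundaries, so $H\le 2K=\Gamma-2$. At the intermediate configurations $\widetilde{\omega}_{Kv+h}\in\mathcal{Q}_v^{a,\,b}$ for $v\in\llbracket1,\,L-2\rrbracket$, adding a connected protuberance of length $h\in\llbracket1,\,K-1\rrbracket$ trades $h$ interior edges (now same-spin) for $h$ new different-spin edges on the opposite side and $2$ vertical edges on the protuberance's sides, yielding $H=2K+2=\Gamma$. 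At the boundary levels $v=0$ and $v=L-1$ the analogous count gives $H=2h+2$ and $H=2K-2h+2$ respectively, both bounded by $2K<\Gamma$ for $h\in\llbracket1,\,K-1\rrbracket$. Combining, $\max_m H(\widetilde{\omega}_m)=\Gamma$, and since consecutive configurations differ by a single spin flip (because consecutive $A_m$ differ by exactly one site), $(\widetilde{\omega}_m)$ is a valid MH-path. Formula \eqref{e_MHheight} then gives $\Phi_{\widetilde{\omega}}=\Gamma$.

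Finally, for the case $K=L$, either $\omega=\widetilde{\omega}$ (already treated) or $\omega_m=\Theta(\widetilde{\omega}_m)$ for all $m$. Since $\Theta$ is an involutive isometry of the Hamiltonian (it preserves the edge structure of $\mathbb{T}_K\times\mathbb{T}_L$ when $K=L$), we have $H(\Theta(\eta))=H(\eta)$ for every $\eta\in\mathcal{X}$, and by construction $\Theta(\mathcal{R}_v^{a,\,b})\subset\mathcal{R}_v$ and $\Theta(\mathcal{Q}_v^{a,\,b})\subset\mathcal{Q}_v$ under the $K=L$ convention of Definition \ref{d_canreg}. Hence (1)--(3) transfer verbatim. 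The only genuinely delicate step is the height bookkeeping at the boundary levels $v\in\{0,\,L-1\}$, where one must be careful not to double-count the torus-wrapping edges; this is the one computation I would write out in detail.
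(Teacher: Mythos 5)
Your argument is correct and is exactly the straightforward verification the paper has in mind: the paper gives no proof of this lemma, stating that the properties are ``immediate from the construction,'' and your unwrapping of the standard sequence for (1)--(2), the edge count $2K-h+h+2=2K+2$ at the intermediate levels together with the bounds $2h+2$ and $2(K-h)+2$ at $v=0$ and $v=L-1$ for (3), and the transfer via the Hamiltonian-preserving involution $\Theta$ when $K=L$, fill in precisely that verification. The only blemish is cosmetic: the two end-edges of the protuberance lie within the row $\ell+v$ and are therefore \emph{horizontal} edges in the paper's convention (Notation \ref{n_bridge}), not vertical, though your count is unaffected.
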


In view of part (3) of the previous lemma, a canonical path between
$\mathbf{a}$ and $\mathbf{b}$ is an optimal path that achieves the
communication height $\Gamma$.

\subsection{\label{sec6.3}Typical configurations}

Let us now define typical configurations which are extensions of canonical
configurations and indeed the main ingredient in the explanation of
the saddle structure. The definition is complicated, but its meaning
will become clear later. We refer to Figure \ref{fig6.3} for an illustration
of typical configurations.

\begin{figure}
\includegraphics[width=13cm]{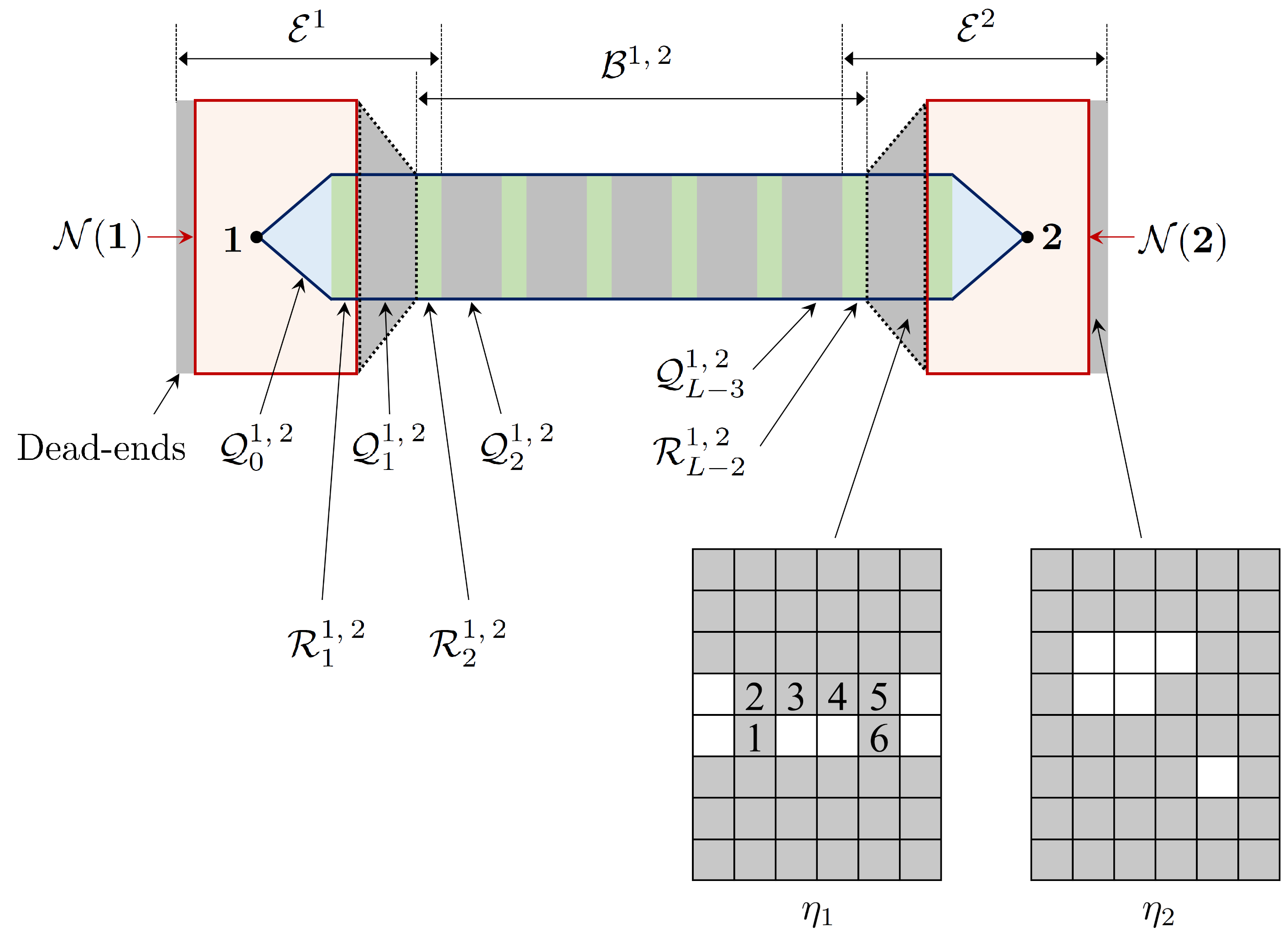}\caption{\label{fig6.3}\textbf{Typical configurations for the Ising model.
}This figure illustrates the structure of $\widehat{\mathcal{N}}(\mathcal{S})$
in the case of $q=2$. We can notice from the figure that as verified
in Proposition \ref{p_typprop}, $\widehat{\mathcal{N}}(\mathcal{S})=\mathcal{E}^{1}\cup\mathcal{B}^{1,\,2}\cup\mathcal{E}^{2}$,
$\mathcal{R}_{2}^{1,\,2}=\mathcal{E}^{1}\cap\mathcal{B}^{1,\,2}$
and $\mathcal{R}_{L-2}^{1,\,2}=\mathcal{B}^{1,\,2}\cap\mathcal{E}^{2}$.
Regions consisting of configurations with energy $\Gamma=2K+2$ are
colored gray. The hexagonal region at the center of the figure enclosed
by the blue line denotes the set $\mathcal{C}=\mathcal{C}^{1,\,2}$
of canonical configurations between $\mathbf{1}$ and $\mathbf{2}$.
Since the MH dynamics cannot escape from $\widehat{\mathcal{N}}(\mathcal{S})$
during the transition from $\mathbf{1}$ to $\mathbf{2}$ (with dominating
probability), the dynamics must path through the $\mathcal{B}^{1,\,2}$-part
of the canonical configurations. The set $\mathcal{E}^{2}$ of edge
typical configurations near $\mathbf{2}$ consists of four regions.
The first one is the neighborhood $\mathcal{N}(\mathbf{2})$ denoted
by the red-enclosed box. The second one is the region consisting of
configurations with energy $\Gamma$ which is connected to $\mathcal{R}_{L-2}^{1,\,2}$
via a $\Gamma$-path in $\mathcal{X}\setminus\mathcal{B}_{\Gamma}$.
This region will be denoted by $\mathcal{Z}^{2,\,1}$ in Section \ref{sec6.4}
(cf. Figure \ref{fig6.4}). An example of a configuration belonging
to this region is $\eta_{1}$. In particular, we can connect $\eta_{1}$
with a configuration in $\mathcal{R}_{L-2}^{1,\,2}$ via a $\Gamma$-path
by updating six gray boxes in the order indicated in the figure (cf.
Proposition \ref{p_Zabtree}). The third region consists of dead-ends
(cf. \eqref{e_Dadef}). An example of a dead-end is $\eta_{2}$, which
has energy $14=2\times6+2=2K+2$. The fourth region is $\mathcal{R}_{L-2}^{1,\,2}$.
A similar decomposition holds for $\mathcal{E}^{1}$ as well.}
\end{figure}

\begin{defn}[Typical configurations]
\label{d_typ}
\begin{itemize}
\item For $a,\,b\in S$, define the collection of \textit{bulk typical configurations}
(between $\mathbf{a}$ and $\mathbf{b}$) by
\begin{equation}
\mathcal{B}^{a,\,b}=\bigcup_{v\in\llbracket2,\,L-2\rrbracket}\mathcal{R}_{v}^{a,\,b}\cup\bigcup_{v\in\llbracket2,\,L-3\rrbracket}\mathcal{Q}_{v}^{a,\,b}\;.\label{e_Bab}
\end{equation}
One can easily notice that $\mathcal{B}^{a,\,b}=\mathcal{B}^{b,\,a}$.
Then, we define $\mathcal{B}=\bigcup_{a,\,b\in S}\mathcal{B}^{a,\,b}$. 
\item Define 
\[
\mathcal{B}_{\Gamma}^{a,\,b}=\bigcup_{v\in\llbracket2,\,L-3\rrbracket}\mathcal{Q}_{v}^{a,\,b}=\{\eta\in\mathcal{B}^{a,\,b}:H(\eta)=\Gamma\}\;.
\]
Then, we define
\[
\mathcal{B}_{\Gamma}:=\bigcup_{a,\,b\in S}\mathcal{B}_{\Gamma}^{a,\,b}=\bigcup_{a,\,b\in S}\bigcup_{v\in\llbracket2,\,L-3\rrbracket}\mathcal{Q}_{v}^{a,\,b}\;.
\]
\item For $a\in S$, the collection of \textit{edge typical configurations}
with respect to $\mathbf{a}$ is defined as
\begin{equation}
\mathcal{E}^{a}=\widehat{\mathcal{N}}(\mathbf{a};\,\mathcal{B}_{\Gamma})\;.\label{e_Ea}
\end{equation}
Finally, we set $\mathcal{E}=\bigcup_{a\in S}\mathcal{E}^{a}$. 
\end{itemize}
A configuration belonging to $\mathcal{B}\cup\mathcal{E}$ is called
a \textit{typical configuration}.
\end{defn}

At first glance, the definition of typical configurations may look
weird. However, the meaning of this will become clear after investigating
their properties. In particular, in Proposition \ref{p_typprop},
we shall demonstrate that $\mathcal{E}^{a}$, $a\in S$ are mutually
disjoint, and that typical configurations are exactly the configurations
which can be reached from ground states via $\Gamma$-paths. Hence,
they form the collection of configurations relevant to the transitions
between the ground states in $\mathcal{S}$.
\begin{rem}[Tube of typical trajectories]
 Later in Proposition \ref{p_typprop}, we will show that
\[
\mathcal{E}\cup\mathcal{B}=\widehat{\mathcal{N}}(\mathcal{S})\;.
\]
From the viewpoint of the pathwise approach to metastability, $\widehat{\mathcal{N}}(\mathcal{S})$
is exactly the \textit{tube of typical trajectories} \cite{OS rev,OS non-rev}
of the metastable transitions between the ground states in $\mathcal{S}$.
In this sense, we demonstrate in Proposition \ref{p_typprop} that
$\mathcal{B}\cup\mathcal{E}$ is exactly the tube of typical trajectories
of the metastable transitions. We also remark here that this is also
the case in the cyclic dynamics as well; see Proposition \ref{p_typpropcyc}.
\end{rem}

\begin{rem}[Classification of typical configurations]
 We decompose typical configurations into the bulk and edge ones,
since the patterns of transitions therein are qualitatively different.
We refer to Figure \ref{fig6.3} for a detailed explanation. Roughly
speaking, to make a transition from $\mathbf{a}$ to $\mathbf{b}$
(without touching the energy level higher than $\Gamma$, i.e., without
escaping from $\widehat{\mathcal{N}}(\mathcal{S})$), the dynamics
has to pass through $\mathcal{E}^{a}$ first to arrive at $\mathcal{B}^{a,\,b}$.
Then, it goes through $\mathcal{B}^{a,\,b}$ along the canonical path
to arrive at $\mathcal{E}^{b}$, and then finally it reaches $\mathbf{b}$.
The behavior of the dynamics on $\mathcal{E}^{a}$ and $\mathcal{E}^{b}$
is somewhat complicated and is explained by an auxiliary Markov chain
defined in Definition \ref{d_EAMc}, which can be regarded as a random
walk on the space of sub-trees of a ladder graph (cf. Proposition
\ref{p_Zabtree}). 
\end{rem}

The next proposition demonstrates the relations between bulk and edge
typical configurations rigorously.
\begin{prop}
\label{p_typprop}The following properties hold for the typical configurations.
\begin{enumerate}
\item For spins $a,\,b\in S$, we have $\mathcal{E}^{a}\cap\mathcal{E}^{b}=\emptyset$.
\item For spins $a,\,b\in S$, we have $\mathcal{E}^{a}\cap\mathcal{B}^{a,\,b}=\mathcal{R}_{2}^{a,\,b}$.
\item For three spins $a,\,b,\,c\in S$, we have $\mathcal{E}^{a}\cap\mathcal{B}^{b,\,c}=\emptyset$.
\item We have $\mathcal{E}\cup\mathcal{B}=\widehat{\mathcal{N}}(\mathcal{S})$.
\end{enumerate}
\end{prop}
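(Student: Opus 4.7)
The plan is to treat the four claims as facets of a single structural theorem---a complete identification of $\widehat{\mathcal{N}}(\mathcal{S})$---with part~(4) carrying the bulk of the work and parts~(1), (2), (3) following by short concatenation/obstruction arguments. The essential external input is \cite[Theorem~2.1]{NZ}, which I will use in the (slightly refined) form that every $\Gamma$-path connecting two distinct ground states must enter $\mathcal{B}_\Gamma^{a,b}$ for some pair $a,b$.

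For part~(4), the inclusion $\mathcal{E}\cup\mathcal{B}\subseteq\widehat{\mathcal{N}}(\mathcal{S})$ is direct: Lemma~\ref{l_hatNPempty} combined with Definition~\ref{d_rnhd} gives $\mathcal{E}^a\subseteq\widehat{\mathcal{N}}(\mathbf{a})$, and Lemma~\ref{l_canpath}(3) shows that every bulk typical configuration lies on a canonical path between two ground states of height exactly $\Gamma$, hence in $\widehat{\mathcal{N}}(\mathcal{S})$. For the reverse inclusion I fix $\eta\in\widehat{\mathcal{N}}(\mathbf{a})$ and split on whether some $\Gamma$-path from $\mathbf{a}$ to $\eta$ avoids $\mathcal{B}_\Gamma$: if yes, then $\eta\in\mathcal{E}^a$ by definition; if not, I claim $\eta$ itself must be a canonical bulk configuration in $\mathcal{B}$. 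This claim is the combinatorial heart of the section and I extract it as a shape-classification lemma to be proved in Appendix~\ref{secB}: any configuration of energy $\le\Gamma=2K+2$ on the torus that lies on a $\Gamma$-path from $\mathbf{a}$ which has already traversed $\mathcal{B}_\Gamma$ is forced to have its minority-spin cluster consist of several full rows (or, in the $K=L$ case, full columns) together with at most one partial row, i.e., to coincide with $\xi_{\ell,v}^{a,b}$ or $\xi_{\ell,v;k,h}^{a,b,\pm}$ for suitable parameters.

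Assuming~(4), the other parts are short. For~(2), the inclusion $\mathcal{R}_2^{a,b}\subseteq\mathcal{E}^a\cap\mathcal{B}^{a,b}$ is witnessed by the canonical subpath $\mathbf{a}=\mathcal{R}_0^{a,b}\to\mathcal{Q}_0^{a,b}\to\mathcal{R}_1^{a,b}\to\mathcal{Q}_1^{a,b}\to\mathcal{R}_2^{a,b}$, which has height $\Gamma$ and avoids $\mathcal{B}_\Gamma$ since the latter contains only $\mathcal{Q}_v^{a,b}$ with $v\in\llbracket2,L-3\rrbracket$. For the reverse inclusion, $\mathcal{E}^a\cap\mathcal{B}_\Gamma=\emptyset$ follows immediately from Definition~\ref{d_rnhd} (a $\Gamma$-path lying in $\mathcal{B}_\Gamma^c$ ends in $\mathcal{B}_\Gamma^c$), so any $\eta\in\mathcal{E}^a\cap\mathcal{B}^{a,b}$ lies in $\mathcal{R}_v^{a,b}$ for some $v\in\llbracket2,L-2\rrbracket$; the shape classification from~(4) rules out $v\ge3$ because increasing the strip count from $v-1$ to $v$ within the energy budget $\Gamma$ must proceed through a protuberance $\mathcal{Q}_{v-1}^{a,b}\subseteq\mathcal{B}_\Gamma$. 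For~(1), if $\eta\in\mathcal{E}^a\cap\mathcal{E}^b$ with distinct $a,b$, concatenating a $\Gamma$-path $\mathbf{a}\to\eta$ in $\mathcal{B}_\Gamma^c$ with the reverse of a $\Gamma$-path $\mathbf{b}\to\eta$ in $\mathcal{B}_\Gamma^c$ produces a $\Gamma$-path $\mathbf{a}\to\mathbf{b}$ entirely in $\mathcal{B}_\Gamma^c$, contradicting the refined Neves--Zeitouni statement. For~(3) with $a,b,c$ pairwise distinct, a configuration $\eta\in\mathcal{B}^{b,c}$ contains no spin of type $a$, so a $\Gamma$-path from $\mathbf{a}$ to $\eta$ must at some intermediate step extinguish all $a$-spins; by the shape classification this transition necessarily passes through a canonical saddle in $\mathcal{B}_\Gamma^{a,b}\cup\mathcal{B}_\Gamma^{a,c}$, so no $\Gamma$-path from $\mathbf{a}$ to $\eta$ avoids $\mathcal{B}_\Gamma$, precluding $\eta\in\mathcal{E}^a$.

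The main obstacle is the shape classification underlying part~(4)~$\supseteq$: ruling out exotic low-energy configurations (non-convex or disconnected clusters, wandering protuberances, multi-droplet states, and the like) under the height budget $2K+2$. I plan to handle this by a row-by-row perimeter accounting on the torus $\mathbb{T}_K\times\mathbb{T}_L$ combined with a discrete isoperimetric argument: once the minority-spin cluster has enough mass to have crossed $\mathcal{B}_\Gamma$, its perimeter $\le 2K+2$ forces it to consist of several full rows plus at most one partial row, and any deviation from this shape produces perimeter strictly greater than $2K+2$. The standing hypothesis $K,L\ge11$ from Section~\ref{sec3} enters here to suppress small-size degeneracies of this isoperimetric argument.
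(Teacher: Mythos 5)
Your overall architecture (prove a global identification of $\widehat{\mathcal{N}}(\mathcal{S})$ and derive the intersection statements from it) is reasonable, and your toolkit for the combinatorics---row-by-row perimeter accounting plus a discrete isoperimetric bound---is exactly what the paper uses elsewhere. But the load-bearing claim in your part~(4) is false. You assert that if $\eta\in\widehat{\mathcal{N}}(\mathbf{a})$ and no $\Gamma$-path from $\mathbf{a}$ to $\eta$ avoids $\mathcal{B}_{\Gamma}$, then $\eta$ must be a canonical bulk configuration $\xi_{\ell,v}^{a,b}$ or $\xi_{\ell,v;k,h}^{a,b,\pm}$. Counterexample: start from $\mathbf{a}=\mathbf{1}$, run a canonical path through $\mathcal{B}_{\Gamma}$ to $\mathcal{R}_{L-2}^{1,2}=\mathcal{R}_{2}^{2,1}$, and then continue at energy level $\Gamma$ into the set $\mathcal{Z}^{2,1}$ of configurations whose minority-spin set is a sub-tree of the two-row ladder (or into a dead-end in $\mathcal{D}^{2}$). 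Such $\eta$ lies in $\widehat{\mathcal{N}}(\mathbf{1})$, is reachable only through $\mathcal{B}_{\Gamma}$ (it belongs to $\mathcal{E}^{2}$, which is disjoint from $\mathcal{E}^{1}$), and is emphatically not of canonical strip-plus-protuberance shape. So the dichotomy ``$\eta\in\mathcal{E}^{a}$ or $\eta\in\mathcal{B}$'' is wrong: the correct third alternative is $\eta\in\mathcal{E}^{b}$ for some $b\neq a$, and $\mathcal{E}^{b}$ is precisely the complicated plateau (ladder sub-trees and dead-ends) whose shape you cannot pin down by an isoperimetric argument of the kind you describe. This is not a removable technicality; it is the entire difficulty of the zero-field model.

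The paper's route avoids ever having to classify the shapes in $\mathcal{E}$. It writes $\widehat{\mathcal{N}}(\mathcal{S})=\widehat{\mathcal{N}}(\mathcal{C})=\widehat{\mathcal{N}}(\mathcal{B};\,\mathcal{C}\setminus\mathcal{B})\cup\widehat{\mathcal{N}}(\mathcal{C}\setminus\mathcal{B};\,\mathcal{B})$ via the decomposition lemma for restricted neighborhoods, shows the first piece equals $\mathcal{B}$ exactly by a purely local one-step analysis (every neighbor of a bulk configuration with energy $\le\Gamma$ is again canonical), and observes that the second piece sits inside $\widehat{\mathcal{N}}(\mathcal{S};\,\mathcal{B}_{\Gamma})=\mathcal{E}$ essentially by definition. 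I would also flag your treatment of the statement ``every $\Gamma$-path between distinct ground states enters $\mathcal{B}_{\Gamma}$'' as an external input: \cite[Theorem~2.1]{NZ} gives only the value of the communication height, and $\mathcal{B}_{\Gamma}$ is defined in this paper, so this is not a slight refinement but a claim you must prove (the paper does so with a bridge-counting argument, locating the first instant the target spin acquires two bridges and showing the configuration there is forced into $\mathcal{R}_{2}^{a_{0},b}$ with next step in $\mathcal{B}_{\Gamma}$). Your perimeter machinery can deliver that proof, but as written your proposal assumes it. Once part~(1) is established this way, your arguments for~(2) and~(3) are essentially the paper's, except that for $v=L-2$ in part~(2) you should invoke $\mathcal{R}_{L-2}^{a,b}\subseteq\mathcal{E}^{b}$ together with part~(1) rather than the strip-count monotonicity, which again presupposes the false shape classification.
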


We prove this proposition in Section \ref{secB.2}. The bulk typical
configurations have a clear structure and we do not need a further
investigation on this set. On the other hand, the structure of edge
typical configurations is quite complicated and possesses one of the
main difficulties of the current problem. This structure will be explained
in the remainder of the current section. 

\subsection{\label{sec6.4}Classification of edge typical configurations}

We fix a spin $a\in S$ and focus on the classification of the configurations
in the set $\mathcal{E}^{a}$. We refer to Figure \ref{fig6.4} for
an illustration of the definition given here.

\begin{figure}
\includegraphics[width=14.5cm]{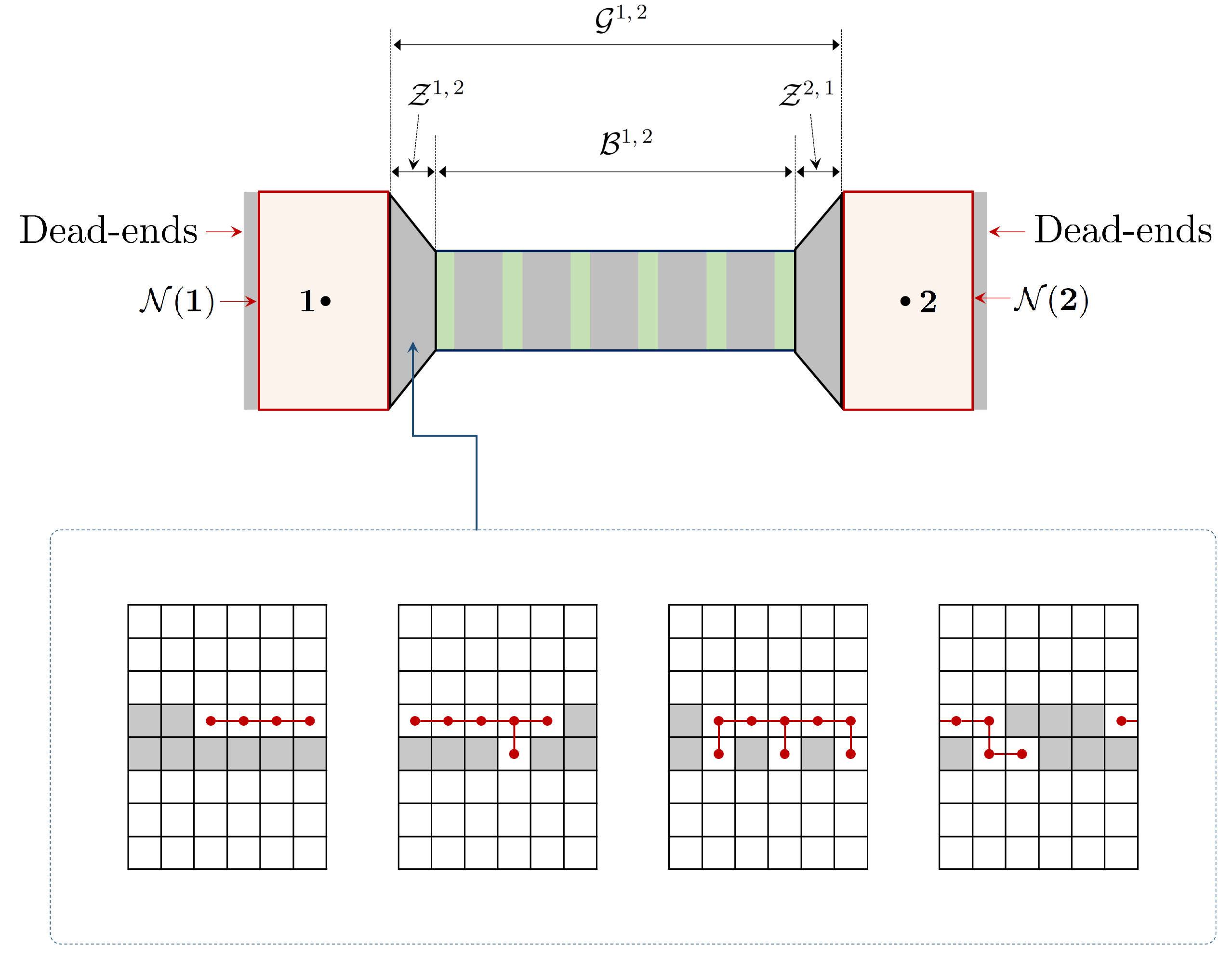}\caption{\label{fig6.4}\textbf{Edge typical configurations.} The configurations
belonging to gray regions have energy $\Gamma$. Green regions denote
the sets of the form $\mathcal{R}_{v}^{1,\,2}$, $v\in\llbracket2,\,L-2\rrbracket$.
We can notice from this figure that the set $\mathcal{N}(\mathbf{1})$
and the bulk typical configurations in $\mathcal{B}^{1,\,2}$ are
connected through the configurations in $\mathcal{Z}^{1,\,2}$. Below
the energy landscape, we give four examples of highly non-trivial
configurations belonging to $\mathcal{Z}^{1,\,2}$. Indeed, as indicated
in the figures, the sites with spin $1$ within the strip form sub-trees.
These configurations have energy $\Gamma$ and do not belong to the
dead-ends since each one of them can be connected to $\mathcal{R}_{2}^{1,\,2}$
by a $\Gamma$-path in $\mathcal{X}\setminus\mathcal{N}(\mathbf{1})$.}
\end{figure}

For another spin $b\ne a$, we define a set $\mathcal{Z}^{a,\,b}$
as
\begin{align}
\mathcal{Z}^{a,\,b}:=\{\eta\in\mathcal{X}: & \;\exists\text{a path }(\omega_{n})_{n=0}^{N}\text{ in }\mathcal{X}\setminus\mathcal{B}_{\Gamma}\text{ such that }\nonumber \\
 & \;\omega_{0}\in\mathcal{R}_{2}^{a,\,b}\;,\;\omega_{N}=\eta\text{ and }H(\omega_{n})=\Gamma\text{ for all }n\in\llbracket1,\,N\rrbracket\}\;.\label{e_Zabdef}
\end{align}
Note that $\mathcal{Z}^{a,\,b}\neq\mathcal{Z}^{b,\,a}$. Write
\[
\mathcal{Z}^{a}:=\bigcup_{b\in S\setminus\{a\}}\mathcal{Z}^{a,\,b}\;\;\;\;\text{and}\;\;\;\;\mathcal{R}^{a}:=\bigcup_{b\in S\setminus\{a\}}\mathcal{R}_{2}^{a,\,b}\;.
\]
Then, by \eqref{e_Zabdef} and Proposition \ref{p_typprop} we have
that
\begin{equation}
\mathcal{Z}^{a}\subset\mathcal{E}^{a}\;\;\;\;\text{and}\;\;\;\;\mathcal{R}^{a}\subset\mathcal{E}^{a}\;.\label{e_ZaRaEa}
\end{equation}

\subsubsection*{Properties of $\mathcal{Z}^{a,\,b}$}

For a configuration $\eta\in\mathcal{X}$ and a spin $a\in S$, we
denote by $P_{a}(\eta)$ the set of sites of spin $a$ with respect
to $\sigma$, i.e.,
\begin{equation}
P_{a}(\eta):=\{x\in\Lambda:\eta(x)=a\}\;.\label{e_Padef}
\end{equation}
The next proposition verifies that each configuration in $\mathcal{Z}^{a,\,b}$
can be matched with \emph{a sub-tree of a ladder graph} (cf. Figure
\ref{fig6.4}).
\begin{prop}
\label{p_Zabtree}For $a,\,b\in S$ and $\eta\in\mathcal{X}$, it
holds that $\eta\in\mathcal{Z}^{a,\,b}$ if and only if the following
three conditions hold simultaneously.
\begin{itemize}
\item \textbf{\textup{{[}Z1{]}}} For all $x\in\Lambda$, we have $\eta(x)\in\{a,\,b\}$.
\item \textbf{\textup{{[}Z2{]}}} There exists $\ell\in\mathbb{T}_{L}$ such
that $P_{b}(\eta)\subseteq\mathbb{T}_{K}\times\{\ell,\,\ell+1\}$
(or $\{\ell,\,\ell+1\}\times\mathbb{T}_{K}$ if $K=L$).
\item \textbf{\textup{{[}Z3{]}}} Define $G_{a}(\eta):=P_{a}(\eta)\cap[\mathbb{T}_{K}\times\{\ell,\,\ell+1\}]$
(or $P_{a}(\eta)\cap[\{\ell,\,\ell+1\}\times\mathbb{T}_{K}]$ if $K=L$).
Then, the set $G_{a}(\eta)$ consists of vertices of a sub-tree\footnote{We regard that the empty set is not a tree.}
of the ladder graph $\mathbb{T}_{K}\times\{\ell,\,\ell+1\}$ (or $\{\ell,\,\ell+1\}\times\mathbb{T}_{K}$
if $K=L$).
\end{itemize}
\end{prop}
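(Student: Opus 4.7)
The plan is to prove the equivalence by propagating conditions [Z1]--[Z3] along single-spin flips. For (⇒), I will induct on $n$ along a realizing path $(\omega_n)_{n=0}^{N}$ from the definition \eqref{e_Zabdef} of $\mathcal{Z}^{a,b}$; for (⇐), I will induct on the tree size $t = |G_a(\eta)|$, constructing such a path by growing the tree leaf by leaf. The key ingredient in both directions is the following local energy balance: for a flip at a vertex $v$ inside the two-row strip $\mathbb{T}_K \times \{\ell, \ell+1\}$ of a configuration $\eta$ satisfying [Z1]--[Z2], writing $d(v) := |N_{\mathrm{strip}}(v) \cap G_a(\eta)| \in \{0,1,2,3\}$ and noting that the unique out-of-strip neighbor of $v$ is necessarily of spin $a$, a direct count of matched and mismatched edges yields
\[
H(\eta^{v,a}) - H(\eta^{v,b}) \;=\; 2 - 2d(v)\;,
\]
while every flip at a vertex outside the strip produces $\Delta H \in \{+2, +4\}$. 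In particular, the only moves preserving the level $H = \Gamma$ are flips inside the strip with $d(v) = 1$.

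For (⇒), start with $\omega_0 \in \mathcal{R}_2^{a,b}$, say with $b$-strip at rows $\{\ell, \ell+1\}$, so $H(\omega_0) = \Gamma - 2$. The step $\omega_0 \to \omega_1$ requires $\Delta H = +2$; of the two move families achieving this---flipping a strip-$b$ to $a$ (producing a configuration in $\mathcal{Q}_1^{a,b}$) or flipping an $a$-spin in a row adjacent to the strip to $b$ (producing $\mathcal{Q}_2^{a,b} \subset \mathcal{B}_\Gamma$)---only the first avoids $\mathcal{B}_\Gamma$, so $\omega_1$ satisfies [Z1]--[Z3] with $G_a(\omega_1)$ consisting of a single vertex. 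For the inductive step with $n \ge 1$, $\Delta H = 0$ is forced, and by the energy identity above the only admissible options are flipping a tree-leaf of $G_a(\omega_n)$ to $b$ (which shrinks the tree; the subcase where the tree has size $1$ would send $H$ to $\Gamma - 2$ and is excluded) or flipping a strip-$b$ with exactly one strip-$a$-neighbor to $a$ (which attaches a new leaf). Both operations preserve [Z1]--[Z3], and since every configuration satisfying [Z1]--[Z3] has $P_b$ supported in at most two rows whereas every element of $\mathcal{B}_\Gamma$ has $P_b$ spanning at least three rows, the condition $\omega_{n+1} \notin \mathcal{B}_\Gamma$ is automatic.

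For (⇐), given $\eta$ satisfying [Z1]--[Z3] with tree size $t = |G_a(\eta)|$, induct on $t$. If $t = 1$, say $G_a(\eta) = \{v\} \subset \mathbb{T}_K \times \{\ell, \ell+1\}$, set $\omega_0 := \xi_{\ell, 2}^{a,b} \in \mathcal{R}_2^{a,b}$ and $\omega_1 := \omega_0^{v, a} = \eta$; then $\omega_1 \in \mathcal{Q}_1^{a,b}$ lies outside $\mathcal{B}_\Gamma$ (since $1 \notin \llbracket 2, L-3 \rrbracket$) and $H(\omega_1) = \Gamma$, so $\eta \in \mathcal{Z}^{a,b}$. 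If $t \ge 2$, choose any leaf $v$ of the tree $G_a(\eta)$ and set $\eta' := \eta^{v, b}$, whose $G_a(\eta')$ is a tree of size $t-1$; by the inductive hypothesis there exists a qualifying path from $\mathcal{R}_2^{a,b}$ to $\eta'$, and appending the flip $\eta' \to \eta$ (which has $\Delta H = 0$ by $d(v) = 1$ and lands outside $\mathcal{B}_\Gamma$ by the two-row argument) completes the construction. The case $K = L$, in which the strip may instead be vertical, follows by the same argument after applying the transpose operator $\Theta$ from Definition \ref{d_canreg}.

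The main obstacle will be ensuring completeness of the case analysis in the inductive step of (⇒): one must enumerate every single-spin flip from a configuration satisfying [Z1]--[Z3] and confirm that those remaining at $H = \Gamma$ and outside $\mathcal{B}_\Gamma$ are precisely the leaf-add and leaf-remove moves. This is made tractable by the rigid geometry of the two-row strip, in which every vertex has exactly three in-strip neighbors and a single out-of-strip neighbor (necessarily of spin $a$), so that $\Delta H$ depends only on the integer $d(v) \in \{0,1,2,3\}$, and only the value $d(v) = 1$ yields $\Delta H = 0$.
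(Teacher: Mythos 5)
Your architecture coincides with the paper's: the forward direction is proved by induction along the realizing path with a local edge-counting argument at the update site, and the converse by peeling leaves off $G_a(\eta)$ to build an explicit path down to $\xi_{\ell,2}^{a,b}$ (the paper enumerates $G_a(\eta)=\{x_1,\dots,x_m\}$ so that every suffix is a tree and flips the $x_i$ to $b$ one at a time, then reverses — the same leaf-by-leaf construction). Your converse direction and your base step are essentially complete.

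The genuine gap is in the inductive step of the forward direction, and it concerns precisely the condition [Z1] you are trying to propagate. Your identity $H(\eta^{v,a})-H(\eta^{v,b})=2-2d(v)$ only compares the two values $a$ and $b$ at $v$; for $q\ge 3$ the MH dynamics may also update $v$ to a third spin $c\notin\{a,b\}$, which is exactly the move that would destroy [Z1]. For such a flip the energy change equals $\deg(v)$ minus the number of edges at $v$ that were already mismatched, so it is $0$ precisely when $v$ is \emph{isolated} (all four neighbors carry a spin different from $\eta(v)$). Hence your claim that "the only moves preserving the level $H=\Gamma$ are flips inside the strip with $d(v)=1$" presupposes that $\omega_n$ has no isolated site, and you never establish this; your closing remark that $\Delta H$ "depends only on $d(v)\in\{0,1,2,3\}$" is false for flips to a third spin. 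This case is not vacuous: a $b$-site $(k,\ell)$ of the strip is isolated exactly when all three of its ladder-neighbors lie in $G_a(\omega_n)$, and a sub-tree can wrap around a single $b$-site in this way (e.g. the U-shaped path on $(k\pm1,\ell)$, $(k\pm1,\ell+1)$, $(k,\ell+1)$). The paper devotes an explicit step to excluding third-spin updates via a no-isolated-spin argument tied to the tree structure of $G_a(\omega_n)$; your write-up must confront this case rather than fold it into the $a\leftrightarrow b$ bookkeeping. Two smaller imprecisions in the same vein: out-of-strip flips to a third spin give $\Delta H=+3$, not only $+2$ or $+4$ (positivity is all you need, but state it correctly), and in the base step you should note that third-spin flips from $\mathcal{R}_2^{a,b}$ cost $+3$ or $+4$ and so cannot produce $\omega_1$.
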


We prove this statement in Section \ref{secB.3}.
\begin{defn}
\label{d_Zlabdef}Fix $a,\,b\in S$. For each $\xi_{\ell,\,2}^{a,\,b}\in\mathcal{R}_{2}^{a,\,b}$,
$\ell\in\mathbb{T}_{L}$ (and also $\Theta(\xi_{\ell,\,2}^{a,\,b})\in\mathcal{R}_{2}^{a,\,b}$
if $K=L$), Proposition \ref{p_Zabtree} implies that there exists
a subset of configurations in $\mathcal{Z}^{a,\,b}$ connected to
$\xi_{\ell,\,2}^{a,\,b}$ with tree structures on $\mathbb{T}_{K}\times\{\ell,\,\ell+1\}$
(or $\{\ell,\,\ell+1\}\times\mathbb{T}_{K}$ for the case of $\Theta(\xi_{\ell,\,2}^{a,\,b})\in\mathcal{R}_{2}^{a,\,b}$).
We will denote this subset by $\mathcal{Z}_{\ell}^{a,\,b}\subset\mathcal{Z}^{a,\,b}$
(or $\Theta(\mathcal{Z}_{\ell}^{a,\,b})\subset\mathcal{Z}^{a,\,b}$
for the case of $\Theta(\xi_{\ell,\,2}^{a,\,b})\in\mathcal{R}_{2}^{a,\,b}$).
Thus, we have the following decomposition of $\mathcal{Z}^{a,\,b}$:
\begin{equation}
\mathcal{Z}^{a,\,b}=\begin{cases}
\bigcup_{\ell\in\mathbb{T}_{L}}\mathcal{Z}_{\ell}^{a,\,b} & \text{if }K<L\;,\\
\bigcup_{\ell\in\mathbb{T}_{L}}\mathcal{Z}_{\ell}^{a,\,b}\cup\bigcup_{\ell\in\mathbb{T}_{L}}\Theta(\mathcal{Z}_{\ell}^{a,\,b}) & \text{if }K=L\;.
\end{cases}\label{e_Zabdecomp}
\end{equation}
Moreover, it is clear that each copy of $\mathcal{Z}_{\ell}^{a,\,b}$
(or $\Theta(\mathcal{Z}_{\ell}^{a,\,b})$ if $K=L$) has the same
structure. This structure will be further investigated in the next
subsection.
\end{defn}

\subsubsection*{Decomposition of $\mathcal{E}^{a}$}

We write 
\begin{equation}
\mathcal{D}^{a}:=\widehat{\mathcal{N}}(\mathbf{a};\,\mathcal{Z}^{a})\;,\label{e_Dadef}
\end{equation}
where $\mathcal{D}$ stands for dead-ends. Indeed, the set $\mathcal{D}^{a}\setminus\mathcal{N}(\mathbf{a})$
consists of the dead-end configurations with energy $\Gamma$ (cf.
caption of Figure \ref{fig6.3}). We now classify the edge typical
configurations.
\begin{prop}
\label{p_Eadecomp}For $a\in S$, we have the following decomposition
of $\mathcal{E}^{a}$:
\begin{equation}
\mathcal{E}^{a}=\mathcal{D}^{a}\cup\mathcal{Z}^{a}\cup\mathcal{R}^{a}\;.\label{e_Eadecomp}
\end{equation}
\end{prop}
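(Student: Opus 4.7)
The plan is to verify both inclusions of the claimed identity $\mathcal{E}^a = \mathcal{D}^a \cup \mathcal{Z}^a \cup \mathcal{R}^a$. Since \eqref{e_ZaRaEa} already gives $\mathcal{Z}^a \cup \mathcal{R}^a \subseteq \mathcal{E}^a$, the remaining content is $\mathcal{D}^a \subseteq \mathcal{E}^a$ in one direction and $\mathcal{E}^a \setminus (\mathcal{Z}^a \cup \mathcal{R}^a) \subseteq \mathcal{D}^a$ in the other. Both are handled by combining a local spin-flip energy-change computation with the structural results of Propositions \ref{p_typprop} and \ref{p_Zabtree}.

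For $\mathcal{D}^a \subseteq \mathcal{E}^a$, I fix $\eta \in \mathcal{D}^a$ together with a witness $\Gamma$-path $\omega = (\omega_n)_{n=0}^{N}$ from $\mathbf{a}$ in $(\mathcal{Z}^a)^c$ and argue that $\omega$ automatically avoids $\mathcal{B}_\Gamma$. Supposing $m$ is the first entry into $\mathcal{B}_\Gamma$, a direct energy-change computation at $\omega_m = \xi_{\ell, v; k, h}^{a, b, \pm} \in \mathcal{Q}_v^{a,b}$ shows that any $\Gamma$-admissible neighbor outside $\mathcal{B}_\Gamma$ must lie in $\mathcal{R}_v^{a, b} \cup \mathcal{R}_{v+1}^{a, b}$. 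Since the initial segment $\omega|_{\llbracket 0, m-1 \rrbracket}$ realizes $\omega_{m-1} \in \mathcal{E}^a$, Proposition \ref{p_typprop}-(2) narrows this to $\omega_{m-1} \in \mathcal{R}_2^{a,b}$. A second energy-change computation at $\omega_{m-1} = \xi_{\ell, 2}^{a,b}$ shows that its $\Gamma$-admissible neighbors all lie in $\mathcal{Q}_2^{a, b} \subseteq \mathcal{B}_\Gamma$ or in $\mathcal{Z}^{a, b}$; hence $\omega_{m-2}$, which exists since $m \ge 2$ (the case $m = 1$ being ruled out by $\mathbf{a}$ not being adjacent to any configuration of energy $\Gamma$), violates either the minimality of $m$ or the avoidance of $\mathcal{Z}^a$.

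For $\mathcal{E}^a \setminus (\mathcal{Z}^a \cup \mathcal{R}^a) \subseteq \mathcal{D}^a$, I fix $\eta$ in this set with a witness $\Gamma$-path $\omega$ in $\mathcal{B}_\Gamma^c$. If $\omega$ never visits $\mathcal{Z}^a$, it itself witnesses $\eta \in \mathcal{D}^a$. Otherwise let $m^*$ be the last visit, so $m^* < N$. A local analysis of the $\mathcal{Z}^{a,b}$-configuration $\omega_{m^*}$ via Proposition \ref{p_Zabtree} reveals that the exit $\omega_{m^*+1}$ is of one of two types: (i) a member of $\mathcal{R}_2^{a, b} \subseteq \mathcal{R}^a$, obtained by flipping the sole vertex of a singleton $a$-tree; or (ii) a configuration whose $a$-region inside the two-row ladder contains a cycle, obtained by flipping a $b$-spin with at least two tree-neighbors inside the ladder. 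In case (i), reapplying the argument from the previous paragraph at $\omega_{m^*+1}$ forces $m^* + 1 = N$ and $\eta \in \mathcal{R}^a$, contradicting the hypothesis. In case (ii), I construct an explicit $\Gamma$-path from $\mathbf{a}$ to $\omega_{m^*+1}$ in $(\mathcal{Z}^a)^c$ by growing the $b$-cluster inside the ladder one spin at a time in an order that keeps the running $a$-region cyclic throughout (and hence never a tree, i.e., never in $\mathcal{Z}^{a, b}$); concatenating this with $\omega|_{\llbracket m^* + 1, N \rrbracket}$, which avoids $\mathcal{Z}^a$ by the choice of $m^*$, yields the required witness for $\eta \in \mathcal{D}^a$.

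The principal obstacle is the explicit construction in case (ii): for every cycle-type exit configuration one must produce a spin-flip ordering from $\mathbf{a}$ that both keeps the running energy strictly below $\Gamma$ at every step and maintains the cyclicity of the $a$-component inside the two-row ladder throughout. This requires a case analysis based on the topology of the cycle (a row-wrapping cycle versus a small polygonal cycle) together with an expansion-type algorithm in the spirit of \cite[Proposition 2.3]{NZ}, adapted to fill one row first and then thread the second row around the target cycle so that no intermediate configuration has a simply-connected tree-shaped $a$-region in the ladder strip.
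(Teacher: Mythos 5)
Your overall architecture (a double inclusion, with a first-entry argument for $\mathcal{D}^{a}\subseteq\mathcal{E}^{a}$ and a last-exit argument for $\mathcal{E}^{a}\setminus(\mathcal{Z}^{a}\cup\mathcal{R}^{a})\subseteq\mathcal{D}^{a}$) is workable, and the first inclusion together with your case (i) is essentially sound. The genuine gap is case (ii). The construction you sketch is internally inconsistent: to ``fill one row first and then thread the second row'' you must pass through configurations consisting of a full $b$-row plus a protuberance in the adjacent row, i.e.\ configurations of the form $\xi_{\ell,1;\,k,\,h}^{a,\,b,\,+}\in\mathcal{Q}_{1}^{a,\,b}$. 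These have energy exactly $2K+2=\Gamma$ (not strictly below $\Gamma$), and their $a$-region inside the ladder is a path, hence a tree, so by Proposition \ref{p_Zabtree} they lie in $\mathcal{Z}^{a,\,b}$ --- precisely the set your path is required to avoid. So the algorithm as described violates both of your own stated requirements, and it is not clear how to repair it uniformly over all cycle-type targets. A secondary weakness of redoing the exit analysis from scratch is that you only consider flips of a $b$-spin to $a$; one must also rule out (or absorb) energy-preserving flips to a third spin, which is exactly what Lemma \ref{l_Zab}-(2) is there to package.

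The good news is that case (ii) needs no construction at all. Your cycle-type exit configuration has energy at most $\Gamma-2$, and by Proposition \ref{p_lowE} (equivalently, by Lemma \ref{l_Zab}-(2), which you never invoke) it cannot be of type \textbf{(L1)}, while types \textbf{(L2)} and \textbf{(L3)} both place it in $\mathcal{N}(\mathbf{a})$. Now $\mathcal{N}(\mathbf{a})\subseteq\mathcal{D}^{a}$ is automatic: any witness path for membership in $\mathcal{N}(\mathbf{a})$ has height strictly below $\Gamma$, hence by \eqref{e_MHheight} consists of configurations of energy strictly below $\Gamma$, whereas every configuration of $\mathcal{Z}^{a}$ has energy exactly $\Gamma$; such a path therefore avoids $\mathcal{Z}^{a}$ by itself. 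Concatenating it with $\omega|_{\llbracket m^{*}+1,\,N\rrbracket}$ closes case (ii) with no new combinatorics. This is in effect what the paper does, though organized differently: it first reduces $\mathcal{E}^{a}$ to $\widehat{\mathcal{N}}(\mathbf{a};\,\mathcal{Z}^{a}\cup\mathcal{R}^{a}\cup\mathcal{B}_{\Gamma})\cup\mathcal{Z}^{a}\cup\mathcal{R}^{a}$ via the decomposition Lemma \ref{l_nhddc} together with the already-proved exit lemmas (Lemma \ref{l_typ1}-(1) and Lemma \ref{l_Zab}), and then identifies the first set with $\mathcal{D}^{a}$ by a first-visit contradiction --- never constructing a new path. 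You should restructure case (ii) along these lines rather than attempt the explicit expansion algorithm.
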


This proposition is also proved in Section \ref{secB.3}.

\subsection{\label{sec6.5}Graph structure of edge typical configurations}

We consistently refer to Figure \ref{fig6.5} for an illustration
of the notions constructed in this subsection.

\begin{figure}
\includegraphics[width=14.5cm]{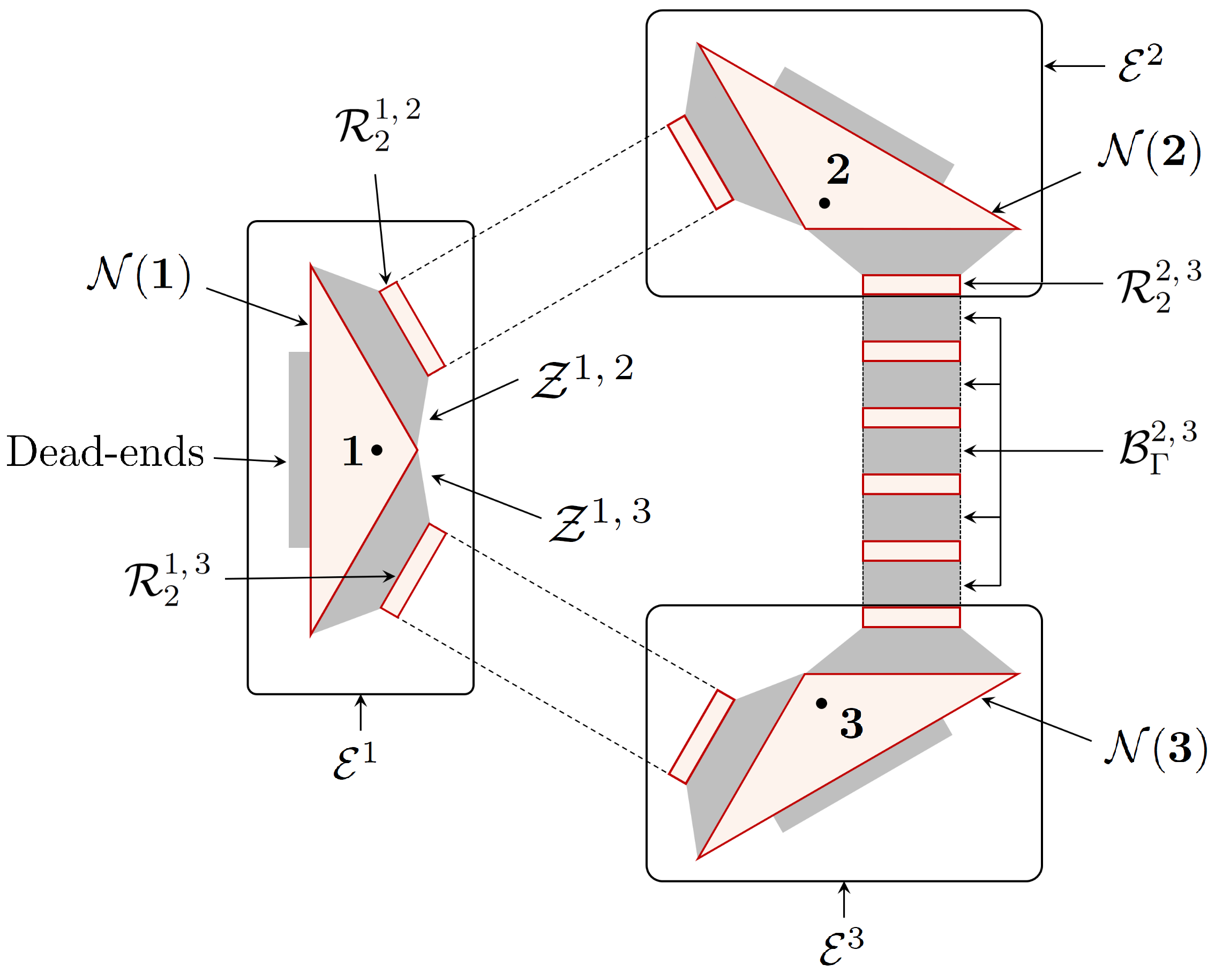}\caption{\label{fig6.5}\textbf{Structure of edge typical configurations.}
Let $S=\{1,\,2,\,3\}$ and $K<L$. Note that the set $\mathcal{E}^{1}$
consists of six parts: $\mathcal{N}(\mathbf{1})$, the dead-ends attached
to $\mathcal{N}(\mathbf{1})$ (cf. caption of Figure \ref{fig6.3}),
$\mathcal{R}_{2}^{1,\,2}$, $\mathcal{Z}^{1,\,2}$ (gray region of
equilateral-trapezoid shape), and finally two more similar collections
$\mathcal{R}_{2}^{1,\,3}$ and $\mathcal{Z}^{1,\,3}$.}
\end{figure}

Fix a spin $a\in S$. The decomposition given in Proposition \ref{p_Eadecomp}
can be alternatively expressed as
\begin{equation}
\mathcal{E}^{a}=\mathcal{D}^{a}\cup\bigcup_{b\in S\setminus\{a\}}[\mathcal{Z}^{a,\,b}\cup\mathcal{R}_{2}^{a,\,b}]\;.\label{e_Eadecomp2}
\end{equation}
Moreover, according to \eqref{e_Zabdecomp}, $\mathcal{E}^{a}$ can
be further expressed as
\begin{equation}
\begin{cases}
\mathcal{D}^{a}\cup\bigcup_{b\in S\setminus\{a\}}\bigcup_{\ell\in\mathbb{T}_{L}}[\mathcal{Z}_{\ell}^{a,\,b}\cup\{\xi_{\ell,\,2}^{a,\,b}\}] & \text{if }K<L\;,\\
\mathcal{D}^{a}\cup\bigcup_{b\in S\setminus\{a\}}\bigcup_{\ell\in\mathbb{T}_{L}}[\mathcal{Z}_{\ell}^{a,\,b}\cup\{\xi_{\ell,\,2}^{a,\,b}\}]\cup\bigcup_{b\in S\setminus\{a\}}\bigcup_{\ell\in\mathbb{T}_{L}}[\Theta(\mathcal{Z}_{\ell}^{a,\,b})\cup\{\Theta(\xi_{\ell,\,2}^{a,\,b})\}] & \text{if }K=L\;.
\end{cases}\label{e_Eadecomp3}
\end{equation}
We now construct a graph and a Markov chain describing the behavior
of the MH dynamics on $\mathcal{E}^{a}$.
\begin{defn}
\label{d_EAMc}We introduce a graph structure and a Markov chain thereon.
Fix $b\ne a$, $\ell\in\mathbb{T}_{L}$ and consider the set $\mathcal{N}(\mathbf{a})\cup\mathcal{Z}_{\ell}^{a,\,b}\cup\{\xi_{\ell,\,2}^{a,\,b}\}$.
\begin{itemize}
\item \textbf{(Graph) }The vertex set $\mathscr{V}_{\ell}^{a,\,b}$ is given
by 
\begin{equation}
\mathscr{V}_{\ell}^{a,\,b}:=\{\mathbf{a}\}\cup\mathcal{Z}_{\ell}^{a,\,b}\cup\{\xi_{\ell,\,2}^{a,\,b}\}\;.\label{e_Vab}
\end{equation}
Then, the edge set $\mathscr{E}_{\ell}^{a,\,b}$ is defined as follows:
$\{\eta,\,\eta'\}\in\mathscr{E}_{\ell}^{a,\,b}$ for $\eta,\,\eta'\in\mathscr{V}_{\ell}^{a,\,b}$
if and only if either $\eta,\,\eta'\ne\mathbf{a}$ and $\eta\sim\eta'$,
or $\eta\in\mathcal{Z}_{\ell}^{a,\,b}$, $\eta'=\mathbf{a}$ and $\eta\sim\xi$
for some $\xi\in\mathcal{N}(\mathbf{a})$. By definition, we can understand
$\mathbf{a}\in\mathscr{V}_{\ell}^{a,\,b}$ as a collapsed state representing
the neighborhood $\mathcal{N}(\mathbf{a})$. By symmetry, the graph
structure $\mathscr{G}_{\ell}^{a,\,b}:=(\mathscr{V}_{\ell}^{a,\,b},\,\mathscr{E}_{\ell}^{a,\,b})$
does not depend on $a\in S$, $b\ne a$ and $\ell\in\mathbb{T}_{L}$
(or even for the collection $\{\mathbf{a}\}\cup\Theta(\mathcal{Z}_{\ell}^{a,\,b})\cup\{\Theta(\xi_{\ell,\,2}^{a,\,b})\}$
if $K=L$). Thus, if there is no risk of confusion, we may omit the
subscript and superscript so that we simply write $\mathscr{G}=(\mathscr{V},\,\mathscr{E})$.
\item \textbf{(Markov chain)} Define $\mathfrak{r}:\mathscr{V}\times\mathscr{V}\rightarrow[0,\,\infty)$
by setting 
\begin{equation}
\mathfrak{r}(\eta,\,\eta'):=\begin{cases}
1 & \text{if }\eta,\,\eta'\ne\mathbf{a}\;,\\
|\{\xi\in\mathcal{N}(\mathbf{a}):\xi\sim\eta'\}| & \text{if }\eta=\mathbf{a}\text{ and }\eta'\in\mathcal{Z}^{a,\,b}\;,\\
|\{\xi\in\mathcal{N}(\mathbf{a}):\xi\sim\eta\}| & \text{if }\eta\in\mathcal{Z}^{a,\,b}\text{ and }\eta'=\mathbf{a}\;,
\end{cases}\label{e_EaMC}
\end{equation}
for $\{\eta,\,\eta'\}\in\mathscr{E}$, and by setting $\mathfrak{r}(\eta,\,\eta')=0$
otherwise. Define $(\mathfrak{Z}(t))_{t\ge0}$ as the continuous-time
Markov chain on $\mathscr{V}$ with rate $\mathfrak{r}(\cdot,\,\cdot)$.
Since the rate function $\mathfrak{r}(\cdot,\,\cdot)$ is symmetric
by its definition, the Markov chain $\mathfrak{Z}(\cdot)$ is reversible
with respect to its invariant distribution, which is the uniform distribution
on $\mathscr{V}$.
\end{itemize}
\end{defn}

We next argue that the process $\mathfrak{Z}(\cdot)$ defined above
approximates the MH dynamics on the subset $\mathcal{N}(\mathbf{a})\cup\mathcal{Z}_{\ell}^{a,\,b}\cup\{\xi_{\ell,\,2}^{a,\,b}\}$
for each selection $b\in S\setminus\{a\}$ and $\ell\in\mathbb{T}_{L}$.
\begin{prop}
\label{p_proj}For each $b\ne a$ and $\ell\in\mathbb{T}_{L}$, define
a projection map $\Pi_{\ell}^{a,\,b}:\mathcal{N}(\mathbf{a})\cup\mathcal{Z}_{\ell}^{a,\,b}\cup\{\xi_{\ell,\,2}^{a,\,b}\}\rightarrow\mathscr{V}$
by
\[
\Pi_{\ell}^{a,\,b}(\eta)=\begin{cases}
\mathbf{a} & \text{if }\eta\in\mathcal{N}(\mathbf{a})\;,\\
\eta & \text{if }\eta\in\mathcal{Z}_{\ell}^{a,\,b}\cup\{\xi_{\ell,\,2}^{a,\,b}\}\;.
\end{cases}
\]
\begin{enumerate}
\item For $\eta_{1},\,\eta_{2}\ne\mathbf{a}$, we have
\begin{equation}
\frac{1}{q}e^{-\Gamma\beta}\mathfrak{r}(\Pi_{\ell}^{a,\,b}(\eta_{1}),\,\Pi_{\ell}^{a,\,b}(\eta_{2}))=(1+o(1))\times\mu_{\beta}(\eta_{1})r_{\beta}(\eta_{1},\,\eta_{2})\;.\label{e_proj1}
\end{equation}
\item For $\eta_{1}\ne\mathbf{a}$, we have
\begin{equation}
\frac{1}{q}e^{-\Gamma\beta}\mathfrak{r}(\Pi_{\ell}^{a,\,b}(\eta_{1}),\,\mathbf{a})=(1+o(1))\times\sum_{\xi\in\mathcal{N}(\mathbf{a})}\mu_{\beta}(\eta_{1})r_{\beta}(\eta_{1},\,\xi)\;.\label{e_proj2}
\end{equation}
\end{enumerate}
In the case of $K=L$, we define $\Theta(\Pi_{\ell}^{a,\,b}):\mathcal{N}(\mathbf{a})\cup\Theta(\mathcal{Z}_{\ell}^{a,\,b})\cup\{\Theta(\xi_{\ell,\,2}^{a,\,b})\}\rightarrow\mathscr{V}$
in a similar way. Then, the same results hold as well.
\end{prop}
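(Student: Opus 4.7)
The plan is to prove both identities by direct substitution, built on three ingredients: the identity \eqref{e_Hcdt}, which for the MH dynamics specializes via \eqref{e_MHheight} to $\mu_\beta(\eta)r_\beta(\eta,\zeta) = Z_\beta^{-1}e^{-\beta\max\{H(\eta),H(\zeta)\}}$; the asymptotic $Z_\beta^{-1} = (1+o(1))/q$ from Theorem \ref{t_mu}; and the fact---established as a preliminary step below---that every transition charged by either side of \eqref{e_proj1} or \eqref{e_proj2} takes place at communication height exactly $\Gamma$. Once that is in place, both sides become rational multiples of $(1+o(1))e^{-\beta\Gamma}/q$, so the proof reduces to matching combinatorial multiplicities.

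The preliminary energy computation shows that every $\eta \in \mathcal{Z}_\ell^{a,b}$ satisfies $H(\eta) = \Gamma$. Writing $n := |G_a(\eta)| \ge 1$, Proposition \ref{p_Zabtree} identifies $G_a(\eta)$ as a sub-tree of the ladder $\mathbb{T}_K \times \{\ell,\ell+1\}$ with $n-1$ edges. The perimeter of the $b$-cluster then decomposes into $(n+2)$ edges across the tree--bulk interface inside the ladder plus $(2K-n)$ edges between the strip and its exterior, yielding $H(\eta) = (n+2)+(2K-n) = 2K+2 = \Gamma$. By contrast $H(\xi_{\ell,2}^{a,b}) = 2K = \Gamma - 2$.

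For part (1), I split on whether $\eta_1 \sim \eta_2$. Non-adjacency forces $r_\beta(\eta_1,\eta_2) = 0$, and by the construction of $\mathscr{E}$ in Definition \ref{d_EAMc} also $\mathfrak{r}(\Pi_\ell^{a,b}(\eta_1),\Pi_\ell^{a,b}(\eta_2)) = 0$, so both sides vanish. When $\eta_1 \sim \eta_2$, the preceding computation gives $\max\{H(\eta_1),H(\eta_2)\} = \Gamma$ in every subcase (both in $\mathcal{Z}_\ell^{a,b}$, or one equal to $\xi_{\ell,2}^{a,b}$ and the other in $\mathcal{Z}_\ell^{a,b}$), so \eqref{e_Hcdt} gives $\mu_\beta(\eta_1)r_\beta(\eta_1,\eta_2) = (1+o(1))e^{-\beta\Gamma}/q$, matching $\mathfrak{r}(\Pi_\ell^{a,b}(\eta_1),\Pi_\ell^{a,b}(\eta_2)) = 1$ from \eqref{e_EaMC}.

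For part (2), I first dispose of $\eta_1 = \xi_{\ell,2}^{a,b}$ by enumerating its single-spin-flip neighbors: each such neighbor either attaches a unit protuberance, landing in $\mathcal{Q}_2^{a,b} \subset \mathcal{B}_\Gamma$, or deletes one $b$-spin from the strip, landing in $\mathcal{Z}_\ell^{a,b}$; in both cases the neighbor has energy $\Gamma$ and hence cannot lie in $\mathcal{N}(\mathbf{a})$ by Remark \ref{r_nhd}. So the sum vanishes, as does $\mathfrak{r}(\xi_{\ell,2}^{a,b},\mathbf{a})$, the latter because the graph-edge definition demands $\eta \in \mathcal{Z}_\ell^{a,b}$. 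For $\eta_1 \in \mathcal{Z}_\ell^{a,b}$, each $\xi \in \mathcal{N}(\mathbf{a})$ with $\xi \sim \eta_1$ satisfies $H(\xi) < \Gamma = H(\eta_1)$ by Remark \ref{r_nhd}, so $H(\eta_1,\xi) = \Gamma$ and $\mu_\beta(\eta_1)r_\beta(\eta_1,\xi) = (1+o(1))e^{-\beta\Gamma}/q$; summing over these neighbors produces \eqref{e_proj2} via $\mathfrak{r}(\eta_1,\mathbf{a}) = |\{\xi \in \mathcal{N}(\mathbf{a}) : \xi \sim \eta_1\}|$ from \eqref{e_EaMC}. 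The case $K = L$ follows verbatim from the $\Theta$-invariance of $H$ and the jump rates. The one mildly delicate check, rather than a genuine obstacle, is the neighbor enumeration for $\xi_{\ell,2}^{a,b}$; everything else is bookkeeping around heights and multiplicities.
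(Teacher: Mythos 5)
Your proposal is correct and follows essentially the same route as the paper: both sides of each identity reduce to $(1+o(1))$ times a combinatorial multiplicity times $e^{-\Gamma\beta}/q$, using \eqref{e_cdtMH}, $Z_\beta=q+o(1)$, and the fact that every relevant transition occurs at height exactly $\Gamma$ (equivalently, $\min\{\mu_\beta(\eta_1),\mu_\beta(\eta_2)\}=Z_\beta^{-1}e^{-\Gamma\beta}$). Your explicit perimeter count showing $H(\eta)=\Gamma$ on $\mathcal{Z}_\ell^{a,b}$ and the neighbor enumeration for $\xi_{\ell,2}^{a,b}$ merely make explicit what the paper absorbs into the definition \eqref{e_Zabdef} and the remark that both sides vanish when $\{\eta_1,\mathbf{a}\}\notin\mathscr{E}$.
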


We prove this approximation statement in Section \ref{secB.4}.

In view of the previous proposition, we can construct a test function
on $\mathcal{E}^{a}$ in terms of the process $\mathfrak{Z}(\cdot)$.
To this end, we have to investigate the potential-theoretic objects
of the process $\mathfrak{Z}(\cdot)$. Denote by $\mathfrak{f}_{\cdot,\,\cdot}(\cdot)$,
$\mathfrak{cap}(\cdot,\,\cdot)$ and $\mathfrak{D}(\cdot)$ the equilibrium
potential, capacity and Dirichlet form with respect to the Markov
chain $\mathfrak{Z}(\cdot)$, respectively (we define them in the
same way as in Section \ref{sec4.1}). We also denote by $\mathfrak{L}$
the infinitesimal generator associated with the process $\mathfrak{Z}(\cdot)$
which acts on functions $F:\mathfrak{\mathscr{V}}\rightarrow\mathbb{R}$
as
\begin{equation}
(\mathfrak{L}F)(\eta)=\sum_{\eta'\in\mathscr{V}:\,\{\eta,\,\eta'\}\in\mathscr{E}}\mathfrak{r}(\eta,\,\eta')\{F(\eta')-F(\eta)\}\;.\label{e_genZ}
\end{equation}
Then, the constant $\mathfrak{e}_{0}$ that appears in \eqref{e_edef}
for the MH dynamics is defined by

\begin{equation}
\mathfrak{e}_{0}:=\frac{1}{|\mathscr{V}|\cdot\mathfrak{cap}(\mathbf{a},\,\xi_{\ell,\,2}^{a,\,b})}\;.\label{e_e0def}
\end{equation}
By the symmetry of the model, this constant $\mathfrak{e}_{0}$ does
not depend on $a$, $b$ or $\ell$. We emphasize that this constant
appears in the capacity estimate because the equilibrium potential
\begin{equation}
\mathfrak{f}:=\mathfrak{f}_{\mathbf{a},\,\xi_{\ell,2}^{a,b}}\label{e_eqpotMH}
\end{equation}
approximates that of the MH dynamics in $\mathcal{E}^{a}$ (cf. Proposition
\ref{p_proj}). Indeed, this function plays a significant role in
the construction of the test function in Section \ref{sec7}.

We conclude this section by showing that $\mathfrak{e}_{0}$ is indeed
small, so that asymptotically (in $K$) this constant has a negligible
effect on the constant $\kappa$ that appears in the EK law. 
\begin{prop}
\label{p_e0est}It holds that $\mathfrak{e}_{0}\le1$.
\end{prop}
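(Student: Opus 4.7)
The plan is to derive a lower bound $\mathfrak{cap}(\mathbf{a},\,\xi_{\ell,\,2}^{a,\,b}) \ge 1/|\mathscr{V}|$ by constructing an explicit unit flow from $\mathbf{a}$ to $\xi_{\ell,\,2}^{a,\,b}$ in the graph $\mathscr{G}$ and applying the Thomson principle; this is precisely the claim $\mathfrak{e}_{0}\le 1$. Since $\mathfrak{Z}(\cdot)$ is reversible with uniform invariant measure $\pi \equiv 1/|\mathscr{V}|$, every edge $\{\eta,\eta'\}\in\mathscr{E}$ has conductance $c(\eta,\eta') = \mathfrak{r}(\eta,\eta')/|\mathscr{V}|$, and Thomson gives $\mathfrak{cap}(\mathbf{a},\,\xi_{\ell,\,2}^{a,\,b}) \ge 1/\mathcal{I}(\phi)$ for any unit flow $\phi$, where $\mathcal{I}(\phi) := \sum_e \phi(e)^{2}/c(e)$. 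It therefore suffices to exhibit a unit flow with $\mathcal{I}(\phi) \le |\mathscr{V}|$.

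The flow I would build is spread equally over $K$ edge-disjoint paths, each of length $K$, indexed by the column where the protuberance starts. Concretely, for each $k \in \mathbb{T}_K$ and $j \in \llbracket 1,\, K-1 \rrbracket$, let $\eta_k^{(j)} \in \mathcal{X}$ be the configuration obtained from $\xi_{\ell,\,1}^{a,\,b}$ (the pre-regular configuration with a single full $b$-row at row $\ell$) by flipping to $b$ the $j$ consecutive sites $(k,\ell+1), (k+1,\ell+1), \ldots, (k+j-1,\ell+1)$ in row $\ell+1$. A direct count of $a$-$b$ bonds shows $H(\eta_k^{(j)}) = 2K+2 = \Gamma$, and since the $a$-sites of $\eta_k^{(j)}$ within the strip $\mathbb{T}_K \times \{\ell,\ell+1\}$ form a contiguous arc of length $K-j \ge 1$ in row $\ell+1$ (a sub-tree of the ladder), Proposition \ref{p_Zabtree} yields $\eta_k^{(j)} \in \mathcal{Z}_\ell^{a,\,b}$. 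Define the path
\[
\omega^{(k)} : \mathbf{a} \to \eta_k^{(1)} \to \eta_k^{(2)} \to \cdots \to \eta_k^{(K-1)} \to \xi_{\ell,\,2}^{a,\,b}.
\]
Consecutive intermediate vertices differ by a single spin flip, so every internal edge lies in $\mathscr{E}$ with rate $\mathfrak{r}=1$. For the initial edge $(\mathbf{a},\,\eta_k^{(1)})$, observe that $\xi_{\ell,\,1}^{a,\,b}$ is an MH-neighbor of $\eta_k^{(1)}$ and lies in $\mathcal{N}(\mathbf{a})$ via the standard row-growth path that adds $b$-spins one at a time along row $\ell$ (its maximal energy is $2K < \Gamma$), so $\mathfrak{r}(\mathbf{a},\,\eta_k^{(1)}) \ge 1$. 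The $K$ paths are pairwise edge-disjoint because the pair $(k,j)$ is recovered from $\eta_k^{(j)}$ (leftmost protuberance column and its length).

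Finally, routing flow $1/K$ along each $\omega^{(k)}$ produces a unit flow $\phi$ supported on $K^{2}$ edges, each of conductance at least $1/|\mathscr{V}|$, giving
\[
\mathcal{I}(\phi) = \sum_{k=1}^{K} \sum_{e \in \omega^{(k)}} \frac{(1/K)^{2}}{c(e)} \le K^{2} \cdot \frac{1/K^{2}}{1/|\mathscr{V}|} = |\mathscr{V}|,
\]
which finishes the argument. The appealing structural point is the exact cancellation between $K$ parallel paths and length $K$ per path, so that no quantitative estimate on the combinatorially complex cardinality $|\mathcal{Z}_\ell^{a,\,b}|$ is required. The only genuine work lies in the combinatorial verification that each $\eta_k^{(j)}$ satisfies \textup{[Z1]}--\textup{[Z3]} and that $\xi_{\ell,\,1}^{a,\,b}$ can be reached from $\mathbf{a}$ strictly below the barrier $\Gamma$; both facts are immediate from Proposition \ref{p_Zabtree} and the row-growth expansion, so I anticipate no serious obstacle.
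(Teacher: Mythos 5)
Your proof is correct and is essentially the paper's argument: you use exactly the same $K$ edge-disjoint paths (growing a protuberance of $b$-spins in row $\ell+1$ starting from each column $k$), the same observation that $\mathfrak{r}\ge 1$ on every edge of these paths, and the same $K\times K$ cancellation. The only difference is presentational: the paper lower-bounds $\mathfrak{D}(\mathfrak{f})$ directly by restricting to these edges and applying Cauchy--Schwarz along each path, whereas you phrase the identical estimate through its Thomson-principle dual by routing a unit flow of strength $1/K$ along each path.
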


Again, we give a nice proof in Section \ref{secB.4}.
\begin{rem}
\label{r_e0est}In fact, with a more refined argument, we can verify
that there exist two constants $C_{1},\,C_{2}>0$ such that $\frac{C_{1}}{K}\le\mathfrak{e}_{0}\le\frac{C_{2}}{K}$.
However, this level of precision is unnecessary in our logical structure
and thus we omit the proof.
\end{rem}

\section{\label{sec7}Test Functions: MH Dynamics}

Throughout the section, we will again consider the MH dynamics and
fix two disjoint, non-empty subsets $\mathcal{P}$ and $\mathcal{Q}$
of $\mathcal{S}$. The purpose of this section is to construct the
test function $\widetilde{h}=\widetilde{h}_{\mathcal{P},\,\mathcal{Q}}:\mathcal{X}\to\mathbb{R}$
and verify three conditions \eqref{e_eqpappr1}, \eqref{e_eqpappr2}
and \eqref{e_eqpappr3} for this test function. This will provide
the proof of Proposition \ref{p_eqpappr} and thus conclude our analysis
given in Section \ref{sec4} for the MH dynamics. Note that in this
case, the dynamics is reversible and thus we do not need to construct
$\widetilde{h}^{*}$.
\begin{notation}
\label{n_abc}Throughout this section, we always use the alphabets
$a,\,b,\,c$ to represent the spins subjected to $\mathbf{a}\in\mathcal{P}$,
$\mathbf{b}\in\mathcal{Q}$ and $\mathbf{c}\in\mathcal{S}\setminus(\mathcal{P}\cup\mathcal{Q})$,
respectively. If necessary, we also use $a',\,b',\,c'$ to represent
$\mathbf{a'}\in\mathcal{P}$, $\mathbf{b'}\in\mathcal{Q}$ and $\mathbf{c'}\in\mathcal{S}\setminus(\mathcal{P}\cup\mathcal{Q})$.
\end{notation}

\subsection{\label{sec7.1}Construction of $\widetilde{h}$}

In this subsection, we construct the desired test function $\widetilde{h}:\mathcal{X}\to\mathbb{R}$.
Before the explicit construction, we briefly explain the nature behind
such a description. On edge typical configurations, we let $\widetilde{h}$
be a proper rescaled function of $\mathfrak{f}$, which is the equilibrium
potential of the auxiliary dynamics defined in \eqref{e_eqpotMH}.
This construction is natural because Proposition \ref{p_proj} ensures
us that the auxiliary process successfully characterizes the behavior
of the original process on the collection $\mathcal{E}$ of edge typical
configurations. Next, on bulk typical configurations, we define $\widetilde{h}$
as a proper rescaling of the equilibrium potential of the symmetric
simple random walk on an one-dimensional line. This is because the
dynamics on the collection of bulk typical configurations is approximately
one-dimensional with symmetric simple transition rates due to the
simple geometry.
\begin{defn}[Test function]
\label{d_testf} We construct $\widetilde{h}:\mathcal{X}\rightarrow[0,\,1]$
on $\mathcal{E}$, $\mathcal{B}$ and $\mathcal{X}\setminus(\mathcal{E}\cup\mathcal{B})$
separately. Recall the constants defined in \eqref{e_bdef} and \eqref{e_edef}.
Note that we always use Notation \ref{n_abc}.
\begin{enumerate}
\item \textbf{Construction on $\mathcal{E}=\bigcup_{\mathbf{a}\in\mathcal{P}}\mathcal{E}^{a}\cup\bigcup_{\mathbf{b}\in\mathcal{Q}}\mathcal{E}^{b}\cup\bigcup_{\mathbf{c}\in\mathcal{S}\setminus(\mathcal{P}\cup\mathcal{Q})}\mathcal{E}^{c}$.}
\begin{itemize}
\item For $\eta\in\mathcal{E}^{a}$, we recall the decomposition \eqref{e_Eadecomp2}
of $\mathcal{E}^{a}$. We define
\begin{equation}
\widetilde{h}(\eta):=\begin{cases}
1 & \text{if }\eta\in\mathcal{D}^{a}\text{ or }\eta\in\mathcal{Z}^{a,\,a'}\cup\mathcal{R}_{2}^{a,\,a'}\text{ for }a'\ne a\;,\\
1-\frac{\mathfrak{e}}{\kappa}(1-\mathfrak{f}(\eta)) & \text{if }\eta\in\mathcal{Z}^{a,\,b}\cup\mathcal{R}_{2}^{a,\,b}\;,\\
1-\frac{|\mathcal{Q}|}{|\mathcal{P}|+|\mathcal{Q}|}\frac{\mathfrak{e}}{\kappa}(1-\mathfrak{f}(\eta)) & \text{if }\eta\in\mathcal{Z}^{a,\,c}\cup\mathcal{R}_{2}^{a,\,c}\;.
\end{cases}\label{e_testf1}
\end{equation}
\item For $\eta\in\mathcal{E}^{b}$, define
\begin{equation}
\widetilde{h}(\eta):=\begin{cases}
0 & \text{if }\eta\in\mathcal{D}^{b}\text{ or }\eta\in\mathcal{Z}^{b,\,b'}\cup\mathcal{R}_{2}^{b,\,b'}\text{ for }b'\ne b\;,\\
\frac{\mathfrak{e}}{\kappa}(1-\mathfrak{f}(\eta)) & \text{if }\eta\in\mathcal{Z}^{b,\,a}\cup\mathcal{R}_{2}^{b,\,a}\;,\\
\frac{|\mathcal{P}|}{|\mathcal{P}|+|\mathcal{Q}|}\frac{\mathfrak{e}}{\kappa}(1-\mathfrak{f}(\eta)) & \text{if }\eta\in\mathcal{Z}^{b,\,c}\cup\mathcal{R}_{2}^{b,\,c}\;.
\end{cases}\label{e_testf2}
\end{equation}
\item For $\eta\in\mathcal{E}^{c}$, define
\begin{equation}
\widetilde{h}(\eta):=\begin{cases}
\frac{|\mathcal{P}|}{|\mathcal{P}|+|\mathcal{Q}|} & \text{if }\eta\in\mathcal{D}^{c}\text{ or }\eta\in\mathcal{Z}^{c,\,c'}\cup\mathcal{R}_{2}^{c,\,c'}\text{ for }c'\ne c\;,\\
\frac{|\mathcal{P}|}{|\mathcal{P}|+|\mathcal{Q}|}+\frac{|\mathcal{Q}|}{|\mathcal{P}|+|\mathcal{Q}|}\frac{\mathfrak{e}}{\kappa}(1-\mathfrak{f}(\eta)) & \text{if }\eta\in\mathcal{Z}^{c,\,a}\cup\mathcal{R}_{2}^{c,\,a}\;,\\
\frac{|\mathcal{P}|}{|\mathcal{P}|+|\mathcal{Q}|}-\frac{|\mathcal{P}|}{|\mathcal{P}|+|\mathcal{Q}|}\frac{\mathfrak{e}}{\kappa}(1-\mathfrak{f}(\eta)) & \text{if }\eta\in\mathcal{Z}^{c,\,b}\cup\mathcal{R}_{2}^{c,\,b}\;.
\end{cases}\label{e_testf3}
\end{equation}
\end{itemize}
\item \textbf{Construction on $\mathcal{B}$.} Recall the decomposition
\eqref{e_Bab}.
\begin{itemize}
\item We set $\widetilde{h}\equiv1$ on $\mathcal{B}^{a,\,a'}$, $\widetilde{h}\equiv0$
on $\mathcal{B}^{b,\,b'}$ and $\widetilde{h}\equiv\frac{|\mathcal{P}|}{|\mathcal{P}|+|\mathcal{Q}|}$
on $\mathcal{B}^{c,\,c'}$.
\item For $\eta\in\mathcal{R}_{v}^{a,\,b}$ with $v\in\llbracket2,\,L-2\rrbracket$,
we set
\begin{equation}
\widetilde{h}(\eta):=\frac{1}{\kappa}\Big[\frac{L-2-v}{L-4}\mathfrak{b}+\mathfrak{e}\Big]\;.\label{e_testf4}
\end{equation}
For $\eta\in\mathcal{Q}_{v}^{a,\,b}$ with $v\in\llbracket2,\,L-3\rrbracket$,
we can write $\eta=\xi_{\ell,\,v;\,k,\,h}^{a,\,b,\,\pm}$ (or $\Theta(\xi_{\ell,\,v;\,k,\,h}^{a,\,b,\,\pm})$
if $K=L$) for some $\ell\in\mathbb{T}_{L}$, $k\in\mathbb{T}_{K}$
and $h\in\llbracket1,\,K-1\rrbracket$. For such $\eta$, we set
\begin{equation}
\widetilde{h}(\eta):=\frac{1}{\kappa}\Big[\frac{(K+2)(L-2-v)-(h+1)}{(K+2)(L-4)}\mathfrak{b}+\mathfrak{e}\Big]\;.\label{e_testf5}
\end{equation}
\item For $\eta\in\mathcal{R}_{v}^{a,\,c}$, $v\in\llbracket2,\,L-2\rrbracket$,
\[
\widetilde{h}(\eta):=\frac{|\mathcal{P}|}{|\mathcal{P}|+|\mathcal{Q}|}+\frac{|\mathcal{Q}|}{|\mathcal{P}|+|\mathcal{Q}|}\frac{1}{\kappa}\Big[\frac{L-2-v}{L-4}\mathfrak{b}+\mathfrak{e}\Big]\;.
\]
For $\eta\in\mathcal{Q}_{v}^{a,\,c}$, $v\in\llbracket2,\,L-3\rrbracket$,
similarly $\eta=\xi_{\ell,\,v;\,k,\,h}^{a,\,c,\,\pm}$ (or $\Theta(\xi_{\ell,\,v;\,k,\,h}^{a,\,c,\,\pm})$
if $K=L$) for some $\ell$, $k$ and $h$. For such $\eta$, we set
\[
\widetilde{h}(\eta):=\frac{|\mathcal{P}|}{|\mathcal{P}|+|\mathcal{Q}|}+\frac{|\mathcal{Q}|}{|\mathcal{P}|+|\mathcal{Q}|}\frac{1}{\kappa}\Big[\frac{(K+2)(L-2-v)-(h+1)}{(K+2)(L-4)}\mathfrak{b}+\mathfrak{e}\Big]\;.
\]
\item For $\eta\in\mathcal{R}_{v}^{c,\,b}$, $v\in\llbracket2,\,L-2\rrbracket$,
\[
\widetilde{h}(\eta):=\frac{|\mathcal{P}|}{|\mathcal{P}|+|\mathcal{Q}|}\frac{1}{\kappa}\Big[\frac{L-2-v}{L-4}\mathfrak{b}+\mathfrak{e}\Big]\;.
\]
For $\eta\in\mathcal{Q}_{v}^{c,\,b}$, $v\in\llbracket2,\,L-3\rrbracket$,
again, $\eta=\xi_{\ell,\,v;\,k,\,h}^{c,\,b,\,\pm}$ (or $\Theta(\xi_{\ell,\,v;\,k,\,h}^{c,\,b,\,\pm})$
if $K=L$) for some $\ell$, $k$ and $h$. We set
\[
\widetilde{h}(\eta):=\frac{|\mathcal{P}|}{|\mathcal{P}|+|\mathcal{Q}|}\frac{1}{\kappa}\Big[\frac{(K+2)(L-2-v)-(h+1)}{(K+2)(L-4)}\mathfrak{b}+\mathfrak{e}\Big]\;.
\]
\end{itemize}
\item \textbf{Construction on $\mathcal{X}\setminus(\mathcal{E}\cup\mathcal{B})$.}
We define $\widetilde{h}\equiv1$ on this set.
\end{enumerate}
\end{defn}

\begin{rem}
Since $\mathcal{E}\cap\mathcal{B}=\mathcal{R}_{2}$ by Proposition
\ref{p_typprop}-(2), we need to check that the constructions of $\widetilde{h}$
on $\mathcal{E}$ and on $\mathcal{B}$ agree with each other on this
intersection. This is immediate from our definition.
\end{rem}

\begin{rem}
\label{r_deadconst}According to Definition \ref{d_testf}, it is
inferred that
\begin{itemize}
\item $\widetilde{h}(\eta)=1$ for all $\eta\in\mathcal{D}^{a}$, $\mathbf{a}\in\mathcal{P}$,
\item $\widetilde{h}(\eta)=0$ for all $\eta\in\mathcal{D}^{b}$, $\mathbf{b}\in\mathcal{Q}$,
\item $\widetilde{h}(\eta)=\frac{|\mathcal{P}|}{|\mathcal{P}|+|\mathcal{Q}|}$
for all $\eta\in\mathcal{D}^{c}$, $\mathbf{c}\in\mathcal{S}\setminus(\mathcal{P}\cup\mathcal{Q})$.
\end{itemize}
\end{rem}

\begin{rem}
\label{r_eqpappr3}It is clear from our definition that the test function
$\widetilde{h}$ fulfills the requirement \eqref{e_eqpappr3}. Hence,
it remains to check \eqref{e_eqpappr1} and \eqref{e_eqpappr2}. 
\end{rem}

\subsection{\label{sec7.2}Dirichlet energy of the test function}

We first check \eqref{e_eqpappr2}.
\begin{prop}
\label{p_testf}The function $\widetilde{h}$ constructed in Definition
\ref{d_testf} satisfies \eqref{e_eqpappr2}, i.e., 
\[
D_{\beta}(\widetilde{h})=(1+o(1))\cdot\frac{|\mathcal{P}||\mathcal{Q}|}{\kappa(|\mathcal{P}|+|\mathcal{Q}|)}e^{-\Gamma\beta}\;.
\]
\end{prop}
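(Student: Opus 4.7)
The plan is to decompose $D_{\beta}(\widetilde{h})$ into localized pieces supported on the bulk and edge regions, and evaluate each piece using either direct enumeration or the auxiliary Markov chain $\mathfrak{Z}$. First I would observe that $\widetilde{h} \equiv 1$ on $\mathcal{X} \setminus \widehat{\mathcal{N}}(\mathcal{S}) = \mathcal{X} \setminus (\mathcal{E} \cup \mathcal{B})$ (Proposition \ref{p_typprop}-(4) and Definition \ref{d_testf}), so that edges outside $\widehat{\mathcal{N}}(\mathcal{S})$ contribute nothing, while edges crossing out of $\widehat{\mathcal{N}}(\mathcal{S})$ satisfy $H(\eta,\xi) > \Gamma$ by Lemma \ref{l_Nhatbdry} and contribute $o(e^{-\Gamma\beta})$ via \eqref{e_Hcdt}. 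Internal edges of the dead-end sets $\mathcal{D}^{a}$ are silenced by Remark \ref{r_deadconst}. The overlap $\mathcal{B} \cap \mathcal{E} = \bigcup_{a \ne b}\mathcal{R}_{2}^{a,b}$ (Proposition \ref{p_typprop}-(2)) contains no pair of adjacent configurations, and the two kinds of single-flip neighbors of $\xi_{\ell,2}^{a,b}$---those adding a strip cell (into $\mathcal{Q}_{2}^{a,b} \subset \mathcal{B}^{a,b}$) versus those removing one (into $\mathcal{Z}^{a,b} \subset \mathcal{E}^{a}$)---are disjoint, enabling clean attribution of each edge to either a bulk sum $D_{\beta}^{\mathcal{B}}$ or an edge sum $D_{\beta}^{\mathcal{E}}$.

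For the bulk, I would fix an unordered pair $\{\mathbf{s},\mathbf{s}'\} \subset \mathcal{S}$ and observe from \eqref{e_testf4}--\eqref{e_testf5} that $\widetilde{h}$ on $\mathcal{B}^{s,s'}$ is an affine rescaling by $[\widetilde{h}(\mathbf{s}) - \widetilde{h}(\mathbf{s}')]$ of the canonical interpolation. I would enumerate three edge types inside $\mathcal{B}^{s,s'}$: internal $\mathcal{Q}_{v}$-steps of jump $\frac{\mathfrak{b}}{(K+2)(L-4)\kappa}$, and the boundary steps $\mathcal{R}_{v} \leftrightarrow \mathcal{Q}_{v}$ and $\mathcal{Q}_{v} \leftrightarrow \mathcal{R}_{v+1}$ of twice that size. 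Every such edge has height $\Gamma$, so $\mu_{\beta}(\eta)r_{\beta}(\eta,\xi) = (1+o(1))\, q^{-1} e^{-\Gamma\beta}$ by Theorem \ref{t_mu} and \eqref{e_Hcdt}. Counting edges of each type (with the factor $\nu_{0}$ automatically absorbing the transposed family when $K=L$) and substituting $\mathfrak{b} = \nu_{0}(K+2)(L-4)/(4KL)$ yields
\begin{equation*}
D_{\beta}|_{\mathcal{B}^{s,s'}}(\widetilde{h}) = (1+o(1)) \cdot [\widetilde{h}(\mathbf{s}) - \widetilde{h}(\mathbf{s}')]^{2} \cdot \frac{\mathfrak{b}}{q\kappa^{2}} \, e^{-\Gamma\beta}\,.
\end{equation*}

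For the edge piece, I would decompose $D_{\beta}^{\mathcal{E}^{a}}(\widetilde{h})$ according to the target spin $\mathbf{s}' \ne \mathbf{a}$ and the row index $\ell \in \mathbb{T}_{L}$ (plus transpose when $K=L$). On the $\ell$-copy $\mathcal{N}(\mathbf{a}) \cup \mathcal{Z}_{\ell}^{a,s'} \cup \{\xi_{\ell,2}^{a,s'}\}$, the test function is the pullback under $\Pi_{\ell}^{a,s'}$ of $\widetilde{h}(\mathbf{a}) - [\widetilde{h}(\mathbf{a}) - \widetilde{h}(\mathbf{s}')] \cdot (\mathfrak{e}/\kappa)(1-\mathfrak{f})$, which is constant on the collapsed set $\mathcal{N}(\mathbf{a})$. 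Applying Proposition \ref{p_proj} yields
\begin{equation*}
D_{\beta}|_{\text{copy}}(\widetilde{h}) = (1+o(1)) \cdot q^{-1} e^{-\Gamma\beta} \cdot [\widetilde{h}(\mathbf{a}) - \widetilde{h}(\mathbf{s}')]^{2} \cdot (\mathfrak{e}/\kappa)^{2} \cdot |\mathscr{V}| \, \mathfrak{cap}(\mathbf{a},\,\xi_{\ell,2}^{a,s'}) \,,
\end{equation*}
and by \eqref{e_e0def} we have $|\mathscr{V}| \, \mathfrak{cap} = 1/\mathfrak{e}_{0}$. Summing over the $L/\nu_{0}$ copies and using $\mathfrak{e} = \nu_{0} \mathfrak{e}_{0}/L$ (from \eqref{e_edef}) collapses this to $(1+o(1)) \cdot [\widetilde{h}(\mathbf{a}) - \widetilde{h}(\mathbf{s}')]^{2} \cdot \mathfrak{e}/(q\kappa^{2}) \, e^{-\Gamma\beta}$.

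Adding the bulk piece and the two edge pieces attached to each unordered pair $\{\mathbf{s},\mathbf{s}'\}$ combines their coefficients into $\mathfrak{b} + 2\mathfrak{e} = \kappa$ by \eqref{e_kappadef}, giving
\begin{equation*}
D_{\beta}(\widetilde{h}) = (1+o(1)) \cdot \frac{e^{-\Gamma\beta}}{q\kappa} \sum_{\{\mathbf{s},\mathbf{s}'\}} [\widetilde{h}(\mathbf{s}) - \widetilde{h}(\mathbf{s}')]^{2} \,.
\end{equation*}
Substituting the values $1,\,0,\,|\mathcal{P}|/(|\mathcal{P}|+|\mathcal{Q}|)$ from \eqref{e_eqpappr3} and evaluating the three nontrivial contributions ($\mathcal{P}$-$\mathcal{Q}$, $\mathcal{P}$-middle, $\mathcal{Q}$-middle), a short arithmetic manipulation gives $\sum_{\{\mathbf{s},\mathbf{s}'\}} [\widetilde{h}(\mathbf{s}) - \widetilde{h}(\mathbf{s}')]^{2} = q |\mathcal{P}||\mathcal{Q}|/(|\mathcal{P}|+|\mathcal{Q}|)$, which closes the proof. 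The main obstacle is the bulk enumeration: correctly tracking the three edge types with the doubled ``end-rung'' factor at $h=1$ and $h=K-1$, handling the transposed copies when $K=L$, and verifying that the $K,L$-dependent combinatorial count cancels exactly against $\mathfrak{b}$. Once this is in place, the edge contribution is essentially a direct corollary of Proposition \ref{p_proj} and the definition of $\mathfrak{e}_{0}$.
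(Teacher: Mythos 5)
Your proposal is correct and follows essentially the same route as the paper's proof: the same three-way decomposition of $D_{\beta}(\widetilde{h})$ (exterior, boundary-crossing, interior edges), the same split of the interior into $E(\mathcal{B})$ and $E(\mathcal{E})$, direct enumeration of the bulk steps with the doubled end-rung increments, and Proposition \ref{p_proj} together with \eqref{e_e0def} for the edge piece. The only difference is cosmetic: you factor each pair's contribution as $[\widetilde{h}(\mathbf{s})-\widetilde{h}(\mathbf{s}')]^{2}$ times a universal constant and close with the single identity $\sum_{\{\mathbf{s},\mathbf{s}'\}}[\widetilde{h}(\mathbf{s})-\widetilde{h}(\mathbf{s}')]^{2}=q|\mathcal{P}||\mathcal{Q}|/(|\mathcal{P}|+|\mathcal{Q}|)$, whereas the paper evaluates the three nontrivial pair types separately and sums.
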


\begin{proof}
Let us divide the Dirichlet form $D_{\beta}(\widetilde{h})$ into
three parts as 
\begin{equation}
\Big[\sum_{\{\eta,\,\xi\}\subset\mathcal{X}\setminus(\mathcal{E}\cup\mathcal{B})}+\sum_{\eta\in\mathcal{E}\cup\mathcal{B},\,\xi\in\mathcal{X}\setminus(\mathcal{E}\cup\mathcal{B})}+\sum_{\{\eta,\,\xi\}\subset\mathcal{E}\cup\mathcal{B}}\Big]\mu_{\beta}(\eta)r_{\beta}(\eta,\,\xi)\{\widetilde{h}(\xi)-\widetilde{h}(\eta)\}^{2}\;,\label{e_decDiri}
\end{equation}
where all summations are carried out for two connected configurations
$\eta$ and $\xi$, i.e., $\eta\sim\xi$. Note that the first summation
is $0$ by part (3) of Definition \ref{d_testf}.

For the second summation, we recall from part (4) of Proposition \ref{p_typprop}
that $\mathcal{E}\cup\mathcal{B}=\widehat{\mathcal{N}}(\mathcal{S})$.
Hence, by Lemma \ref{l_Nhatbdry}, we have $H(\xi)\ge\Gamma+1$. Since
$H(\eta)\le\Gamma$, by Theorem \ref{t_mu} and \eqref{e_cdtMH},
\[
\mu_{\beta}(\eta)r_{\beta}(\eta,\,\xi)=\mu_{\beta}(\xi)=\frac{1}{Z_{\beta}}e^{-\beta H(\xi)}=O(e^{-(\Gamma+1)\beta})\;.
\]
Since $\widetilde{h}:\mathcal{X}\rightarrow[0,\,1]$, we can conclude
that the second summation of \eqref{e_decDiri} is $O(e^{-(\Gamma+1)\beta})$.

It remains to estimate the third summation of \eqref{e_decDiri},
which is indeed the main constituent of the Dirichlet form. For each
subset $\mathcal{P}\subseteq\mathcal{X}$, we write
\begin{equation}
E(\mathcal{P})=\big\{\{\sigma,\,\zeta\}\subseteq\mathcal{P}:\sigma\sim\zeta\big\}\;.\label{e_EP}
\end{equation}
Then, we can observe from Proposition \ref{p_typprop} that we can
decompose $E(\mathcal{E}\cup\mathcal{B})$ as
\begin{equation}
E(\mathcal{E}\cup\mathcal{B})=E(\mathcal{B})\cup E(\mathcal{E})\;,\label{e_decE}
\end{equation}
and thus we can rewrite the first summation of \eqref{e_decDiri}
as
\begin{equation}
\Big[\sum_{\{\eta,\,\xi\}\in E(\mathcal{B})}+\sum_{\{\eta,\,\xi\}\in E(\mathcal{E})}\Big]\mu_{\beta}(\eta)r_{\beta}(\eta,\,\xi)\{\widetilde{h}(\xi)-\widetilde{h}(\eta)\}^{2}\;.\label{e_mce1}
\end{equation}
Now, we divide the estimate of these summations into two cases: $K<L$
or $K=L$.\medskip{}

\noindent \textbf{(Case $K<L$)} We start by calculating the first
summation of \eqref{e_mce1}. Note that $\mathcal{B}$ can be decomposed
as (cf. Notation \ref{n_abc})
\[
\bigcup_{\mathbf{a}\ne\mathbf{a'}}\mathcal{B}^{a,\,a'}\cup\bigcup_{\mathbf{b}\ne\mathbf{b'}}\mathcal{B}^{b,\,b'}\cup\bigcup_{\mathbf{c}\ne\mathbf{c'}}\mathcal{B}^{c,\,c'}\cup\bigcup_{\mathbf{a},\,\mathbf{b}}\mathcal{B}^{a,\,b}\cup\bigcup_{\mathbf{a},\,\mathbf{c}}\mathcal{B}^{a,\,c}\cup\bigcup_{\mathbf{b},\,\mathbf{c}}\mathcal{B}^{c,\,b}\;.
\]
Moreover, note that the summations with respect to $\bigcup_{\mathbf{a}\ne\mathbf{a'}}\mathcal{B}^{a,\,a'}$,
$\bigcup_{\mathbf{b}\ne\mathbf{b'}}\mathcal{B}^{b,\,b'}$ and $\bigcup_{\mathbf{c}\ne\mathbf{c'}}\mathcal{B}^{c,\,c'}$
vanish by part (2) of Definition \ref{d_testf}. Thus, we can divide
the first summation of \eqref{e_mce1} as 
\begin{equation}
\sum_{\mathbf{a},\,\mathbf{b}}\sum_{\{\eta,\,\xi\}\in E(\mathcal{B}^{a,b})}+\sum_{\mathbf{a},\,\mathbf{c}}\sum_{\{\eta,\,\xi\}\in E(\mathcal{B}^{a,c})}+\sum_{\mathbf{b},\,\mathbf{c}}\sum_{\{\eta,\,\xi\}\in E(\mathcal{B}^{c,b})}\;.\label{e_mce1.5}
\end{equation}
Since 
\begin{equation}
E(\mathcal{B}^{a,\,b})=\bigcup_{v\in\llbracket2,\,L-3\rrbracket}E(\mathcal{R}_{v}^{a,\,b}\cup\mathcal{Q}_{v}^{a,\,b}\cup\mathcal{R}_{v+1}^{a,\,b})\;,\label{e_EBab}
\end{equation}
the first double summation of \eqref{e_mce1.5} equals
\[
\sum_{\mathbf{a},\,\mathbf{b}}\sum_{v\in\llbracket2,\,L-3\rrbracket}\sum_{\{\eta,\,\xi\}\in E(\mathcal{R}_{v}^{a,b}\cup\mathcal{Q}_{v}^{a,b}\cup\mathcal{R}_{v+1}^{a,b})}\mu_{\beta}(\eta)r_{\beta}(\eta,\,\xi)\{\widetilde{h}(\xi)-\widetilde{h}(\eta)\}^{2}\;.
\]
Fixing $\mathbf{a}\in\mathcal{P}$ and $\mathbf{b}\in\mathcal{Q}$,
which is possible because of symmetry, the last display can be written
as 
\begin{equation}
|\mathcal{P}||\mathcal{Q}|\sum_{v\in\llbracket2,\,L-3\rrbracket}\sum_{\{\eta,\,\xi\}\in E(\mathcal{R}_{v}^{a,b}\cup\mathcal{Q}_{v}^{a,b}\cup\mathcal{R}_{v+1}^{a,b})}\mu_{\beta}(\eta)r_{\beta}(\eta,\,\xi)\{\widetilde{h}(\xi)-\widetilde{h}(\eta)\}^{2}\;.\label{e_mce2}
\end{equation}
The last summation in $\{\eta,\,\xi\}$ can be written as $\sum_{\ell\in\mathbb{T}_{L}}\sum_{k\in\mathbb{T}_{K}}$
of 
\begin{align*}
 & \mu_{\beta}(\xi_{\ell,\,v}^{a,\,b})r_{\beta}(\xi_{\ell,\,v}^{a,\,b},\,\xi_{\ell,\,v;\,k,\,1}^{a,\,b,\,\pm})\{\widetilde{h}(\xi_{\ell,\,v;\,k,\,1}^{a,\,b,\,\pm})-\widetilde{h}(\xi_{\ell,\,v}^{a,\,b})\}^{2}\\
 & +\sum_{h\in\llbracket1,\,K-2\rrbracket}\mu_{\beta}(\xi_{\ell,\,v;\,k,\,h}^{a,\,b,\,\pm})r_{\beta}(\xi_{\ell,\,v;\,k,\,h}^{a,\,b,\,\pm},\,\xi_{\ell,\,v;\,k,\,h+1}^{a,\,b,\,\pm})\{\widetilde{h}(\xi_{\ell,\,v;\,k,\,h+1}^{a,\,b,\,\pm})-\widetilde{h}(\xi_{\ell,\,v;\,k,\,h}^{a,\,b,\,\pm})\}^{2}\\
 & +\sum_{h\in\llbracket1,\,K-2\rrbracket}\mu_{\beta}(\xi_{\ell,\,v;\,k,\,h}^{a,\,b,\,\pm})r_{\beta}(\xi_{\ell,\,v;\,k,\,h}^{a,\,b,\,\pm},\,\xi_{\ell,\,v;\,k-1,\,h+1}^{a,\,b,\,\pm})\{\widetilde{h}(\xi_{\ell,\,v;\,k-1,\,h+1}^{a,\,b,\,\pm})-\widetilde{h}(\xi_{\ell,\,v;\,k,\,h}^{a,\,b,\,\pm})\}^{2}\\
 & +\mu_{\beta}(\xi_{\ell,\,v;\,k,\,K-1}^{a,\,b,\,+})r_{\beta}(\xi_{\ell,\,v;\,k,\,K-1}^{a,\,b,\,+},\,\xi_{\ell,\,v+1}^{a,\,b})\{\widetilde{h}(\xi_{\ell,\,v+1}^{a,\,b})-\widetilde{h}(\xi_{\ell,\,v;\,k,\,K-1}^{a,\,b,\,+})\}^{2}\\
 & +\mu_{\beta}(\xi_{\ell,\,v;\,k,\,K-1}^{a,\,b,\,-})r_{\beta}(\xi_{\ell,\,v;\,k,\,K-1}^{a,\,b,\,-},\,\xi_{\ell-1,\,v+1}^{a,\,b})\{\widetilde{h}(\xi_{\ell-1,\,v+1}^{a,\,b})-\widetilde{h}(\xi_{\ell,\,v;\,k,\,K-1}^{a,\,b,\,-})\}^{2}\;.
\end{align*}
By \eqref{e_cdtMH}, \eqref{e_testf4} and \eqref{e_testf5}, this
equals $2\sum_{\ell\in\mathbb{T}_{L}}\sum_{k\in\mathbb{T}_{K}}$ (where
$2$ is multiplied since $+$ and $-$ in $\pm$ give us the same
computation) of 
\begin{align*}
 & \frac{e^{-\Gamma\beta}}{Z_{\beta}}\cdot\frac{4\mathfrak{b}^{2}}{[\kappa(K+2)(L-4)]^{2}}+\sum_{h\in\llbracket1,\,K-2\rrbracket}\frac{e^{-\Gamma\beta}}{Z_{\beta}}\cdot\frac{\mathfrak{b}^{2}}{[\kappa(K+2)(L-4)]^{2}}\\
 & +\sum_{h\in\llbracket1,\,K-2\rrbracket}\frac{e^{-\Gamma\beta}}{Z_{\beta}}\cdot\frac{\mathfrak{b}^{2}}{[\kappa(K+2)(L-4)]^{2}}+\frac{e^{-\Gamma\beta}}{Z_{\beta}}\cdot\frac{4\mathfrak{b}^{2}}{[\kappa(K+2)(L-4)]^{2}}\;.
\end{align*}
By Theorem \ref{t_mu}, this is further simplified as
\[
(1+o(1))\cdot\frac{e^{-\Gamma\beta}}{q}\cdot\frac{(2K+4)\mathfrak{b}^{2}}{(K+2)^{2}(L-4)^{2}\kappa^{2}}=(1+o(1))\cdot\frac{2\mathfrak{b}^{2}}{q(K+2)(L-4)^{2}\kappa^{2}}e^{-\Gamma\beta}\;.
\]
Therefore by \eqref{e_bdef}, we can conclude that 
\begin{align*}
 & \sum_{\mathbf{a},\,\mathbf{b}}\sum_{\{\eta,\,\xi\}\in E(\mathcal{B}^{a,b})}\mu_{\beta}(\eta)r_{\beta}(\eta,\,\xi)\{\widetilde{h}(\xi)-\widetilde{h}(\eta)\}^{2}\\
 & =(1+o(1))\cdot|\mathcal{P}||\mathcal{Q}|\cdot\sum_{v\in\llbracket2,\,L-3\rrbracket}2\sum_{\ell\in\mathbb{T}_{L}}\sum_{k\in\mathbb{T}_{K}}\frac{2\mathfrak{b}^{2}}{q(K+2)(L-4)^{2}\kappa^{2}}e^{-\Gamma\beta}\\
 & =(1+o(1))\cdot|\mathcal{P}||\mathcal{Q}|\frac{4KL(L-4)\mathfrak{b}^{2}}{q(K+2)(L-4)^{2}\kappa^{2}}e^{-\Gamma\beta}=(1+o(1))\cdot\frac{|\mathcal{P}||\mathcal{Q}|\mathfrak{b}}{q\kappa^{2}}e^{-\Gamma\beta}\;.
\end{align*}
The second and third summations of \eqref{e_mce1.5} can be dealt
with in the same way (but with suitable constants multiplied), so
that the second and the third summation becomes
\[
(1+o(1))\cdot\frac{|\mathcal{Q}|^{2}|\mathcal{P}|(q-|\mathcal{P}|-|\mathcal{Q}|)\mathfrak{b}}{(|\mathcal{P}|+|\mathcal{Q}|)^{2}q\kappa^{2}}e^{-\Gamma\beta}\;\;\;\;\text{and}\;\;\;\;(1+o(1))\cdot\frac{|\mathcal{P}|^{2}|\mathcal{Q}|(q-|\mathcal{P}|-|\mathcal{Q}|)\mathfrak{b}}{(|\mathcal{P}|+|\mathcal{Q}|)^{2}q\kappa^{2}}e^{-\Gamma\beta}\;,
\]
respectively. Summing up the last two displays, we obtain
\begin{equation}
\sum_{\{\eta,\,\xi\}\in E(\mathcal{B})}\mu_{\beta}(\eta)r_{\beta}(\eta,\,\xi)\{\widetilde{h}(\xi)-\widetilde{h}(\eta)\}^{2}=(1+o(1))\cdot\frac{|\mathcal{P}||\mathcal{Q}|\mathfrak{b}}{(|\mathcal{P}|+|\mathcal{Q}|)\kappa^{2}}e^{-\Gamma\beta}\;.\label{e_mce3}
\end{equation}

Next, we calculate the second summation of \eqref{e_mce1}. We may
decompose $E(\mathcal{E})$ as
\[
E(\mathcal{E})=\bigcup_{\mathbf{a}}E(\mathcal{E}^{a})\cup\bigcup_{\mathbf{b}}E(\mathcal{E}^{b})\cup\bigcup_{\mathbf{c}}E(\mathcal{E}^{c})\;.
\]
First, consider the summation in $\bigcup_{\mathbf{a}}E(\mathcal{E}^{a})$.
By \eqref{e_Eadecomp2}, Definition \eqref{d_testf}-(1) and Remark
\ref{r_deadconst}, we may rewrite it as
\begin{align*}
 & \sum_{\mathbf{a},\,\mathbf{b}}\sum_{\ell\in\mathbb{T}_{L}}\sum_{\{\eta_{1},\,\eta_{2}\}\subset\mathcal{Z}_{\ell}^{a,b}\cup\{\xi_{\ell,2}^{a,b}\}}\mu_{\beta}(\eta_{1})r_{\beta}(\eta_{1},\,\eta_{2})\{\widetilde{h}(\eta_{2})-\widetilde{h}(\eta_{1})\}^{2}\\
 & +\sum_{\mathbf{a},\,\mathbf{b}}\sum_{\ell\in\mathbb{T}_{L}}\sum_{\eta_{1}\in\mathcal{Z}^{a,b}\cup\{\xi_{\ell,2}^{a,b}\}}\sum_{\xi\in\mathcal{N}(\mathbf{a})}\mu_{\beta}(\eta_{1})r_{\beta}(\eta_{1},\,\xi)\{\widetilde{h}(\xi)-\widetilde{h}(\eta_{1})\}^{2}\\
 & +\sum_{\mathbf{a},\,\mathbf{c}}\sum_{\ell\in\mathbb{T}_{L}}\sum_{\{\eta_{1},\,\eta_{2}\}\subset\mathcal{Z}_{\ell}^{a,c}\cup\{\xi_{\ell,2}^{a,c}\}}\mu_{\beta}(\eta_{1})r_{\beta}(\eta_{1},\,\eta_{2})\{\widetilde{h}(\eta_{2})-\widetilde{h}(\eta_{1})\}^{2}\\
 & +\sum_{\mathbf{a},\,\mathbf{c}}\sum_{\ell\in\mathbb{T}_{L}}\sum_{\eta_{1}\in\mathcal{Z}^{a,c}\cup\{\xi_{\ell,2}^{a,c}\}}\sum_{\xi\in\mathcal{N}(\mathbf{a})}\mu_{\beta}(\eta_{1})r_{\beta}(\eta_{1},\,\xi)\{\widetilde{h}(\xi)-\widetilde{h}(\eta_{1})\}^{2}\;.
\end{align*}
By Proposition \ref{p_proj}, taking into advantage the model symmetry,
this equals
\begin{align}
 & |\mathcal{P}||\mathcal{Q}|\cdot L(1+o(1))\cdot\Big[\sum_{\{\eta_{1},\,\eta_{2}\}\subset\mathcal{Z}_{\ell}^{a,b}\cup\{\xi_{\ell,2}^{a,b}\}}+\sum_{\eta_{1}\in\mathcal{Z}^{a,b}\cup\{\xi_{\ell,2}^{a,b}\},\,\eta_{2}=\mathbf{a}}\Big]\nonumber \\
 & +|\mathcal{P}|(q-|\mathcal{P}|-|\mathcal{Q}|)\cdot L(1+o(1))\cdot\Big[\sum_{\{\eta_{1},\,\eta_{2}\}\subset\mathcal{Z}_{\ell}^{a,c}\cup\{\xi_{\ell,2}^{a,c}\}}+\sum_{\eta_{1}\in\mathcal{Z}^{a,c}\cup\{\xi_{\ell,2}^{a,c}\},\,\eta_{2}=\mathbf{a}}\Big]\label{e_mce4}
\end{align}
applied to the summand $\frac{1}{q}e^{-\Gamma\beta}\mathfrak{r}(\eta_{1},\,\eta_{2})\{\widetilde{h}(\eta_{2})-\widetilde{h}(\eta_{1})\}^{2}$.
By \eqref{e_testf1}, this becomes
\begin{align*}
 & |\mathcal{P}||\mathcal{Q}|\cdot L(1+o(1))\cdot\frac{\mathfrak{e}^{2}}{\kappa^{2}}\sum_{\{\eta_{1},\,\eta_{2}\}\in\mathscr{E}}\\
 & +|\mathcal{P}|(q-|\mathcal{P}|-|\mathcal{Q}|)\cdot L(1+o(1))\cdot\frac{|\mathcal{Q}|^{2}}{(|\mathcal{P}|+|\mathcal{Q}|)^{2}}\frac{\mathfrak{e}^{2}}{\kappa^{2}}\sum_{\{\eta_{1},\,\eta_{2}\}\in\mathscr{E}}
\end{align*}
applied to the summand $\frac{1}{q}e^{-\Gamma\beta}\mathfrak{r}(\eta_{1},\,\eta_{2})\{\mathfrak{f}(\eta_{2})-\mathfrak{f}(\eta_{1})\}^{2}$.
Noting the definition of capacities (cf. \eqref{e_Capdef}), this
is equal to
\begin{align*}
 & \Big[1+\frac{(q-|\mathcal{P}|-|\mathcal{Q}|)|\mathcal{Q}|}{(|\mathcal{P}|+|\mathcal{Q}|)^{2}}\Big]\cdot|\mathcal{P}||\mathcal{Q}|\cdot L(1+o(1))\cdot\frac{e^{-\Gamma\beta}\mathfrak{e}^{2}}{\kappa^{2}q}|\mathscr{V}|\mathfrak{cap}(\mathbf{a},\,\mathcal{R}_{2}^{a,\,b})\\
 & =\Big[1+\frac{(q-|\mathcal{P}|-|\mathcal{Q}|)|\mathcal{Q}|}{(|\mathcal{P}|+|\mathcal{Q}|)^{2}}\Big]\cdot\frac{|\mathcal{P}||\mathcal{Q}|\mathfrak{e}(1+o(1))}{\kappa^{2}q}e^{-\Gamma\beta}\;,
\end{align*}
where in the equality we used \eqref{e_edef} and \eqref{e_e0def}.
Employing the same arguments to the summations in $\bigcup_{\mathbf{b}}E(\mathcal{E}^{b})$
and $\bigcup_{\mathbf{c}}E(\mathcal{E}^{c})$ in the second summation
of \eqref{e_mce1}, we obtain that the summations in $\bigcup_{\mathbf{b}}E(\mathcal{E}^{b})$
equals
\[
\Big[1+\frac{|\mathcal{P}|(q-|\mathcal{P}|-|\mathcal{Q}|)}{(|\mathcal{P}|+|\mathcal{Q}|)^{2}}\Big]\cdot\frac{|\mathcal{P}||\mathcal{Q}|\mathfrak{e}(1+o(1))}{\kappa^{2}q}e^{-\Gamma\beta}\;,
\]
and the summation in $\bigcup_{\mathbf{c}}E(\mathcal{E}^{c})$ becomes
\[
\Big[\frac{(q-|\mathcal{P}|-|\mathcal{Q}|)|\mathcal{Q}|}{(|\mathcal{P}|+|\mathcal{Q}|)^{2}}+\frac{|\mathcal{P}|(q-|\mathcal{P}|-|\mathcal{Q}|)}{(|\mathcal{P}|+|\mathcal{Q}|)^{2}}\Big]\cdot\frac{|\mathcal{P}||\mathcal{Q}|\mathfrak{e}(1+o(1))}{\kappa^{2}q}e^{-\Gamma\beta}\;.
\]
Therefore, summing up the last three displays, we conclude that
\begin{equation}
\sum_{\{\eta,\,\xi\}\in E(\mathcal{E})}\mu_{\beta}(\eta)r_{\beta}(\eta,\,\xi)\{\widetilde{h}(\xi)-\widetilde{h}(\eta)\}^{2}=\frac{2|\mathcal{P}||\mathcal{Q}|}{|\mathcal{P}|+|\mathcal{Q}|}\cdot\frac{\mathfrak{e}(1+o(1))}{\kappa^{2}}e^{-\Gamma\beta}\;.\label{e_mce6}
\end{equation}
Therefore, by \eqref{e_mce1}, \eqref{e_mce3} and \eqref{e_mce6},
we conclude that the third summation of \eqref{e_decDiri} equals
$1+o(1)$ times
\[
\frac{|\mathcal{P}||\mathcal{Q}|}{|\mathcal{P}|+|\mathcal{Q}|}\cdot\frac{\mathfrak{b}+2\mathfrak{e}}{\kappa^{2}}e^{-\Gamma\beta}=\frac{|\mathcal{P}||\mathcal{Q}|}{\kappa(|\mathcal{P}|+|\mathcal{Q}|)}e^{-\Gamma\beta}\;,
\]
as desired.\medskip{}

\noindent \textbf{(Case $K=L$)} This case is analogous to the previous
case. The only difference is the fact that \eqref{e_mce1} must be
counted twice. This fact is reflected in the definition of the constants
$\mathfrak{b}$ and $\mathfrak{e}$ in \eqref{e_bdef} and \eqref{e_edef}
through the constant $\nu_{0}$.
\end{proof}
\begin{rem}
\label{r_testf}The estimates \eqref{e_mce3} and \eqref{e_mce6}
are the reason that we call $\mathfrak{b}$ and $\mathfrak{e}$ the
bulk and edge constants, respectively.
\end{rem}

\subsection{\label{sec7.3}Proof of $H^{1}$-approximation}

In this subsection, we prove \eqref{e_eqpappr1} to complete the proof
of Proposition \ref{p_eqpappr} for the MH dynamics. To prove \eqref{e_eqpappr1},
we first expand
\begin{align}
D_{\beta}(h_{\mathcal{P},\,\mathcal{Q}}-\widetilde{h}) & =\langle h_{\mathcal{P},\,\mathcal{Q}}-\widetilde{h},\,-\mathcal{L}_{\beta}(h_{\mathcal{P},\,\mathcal{Q}}-\widetilde{h})\rangle_{\mu_{\beta}}\nonumber \\
 & =D_{\beta}(h_{\mathcal{P},\,\mathcal{Q}})+D_{\beta}(\widetilde{h})-\langle h_{\mathcal{P},\,\mathcal{Q}},\,-\mathcal{L}_{\beta}\widetilde{h}\rangle_{\mu_{\beta}}-\langle\widetilde{h},\,-\mathcal{L}_{\beta}h_{\mathcal{P},\,\mathcal{Q}}\rangle_{\mu_{\beta}}\;.\label{e_decDbhh}
\end{align}
Since $\widetilde{h}\equiv1$ on $\mathcal{P}$, $\widetilde{h}\equiv0$
on $\mathcal{Q}$ (cf. Remark \ref{r_eqpappr3}), and since $\mathcal{L}_{\beta}h_{\mathcal{P},\,\mathcal{Q}}\equiv0$
on $(\mathcal{P}\cup\mathcal{Q})^{c}$ (cf. \eqref{e_eqpsol}), we
have 
\begin{align*}
\langle\widetilde{h},\,-\mathcal{L}_{\beta}h_{\mathcal{P},\,\mathcal{Q}}\rangle_{\mu_{\beta}} & =\sum_{\mathbf{a}\in\mathcal{P}}\widetilde{h}(\mathbf{a})(-\mathcal{L}_{\beta}h_{\mathcal{P},\,\mathcal{Q}})(\mathbf{a})\mu_{\beta}(\mathbf{a})\\
 & =\sum_{\mathbf{a}\in\mathcal{P}}h_{\mathcal{P},\,\mathcal{Q}}(\mathbf{a})(-\mathcal{L}_{\beta}h_{\mathcal{P},\,\mathcal{Q}})(\mathbf{a})\mu_{\beta}(\mathbf{a})=D_{\beta}(h_{\mathcal{P},\,\mathcal{Q}})\;.
\end{align*}
In the second equality, we used that $h_{\mathcal{P},\,\mathcal{Q}}\equiv\widetilde{h}\equiv1$
on $\mathcal{P}$. Inserting this into \eqref{e_decDbhh} and expanding
the first inner product in the right-hand side, we obtain
\begin{equation}
D_{\beta}(h_{\mathcal{P},\,\mathcal{Q}}-\widetilde{h})=D_{\beta}(\widetilde{h})-\sum_{\eta\in\mathcal{X}}h_{\mathcal{P},\,\mathcal{Q}}(\eta)(-\mathcal{L}_{\beta}\widetilde{h})(\eta)\mu_{\beta}(\eta)\;.\label{e_Dbhh}
\end{equation}
By the definition of $\mathcal{L}_{\beta}$ (cf. \eqref{e_gen}) and
Proposition \ref{p_testf}, it suffices to prove that 
\begin{equation}
\sum_{\eta\in\mathcal{X}}h_{\mathcal{P},\,\mathcal{Q}}(\eta)\sum_{\xi\in\mathcal{X}}\mu_{\beta}(\eta)r_{\beta}(\eta,\,\xi)[\widetilde{h}(\eta)-\widetilde{h}(\xi)]=(1+o(1))\cdot\frac{|\mathcal{P}||\mathcal{Q}|}{\kappa(|\mathcal{P}|+|\mathcal{Q}|)}e^{-\Gamma\beta}\;.\label{e_WTS}
\end{equation}
Before proving this identity, we present a simple lemma which states
that the equilibrium potential $h_{\mathcal{P},\,\mathcal{Q}}$ is
asymptotically constant on each neighborhood $\mathcal{N}(\mathbf{s})$
for $\mathbf{s}\in\mathcal{S}$.
\begin{lem}
\label{l_eqpot}For all $\mathbf{s}\in\mathcal{S}$, we have $\max_{\eta\in\mathcal{N}(\mathbf{s})}\big|h_{\mathcal{P},\,\mathcal{Q}}(\eta)-h_{\mathcal{P},\,\mathcal{Q}}(\mathbf{s})\big|=o(1)$.
\end{lem}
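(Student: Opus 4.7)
The plan is to reduce the statement to a large-deviation estimate on the first entry point into $\mathcal{S}$ and then exploit a strict gap between communication heights that is forced by $\eta\in\mathcal{N}(\mathbf{s})$. First, since $\mathcal{P}\cup\mathcal{Q}\subseteq\mathcal{S}$, the function $h_{\mathcal{P},\,\mathcal{Q}}$ is harmonic on $\mathcal{X}\setminus(\mathcal{P}\cup\mathcal{Q})$ by \eqref{e_eqpsol}, and the strong Markov property applied at the hitting time $\tau_{\mathcal{S}}$ yields, for every $\eta\in\mathcal{X}\setminus(\mathcal{P}\cup\mathcal{Q})$,
\begin{equation*}
h_{\mathcal{P},\,\mathcal{Q}}(\eta)=\sum_{\mathbf{s}'\in\mathcal{S}}\mathbb{P}_{\eta}[\sigma_{\beta}(\tau_{\mathcal{S}})=\mathbf{s}']\,h_{\mathcal{P},\,\mathcal{Q}}(\mathbf{s}').
\end{equation*}
Subtracting $h_{\mathcal{P},\,\mathcal{Q}}(\mathbf{s})$ from both sides and using $h_{\mathcal{P},\,\mathcal{Q}}\in[0,1]$ gives the elementary bound
\begin{equation*}
\bigl|h_{\mathcal{P},\,\mathcal{Q}}(\eta)-h_{\mathcal{P},\,\mathcal{Q}}(\mathbf{s})\bigr|\le\mathbb{P}_{\eta}\bigl[\sigma_{\beta}(\tau_{\mathcal{S}})\neq\mathbf{s}\bigr],
\end{equation*}
reducing the lemma to showing that this probability is $o(1)$, uniformly over $\eta\in\mathcal{N}(\mathbf{s})$.

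Second, I would establish a strict order-one separation of communication heights. From $\eta\in\mathcal{N}(\mathbf{s})$ and the integrality of $H$ (hence of $\Phi$ for the MH dynamics via \eqref{e_MHheight}), we have $\Phi(\eta,\mathbf{s})\le\Gamma-1$. For any $\mathbf{s}'\in\mathcal{S}\setminus\{\mathbf{s}\}$, the concatenation inequality
\begin{equation*}
\Gamma=\Phi(\mathbf{s},\mathbf{s}')\le\max\{\Phi(\mathbf{s},\eta),\,\Phi(\eta,\mathbf{s}')\},
\end{equation*}
combined with $\Phi(\mathbf{s},\mathbf{s}')=\Gamma$ from Theorem \ref{t_EB}, forces $\Phi(\eta,\mathbf{s}')\ge\Gamma$. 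Thus the energy barrier from $\eta$ to any competing ground state exceeds the one from $\eta$ to $\mathbf{s}$ by at least one unit of energy.

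Third, I would convert this order-one height gap into an exponentially small probability of missing $\mathbf{s}$ upon first entry to $\mathcal{S}$, via a standard pathwise large-deviation estimate in the spirit of \cite{OV meta,NZ}. Concretely, the explicit path of height $\le\Gamma-1$ from $\eta$ to $\mathbf{s}$ gives $\mathbb{E}_{\eta}[\tau_{\mathbf{s}}]=O(e^{(\Gamma-1)\beta})$, while a union bound over barrier-crossing jumps yields $\mathbb{P}_{\eta}[\tau_{\mathcal{S}\setminus\{\mathbf{s}\}}\le T]\le CTe^{-\Gamma\beta}$. Taking $T=e^{(\Gamma-1/2)\beta}$, Markov's inequality forces $\tau_{\mathbf{s}}\le T$ with probability $1-o(1)$, while the second bound gives $\tau_{\mathcal{S}\setminus\{\mathbf{s}\}}>T$ with probability $1-o(1)$; together they provide $\mathbb{P}_{\eta}[\sigma_{\beta}(\tau_{\mathcal{S}})\neq\mathbf{s}]=O(e^{-\beta/2})$.

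The main obstacle is ensuring that the two large-deviation bounds in the third step are uniform in $\eta\in\mathcal{N}(\mathbf{s})$, since $\mathcal{N}(\mathbf{s})$ may contain configurations of rather high energy. A cleaner alternative I would pursue if uniformity proves delicate is to derive the hitting-probability bound directly from a capacity comparison: by the Dirichlet principle (available since the MH dynamics is reversible) with a test function equal to $1$ on $\mathcal{N}(\mathbf{s})$ and $0$ on $\mathcal{S}\setminus\{\mathbf{s}\}$, one gets $\mathrm{Cap}_{\beta}(\mathcal{N}(\mathbf{s}),\mathcal{S}\setminus\{\mathbf{s}\})=O(e^{-\Gamma\beta})$, and a direct Dirichlet-form lower bound along the low-height path from $\eta$ to $\mathbf{s}$ gives $\mathrm{Cap}_{\beta}(\eta,\mathbf{s})\ge ce^{-(\Gamma-1)\beta}$. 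The standard identity relating hitting probabilities to capacity ratios in reversible chains then yields the desired $o(1)$ bound.
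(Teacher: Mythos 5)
Your reduction and your capacity-based fallback are correct, and together they give a complete, self-contained proof; but be aware that the route you present as primary does not survive the uniformity issue you yourself flag. The bound $\mathbb{P}_{\eta}[\tau_{\mathcal{S}\setminus\{\mathbf{s}\}}\le T]\le CTe^{-\Gamma\beta}$ is false for a starting point $\eta\in\mathcal{N}(\mathbf{s})$ of high energy: transferring the stationary union bound to $\mathbb{P}_{\eta}$ costs a factor $\mu_{\beta}(\eta)^{-1}\asymp e^{H(\eta)\beta}$, and since $H(\eta)$ can be as large as $\Gamma-1$, the correct bound is only $CTe^{-\beta}$, which explodes for $T=e^{(\Gamma-1/2)\beta}$. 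Indeed, such an $\eta$ can sit one energy unit below a saddle leading to $\breve{\mathbf{s}}$, so $\mathbb{P}_{\eta}[\tau_{\mathcal{S}\setminus\{\mathbf{s}\}}<\tau_{\mathbf{s}}]$ is genuinely of order $e^{-\beta}$ independently of $T$; no choice of $T$ repairs the two-sided Markov argument. Your ``cleaner alternative'' is therefore not optional: the renewal bound $\mathbb{P}_{\eta}[\tau_{\mathcal{S}\setminus\{\mathbf{s}\}}<\tau_{\mathbf{s}}]\le\mathrm{Cap}_{\beta}(\eta,\mathcal{S}\setminus\{\mathbf{s}\})/\mathrm{Cap}_{\beta}(\eta,\mathbf{s})$, combined with the Dirichlet upper bound $O(e^{-\Gamma\beta})$ from the indicator of $\mathcal{N}(\mathbf{s})$ (every edge leaving $\mathcal{N}(\mathbf{s})$ has height at least $\Gamma$) and the path lower bound $ce^{-(\Gamma-1)\beta}$ coming from your height-gap observation, is the argument that actually closes the proof, yielding $O(e^{-\beta})$ uniformly over the finitely many $\eta\in\mathcal{N}(\mathbf{s})$.

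Compared with the paper: the paper reduces the lemma to the escape estimate $\max_{\eta\in\mathcal{N}(\mathbf{s})}\mathbb{P}_{\eta}[\tau_{\mathcal{X}\setminus\mathcal{N}(\mathbf{s})}<\tau_{\mathbf{s}}]=o(1)$, which it imports from \cite[Theorem 3.2-(iii)]{NZB}, and splits into three cases according to whether $\mathbf{s}$ lies in $\mathcal{P}$, $\mathcal{Q}$ or neither. Your optional-stopping identity at $\tau_{\mathcal{S}}$ handles all three cases at once and targets the slightly weaker (but sufficient) event $\{\sigma_{\beta}(\tau_{\mathcal{S}})\ne\mathbf{s}\}$, and your capacity argument is precisely the alternative the paper sketches in Remark \ref{r_eqpot}; what it buys is self-containedness (no appeal to the external recurrence result), at the cost of relying on reversibility, which is why the paper's cited-probability route is the one that generalizes to the cyclic dynamics in Lemma \ref{l_eqpotcyc}.
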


\begin{proof}
We first recall from \cite[Theorem 3.2-(iii)]{NZB} that, for all
$\mathbf{s}\in\mathcal{S}$, we have 
\begin{equation}
\max_{\eta\in\mathcal{N}(\mathbf{s})}\mathbb{P}_{\eta}[\tau_{\mathcal{X}\setminus\mathcal{N}(\mathbf{s})}<\tau_{\mathbf{s}}]=o(1)\;.\label{e_eqpot1}
\end{equation}
We first assume that $\mathbf{s}\in\mathcal{Q}$, so that $h_{\mathcal{P},\,\mathcal{Q}}(\mathbf{s})=0$.
For $\eta\in\mathcal{N}(\mathbf{s})$, we can bound
\[
|h_{\mathcal{P},\,\mathcal{Q}}(\eta)-h_{\mathcal{P},\,\mathcal{Q}}(\mathbf{s})|=h_{\mathcal{P},\,\mathcal{Q}}(\eta)=\mathbb{P}_{\eta}[\tau_{\mathcal{P}}<\tau_{\mathcal{Q}}]\le\mathbb{P}_{\eta}[\tau_{\mathcal{X}\setminus\mathcal{N}(\mathbf{s})}<\tau_{\mathbf{s}}]\;.
\]
Thus, the proof is completed by \eqref{e_eqpot1}. We can handle the
case $\mathbf{s}\in\mathcal{P}$ by an entirely same manner. 

Let us finally consider the case $\mathbf{s}\in\mathcal{S}\setminus(\mathcal{P}\cup\mathcal{Q})$.
By the Markov property, for $\eta\in\mathcal{N}(\mathbf{s})$, 
\[
h_{\mathcal{P},\,\mathcal{Q}}(\eta)=\mathbb{P}_{\eta}[\tau_{\mathcal{P}}<\tau_{\mathcal{Q}}]=\mathbb{P}_{\eta}[\tau_{\mathbf{s}}<\tau_{\mathcal{X}\setminus\mathcal{N}(\mathbf{s})}]\mathbb{P}_{\mathbf{s}}[\tau_{\mathcal{P}}<\tau_{\mathcal{Q}}]+\mathbb{P}_{\eta}[\tau_{\mathbf{s}}>\tau_{\mathcal{X}\setminus\mathcal{N}(\mathbf{s})},\;\tau_{\mathcal{P}}<\tau_{\mathcal{Q}}]\;.
\]
From this, we obtain
\[
|h_{\mathcal{P},\,\mathcal{Q}}(\eta)-h_{\mathcal{P},\,\mathcal{Q}}(\mathbf{s})|\le2\mathbb{P}_{\eta}[\tau_{\mathbf{s}}>\tau_{\mathcal{X}\setminus\mathcal{N}(\mathbf{s})}]
\]
and thus the proof of this case is also completed by \eqref{e_eqpot1}.
\end{proof}
\begin{rem}
\label{r_eqpot}We can also prove this lemma by using a capacity bound
on the equilibrium potential (cf. \cite[Lemma 8.4]{BdenH Meta}),
and then applying the Dirichlet--Thomson principle.
\end{rem}

Now, we begin to prove \eqref{e_WTS}. Write 
\begin{equation}
\phi(\eta):=\sum_{\xi\in\mathcal{X}}\mu_{\beta}(\eta)r_{\beta}(\eta,\,\xi)[\widetilde{h}(\eta)-\widetilde{h}(\xi)]\;,\label{e_phi}
\end{equation}
so that our claim \eqref{e_WTS} can be simply rewritten as
\begin{equation}
\sum_{\eta\in\mathcal{X}}h_{\mathcal{P},\,\mathcal{Q}}(\eta)\phi(\eta)=(1+o(1))\cdot\frac{|\mathcal{P}||\mathcal{Q}|}{\kappa(|\mathcal{P}|+|\mathcal{Q}|)}e^{-\Gamma\beta}\;.\label{e_WTS2}
\end{equation}
First, we deduce that $\phi$ is negligible outside $\widehat{\mathcal{N}}(\mathcal{S})$.
\begin{lem}
\label{l_fl1}For all $\eta\in\mathcal{X}\setminus\widehat{\mathcal{N}}(\mathcal{S})$,
we have $\phi(\eta)=o(e^{-\Gamma\beta})$.
\end{lem}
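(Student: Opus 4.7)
\textbf{Proof plan for Lemma \ref{l_fl1}.} The plan rests on two simple facts: first, by Proposition \ref{p_typprop}-(4) we have $\widehat{\mathcal{N}}(\mathcal{S}) = \mathcal{E} \cup \mathcal{B}$, and so part (3) of Definition \ref{d_testf} forces $\widetilde{h}(\eta) = 1$ for every $\eta \in \mathcal{X}\setminus\widehat{\mathcal{N}}(\mathcal{S})$; second, Lemma \ref{l_Nhatbdry} provides a one-step energy barrier when crossing the boundary of $\widehat{\mathcal{N}}(\mathbf{s})$.

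Fix $\eta \in \mathcal{X}\setminus\widehat{\mathcal{N}}(\mathcal{S})$. Using $\widetilde{h}(\eta)=1$, rewrite
\[
\phi(\eta) \;=\; \sum_{\xi\,:\,r_\beta(\eta,\xi)>0} \mu_\beta(\eta) r_\beta(\eta,\xi)\bigl[1-\widetilde{h}(\xi)\bigr].
\]
Split the summation according to whether $\xi \in \widehat{\mathcal{N}}(\mathcal{S})$ or not. If $\xi \notin \widehat{\mathcal{N}}(\mathcal{S})$, then again $\widetilde{h}(\xi)=1$ and the summand vanishes, so only the terms with $\xi \in \widehat{\mathcal{N}}(\mathcal{S})$ contribute.

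For such a $\xi$, there is some $\mathbf{s}\in\mathcal{S}$ with $\xi \in \widehat{\mathcal{N}}(\mathbf{s})$, while $\eta \notin \widehat{\mathcal{N}}(\mathbf{s})$ (as $\eta \notin \widehat{\mathcal{N}}(\mathcal{S})$). Since $r_\beta(\eta,\xi)>0$ implies $\eta \sim \xi$, Lemma \ref{l_Nhatbdry} applied with $\xi_1=\xi$ and $\xi_2=\eta$ yields $H(\eta) > \Gamma$, and integrality of the Hamiltonian gives $H(\eta) \geq \Gamma+1$. Combined with \eqref{e_Hcdt} and the MH expression \eqref{e_Hsigmazeta} for $H(\eta,\xi)=\max\{H(\eta),H(\xi)\}$, this produces
\[
\mu_\beta(\eta)r_\beta(\eta,\xi) \;=\; \frac{1}{Z_\beta}e^{-\beta H(\eta,\xi)} \;\leq\; \frac{1}{Z_\beta}e^{-\beta(\Gamma+1)} \;=\; O(e^{-(\Gamma+1)\beta})
\]
by Theorem \ref{t_mu}. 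Since $|1-\widetilde{h}(\xi)| \leq 1$ and $\eta$ has at most $(q-1)KL$ neighbors (a bound independent of $\beta$), we conclude
\[
|\phi(\eta)| \;\leq\; (q-1)KL \cdot O(e^{-(\Gamma+1)\beta}) \;=\; o(e^{-\Gamma\beta}),
\]
as desired. No serious obstacle is anticipated; the only subtlety is invoking Lemma \ref{l_Nhatbdry} correctly at the boundary of each neighborhood, which is immediate once one notes $\widehat{\mathcal{N}}(\mathcal{S}) = \bigcup_{\mathbf{s}\in\mathcal{S}}\widehat{\mathcal{N}}(\mathbf{s})$.
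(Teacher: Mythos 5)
Your proposal is correct and follows essentially the same route as the paper's proof: both observe that $\widetilde{h}$ is constant outside $\widehat{\mathcal{N}}(\mathcal{S})$ so only edges into $\widehat{\mathcal{N}}(\mathcal{S})$ contribute, invoke Lemma \ref{l_Nhatbdry} to get $H(\eta)\ge\Gamma+1$, and bound $\mu_\beta(\eta)r_\beta(\eta,\xi)=O(e^{-(\Gamma+1)\beta})$ together with $0\le\widetilde{h}\le1$. The only cosmetic difference is that you use the height identity \eqref{e_Hcdt} where the paper uses the detailed balance form \eqref{e_cdtMH}; these are equivalent for the MH dynamics.
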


\begin{proof}
Since $\widetilde{h}$ is defined as constant outside $\widehat{\mathcal{N}}(\mathcal{S})$,
we may only consider $\eta\in\mathcal{X}\setminus\widehat{\mathcal{N}}(\mathcal{S})$
which is connected to at least one configuration in $\widehat{\mathcal{N}}(\mathcal{S})$.
Then, by Lemma \ref{l_Nhatbdry}, we have $H(\eta)\ge\Gamma+1$, and
thus by \eqref{e_cdtMH}, it holds for $\xi\in\widehat{\mathcal{N}}(\mathcal{S})$
with $\eta\sim\xi$ that
\[
0\le\mu_{\beta}(\eta)r_{\beta}(\eta,\,\xi)=\min\{\mu_{\beta}(\eta),\,\mu_{\beta}(\xi)\}=\mu_{\beta}(\eta)=O(e^{-\beta(\Gamma+1)})\;.
\]
This completes the proof since $\widetilde{h}:\mathcal{X}\rightarrow[0,\,1]$.
\end{proof}
Next, we deal with $\phi(\eta)$ where $\eta\in\widehat{\mathcal{N}}(\mathcal{S})$.
We decompose $\phi(\eta)=\phi_{1}(\eta)+\phi_{2}(\eta)$ where
\begin{align*}
\phi_{1}(\eta) & =\sum_{\xi\in\widehat{\mathcal{N}}(\mathcal{S})}\mu_{\beta}(\eta)r_{\beta}(\eta,\,\xi)[\widetilde{h}(\eta)-\widetilde{h}(\xi)]\;,\\
\phi_{2}(\eta) & =\sum_{\xi\notin\widehat{\mathcal{N}}(\mathcal{S})}\mu_{\beta}(\eta)r_{\beta}(\eta,\,\xi)[\widetilde{h}(\eta)-\widetilde{h}(\xi)]\;.
\end{align*}
The first claim is that $\phi_{2}(\eta)$ is negligible for all $\eta\in\widehat{\mathcal{N}}(\mathcal{S})$.
\begin{lem}
\label{l_fl2}Suppose that $\eta\in\widehat{\mathcal{N}}(\mathcal{S})$.
Then, $\phi_{2}(\eta)=o(e^{-\Gamma\beta})$.
\end{lem}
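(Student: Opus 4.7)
The plan is to mirror the argument used in Lemma \ref{l_fl1}, exploiting the fact that $\phi_{2}(\eta)$ picks up only transitions from a configuration with height at most $\Gamma$ to configurations whose height strictly exceeds $\Gamma$. The only new input compared with Lemma \ref{l_fl1} is swapping the roles of the two endpoints inside $\widehat{\mathcal{N}}(\mathcal{S})$ versus outside.

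Fix $\eta\in\widehat{\mathcal{N}}(\mathcal{S})$. By Remark \ref{r_nhd}(2) we have $H(\eta)\le\Gamma$. If $\xi\in\mathcal{X}\setminus\widehat{\mathcal{N}}(\mathcal{S})$ satisfies $\eta\sim\xi$, then Lemma \ref{l_Nhatbdry} (applied with the roles of $\xi_{1}$ and $\xi_{2}$ given to $\eta$ and $\xi$ respectively) yields $H(\xi)>\Gamma$; since $H$ is integer-valued this means $H(\xi)\ge\Gamma+1$. Consequently, by the detailed balance identity \eqref{e_cdtMH},
\[
\mu_{\beta}(\eta)\,r_{\beta}(\eta,\,\xi)=\min\{\mu_{\beta}(\eta),\,\mu_{\beta}(\xi)\}=\mu_{\beta}(\xi)=Z_{\beta}^{-1}e^{-\beta H(\xi)}=O(e^{-\beta(\Gamma+1)}).
\]

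To conclude, observe that $\widetilde{h}$ takes values in $[0,\,1]$, so $|\widetilde{h}(\eta)-\widetilde{h}(\xi)|\le1$, and that the number of neighbors $\xi$ of $\eta$ in the MH dynamics is bounded by $|\Lambda|\,|S|=qKL$, a constant independent of $\beta$. Summing the previous estimate over these finitely many $\xi$ gives $\phi_{2}(\eta)=O(e^{-\beta(\Gamma+1)})=o(e^{-\beta\Gamma})$, as required. There is no real obstacle here — the statement is a direct consequence of Lemma \ref{l_Nhatbdry} together with the detailed balance condition, and it plays the same role as the $O(e^{-(\Gamma+1)\beta})$ contribution from the second summation of \eqref{e_decDiri} in the proof of Proposition \ref{p_testf}.
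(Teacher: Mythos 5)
Your argument is correct and is essentially the paper's own proof: apply Lemma \ref{l_Nhatbdry} (via $\widehat{\mathcal{N}}(\mathcal{S})=\widehat{\mathcal{N}}(\mathbf{a})$ from Proposition \ref{p_nhd}) to get $H(\xi)\ge\Gamma+1$, then use \eqref{e_cdtMH}, the bound $0\le\widetilde{h}\le1$, and the finiteness of the neighborhood to conclude $\phi_{2}(\eta)=O(e^{-(\Gamma+1)\beta})$. No differences worth noting.
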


\begin{proof}
If $\xi\notin\widehat{\mathcal{N}}(\mathcal{S})$ with $\eta\sim\xi$,
then
\[
\mu_{\beta}(\eta)r_{\beta}(\eta,\,\xi)=\min\{\mu_{\beta}(\eta),\,\mu_{\beta}(\xi)\}=\mu_{\beta}(\xi)=O(e^{-\beta(\Gamma+1)})
\]
by \eqref{e_cdtMH}, Proposition \ref{p_typprop}-(4), Lemma \ref{l_Nhatbdry}
and Theorem \ref{t_mu}. Thus, since $0\le\widetilde{h}\le1$,
\[
|\phi_{2}(\eta)|\le\sum_{\xi\notin\widehat{\mathcal{N}}(\mathcal{S}):\,\xi\sim\eta}\mu_{\beta}(\eta)r_{\beta}(\eta,\,\xi)=O(e^{-\beta(\Gamma+1)})\;,
\]
which is $o(e^{-\beta\Gamma})$ as desired.
\end{proof}
Summing up, instead of \eqref{e_WTS2}, it suffices to show that 
\begin{equation}
\sum_{\eta\in\widehat{\mathcal{N}}(\mathcal{S})}h_{\mathcal{P},\,\mathcal{Q}}(\eta)\phi_{1}(\eta)=(1+o(1))\cdot\frac{|\mathcal{P}||\mathcal{Q}|}{\kappa(|\mathcal{P}|+|\mathcal{Q}|)}e^{-\Gamma\beta}\;.\label{e_WTS3}
\end{equation}
Let us now investigate $\phi_{1}$. We start with the set $\mathcal{B}\setminus\mathcal{E}$.
\begin{lem}
\label{l_fl3}It holds that \textbf{$\phi_{1}(\eta)=0$} for all $\eta\in\mathcal{B}\setminus\mathcal{E}$.
\end{lem}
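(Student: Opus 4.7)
The plan is to show that the test function $\widetilde{h}$, restricted to each bulk family $\mathcal{B}^{a,b}$, is discretely harmonic at every $\eta\in\mathcal{B}\setminus\mathcal{E}$ in the precise sense that $\sum_{\xi\in\widehat{\mathcal{N}}(\mathcal{S})}r_\beta(\eta,\xi)[\widetilde{h}(\eta)-\widetilde{h}(\xi)]=0$, which immediately gives $\phi_1(\eta)=0$. First I would argue that any neighbor $\xi\in\widehat{\mathcal{N}}(\mathcal{S})$ of such $\eta$ must lie in the same $\mathcal{B}^{a,b}$: by a direct energy accounting at a single spin flip, the only flips from $\eta$ that do not raise $H$ above $\Gamma$ (and hence do not escape $\widehat{\mathcal{N}}(\mathcal{S})$ by Lemma~\ref{l_Nhatbdry}) are those updating a boundary site between the two spin clusters, which keep us within $\mathcal{B}^{a,b}$; introducing a third spin or flipping an interior site raises $H$ by at least $3$.

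Next I would dispose of the trivial cases. When $\{a,b\}$ is contained entirely in $\mathcal{P}$, in $\mathcal{Q}$, or in $\mathcal{S}\setminus(\mathcal{P}\cup\mathcal{Q})$, $\widetilde{h}$ is constant on $\mathcal{B}^{a,b}$ by part (2) of Definition~\ref{d_testf}, so the vanishing is immediate. In the remaining cases the formulas for $\widetilde{h}$ on $\mathcal{B}^{a,b}$, $\mathcal{B}^{a,c}$, and $\mathcal{B}^{c,b}$ all depend on the configuration through the same functional form, up to a positive affine rescaling, so it suffices to handle $\mathbf{a}\in\mathcal{P}$, $\mathbf{b}\in\mathcal{Q}$. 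Introducing the position index
\[
p(\xi_{\ell,v}^{a,b}):=(K+2)(L-2-v),\qquad p(\xi_{\ell,v;k,h}^{a,b,\pm}):=(K+2)(L-2-v)-(h+1),
\]
the formulas \eqref{e_testf4}--\eqref{e_testf5} read $\widetilde{h}(\eta)=A\,p(\eta)+B$ with $A=\mathfrak{b}/[\kappa(K+2)(L-4)]$ and $B=\mathfrak{e}/\kappa$. Since the MH rate $e^{-\beta[H(\xi)-H(\eta)]_+}$ takes a common value across all of the relevant neighbors of a fixed $\eta$ (namely $e^{-2\beta}$ when $\eta\in\mathcal{R}_v$ and $1$ when $\eta\in\mathcal{Q}_v$), the claim $\phi_1(\eta)=0$ reduces to the purely combinatorial assertion $\sum_\xi[p(\eta)-p(\xi)]=0$ over the neighbors.

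This last assertion I would verify by a three-case analysis: (i) for $\eta\in\mathcal{R}_v^{a,b}$ with $v\in\llbracket 3,L-3\rrbracket$, the $2K$ length-$1$ protuberances shift $p$ by $-2$ and the $2K$ "indentations", reparametrized as length-$(K{-}1)$ protuberances $\xi_{\ell,v-1;k+1,K-1}^{a,b,+}$ and $\xi_{\ell+1,v-1;k+1,K-1}^{a,b,-}$ of the adjacent slab, shift $p$ by $+2$; (ii) for $\eta=\xi_{\ell,v;k,h}^{a,b,\pm}\in\mathcal{Q}_v^{a,b}$ with $2\le h\le K-2$, the four $h\mapsto h\pm 1$, $k\mapsto k\pm 1$ moves contribute shifts $(+1,-1,+1,-1)$; (iii) at the endpoints $h=1$ or $h=K-1$ the two extending (resp. shrinking) moves coalesce into a single flip to $\xi_{\ell,v}^{a,b}$ or $\xi_{\ell,v+1}^{a,b}$ with shift $\pm 2$, yielding $(+2,-1,-1)$ or $(+1,+1,-2)$. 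In every case the sum vanishes. The main obstacle is the careful bookkeeping in (i) and (iii)---one must recognize that indentations of a row-partitioned configuration need to be rewritten as length-$(K{-}1)$ protuberances of the neighboring slab, and that the boundary moves inside $\mathcal{Q}_v^{a,b}$ collapse two generic moves into one single edge whose $p$-increment is doubled---after which the harmonicity is transparent from the linearity $\widetilde{h}=Ap+B$.
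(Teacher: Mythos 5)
Your proposal is correct and follows essentially the same route as the paper: reduce to neighbors within the same $\mathcal{B}^{a,b}$ (Lemma \ref{l_typ1}), note that $\widetilde{h}$ is constant on the bulks joining two states of the same class and affine otherwise, and verify discrete harmonicity by the same direct cancellation of $\pm 2$ and $\pm 1$ increments; your position index $p$ with $\widetilde{h}=Ap+B$ is just a cleaner packaging of the paper's explicit substitution of \eqref{e_testf4}--\eqref{e_testf5}. Your explicit treatment of the coalesced boundary moves at $h=1,K-1$ is in fact slightly more careful than the paper's displayed generic formula, but the computation and conclusion are identical.
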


\begin{proof}
By construction of the function $\widetilde{h}$ on $\mathcal{B}$,
it suffices to deal with the cases $\eta\in\mathcal{B}^{a,\,b}\setminus\mathcal{E}$,
$\eta\in\mathcal{B}^{a,\,c}\setminus\mathcal{E}$ and $\eta\in\mathcal{B}^{c,\,b}\setminus\mathcal{E}$
where $\mathbf{a}\in\mathcal{P}$, $\mathbf{b}\in\mathcal{Q}$ and
$\mathbf{c}\in\mathcal{S}\setminus(\mathcal{P}\cup\mathcal{Q})$.
We start with the case $\eta\in\mathcal{B}^{a,\,b}\setminus\mathcal{E}$.

We first consider the case $K<L$. If $\eta=\xi_{\ell,\,v}^{a,\,b}$
for some $\ell\in\mathbb{T}_{L}$ and $v\in\llbracket3,\,L-3\rrbracket$,
by simple inspection, $\phi_{1}(\eta)$ equals
\[
\sum_{k\in\mathbb{T}_{K}}\sum_{\xi\in\{\xi_{\ell,v;k,1}^{a,b,+},\,\xi_{\ell,v;k,1}^{a,b,-},\,\xi_{\ell,v-1;k,K-1}^{a,b,+},\,\xi_{\ell+1,v-1;k,K-1}^{a,b,-}\}}\mu_{\beta}(\eta)r_{\beta}(\eta,\,\xi)[\widetilde{h}(\eta)-\widetilde{h}(\xi)]\;.
\]
Substituting the exact values from Definition \ref{d_testf} and noting
\eqref{e_cdtMH}, this becomes
\[
\frac{e^{-\beta\Gamma}}{Z_{\beta}}\sum_{k\in\mathbb{T}_{K}}\Big[\frac{2\mathfrak{b}}{\kappa(K+2)(L-4)}+\frac{2\mathfrak{b}}{\kappa(K+2)(L-4)}-\frac{2\mathfrak{b}}{\kappa(K+2)(L-4)}-\frac{2\mathfrak{b}}{\kappa(K+2)(L-4)}\Big]\;,
\]
which is zero. If $\eta=\xi_{\ell,\,v;\,k,\,h}^{a,\,b,\,+}$ for some
$\ell\in\mathbb{T}_{L}$, $v\in\llbracket2,\,L-3\rrbracket$, $k\in\mathbb{T}_{K}$
and $h\in\llbracket1,\,K-1\rrbracket$, then $\phi_{1}(\eta)$ equals
\begin{align*}
 & \sum_{\xi\in\{\xi_{\ell,v;k,h+1}^{a,b,+},\,\xi_{\ell,v;k-1,h+1}^{a,b,+},\,\xi_{\ell,v;k,h-1}^{a,b,+},\,\xi_{\ell,v;k+1,h-1}^{a,b,+}\}}\mu_{\beta}(\eta)r_{\beta}(\eta,\,\xi)[\widetilde{h}(\eta)-\widetilde{h}(\xi)]\\
 & =\frac{e^{-\Gamma\beta}}{Z_{\beta}}\Big[\frac{\mathfrak{b}}{\kappa(K+2)(L-4)}+\frac{\mathfrak{b}}{\kappa(K+2)(L-4)}-\frac{\mathfrak{b}}{\kappa(K+2)(L-4)}-\frac{\mathfrak{b}}{\kappa(K+2)(L-4)}\Big]\;,
\end{align*}
which is again zero. The case of $\eta=\xi_{\ell,\,v;\,k,\,h}^{a,\,b,\,-}$
can be handled similarly. Therefore, we conclude the case $\eta\in\mathcal{B}^{a,\,b}\setminus\mathcal{E}$
under the assumption $K<L$. If $K=L$, then there are twice more
possibilities obtained by transposing the above configurations. However,
this case can be dealt with identically as above.

Finally, note that the structure of $\widetilde{h}$ on $\mathcal{B}^{a,\,c}$
and $\mathcal{B}^{c,\,b}$ are the same as the structure on $\mathcal{B}^{a,\,b}$,
except for linear transformations. Thus, we can repeat the same calculations
to obtain the same result that $\phi_{1}(\eta)=0$.
\end{proof}
Next, we show that $\phi_{1}$ is also zero on $\mathcal{R}_{2}$
and $\mathcal{R}_{L-2}$.
\begin{lem}
\label{l_fl4}It holds that \textbf{$\phi_{1}(\eta)=0$} for all $\eta\in\mathcal{R}_{2}\cup\mathcal{R}_{L-2}$.
\end{lem}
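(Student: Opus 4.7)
The plan is to mirror the strategy of Lemma \ref{l_fl3}: enumerate the neighbors of $\eta$ in $\widehat{\mathcal{N}}(\mathcal{S}) = \mathcal{E} \cup \mathcal{B}$, substitute the explicit values of $\widetilde{h}$ from Definition \ref{d_testf}, and show that the sum defining $\phi_1(\eta)$ vanishes. Since $\mathcal{R}_{L-2}^{a,b} = \mathcal{R}_{2}^{b,a}$ as subsets of $\mathcal{X}$, I can restrict attention to $\eta \in \mathcal{R}_2$. If $\eta \in \mathcal{R}_2^{s,s'}$ with $\mathbf{s},\mathbf{s'}$ lying in the same block among $\mathcal{P}, \mathcal{Q}, \mathcal{S}\setminus(\mathcal{P}\cup\mathcal{Q})$, then Definition \ref{d_testf} forces $\widetilde{h}$ to be identically constant on $\{\eta\}$ together with all of its $\widehat{\mathcal{N}}(\mathcal{S})$-neighbors, so $\phi_1(\eta)=0$ is automatic. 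The only nontrivial cases are $\mathcal{R}_2^{a,b}, \mathcal{R}_2^{a,c}, \mathcal{R}_2^{c,b}$, and the affine scaling of $\widetilde{h}$ visible in \eqref{e_testf1}--\eqref{e_testf5} makes the latter two direct scalar multiples of the first, so it suffices to treat $\eta = \xi_{\ell,2}^{a,b}$.

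For $\eta = \xi_{\ell,2}^{a,b}$ (and, when $K=L$, its transpose, which is handled identically), a direct energy check shows that the only single-spin updates of $\eta$ landing inside $\widehat{\mathcal{N}}(\mathcal{S})$ are of two types: (i) the $2K$ flips of an $a$-spin in row $\ell-1$ or $\ell+2$ to $b$, yielding the protuberance configurations $\xi_{\ell,2;k,1}^{a,b,\pm} \in \mathcal{Q}_2^{a,b}$; and (ii) the $2K$ flips of a $b$-spin in row $\ell$ or $\ell+1$ to $a$, yielding the single-vertex-tree configurations $\xi \in \mathcal{Z}_\ell^{a,b}$. Every other update raises the energy to at least $\Gamma+1$ and is ruled out of $\widehat{\mathcal{N}}(\mathcal{S})$ by Remark \ref{r_nhd}(2). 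Since $H(\eta)=\Gamma-2$ and $H(\xi)=\Gamma$ in both families, each transition carries the common weight $Z_\beta^{-1}e^{-\beta\Gamma}$ by \eqref{e_cdtMH}.

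Substituting \eqref{e_testf4}--\eqref{e_testf5} into family (i) gives $\widetilde{h}(\eta) - \widetilde{h}(\xi) = \frac{2\mathfrak{b}}{\kappa(K+2)(L-4)}$, and substituting the second line of \eqref{e_testf1} into family (ii) gives $\widetilde{h}(\eta) - \widetilde{h}(\xi) = -\frac{\mathfrak{e}}{\kappa}\mathfrak{f}(\xi)$. The symmetry of the ladder graph $\mathscr{G}_\ell^{a,b}$ forces $\mathfrak{f}$ to take a common value $f_1$ on all $2K$ type-(ii) configurations. To identify $f_1$ I will exploit $\mathfrak{L}\mathfrak{f} \equiv 0$ on $\mathscr{V} \setminus \{\mathbf{a}, \xi_{\ell,2}^{a,b}\}$ together with $\sum_{\eta' \in \mathscr{V}} \mathfrak{L}\mathfrak{f}(\eta') = 0$ (which holds because $\mathfrak{r}$ is symmetric and the invariant measure of $\mathfrak{Z}$ is uniform), yielding $\mathfrak{L}\mathfrak{f}(\xi_{\ell,2}^{a,b}) = -\mathfrak{L}\mathfrak{f}(\mathbf{a}) = |\mathscr{V}|\,\mathfrak{cap}(\mathbf{a}, \xi_{\ell,2}^{a,b})$; since $\mathfrak{f}(\xi_{\ell,2}^{a,b})=0$ the left side is simply $2K f_1$, and then \eqref{e_e0def}--\eqref{e_edef} give $f_1 = \frac{1}{2K\mathfrak{e}_0} = \frac{\nu_0}{2KL\mathfrak{e}}$.

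Summing the $4K$ contributions produces
\[
\phi_1(\eta) = \frac{e^{-\beta\Gamma}}{Z_\beta\,\kappa}\left[\frac{4K\mathfrak{b}}{(K+2)(L-4)} - \frac{\nu_0}{L}\right],
\]
which vanishes identically thanks to the definition $\mathfrak{b} = \nu_0\frac{(K+2)(L-4)}{4KL}$ in \eqref{e_bdef}. The main obstacle will be the computation of $f_1$: the whole cancellation only works because the bulk constant $\mathfrak{b}$ has been calibrated a priori so that the flux that $\widetilde{h}$ delivers from $\mathcal{R}_2$ into the linearly interpolated bulk region exactly matches the flux that $\widetilde{h}$ receives from $\mathcal{R}_2$ through the edge region via the auxiliary equilibrium potential $\mathfrak{f}$; identifying the coefficient $\nu_0/L$ is the single step where the potential-theoretic structure of $\mathfrak{Z}$ enters the argument.
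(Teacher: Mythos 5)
Your proof is correct and follows essentially the same route as the paper's: split the neighbors of $\xi_{\ell,2}^{a,\,b}$ into the bulk part ($\mathcal{Q}_{2}^{a,\,b}$) and the edge part ($\mathcal{Z}_{\ell}^{a,\,b}$), and show the two fluxes cancel via the identity $(\mathfrak{L}\mathfrak{f})(\xi_{\ell,\,2}^{a,\,b})=|\mathscr{V}|\,\mathfrak{cap}(\mathbf{a},\,\xi_{\ell,\,2}^{a,\,b})$ together with the definitions of $\mathfrak{b}$, $\mathfrak{e}$ and $\mathfrak{e}_{0}$. The only (harmless) deviation is that you pin down the individual value $f_{1}$ of $\mathfrak{f}$ on the single-vertex configurations by symmetry, whereas the paper only needs the aggregate sum.
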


\begin{proof}
We have $\mathcal{R}_{2}=\mathcal{R}_{L-2}$ by Definition \ref{d_canreg}
and thus we only focus on $\mathcal{R}_{2}$. By Definition \ref{d_testf},
we only need to consider $\eta\in\mathcal{R}_{2}^{a,\,b}\cup\mathcal{R}_{2}^{a,\,c}\cup\mathcal{R}_{2}^{c,\,b}$
for $\mathbf{a}\in\mathcal{P}$, $\mathbf{b}\in\mathcal{Q}$ and $\mathbf{c}\in\mathcal{S}\setminus(\mathcal{P}\cup\mathcal{Q})$.
First consider $\eta\in\mathcal{R}_{2}^{a,\,b}$. Without loss of
generality, we assume $\eta=\xi_{\ell,\,2}^{a,\,b}$, since we can
deal with the case of $\eta=\Theta(\xi_{\ell,\,2}^{a,\,b})$ in the
same way. Recall that $\mathfrak{f}=\mathfrak{f}_{\mathbf{a},\,\xi_{\ell,2}^{a,b}}$
from \eqref{e_eqpotMH} and recall the generator $\mathfrak{L}$ from
\eqref{e_genZ}. Then, since the uniform measure on $\mathscr{V}$
is the invariant measure for the process $\mathfrak{Z}(\cdot)$, by
the property of capacities (e.g., \cite[(7.1.39)]{BdenH Meta}), we
can write 
\[
\frac{1}{|\mathscr{V}|}\sum_{\xi\in\mathscr{V}\setminus\{\xi_{\ell,2}^{a,b}\}}\mathfrak{r}(\xi_{\ell,\,2}^{a,\,b},\,\xi)[\mathfrak{f}(\xi_{\ell,\,2}^{a,\,b})-\mathfrak{f}(\xi)]=-\frac{1}{|\mathscr{V}|}(\mathfrak{L}\mathfrak{f})(\xi_{\ell,\,2}^{a,\,b})=-\mathfrak{cap}(\mathbf{a},\,\xi_{\ell,\,2}^{a,\,b})\;.
\]
On the other hand, by the definition of $\widetilde{h}$, we can write
\[
\sum_{\xi\in\mathcal{E}^{a}}\mu_{\beta}(\xi_{\ell,\,2}^{a,\,b})r_{\beta}(\xi_{\ell,\,2}^{a,\,b},\,\xi)[\widetilde{h}(\xi_{\ell,\,2}^{a,\,b})-\widetilde{h}(\xi)]=\frac{1}{Z_{\beta}}e^{-\beta\Gamma}\frac{\mathfrak{e}}{\kappa}\sum_{\xi\in\mathscr{V}\setminus\{\xi_{\ell,2}^{a,b}\}}\mathfrak{r}(\xi_{\ell,\,2}^{a,\,b},\,\xi)\{\mathfrak{f}(\xi_{\ell,\,2}^{a,\,b})-\mathfrak{f}(\xi)\}\;.
\]
Summing up the computations above, we get
\begin{equation}
\sum_{\xi\in\mathcal{E}^{a}}\mu_{\beta}(\xi_{\ell,\,2}^{a,\,b})r_{\beta}(\xi_{\ell,\,2}^{a,\,b},\,\xi)[\widetilde{h}(\xi_{\ell,\,2}^{a,\,b})-\widetilde{h}(\xi)]=-\frac{1}{Z_{\beta}}e^{-\beta\Gamma}\frac{\mathfrak{e}}{\kappa}\times|\mathscr{V}|\mathfrak{cap}(\mathbf{a},\,\xi_{\ell,\,2}^{a,\,b})=-\nu_{0}\frac{e^{-\beta\Gamma}}{Z_{\beta}\kappa L}\;,\label{e_fl4.1}
\end{equation}
where the second identity is a consequence of the definitions of $\mathfrak{e}$
and $\mathfrak{e}_{0}$ given in \eqref{e_edef} and \eqref{e_e0def},
respectively. On the other hand, by the definition of $\mathfrak{b}$,
\begin{equation}
\sum_{\xi\in\mathcal{B}^{a,b}}\mu_{\beta}(\xi_{\ell,\,2}^{a,\,b})r_{\beta}(\xi_{\ell,\,2}^{a,\,b},\,\xi)[\widetilde{h}(\xi_{\ell,\,2}^{a,\,b})-\widetilde{h}(\xi)]=\frac{1}{Z_{\beta}}e^{-\beta\Gamma}\times2K\times\frac{2\mathfrak{b}}{\kappa(K+2)(L-4)}=\nu_{0}\frac{e^{-\beta\Gamma}}{Z_{\beta}\kappa L}\;.\label{e_fl4.2}
\end{equation}
By adding \eqref{e_fl4.1} and \eqref{e_fl4.2}, we obtain
\[
\phi_{1}(\xi_{\ell,\,2}^{a,\,b})=-\nu_{0}\frac{e^{-\beta\Gamma}}{Z_{\beta}\kappa L}+\nu_{0}\frac{e^{-\beta\Gamma}}{Z_{\beta}\kappa L}=0\;.
\]

The same computations can be done with the remaining cases $\eta\in\mathcal{R}_{2}^{a,\,c}$
and $\eta\in\mathcal{R}_{2}^{c,\,b}$, just by multiplying constants
to each term. Thus, we do not repeat the proof.
\end{proof}
Next, we show that $\phi_{1}(\eta)=0$ for $\eta\in\mathcal{Z}^{a,\,b}\cup\mathcal{Z}^{a,\,c}\cup\mathcal{Z}^{c,\,b}$,
$\mathbf{a}\in\mathcal{P}$, $\mathbf{b}\in\mathcal{Q}$ and $\mathbf{c}\in\mathcal{S}\setminus(\mathcal{P}\cup\mathcal{Q})$.
\begin{lem}
\label{l_fl5}It holds that \textbf{$\phi_{1}(\eta)=0$} for all $\eta\in\mathcal{Z}^{a,\,b}\cup\mathcal{Z}^{a,\,c}\cup\mathcal{Z}^{c,\,b}$.
\end{lem}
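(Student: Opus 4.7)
The plan is to mirror the proof of Lemma \ref{l_fl4}, but with the harmonicity of $\mathfrak{f}$ at interior vertices of the auxiliary graph $\mathscr{G}$ replacing the capacity identity used there at the boundary vertex $\xi_{\ell,2}^{a,b}$. On each stratum $\mathcal{Z}_{\ell}^{a,b}$ the test function $\widetilde{h}$ is, by \eqref{e_testf1}, an affine function of $\mathfrak{f}_{\mathbf{a},\,\xi_{\ell,2}^{a,b}}$; so the discrete divergence $\phi_{1}(\eta)$ at an interior $\eta$ should collapse to a nonzero multiple of $(\mathfrak{L}\mathfrak{f})(\eta)=0$.

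First I would fix $\eta\in\mathcal{Z}^{a,b}$ with $\eta\in\mathcal{Z}_{\ell}^{a,b}$ and verify the key geometric input: every $\xi$ with $\xi\sim\eta$ and $\xi\in\widehat{\mathcal{N}}(\mathcal{S})$ lies in $\mathcal{N}(\mathbf{a})\cup\mathcal{Z}_{\ell}^{a,b}\cup\{\xi_{\ell,2}^{a,b}\}$. For this I would combine: (i) Proposition \ref{p_typprop} to see $\xi\in\mathcal{E}^{a}$ (a one-site update cannot change the dominant color or spoil Proposition \ref{p_typprop}-(1)); (ii) the decomposition \eqref{e_Eadecomp2} together with \eqref{e_Dadef}, which forces $\xi\notin\mathcal{D}^{a}\setminus\mathcal{N}(\mathbf{a})$ since such dead-ends are separated from $\mathcal{Z}^{a}$ by definition; and (iii) the fact that a single spin flip preserves the strip index $\ell$ and the secondary color $b$ in conditions [Z1]--[Z3] of Proposition \ref{p_Zabtree}.

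Second, since $H(\eta)=\Gamma$ and every relevant neighbor $\xi\in\widehat{\mathcal{N}}(\mathcal{S})$ satisfies $H(\xi)\le\Gamma$, the detailed balance \eqref{e_cdtMH} yields $\mu_{\beta}(\eta)r_{\beta}(\eta,\xi)=\frac{1}{Z_{\beta}}e^{-\beta\Gamma}$ exactly. Recalling that $\widetilde{h}\equiv1$ on $\mathcal{N}(\mathbf{a})\subset\mathcal{D}^{a}$, that $\mathfrak{f}(\mathbf{a})=1$, and the formula \eqref{e_testf1} on $\mathcal{Z}_{\ell}^{a,b}\cup\{\xi_{\ell,2}^{a,b}\}$, the increment $\widetilde{h}(\eta)-\widetilde{h}(\xi)$ equals $\frac{\mathfrak{e}}{\kappa}[\mathfrak{f}(\eta)-\mathfrak{f}(\Pi_{\ell}^{a,b}(\xi))]$ in every case. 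Collapsing the $\mathcal{N}(\mathbf{a})$-neighbors via the definition \eqref{e_EaMC} of $\mathfrak{r}(\eta,\mathbf{a})$, this yields
\begin{equation*}
\phi_{1}(\eta)=\frac{e^{-\beta\Gamma}}{Z_{\beta}}\frac{\mathfrak{e}}{\kappa}\sum_{\xi\in\mathscr{V}_{\ell}^{a,b}\setminus\{\eta\}}\mathfrak{r}(\eta,\xi)[\mathfrak{f}(\eta)-\mathfrak{f}(\xi)]=-\frac{e^{-\beta\Gamma}}{Z_{\beta}}\frac{\mathfrak{e}}{\kappa}(\mathfrak{L}\mathfrak{f})(\eta)=0,
\end{equation*}
since $\eta\in\mathscr{V}_{\ell}^{a,b}\setminus\{\mathbf{a},\,\xi_{\ell,2}^{a,b}\}$ and $\mathfrak{f}=\mathfrak{f}_{\mathbf{a},\,\xi_{\ell,2}^{a,b}}$ is harmonic there by \eqref{e_eqpsol}.

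The cases $\eta\in\mathcal{Z}^{a,c}$ and $\eta\in\mathcal{Z}^{c,b}$ are handled identically: the test function is again an affine function of the corresponding auxiliary equilibrium potential, with scaling constants $\frac{|\mathcal{Q}|}{|\mathcal{P}|+|\mathcal{Q}|}\frac{\mathfrak{e}}{\kappa}$ and $\frac{|\mathcal{P}|}{|\mathcal{P}|+|\mathcal{Q}|}\frac{\mathfrak{e}}{\kappa}$ read off from \eqref{e_testf1}--\eqref{e_testf3}, and the computation above goes through unchanged up to this overall prefactor. The main obstacle is the geometric step (i)--(iii): although morally clear from Figures \ref{fig6.3}--\ref{fig6.4}, a careful combinatorial case analysis of single-site updates of the tree configurations in Proposition \ref{p_Zabtree} is needed to rule out transitions from $\mathcal{Z}_{\ell}^{a,b}$ into a different strip, a different secondary color, or a proper dead-end, and to confirm that every $\xi$ with $H(\xi)>\Gamma$ is absorbed into $\phi_{2}$ and hence already negligible by Lemma \ref{l_fl2}. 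This analysis fits naturally with the combinatorial work postponed to Appendix B.
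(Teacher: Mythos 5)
Your proposal is correct and takes essentially the same route as the paper: the paper's proof simply writes $\phi_{1}(\eta)$ as $\frac{e^{-\Gamma\beta}}{Z_{\beta}}\frac{\mathfrak{e}}{\kappa}$ times $(\mathfrak{L}\mathfrak{f})(\eta)$ and invokes harmonicity of the equilibrium potential at interior vertices, exactly as you do. The geometric step you flag as the main obstacle is precisely the content of Lemma \ref{l_Zab}-(2) and Proposition \ref{p_proj}, which the paper's two-line proof uses implicitly.
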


\begin{proof}
We consider only the case $\eta\in\mathcal{Z}^{a,\,b}$, since the
structure is identical in the other cases (with constants multiplied).
Recall $\mathfrak{L}$ from \eqref{e_genZ}. Then for each $\eta\in\mathcal{Z}^{a,\,b}$,
we can write 
\begin{equation}
\phi_{1}(\eta)=\frac{1}{Z_{\beta}}e^{-\Gamma\beta}\frac{\mathfrak{e}}{\kappa}\times(\mathfrak{L}\mathfrak{f})(\eta)\;.\label{e_fl5}
\end{equation}
By the elementary property of equilibrium potentials (e.g., \cite[(7.1.21)]{BdenH Meta}),
we can easily conclude from \eqref{e_fl5} that $\phi_{1}(\eta)=0$.
\end{proof}
All it remains is to consider the configurations in $\mathcal{D}^{s}$
for all $s\in S$. This is the content of the following two lemmas.
\begin{lem}
\label{l_fl6}For any $\mathbf{s}\in\mathcal{S}$ and $\eta\in\mathcal{D}^{s}\setminus\mathcal{N}(\mathbf{s})$,
we have $\phi_{1}(\eta)=0$.
\end{lem}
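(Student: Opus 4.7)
The plan is to reduce the claim to a structural statement about neighbors of dead-end configurations, exploiting the fact that the test function $\widetilde{h}$ is constant on the full set $\mathcal{D}^{s}$ by Remark \ref{r_deadconst}. Specifically, the uniform value of $\widetilde{h}$ on $\mathcal{D}^{s}$ equals $1$, $0$, or $|\mathcal{P}|/(|\mathcal{P}|+|\mathcal{Q}|)$ according as $\mathbf{s}\in\mathcal{P}$, $\mathcal{Q}$, or $\mathcal{S}\setminus(\mathcal{P}\cup\mathcal{Q})$. Consequently, from the definition of $\phi_{1}$, it suffices to prove that every neighbor $\xi\in\widehat{\mathcal{N}}(\mathcal{S})$ of $\eta$ again belongs to $\mathcal{D}^{s}$, for then each summand $\widetilde{h}(\eta)-\widetilde{h}(\xi)$ vanishes.

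To carry this out, fix such a neighbor $\xi$. By Remark \ref{r_nhd}(2), $\xi\in\widehat{\mathcal{N}}(\mathcal{S})$ gives $H(\xi)\le\Gamma$. Since $\eta\in\mathcal{D}^{s}=\widehat{\mathcal{N}}(\mathbf{s};\mathcal{Z}^{s})$, there is a path $\omega:\mathbf{s}\to\eta$ lying in $(\mathcal{Z}^{s})^{c}$ with $\Phi_{\omega}\le\Gamma$. Appending the edge $(\eta,\xi)$ and invoking the MH identity \eqref{e_MHheight}, I obtain a $\Gamma$-path $\mathbf{s}\to\xi$. If $\xi\notin\mathcal{Z}^{s}$, then this extended path lies entirely in $(\mathcal{Z}^{s})^{c}$, so $\xi\in\widehat{\mathcal{N}}(\mathbf{s};\mathcal{Z}^{s})=\mathcal{D}^{s}$, as desired.

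The task therefore boils down to excluding $\xi\in\mathcal{Z}^{s}$. Suppose for contradiction $\xi\in\mathcal{Z}^{s,s'}$ for some $s'\ne s$. Definition \eqref{e_Zabdef} produces a path $\omega'$ in $(\mathcal{B}_{\Gamma})^{c}$ from some $\omega'_{0}\in\mathcal{R}_{2}^{s,s'}$ to $\xi$ with $H(\omega'_{n})=\Gamma$ for all $n\ge1$. I extend this path by the edge $(\xi,\eta)$. Noting from \eqref{e_Ea} that $\eta\in\mathcal{E}^{s}=\widehat{\mathcal{N}}(\mathbf{s};\mathcal{B}_{\Gamma})$, one has $\eta\notin\mathcal{B}_{\Gamma}$, so the extension remains in $(\mathcal{B}_{\Gamma})^{c}$. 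Since $\eta\notin\mathcal{N}(\mathbf{s})$ forces $\Phi(\mathbf{s},\eta)=\Gamma$, and an inspection of the low-energy configurations in $\mathcal{E}^{s}$ (via the tree characterization of Proposition \ref{p_Zabtree} and the fact that $\{H<\Gamma\}\cap\mathcal{E}^{s}$ is contained in $\mathcal{N}(\mathbf{s})$) yields $H(\eta)=\Gamma$, the extended path satisfies the defining property of $\mathcal{Z}^{s,s'}$ with endpoint $\eta$. This places $\eta\in\mathcal{Z}^{s}$, contradicting $\eta\in\mathcal{D}^{s}=\widehat{\mathcal{N}}(\mathbf{s};\mathcal{Z}^{s})$, whose elements by construction are not in $\mathcal{Z}^{s}$.

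The main obstacle is the last step, namely the verification that $H(\eta)=\Gamma$ for every $\eta\in\mathcal{D}^{s}\setminus\mathcal{N}(\mathbf{s})$. This is essentially the claim that any typical configuration strictly below the energy barrier is absorbed into the metastable well $\mathcal{N}(\mathbf{s})$, and must be read off the combinatorial description of $\mathcal{E}^{s}$ developed in Section \ref{sec6.4}; once granted, the rest of the argument is a clean path-concatenation using \eqref{e_MHheight} and the disjointness $\mathcal{D}^{s}\cap\mathcal{Z}^{s}=\emptyset$ built into the definitions.
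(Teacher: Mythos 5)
Your proof is correct in substance and its core coincides with the paper's: the paper's own proof of Lemma \ref{l_fl6} is a one-line appeal to the constancy of $\widetilde{h}$ on $\mathcal{D}^{s}$, leaving implicit precisely the structural fact you set out to verify, namely that every $\xi\in\widehat{\mathcal{N}}(\mathcal{S})$ adjacent to $\eta$ again lies in $\mathcal{D}^{s}$, so that each increment $\widetilde{h}(\eta)-\widetilde{h}(\xi)$ vanishes by Remark \ref{r_deadconst}. Your path-concatenation argument for the case $\xi\notin\mathcal{Z}^{s}$ is sound (it uses \eqref{e_MHheight} correctly and the built-in disjointness $\mathcal{D}^{s}\cap\mathcal{Z}^{s}=\emptyset$). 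Two remarks on the remaining case. First, your parenthetical claim that $\{H<\Gamma\}\cap\mathcal{E}^{s}\subseteq\mathcal{N}(\mathbf{s})$ is false as stated, since $\mathcal{R}_{2}^{s,s'}\subseteq\mathcal{E}^{s}$ has energy $\Gamma-2$ and is not contained in $\mathcal{N}(\mathbf{s})$; what you actually need is $\{H<\Gamma\}\cap\mathcal{D}^{s}\subseteq\mathcal{N}(\mathbf{s})$, which is true: by Proposition \ref{p_lowE} such a configuration is of type \textbf{(L1)}, \textbf{(L2)} or \textbf{(L3)}; types \textbf{(L2)}--\textbf{(L3)} force membership in some $\mathcal{N}(\mathbf{s}_0)$ and hence $\mathbf{s}_0=\mathbf{s}$ by Proposition \ref{p_typprop}-(1), while type \textbf{(L1)} is excluded because the proof of Proposition \ref{p_Eadecomp} identifies $\mathcal{D}^{s}$ with $\widehat{\mathcal{N}}(\mathbf{s};\,\mathcal{Z}^{s}\cup\mathcal{R}^{s}\cup\mathcal{B}_{\Gamma})$, which is disjoint from $\mathcal{R}^{s}\cup\mathcal{B}_{\Gamma}$, and Proposition \ref{p_typprop}-(2),(3) excludes $\mathcal{R}_{v}$ with $v\in\llbracket3,\,L-3\rrbracket$ from $\mathcal{E}^{s}$. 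Second, the exclusion of $\xi\in\mathcal{Z}^{s,s'}$ is obtained more directly from Lemma \ref{l_Zab}-(2) applied to $\xi$: any neighbor of $\xi$ with energy at most $\Gamma$ lies in $\mathcal{Z}^{s,s'}\cup\mathcal{N}(\mathbf{s})\cup\mathcal{R}_{2}^{s,s'}$, and all three possibilities are already ruled out for $\eta$ by the disjointness relations above, so you can bypass both the path extension and the verification that $H(\eta)=\Gamma$. With these adjustments your argument is complete and amounts to a more careful write-up of the step the paper suppresses.
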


\begin{proof}
Recall from Definition \ref{d_testf}-(1) that $\widetilde{h}$ is
defined to be constant on $\mathcal{D}^{s}$, and thus for all $\eta\in\mathcal{D}^{s}\setminus\mathcal{N}(\mathbf{s})$,
\[
\phi_{1}(\eta)=\sum_{\xi\in\widehat{\mathcal{N}}(\mathcal{S}):\,\xi\sim\eta}\mu_{\beta}(\eta)r_{\beta}(\eta,\,\xi)[\widetilde{h}(\eta)-\widetilde{h}(\xi)]=0\;.
\]
This concludes the proof.
\end{proof}
\begin{lem}
\label{l_fl7}For $\mathbf{a}\in\mathcal{P}$, $\mathbf{b}\in\mathcal{Q}$
and $\mathbf{c}\in\mathcal{S}\setminus(\mathcal{P}\cup\mathcal{Q})$,
it holds that
\begin{align}
\sum_{\eta\in\mathcal{N}(\mathbf{a})}\phi_{1}(\eta) & =(1+o(1))\cdot\frac{|\mathcal{Q}|}{\kappa(|\mathcal{P}|+|\mathcal{Q}|)}e^{-\Gamma\beta}\;,\label{efl1}\\
\sum_{\eta\in\mathcal{N}(\mathbf{b})}\phi_{1}(\eta) & =-(1+o(1))\cdot\frac{|\mathcal{P}|}{\kappa(|\mathcal{P}|+|\mathcal{Q}|)}e^{-\Gamma\beta}\;,\label{efl2}\\
\sum_{\eta\in\mathcal{N}(\mathbf{c})}\phi_{1}(\eta) & =o(e^{-\Gamma\beta})\;.\label{efl3}
\end{align}
Moreover, there exists $C>0$ independent of $\beta$ such that for
all $\eta\in\mathcal{N}(\mathbf{a})\cup\mathcal{N}(\mathbf{b})\cup\mathcal{N}(\mathbf{c})$,
$|\phi_{1}(\eta)|\le Ce^{-\Gamma\beta}$.
\end{lem}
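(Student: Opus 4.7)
The plan is to expand $\phi_1(\eta)$ using the structure of $\widetilde h$ near ground states, interchange the order of summation via the detailed-balance identity \eqref{e_cdtMH}, and then recognize each resulting inner sum as a capacity for the auxiliary Markov chain $\mathfrak Z(\cdot)$ on $\mathscr V$. Concretely, for $\eta\in\mathcal N(\mathbf s)$ with $\mathbf s\in\mathcal S$, the value $\widetilde h(\eta)$ equals the constant value of $\widetilde h$ on the dead-end set $\mathcal D^{s}$ (namely $1$, $0$, or $|\mathcal P|/(|\mathcal P|+|\mathcal Q|)$). Hence the only neighbors $\xi\in\widehat{\mathcal N}(\mathcal S)$ of $\eta$ that contribute to $\phi_1(\eta)$ are those in $\mathcal Z^{s,s'}$ for appropriate $s'\neq s$, and on each such $\xi$ the definition of $\widetilde h$ gives $\widetilde h(\eta)-\widetilde h(\xi)=\epsilon\, c\,\frac{\mathfrak e}{\kappa}(1-\mathfrak f(\xi))$ for an explicit sign $\epsilon\in\{\pm 1\}$ and a constant $c\in\{1,\tfrac{|\mathcal P|}{|\mathcal P|+|\mathcal Q|},\tfrac{|\mathcal Q|}{|\mathcal P|+|\mathcal Q|}\}$ determined by the membership of $s,s'$ in $\mathcal P,\mathcal Q$.

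Summing over $\eta\in\mathcal N(\mathbf s)$ and using reversibility, the contribution of $\xi\in\mathcal Z_\ell^{s,s'}$ becomes $(1-\widetilde h(\xi))$ (up to sign) times $\sum_{\eta\in\mathcal N(\mathbf s)}\mu_\beta(\xi)r_\beta(\xi,\eta)$. By Proposition \ref{p_proj}(2), this last sum equals $(1+o(1))\cdot q^{-1}e^{-\Gamma\beta}\mathfrak r(\xi,\mathbf s)$. So the full contribution of $\mathcal Z_\ell^{s,s'}$ equals $\pm(1+o(1))\cdot\frac{c\,\mathfrak e}{q\kappa}e^{-\Gamma\beta}\cdot\sum_{\xi\in\mathcal Z_\ell^{s,s'}}\mathfrak r(\xi,\mathbf s)(1-\mathfrak f(\xi))$. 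Now I invoke the key algebraic identity
\[
\sum_{\xi\in\mathcal Z_\ell^{s,s'}}\mathfrak r(\xi,\mathbf s)(1-\mathfrak f(\xi))=|\mathscr V|\,\mathfrak{cap}(\mathbf s,\xi_{\ell,2}^{s,s'})=\mathfrak e_0^{-1},
\]
which follows from $\mathfrak L\mathfrak f\equiv 0$ on $\mathcal Z_\ell^{s,s'}$, the boundary values $\mathfrak f(\mathbf s)=1$, $\mathfrak f(\xi_{\ell,2}^{s,s'})=0$, the symmetry $\mathfrak r(\mathbf s,\xi)=\mathfrak r(\xi,\mathbf s)$, and the uniformity of the invariant measure of $\mathfrak Z(\cdot)$; the second equality is \eqref{e_e0def}. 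Summing over $\ell\in\mathbb T_L$ (and the transpose copies if $K=L$) and using the identity $\mathfrak e=\nu_0\mathfrak e_0/L$ from \eqref{e_edef}, the prefactor $\mathfrak e/(q\kappa\,\mathfrak e_0)$ times the $L/\nu_0$ copies collapses to exactly $1/(q\kappa)$, so each set $\mathcal Z^{s,s'}$ contributes $\pm(1+o(1))\cdot\frac{c}{q\kappa}e^{-\Gamma\beta}$.

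It then remains to sum over the relevant $s'$. For $\mathbf s=\mathbf a\in\mathcal P$ the contributions from $\mathbf b'\in\mathcal Q$ (with $c=1$, and $|\mathcal Q|$ choices) and from $\mathbf c'\in\mathcal S\setminus(\mathcal P\cup\mathcal Q)$ (with $c=|\mathcal Q|/(|\mathcal P|+|\mathcal Q|)$, and $q-|\mathcal P|-|\mathcal Q|$ choices) both carry the same sign and sum to $|\mathcal Q|\cdot\frac{q}{|\mathcal P|+|\mathcal Q|}\cdot\frac{1}{q\kappa}e^{-\Gamma\beta}$, yielding \eqref{efl1}; the case $\mathbf s=\mathbf b\in\mathcal Q$ is mirror-symmetric and gives \eqref{efl2}. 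For $\mathbf s=\mathbf c\in\mathcal S\setminus(\mathcal P\cup\mathcal Q)$ the contributions from neighbors in $\mathcal Z^{c,a}$ ($\mathbf a\in\mathcal P$) and in $\mathcal Z^{c,b}$ ($\mathbf b\in\mathcal Q$) carry the prefactors $-\tfrac{|\mathcal Q|}{|\mathcal P|+|\mathcal Q|}$ and $+\tfrac{|\mathcal P|}{|\mathcal P|+|\mathcal Q|}$ with multiplicities $|\mathcal P|$ and $|\mathcal Q|$, so the two leading-order terms cancel exactly and only the $o(e^{-\Gamma\beta})$ remainder survives, giving \eqref{efl3}. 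Finally, the uniform bound $|\phi_1(\eta)|\le Ce^{-\Gamma\beta}$ is immediate from $0\le\widetilde h\le 1$, the estimate $\mu_\beta(\eta)r_\beta(\eta,\xi)\le Z_\beta^{-1}e^{-\beta H(\xi)}\le Z_\beta^{-1}e^{-\beta\Gamma}$ valid for $\xi\in\widehat{\mathcal N}(\mathcal S)$ (Remark \ref{r_nhd}(2) and \eqref{e_cdtMH}), together with the trivial bound $|\Lambda|(q-1)$ on the number of neighbors of any configuration.

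The main obstacle is the identification of $\sum_{\xi\in\mathcal Z_\ell^{s,s'}}\mathfrak r(\xi,\mathbf s)(1-\mathfrak f(\xi))$ with $|\mathscr V|\mathfrak{cap}(\mathbf s,\xi_{\ell,2}^{s,s'})=\mathfrak e_0^{-1}$ and the subsequent bookkeeping that makes the $\mathfrak e$'s cancel. The delicate point is that this identity must be exact (not merely asymptotic), and that all of the surviving $o(1)$ corrections genuinely come from Proposition \ref{p_proj}(2) rather than from the capacity identification itself; this is what ensures that the quantitative constants in \eqref{efl1}--\eqref{efl3} match precisely.
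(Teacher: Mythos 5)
Your argument is correct and follows essentially the same route as the paper's proof: both reduce $\sum_{\eta\in\mathcal{N}(\mathbf{s})}\phi_{1}(\eta)$ to $(\mathfrak{L}\mathfrak{f})(\mathbf{s})=-|\mathscr{V}|\,\mathfrak{cap}(\mathbf{s},\xi_{\ell,2}^{s,s'})=-\mathfrak{e}_{0}^{-1}$ summed over the $L/\nu_{0}$ copies of $\mathcal{Z}_{\ell}^{s,s'}$ and the relevant spins $s'$, collapse the constants via $\mathfrak{e}=\nu_{0}\mathfrak{e}_{0}/L$, and obtain \eqref{efl3} from the exact cancellation of the $\mathcal{P}$- and $\mathcal{Q}$-contributions. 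One small slip in the otherwise trivial uniform bound: the chain $\mu_{\beta}(\eta)r_{\beta}(\eta,\xi)\le Z_{\beta}^{-1}e^{-\beta H(\xi)}\le Z_{\beta}^{-1}e^{-\Gamma\beta}$ has its second inequality reversed for general $\xi\in\widehat{\mathcal{N}}(\mathcal{S})$ (where $H(\xi)\le\Gamma$); one should instead note that the only neighbors $\xi$ with $\widetilde{h}(\xi)\ne\widetilde{h}(\eta)$ lie in the sets $\mathcal{Z}^{s,s'}$ and hence satisfy $H(\xi)=\Gamma$, which gives $\mu_{\beta}(\eta)r_{\beta}(\eta,\xi)=Z_{\beta}^{-1}e^{-\Gamma\beta}$ for every contributing term.
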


\begin{proof}
To start, we prove the first identity which is
\begin{equation}
\sum_{\eta\in\mathcal{N}(\mathbf{a})}\sum_{\xi\in\widehat{\mathcal{N}}(\mathcal{S}):\,\xi\sim\eta}\mu_{\beta}(\eta)r_{\beta}(\eta,\,\xi)[\widetilde{h}(\eta)-\widetilde{h}(\xi)]=(1+o(1))\cdot\frac{|\mathcal{Q}||}{\kappa(|\mathcal{P}|+|\mathcal{Q}|)}e^{-\Gamma\beta}\;.\label{e_fl7}
\end{equation}
The left-hand side can be written as
\begin{align*}
 & -\sum_{\eta\in\mathcal{N}(\mathbf{a})}\sum_{\mathbf{b}\in\mathcal{Q}}|\mathcal{R}_{2}^{a,\,b}|\sum_{\xi\in\mathcal{Z}_{\ell}^{a,b}:\,\xi\sim\eta}\frac{e^{-\Gamma\beta}}{Z_{\beta}}\frac{\mathfrak{e}}{\kappa}[\mathfrak{f}(\xi)-\mathfrak{f}(\mathbf{a})]\\
 & -\sum_{\eta\in\mathcal{N}(\mathbf{a})}\sum_{\mathbf{c}\in\mathcal{S}\setminus(\mathcal{P}\cup\mathcal{Q})}|\mathcal{R}_{2}^{a,\,c}|\sum_{\xi\in\mathcal{Z}_{\ell}^{a,c}:\,\xi\sim\eta}\frac{e^{-\Gamma\beta}}{Z_{\beta}}\frac{|\mathcal{Q}|}{|\mathcal{P}|+|\mathcal{Q}|}\frac{\mathfrak{e}}{\kappa}[\mathfrak{f}(\xi)-\mathfrak{f}(\mathbf{a})]\;.
\end{align*}
This can be rewritten as 
\begin{align*}
 & -\frac{e^{-\Gamma\beta}}{Z_{\beta}}\frac{\mathfrak{e}}{\kappa}\sum_{\mathbf{b}\in\mathcal{Q}}|\mathcal{R}_{2}^{a,\,b}|\sum_{\xi\in\mathcal{Z}_{\ell}^{a,b}:\,\{\xi,\,\mathbf{a}\}\in\mathscr{E}}\mathfrak{r}(\mathbf{a},\,\xi)[\mathfrak{f}(\xi)-\mathfrak{f}(\mathbf{a})]\\
 & -\frac{e^{-\Gamma\beta}}{Z_{\beta}}\frac{|\mathcal{Q}|}{|\mathcal{P}|+|\mathcal{Q}|}\frac{\mathfrak{e}}{\kappa}\sum_{\mathbf{c}\in\mathcal{S}\setminus(\mathcal{P}\cup\mathcal{Q})}|\mathcal{R}_{2}^{a,\,c}|\sum_{\xi\in\mathcal{Z}_{\ell}^{a,c}:\,\{\xi,\,\mathbf{a}\}\in\mathscr{E}}\mathfrak{r}(\mathbf{a},\,\xi)[\mathfrak{f}(\xi)-\mathfrak{f}(\mathbf{a})]\;.
\end{align*}
By the property of capacities (e.g., \cite[(7.1.39)]{BdenH Meta}),
the last display equals
\begin{align*}
 & -\frac{e^{-\Gamma\beta}}{Z_{\beta}}\frac{\mathfrak{e}}{\kappa}\Big[\sum_{\mathbf{b}\in\mathcal{Q}}|\mathcal{R}_{2}^{a,\,b}|\times(\mathfrak{L}\mathfrak{f})(\mathbf{a})+\frac{|\mathcal{Q}|}{|\mathcal{P}|+|\mathcal{Q}|}\sum_{\mathbf{c}\in\mathcal{S}\setminus(\mathcal{P}\cup\mathcal{Q})}|\mathcal{R}_{2}^{a,\,c}|\times(\mathfrak{L}\mathfrak{f})(\mathbf{a})\Big]\\
 & =\frac{e^{-\Gamma\beta}}{Z_{\beta}}\frac{\mathfrak{e}}{\kappa}\Big[|\mathcal{Q}|\times\frac{1}{\mathfrak{e}}+(q-|\mathcal{P}|-|\mathcal{Q}|)\times\frac{|\mathcal{Q}|}{|\mathcal{P}|+|\mathcal{Q}|}\frac{1}{\mathfrak{e}}\Big]=\frac{qe^{-\Gamma\beta}}{Z_{\beta}\kappa}\frac{|\mathcal{Q}|}{|\mathcal{P}|+|\mathcal{Q}|}\;.
\end{align*}
By Theorem \ref{t_mu}, we can verify \eqref{e_fl7}. The second and
third identities of the lemma can be proved in the exact same way
and thus we omit the detail.

Finally, take $\eta\in\mathcal{N}(\mathbf{a})\cup\mathcal{N}(\mathbf{b})\cup\mathcal{N}(\mathbf{c})$.
By Definition \ref{d_testf}, if $\xi\in\widehat{\mathcal{N}}(\mathcal{S})$
with $\xi\sim\eta$, we know that $\widetilde{h}(\eta)\ne\widetilde{h}(\xi)$
only if $\min\{H(\eta),\,H(\xi)\}\ge\Gamma$. This implies that
\[
|\phi_{1}(\eta)|\le\sum_{\xi\in\widehat{\mathcal{N}}(\mathcal{S}):\,\xi\sim\eta}\mu_{\beta}(\eta)r_{\beta}(\eta,\,\xi)|\widetilde{h}(\eta)-\widetilde{h}(\xi)|\le\frac{C}{Z_{\beta}}e^{-\Gamma\beta}\;.
\]
The inequality holds by \eqref{e_cdtMH} and the fact that $0\le\widetilde{h}\le1$.
Since $Z_{\beta}=q+o(1)$ by Theorem \ref{t_mu}, we conclude the
proof.
\end{proof}
Now, we have all ingredients to prove \eqref{e_eqpappr1}. Thus, we
are ready to complete the proof of Proposition \ref{p_eqpappr} for
the MH dynamics.
\begin{proof}[Proof of Proposition \ref{p_eqpappr} for the MH dynamics]
 By Remark \ref{r_eqpappr3} and Proposition \ref{p_testf}, it suffices
to verify \eqref{e_eqpappr1}. As we have discussed earlier, proving
\eqref{e_eqpappr1} is reduced to proving \eqref{e_WTS3}. This has
been verified in Lemmas \ref{l_eqpot} and \ref{l_fl3}-\ref{l_fl7}.
More precisely, by Lemmas \ref{l_fl3}-\ref{l_fl6}, we obtain
\[
\sum_{\eta\in\widehat{\mathcal{N}}(\mathcal{S})}h_{\mathcal{P},\,\mathcal{Q}}(\eta)\phi_{1}(\eta)=\sum_{\mathbf{s}\in\mathcal{S}}\sum_{\eta\in\mathcal{N}(\mathbf{s})}h_{\mathcal{P},\,\mathcal{Q}}(\eta)\phi_{1}(\eta)=o(e^{-\Gamma\beta})+\sum_{\mathbf{s}\in\mathcal{S}}\Big[h_{\mathcal{P},\,\mathcal{Q}}(\mathbf{s})\sum_{\eta\in\mathcal{N}(\mathbf{s})}\phi_{1}(\eta)\Big]\;,
\]
where the second identity follows from Lemma \ref{l_eqpot} and the
last part of Lemma \ref{l_fl7}. Inserting \eqref{efl1}, \eqref{efl2}
and \eqref{efl3} at the right-hand side, we obtain \eqref{e_WTS3}
and thus the proof is completed.
\end{proof}

\section{\label{sec8}Energy Landscape Analysis: Cyclic Dynamics}

In the current and the following sections, we always assume that the
process is the cyclic dynamics with $q\ge3$.

\subsection{\label{sec8.1}Energy barrier: proof of Theorem \ref{t_EB}}

We denote by $\Gamma=\Gamma^{\mathrm{cyc}}$ the energy barrier of
the cyclic dynamics. In this subsection, we prove Theorem \ref{t_EB},
i.e., $\Gamma=2K+4$. 

We first need to modify the canonical path of the MH dynamics defined
in Definition \ref{d_canpath} to get a corresponding object with
respect to the cyclic dynamics, since the spin updates for the cyclic
dynamics are more restrictive compared to that for the MH dynamics.
To this end, we fix two spins $a,\,b\in S$ and define the canonical
paths from $\mathbf{a}\in\mathcal{S}$ to $\mathbf{b}\in\mathcal{S}$.
\begin{defn}[Canonical paths of cyclic dynamics]
\label{d_canpathcyc} We denote by $r\in\llbracket1,\,q-1\rrbracket$
the unique integer which makes $b-a-r$ a multiple of $q$\footnote{Indeed, $r=b-a$ if $a<b$ and $r=q+b-a$ if $a>b$.}.
Then, a sequence of configurations $(\omega_{n})_{n=0}^{rKL}$ is
called a \textit{canonical path }from $\mathbf{a}$ to $\mathbf{b}$
if there exists a backbone canonical path $(\widetilde{\omega}_{m})_{m=0}^{KL}$
of the MH dynamics from $\mathbf{a}$ to $\mathbf{b}$ such that for
each $m\in\llbracket0,\,KL-1\rrbracket$ and $i\in\llbracket0,\,r\rrbracket$,
\[
\omega_{rm+i}=\tau_{x_{m}}^{i}\widetilde{\omega}_{m}\;,
\]
where $\tau_{x}^{i}\sigma$ denotes the configuration obtained from
$\sigma$ by applying $\tau_{x}$ for $i$ times, and where $x_{m}\in\Lambda$
is the site on which the spin update $\widetilde{\omega}_{m}\to\widetilde{\omega}_{m+1}$
occurs in the MH dynamics (i.e., $\widetilde{\omega}_{m+1}=\widetilde{\omega}_{m}^{x_{m},\,b}$).
\end{defn}

\begin{rem}
We note that the sequence $(\omega_{n})_{n=0}^{rKL}$ is well defined.
To this end, we need to verify that the definition agrees on each
$\omega_{rm}$ for $m\in\llbracket1,\,KL-1\rrbracket$. Indeed, this
is the case since
\[
\omega_{rm}=\widetilde{\omega}_{m}=\tau_{x_{m-1}}^{r}\widetilde{\omega}_{m-1}\;,
\]
where the second identity holds since $\widetilde{\omega}_{m}=\widetilde{\omega}_{m-1}^{x_{m-1},\,b}$
and $r$ is exactly the number of $\tau_{x_{m-1}}$ needed to be applied
to $\widetilde{\omega}_{m-1}$ to obtain $\widetilde{\omega}_{m}$.
\end{rem}

Now, we verify that a canonical path is indeed a path in the sense
of the cyclic dynamics, and that it indeed achieves the desired energy
level $2K+4$.
\begin{lem}
\label{l_canpathcyc}Suppose that $\omega=(\omega_{n})_{n=0}^{rKL}$
is a canonical path from $\mathbf{a}$ to $\mathbf{b}$, where $r\in\llbracket1,\,q-1\rrbracket$
is as defined in Definition \ref{d_canpathcyc}.
\begin{enumerate}
\item The sequence $\omega$ is indeed a path.
\item Recall \eqref{e_height}. Then, it holds that $\Phi_{\omega}=2K+4$.
\end{enumerate}
\end{lem}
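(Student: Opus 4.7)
Part (1) is immediate from the definitions. Each consecutive pair in the cyclic canonical path has the form $(\omega_n,\omega_{n+1})=(\omega_n,\tau_{x_m}\omega_n)$ for some $x_m\in\Lambda$, and since $R_\beta(\omega_n;x_m)>0$ (it is a pure exponential) and $\tau_{x_m}\omega_n\neq\omega_n$ (as $q\ge 2$), we have $r_\beta^{\mathrm{cyc}}(\omega_n,\omega_{n+1})>0$, so $\omega$ is a path in the cyclic dynamics.

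For part (2), the key reduction is the following. Because $\omega_{rm+i}$ agrees with $\widetilde\omega_m$ outside the single site $x_m$, the configuration $\omega_{rm+i}^{x_m,a'}$ coincides with $\widetilde\omega_m^{x_m,a'}$ for every $a'\in S$. Hence, by definition \eqref{e_Hsigmazeta},
\[
H(\omega_{rm+i},\omega_{rm+i+1}) \;=\; \max_{a'\in S} H(\widetilde\omega_m^{x_m,a'})\;,
\]
which is \emph{independent of} $i\in\llbracket 0,r-1\rrbracket$. Writing $H^{\mathrm{ext}}(m)$ for the contribution to $H(\widetilde\omega_m)$ of edges of $\Lambda$ not incident to $x_m$, we have
\[
H(\widetilde\omega_m^{x_m,a'}) \;=\; H^{\mathrm{ext}}(m) \;+\; |\{y\sim x_m:\widetilde\omega_m(y)\neq a'\}|\;,
\]
which is at most $H^{\mathrm{ext}}(m)+4$. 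Now Definition \ref{d_canpath} ensures that every backbone configuration takes values in $\{a,b\}$, so every neighbor of $x_m$ carries spin $a$ or $b$; since $q\ge 3$, one may pick $a'\in S\setminus\{a,b\}$ realizing the bound $4$. Setting $N_b(m):=|\{y\sim x_m:\widetilde\omega_m(y)=b\}|$ and using $\widetilde\omega_m(x_m)=a$, we conclude
\[
H(\omega_{rm+i},\omega_{rm+i+1}) \;=\; H(\widetilde\omega_m) \;-\; N_b(m) \;+\; 4\;.
\]

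It therefore remains to verify $\max_m[H(\widetilde\omega_m)-N_b(m)]=2K$. By Definition \ref{d_canpath}, the backbone visits $\widetilde\omega_{Kv}\in\mathcal{R}_v^{a,b}$ and $\widetilde\omega_{Kv+h}\in\mathcal{Q}_v^{a,b}$ for $h\in\llbracket 1,K-1\rrbracket$, and from the explicit description of the configurations $\xi_{\ell,v;k,h}^{a,b,\pm}$ one can read off both $H(\widetilde\omega_m)$ and $N_b(m)$ directly. The claim follows from a short case analysis: for a bulk-row interior transition $\widetilde\omega_{Kv+h}\to\widetilde\omega_{Kv+h+1}$ with $v\in\llbracket 1,L-2\rrbracket$ and $h\in\llbracket 1,K-2\rrbracket$, the flip site has exactly two $b$-neighbors (one from the completed strip below, one from the adjacent protrusion site), $H(\widetilde\omega_m)=2K+2$, and so $H(\widetilde\omega_m)-N_b(m)=2K$; for the first and last flips within a bulk row one instead has $N_b(m)\in\{1,3\}$, yielding $2K-1$; and for the first and last rows ($v\in\{0,L-1\}$, where the bracketing regular configuration has energy $0$) the bound $\le 2K-2$ is immediate. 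Combining these yields $\Phi_\omega\le 2K+4$, with equality realized at every interior transition of every bulk row, which exist since $K,L\ge 11$.

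The only real work is the elementary bookkeeping in the last paragraph. No deeper obstacle arises once the reduction $H(\omega_{rm+i},\omega_{rm+i+1})=H(\widetilde\omega_m)-N_b(m)+4$ is in place, as this tightly couples the cyclic and MH analyses via a constant $+4$ shift stemming from the availability of a spin $a'\notin\{a,b\}$ in $S$.
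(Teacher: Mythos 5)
Your proposal is correct and follows essentially the same route as the paper: both reduce $\Phi_\omega$ to $\max_{m}\max_{c\in S}H(\widetilde\omega_m^{x_m,c})$ over the MH backbone and then evaluate this case by case, the paper simply tabulating the values that your identity $\max_{c}H(\widetilde\omega_m^{x_m,c})=H(\widetilde\omega_m)-N_b(m)+4$ derives more systematically. The bookkeeping in your final paragraph matches the paper's display \eqref{e_canpathcyc}, so no gap remains.
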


\begin{proof}
We recall the notation of Definition \ref{d_canpathcyc} so that $(\widetilde{\omega}_{m})_{m=0}^{KL}$
denotes the backbone MH-canonical path associated with the path $\omega$
with $\widetilde{\omega}_{m+1}=\widetilde{\omega}_{m}^{x_{m},\,b}$.
\medskip{}

\noindent (1) By definition, for each $m\in\llbracket0,\,KL-1\rrbracket$
and $i\in\llbracket0,\,r-1\rrbracket$, we can write $\omega_{rm+i}=\tau_{x_{m}}^{i}\widetilde{\omega}_{m}$
and $\omega_{rm+i+1}=\tau_{x_{m}}^{i+1}\widetilde{\omega}_{m}$. Thus,
we have $\omega_{rm+i+1}=\tau_{x_{m}}\omega_{rm+i}$ which implies
$r_{\beta}(\omega_{rm+i},\,\omega_{rm+i+1})>0$. This concludes the
proof of part (1).\medskip{}

\noindent (2) We continue the notation of part (1), so that $\omega$
is induced from an MH-canonical path $(\widetilde{\omega}_{m})_{m=0}^{KL}$.
Recall from \eqref{e_Hsigmazeta} and \eqref{e_height} that 
\begin{align}
\Phi_{\omega} & =\max_{n\in\llbracket0,\,rKL-1\rrbracket}H(\omega_{n},\,\omega_{n+1})\nonumber \\
 & =\max_{m\in\llbracket0,\,KL-1\rrbracket}\max_{i\in\llbracket0,\,r-1\rrbracket}H(\omega_{rm+i},\,\omega_{rm+i+1})=\max_{m\in\llbracket0,\,KL-1\rrbracket}\max_{c\in S}H(\widetilde{\omega}_{m}^{x_{m},\,c})\;.\label{e_Phiomega}
\end{align}
Then, by an elementary computation, we can explicitly calculate $\max_{c\in S}H(\widetilde{\omega}_{m}^{x_{m},\,c})$.
Namely, 
\begin{equation}
\max_{c\in S}H(\widetilde{\omega}_{m}^{x_{m},\,c})=\begin{cases}
4 & \text{if }m=0\text{ or }KL-1\;,\\
2m+5 & \text{if }m\in\llbracket1,\,K-2\rrbracket\;,\\
2K+2 & \text{if }m=K-1\text{ or }K(L-1)\;,\\
2K+3 & \text{if }m=Kv\text{ or }K(v+1)-1\;,\\
2K+4 & \text{if }m\in\llbracket Kv+1,\,K(v+1)-2\rrbracket\;,\\
2KL-2m+3 & \text{if }m\in\llbracket K(L-1)+1,\,KL-2\rrbracket\;,
\end{cases}\label{e_canpathcyc}
\end{equation}
where $v\in\llbracket1,\,L-2\rrbracket$. Here, the condition $q\ge3$
is used in the fact that there always exists a third spin which is
neither $1$ nor $2$. Therefore, by \eqref{e_Phiomega}, we have
$\Phi_{\omega}=2K+4$ and thus conclude the proof.
\end{proof}
Now, we are ready to present the upper bound of the energy barrier.
\begin{prop}
\label{p_EBub}For all $\mathbf{a},\,\mathbf{b}\in\mathcal{S}$, we
have $\Phi(\mathbf{a},\,\mathbf{b})\le2K+4$. 
\end{prop}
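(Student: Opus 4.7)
The plan is to simply exhibit, for each pair $\mathbf{a},\mathbf{b}\in\mathcal{S}$, a concrete path from $\mathbf{a}$ to $\mathbf{b}$ whose height does not exceed $2K+4$; the inequality $\Phi(\mathbf{a},\mathbf{b})\le 2K+4$ then follows immediately from the definition \eqref{e_commheight} of the communication height as an infimum over paths. The essential work has in fact already been done: Lemma \ref{l_canpathcyc} provides exactly such a path.

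More precisely, I would argue as follows. If $\mathbf{a}=\mathbf{b}$, then by \eqref{e_commheight2} we have $\Phi(\mathbf{a},\mathbf{a})=H(\mathbf{a})=0$, which is certainly at most $2K+4$, so there is nothing to prove. If $\mathbf{a}\ne\mathbf{b}$, take the integer $r\in\llbracket 1,q-1\rrbracket$ with $b-a\equiv r\pmod{q}$ as in Definition \ref{d_canpathcyc}, fix any backbone MH-canonical path $(\widetilde{\omega}_m)_{m=0}^{KL}$ from $\mathbf{a}$ to $\mathbf{b}$ (which exists by Definition \ref{d_canpath}), and let $\omega=(\omega_n)_{n=0}^{rKL}$ be the induced canonical path of the cyclic dynamics. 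By part (1) of Lemma \ref{l_canpathcyc}, $\omega$ is indeed a path from $\mathbf{a}$ to $\mathbf{b}$ in the sense of the cyclic dynamics, and by part (2), $\Phi_{\omega}=2K+4$. By \eqref{e_commheight}, this gives $\Phi(\mathbf{a},\mathbf{b})\le\Phi_\omega=2K+4$.

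Since both parts of Lemma \ref{l_canpathcyc} are already established, there is no real obstacle here: the proposition is an immediate corollary. The only minor bookkeeping point is to make sure the argument covers all ordered pairs in $\mathcal{S}\times\mathcal{S}$, including the diagonal, which is why I would explicitly separate the trivial case $\mathbf{a}=\mathbf{b}$. The genuine content — namely, that the explicit canonical construction of Definition \ref{d_canpathcyc} achieves height exactly $2K+4$, with the maximum attained on the intermediate rotations within a column as computed in \eqref{e_canpathcyc} — lies in Lemma \ref{l_canpathcyc} rather than in Proposition \ref{p_EBub} itself. The matching lower bound $\Phi(\mathbf{a},\mathbf{b})\ge 2K+4$ (needed to complete the proof of Theorem \ref{t_EB} for the cyclic case) is, of course, the harder direction, but it is a separate statement and not part of what Proposition \ref{p_EBub} claims.
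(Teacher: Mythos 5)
Your proposal is correct and matches the paper's argument exactly: the paper also deduces Proposition \ref{p_EBub} directly from Lemma \ref{l_canpathcyc}-(2), which exhibits a canonical path of height $2K+4$. Your extra remarks (the trivial diagonal case and the appeal to \eqref{e_commheight}) are just bookkeeping the paper leaves implicit.
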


\begin{proof}
The statement is direct from part (2) of Lemma \ref{l_canpathcyc}.
\end{proof}
A combinatorial proof of the next result is postponed to Section \ref{secD.1}.
\begin{prop}
\label{p_EBlb}For any path $\omega=(\omega_{n})_{n=0}^{N}$ from
$\mathbf{a}$ to $\breve{\mathbf{a}}$, we have $\Phi_{\omega}\ge2K+4$. 
\end{prop}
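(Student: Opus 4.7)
The plan is to reduce the cyclic lower bound to the MH lower bound of \cite[Theorem 2.1]{NZ} and then to extract the missing $+2$ by exploiting the ``phantom'' maximum over $S$ in the definition \eqref{e_Hsigmazeta} of the cyclic step height. Since every cyclic rotation is in particular a single-site spin update, the given cyclic path $\omega = (\omega_{n})_{n=0}^{N}$ is also a valid MH-path from $\mathbf{a}$ to $\breve{\mathbf{a}}$, so
\[
H^{*} \;:=\; \max_{n \in \llbracket 0,\,N \rrbracket} H(\omega_{n}) \;\ge\; 2K+2
\]
by the MH result. Moreover, the crude bound $H(\omega_{n},\omega_{n+1}) = \max_{c \in S} H(\omega_{n}^{x_{n},c}) \ge H(\omega_{n})$ yields $\Phi_{\omega} \ge H^{*}$, so the case $H^{*} \ge 2K+4$ is immediate. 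The whole content of the proposition therefore lies in the residual case $H^{*} \in \{2K+2,\,2K+3\}$.

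For this residual case, the starting point is the algebraic identity
\[
H(\omega^{x,c}) - H(\omega) \;=\; N_{\omega(x)}(x) - N_{c}(x),
\]
where $N_{c}(x) := |\{y \sim x : \omega(y) = c\}|$. Applying this at a peak index $n^{*}$ realizing $H(\omega_{n^{*}}) = H^{*}$, with $x = x_{n^{*}}$ the updated site, gives
\[
H(\omega_{n^{*}},\,\omega_{n^{*}+1}) \;\ge\; H^{*} + N_{\omega_{n^{*}}(x)}(x) - \min_{c \in S} N_{c}(x).
\]
I would argue that a near-minimal MH-peak $\omega_{n^{*}}$ must look locally like a classical two-phase NZ-saddle (a rectangular droplet carrying at most a unit protuberance), so that (i)~the neighbors of the updated site $x$ assume at most two spin values, whence $q \ge 3$ guarantees a spin $c^{*} \in S$ with $N_{c^{*}}(x) = 0$, and (ii)~the site $x$, lying on the cluster boundary, has at least $2K+4 - H^{*}$ like-spin neighbors, i.e.\ $N_{\omega_{n^{*}}(x)}(x) \ge 2$ when $H^{*} = 2K+2$ and $\ge 1$ when $H^{*} = 2K+3$. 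Combining (i) and (ii) yields $H(\omega_{n^{*}},\,\omega_{n^{*}+1}) \ge 2K+4$, as desired. The hypothesis $q \ge 3$ enters exactly at (i) and is what fails when $q = 2$ (where the cyclic dynamics degenerates to the MH one).

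The main obstacle is establishing (ii): ruling out pathological peak sites at which the rotated spin is essentially ``isolated''. For the most delicate value $H^{*} = 2K+2$, I would extend the reference-configuration / isoperimetric reasoning of \cite{NZ} and observe that an update site with $N_{\omega_{n^{*}}(x)}(x) \le 1$ would force an energy drop of at least $4$ at the next step, which is incompatible with $\omega$ still reaching $\breve{\mathbf{a}}$ without re-climbing past $H^{*}$ (otherwise one could delete this excursion to produce a strictly shorter MH-path contradicting $\Phi^{\mathrm{MH}} = 2K+2$). If the inequality fails at the original index $n^{*}$, a short ``peak-sliding'' argument along the plateau lets us replace $n^{*}$ by a nearby index where both (i) and (ii) hold simultaneously. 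This combinatorial step, extending NZ from two-spin MH-saddles to the richer set of three-colour configurations that a cyclic path may visit, is expected to be the technical heart of the proof and is presumably what occupies Appendix~\ref{secD.1}.
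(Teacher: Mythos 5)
There is a genuine gap. Your reduction to the MH bound of \cite{NZ} and the identity $\max_{c\in S}H(\omega^{x,c})=H(\omega)+N_{\omega(x)}(x)-\min_{c}N_{c}(x)$ are fine, but the residual case $H^{*}\in\{2K+2,\,2K+3\}$ is the entire proposition, and your local analysis at an energy peak does not close it. Condition (ii) genuinely fails at legitimate peaks: if $\omega_{n^{*}}=\xi_{\ell,v;k,1}^{a,b,+}$ (energy $2K+2$) and the performed update removes the unit protuberance, the updated site has $N_{\omega(x)}(x)=1$ and a third spin is absent from its neighborhood, so the cyclic step height is exactly $2K+3<2K+4$. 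Thus one cannot argue at ``a'' peak; one must show the path is \emph{forced} through a configuration at which the update actually performed is expensive. Your proposed repairs do not supply this: the claim that $N_{\omega(x)}(x)\le1$ forces ``an energy drop of at least $4$'' is arithmetically wrong (the drop is $N_{\omega(x)}(x)-N_{c}(x)\ge-3$, and equals $-2$ in the protuberance example), and the ``delete the excursion'' argument proves nothing, since producing a shorter MH path of height $2K+2$ is perfectly consistent with the MH communication height being $2K+2$. Condition (i) is likewise unproved: configurations of energy $\le 2K+3$ visited by a cyclic path may carry three spin values (e.g.\ a regular strip with a single third-colour protuberance has energy $2K+3$), so ``at most two spin values near $x$'' is not automatic and is not implied by the classification of Proposition \ref{p_lowE}, which only covers energies strictly below $2K+2$.

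The missing idea — and the actual content of the paper's proof — is a \emph{global} crossing argument rather than a peak argument. The paper tracks $\|\omega_{m}\|_{b}$ and takes the last index at which it equals $\lfloor KL/2\rfloor+3$, a level chosen so that it is not congruent to $0$ or $\pm1$ modulo $K$. A bridge-counting and isoperimetric case analysis (cases \textbf{(C1)}--\textbf{(C7)}) then shows that the configuration at this crossing must lie in $\mathcal{Q}_{v}^{a_{0},b}$ with protuberance length in $\llbracket2,\,K-2\rrbracket$; from such a configuration \emph{every} single-site cyclic update has height at least $2K+4$ (this is where $q\ge3$ enters), so no knowledge of which site is updated is needed. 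If you want to salvage your route, you would have to prove a statement of exactly this strength — that some configuration the path visits admits no cheap cyclic update at all — and that is the combinatorial work your sketch defers.
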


Finally, we are ready to prove Theorem \ref{t_EB}.
\begin{proof}[Proof of Theorem \ref{t_EB}]
 Let $\mathbf{a},\,\mathbf{b}\in\mathcal{S}$. Proposition \ref{p_EBub}
implies that $\Phi(\mathbf{a},\,\breve{\mathbf{a}})\le\Phi(\mathbf{a},\,\mathbf{b})\le2K+4$.
On the other hand, Proposition \ref{p_EBlb} implies that $\Phi(\mathbf{a},\,\breve{\mathbf{a}})\ge2K+4$.
Combining this two, we obtain that $\Phi(\mathbf{a},\,\breve{\mathbf{a}})=\Phi(\mathbf{a},\,\mathbf{b})=2K+4$. 
\end{proof}

\subsection{Neighborhoods}

In this subsection, we explain the neighborhoods for the cyclic dynamics.
Before proceeding to define them, we first check the following lemma
which was obvious for the MH dynamics but not quite so for the cyclic
dynamics.
\begin{lem}
\label{l_Phiissym}For all $\sigma,\,\zeta\in\mathcal{X}$, we have
$\Phi(\sigma,\,\zeta)=\Phi(\zeta,\,\sigma)$.
\end{lem}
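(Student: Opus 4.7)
The plan is to prove the symmetry by showing that any path $\omega : \sigma \to \zeta$ in the cyclic dynamics can be converted into a reverse path $\omega' : \zeta \to \sigma$ of exactly the same height $\Phi_{\omega'} = \Phi_\omega$, after which taking the minimum gives $\Phi(\zeta,\sigma) \le \Phi(\sigma,\zeta)$ and symmetry follows. Unlike the MH case, one cannot simply reverse the sequence of configurations, because the cyclic dynamics can only rotate a spin forward; instead I will expand each forward step into $q-1$ forward steps that undo it.

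Concretely, given $\omega=(\omega_n)_{n=0}^N$ with $\omega_{n+1}=\tau_{x_n}\omega_n$ for some $x_n\in\Lambda$, I will replace each step $\omega_n\to\omega_{n+1}$ by the block
\[
\omega_{n+1}\to\tau_{x_n}\omega_{n+1}\to\tau_{x_n}^{2}\omega_{n+1}\to\cdots\to\tau_{x_n}^{q-1}\omega_{n+1},
\]
noting that $\tau_{x_n}^{q-1}\omega_{n+1}=\tau_{x_n}^{q}\omega_n=\omega_n$ by the convention $q+1=1$. Concatenating these blocks in reverse order (from $n=N-1$ down to $n=0$) produces a legitimate path $\omega':\zeta\to\sigma$ in the cyclic dynamics.

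To show $\Phi_{\omega'}=\Phi_\omega$, I will use the key observation that for any site $x$ and any configuration $\eta$, the set $\{\eta^{x,a}:a\in S\}$ depends only on the spins of $\eta$ off $x$. Hence, for every $i\in\llbracket0,q-2\rrbracket$, the configurations $\tau_{x_n}^i\omega_{n+1}$ and $\omega_n$ agree off $x_n$, so by \eqref{e_Hsigmazeta},
\[
H\bigl(\tau_{x_n}^i\omega_{n+1},\,\tau_{x_n}^{i+1}\omega_{n+1}\bigr)=\max_{a\in S}H\bigl((\tau_{x_n}^i\omega_{n+1})^{x_n,a}\bigr)=\max_{a\in S}H\bigl(\omega_n^{x_n,a}\bigr)=H(\omega_n,\omega_{n+1}).
\]
Taking the max over all blocks and all $i$ yields $\Phi_{\omega'}=\max_n H(\omega_n,\omega_{n+1})=\Phi_\omega$.

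Minimizing over $\omega$ then gives $\Phi(\zeta,\sigma)\le\Phi(\sigma,\zeta)$, and the reverse inequality is identical by swapping the roles of $\sigma$ and $\zeta$. I do not expect any genuine obstacle here; the only point one has to be careful about is that the height of a cyclic step depends on the full set of possible spin updates at $x_n$, not just on the two endpoints, and the observation above is exactly what makes that set invariant along the inserted block.
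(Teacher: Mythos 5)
Your proposal is correct and follows essentially the same route as the paper: replace each forward step $\omega_n\to\omega_{n+1}=\tau_{x_n}\omega_n$ by the block of $q-1$ rotations $\omega_{n+1}\to\tau_{x_n}\omega_{n+1}\to\cdots\to\tau_{x_n}^{q-1}\omega_{n+1}=\omega_n$, observe that all heights along the block coincide with $H(\omega_n,\omega_{n+1})$ because the configurations agree off $x_n$, and conclude $\Phi(\zeta,\sigma)\le\Phi(\sigma,\zeta)$ before swapping roles. Your explicit justification of the height equality via the invariance of $\{\eta^{x,a}:a\in S\}$ is exactly the observation the paper uses (cf.\ \eqref{e_OrbitH}), so there is nothing to add.
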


\begin{proof}
Let $\omega=(\omega_{n})_{n=0}^{N}:\sigma\rightarrow\zeta$ be an
optimal path satisfying $\Phi_{\omega}=\Phi(\sigma,\,\zeta)$. Note
that the reversed sequence $(\omega_{N},\,\omega_{N-1},\,\dots,\,\omega_{0})$
is no longer a path with respect to the cyclic dynamics. However,
since $\omega_{n+1}=\tau_{x}\omega_{n}$ for some $x\in\Lambda$,
we can write $\omega_{n}=\tau_{x}^{q-1}\omega_{n+1}$ where $\tau_{x}^{i}\sigma$
denotes the configuration obtained from $\sigma$ by applying $\tau_{x}$
for $i$ times. Thus, we can replace $(\omega_{n+1},\,\omega_{n})$
with 
\begin{equation}
(\omega_{n+1},\,\tau_{x}\omega_{n+1},\,\tau_{x}^{2}\omega_{n+1},\,\dots,\tau_{x}^{q-1}\omega_{n+1})\;.\label{e_Phiissym}
\end{equation}
Replacing each $(\omega_{n+1},\,\omega_{n})$ in $(\omega_{N},\,\omega_{N-1},\,\dots,\,\omega_{0})$
in this manner, we get a path $\widetilde{\omega}$ (with respect
to the cyclic dynamics) from $\zeta$ to $\sigma$. Since
\[
H(\omega_{n+1},\,\tau_{x}\omega_{n+1})=\cdots=H(\tau_{x}^{q-2}\omega_{n+1},\,\tau_{x}^{q-1}\omega_{n+1})=H(\omega_{n},\,\omega_{n+1})\;,
\]
we immediately have that $\Phi_{\widetilde{\omega}}=\Phi_{\omega}=\Phi(\sigma,\,\zeta)$.
Thus we have $\Phi(\zeta,\,\sigma)\le\Phi(\sigma,\,\zeta)$. Replacing
the roles of $\sigma$ and $\zeta$, we also obtain the reversed inequality
$\Phi(\zeta,\,\sigma)\ge\Phi(\sigma,\,\zeta)$ and thus we complete
the proof. 
\end{proof}
Next, we define the neighborhood $\mathcal{N}(\sigma),\,\widehat{\mathcal{N}}(\sigma),\,\mathcal{N}(\mathcal{P})$
and $\widehat{\mathcal{N}}(\mathcal{P})$ in the same manner with
Definition \ref{d_nhd}. Then, since we also know that $\Gamma$ is
the energy barrier, Proposition \ref{p_nhd} also holds for the cyclic
dynamics. On the other hand, we now have to modify Lemma \ref{l_Nhatbdry}
in the following way. 
\begin{lem}
\label{l_Nhatbdrycyc}Let $\sigma\in\mathcal{X}$ and let $\zeta_{1}\in\widehat{\mathcal{N}}(\sigma)$,
$\zeta_{2}\notin\widehat{\mathcal{N}}(\sigma)$ and $\zeta_{1}\sim\zeta_{2}$.
Then, $H(\zeta_{1},\,\zeta_{2})>\Gamma$. 
\end{lem}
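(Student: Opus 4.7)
The plan is to argue by contradiction, mimicking the MH proof of Lemma \ref{l_Nhatbdry} but handling the extra subtlety caused by the cyclic, hence directed, transition structure. Suppose for contradiction that $H(\zeta_{1},\zeta_{2})\le\Gamma$. Since $\zeta_{1}\in\widehat{\mathcal{N}}(\sigma)$, by Definition \ref{d_nhd} there exists a path $\omega:\sigma\to\zeta_{1}$ with $\Phi_{\omega}\le\Gamma$. The goal is to extend $\omega$ to a $\Gamma$-path from $\sigma$ to $\zeta_{2}$, which would contradict $\zeta_{2}\notin\widehat{\mathcal{N}}(\sigma)$.

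By $\zeta_{1}\sim\zeta_{2}$ and the definition \eqref{e_edge}, either $\zeta_{2}=\tau_{x}\zeta_{1}$ or $\zeta_{1}=\tau_{x}\zeta_{2}$ for some $x\in\Lambda$. In the first case, $r_{\beta}(\zeta_{1},\zeta_{2})>0$, so we can simply append the edge $(\zeta_{1},\zeta_{2})$ to $\omega$; by the height formula \eqref{e_Hsigmazeta} and the assumption $H(\zeta_{1},\zeta_{2})\le\Gamma$, the extended path is still a $\Gamma$-path. In the second (harder) case, the direct edge goes the wrong way, but we can mimic the trick used in the proof of Lemma \ref{l_Phiissym}: insert the full cyclic detour
\[
(\zeta_{1},\,\tau_{x}\zeta_{1},\,\tau_{x}^{2}\zeta_{1},\,\dots,\,\tau_{x}^{q-1}\zeta_{1}=\zeta_{2})
\]
at the end of $\omega$. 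This is a valid path for the cyclic dynamics, and because $\{(\tau_{x}^{i}\zeta_{1})^{x,a}:a\in S\}=\{\zeta_{1}^{x,a}:a\in S\}$, each consecutive pair satisfies
\[
H(\tau_{x}^{i}\zeta_{1},\,\tau_{x}^{i+1}\zeta_{1})=\max_{a\in S}H(\zeta_{1}^{x,a})=H(\zeta_{1},\zeta_{2})\le\Gamma,
\]
so again the extended path remains a $\Gamma$-path. Either way, $\zeta_{2}\in\widehat{\mathcal{N}}(\sigma)$, contradicting our hypothesis and forcing $H(\zeta_{1},\zeta_{2})>\Gamma$.

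The only nontrivial point is the second case above, where one must use the symmetry of the cyclic height already observed in Lemma \ref{l_Phiissym}; other than that the argument is a direct analogue of the MH version. I expect no further obstacle.
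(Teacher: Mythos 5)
Your proof is correct and rests on the same key idea as the paper's: the $(q-1)$-step cyclic detour of \eqref{e_Phiissym} that lets one traverse an edge "backwards" at the same height $H(\zeta_{1},\,\zeta_{2})$. The paper packages this slightly differently — in the case $r_{\beta}(\zeta_{2},\,\zeta_{1})>0$ it invokes Lemma \ref{l_Phiissym} twice to reverse the whole path and prepend the edge $(\zeta_{2},\,\zeta_{1})$, whereas you append the detour $(\zeta_{1},\,\tau_{x}\zeta_{1},\,\dots,\,\tau_{x}^{q-1}\zeta_{1}=\zeta_{2})$ directly at the end of $\omega$, which is marginally more streamlined but substantively identical.
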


\begin{proof}
Suppose to the contrary that $H(\zeta_{1},\,\zeta_{2})\le\Gamma$.
Since $\zeta_{1}\sim\zeta_{2}$, we have either $r_{\beta}(\zeta_{1},\,\zeta_{2})>0$
or $r_{\beta}(\zeta_{2},\,\zeta_{1})>0$. For the former case, we
first find a path $\omega:\sigma\rightarrow\zeta_{1}$ such that $\Phi_{\omega}\le\Gamma$
(such a path exists since $\zeta_{1}\in\widehat{\mathcal{N}}(\sigma)$).
Then, we can concatenate the directed edge $(\zeta_{1},\,\zeta_{2})$
at the end of the path $\omega$ to form a path $\omega':\sigma\rightarrow\zeta_{2}$
satisfying $\Phi_{\omega'}\le\Gamma$. This contradicts $\zeta_{2}\notin\mathcal{\widehat{N}}(\sigma)$.

On the other hand, for the latter case, i.e., the case $r_{\beta}(\zeta_{2},\,\zeta_{1})>0$,
we find a path $\omega:\zeta_{1}\rightarrow\sigma$ such that $\Phi_{\omega}\le\Gamma$,
where such a path exists since $\Phi(\zeta_{1},\,\sigma)=\Phi(\sigma,\,\zeta_{1})\le\Gamma$
thanks to Lemma \ref{l_Phiissym}. Then, we concatenate the directed
edge $(\zeta_{2},\,\zeta_{1})$ at the front of the path $\omega$
to form a path $\omega'':\zeta_{2}\rightarrow\sigma$ satisfying $\Phi_{\omega''}\le\Gamma$.
Thus, we have $\Phi(\zeta_{2},\,\sigma)\le\Gamma$ and again by Lemma
\ref{l_Phiissym} we obtain $\Phi(\sigma,\,\zeta_{2})\le\Gamma$.
This again contradicts $\zeta_{2}\notin\mathcal{\widehat{N}}(\sigma)$.
\end{proof}
In turn, we modify the definition of the restricted neighborhoods
given in Definition \ref{d_rnhd}. 
\begin{defn}[Restricted neighborhood of cyclic dynamics]
\label{d_rnhdcyc} Let $\mathcal{Q}\subseteq\mathcal{X}$. For $\sigma\in\mathcal{X}\setminus\mathcal{Q}$,
we define $\widehat{\mathcal{N}}(\sigma;\,\mathcal{Q})$ as the collection
of all $\zeta$ with $H(\zeta)\le\Gamma$ such that there exist $\omega_{0},\,\omega_{1},\,\dots,\,\omega_{N}\in\mathcal{X}\setminus\mathcal{Q}$
satisfying $\omega_{0}=\sigma$, $\omega_{N}=\zeta$ and 
\[
\omega_{n}\sim\omega_{n+1}\text{ with }H(\omega_{n},\,\omega_{n+1})\le\Gamma\;\;\;\;\text{for all }n\in\llbracket0,\,N-1\rrbracket\;.
\]
For $\mathcal{P}\subseteq\mathcal{X}$ disjoint with $\mathcal{Q}$,
we define $\widehat{\mathcal{N}}(\mathcal{P};\,\mathcal{Q}):=\bigcup_{\sigma\in\mathcal{P}}\widehat{\mathcal{N}}(\sigma;\,\mathcal{Q})$.
\end{defn}

Then, as we did in the analysis of the MH dynamics, we formulate the
following lemma.
\begin{lem}
\label{l_hatNPemptycyc}For all $\mathcal{P}\subseteq\mathcal{X}$,
we have
\[
\widehat{\mathcal{N}}(\mathcal{P})=\widehat{\mathcal{N}}(\mathcal{P};\,\emptyset)\;.
\]
\end{lem}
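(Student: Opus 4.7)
The statement asserts equality of two notions of $\Gamma$-sublevel neighborhood of a set $\mathcal{P}$: the one defined via the communication height $\Phi$ (requiring actual cyclic paths $\omega_n \to \omega_{n+1}$ with $r_\beta > 0$), and the one defined only through the unoriented adjacency $\omega_n \sim \omega_{n+1}$ with edge heights $H(\omega_n, \omega_{n+1}) \le \Gamma$. Since both sides are unions over $\sigma \in \mathcal{P}$, it suffices to prove $\widehat{\mathcal{N}}(\sigma) = \widehat{\mathcal{N}}(\sigma;\,\emptyset)$ for every $\sigma \in \mathcal{X}$.

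For the inclusion $\widehat{\mathcal{N}}(\sigma) \subseteq \widehat{\mathcal{N}}(\sigma;\,\emptyset)$, take $\zeta$ with $\Phi(\sigma,\,\zeta) \le \Gamma$ and choose a genuine cyclic path $\omega=(\omega_n)_{n=0}^{N}$ realizing this, so that $r_\beta(\omega_n,\,\omega_{n+1}) > 0$ and $H(\omega_n,\,\omega_{n+1}) \le \Gamma$ at every step. In particular $\omega_n \sim \omega_{n+1}$. Moreover, writing $\omega_N = \omega_{N-1}^{x,\,a}$ for the spin update made at the last step, we have $H(\zeta) = H(\omega_N) \le \max_{a\in S} H(\omega_{N-1}^{x,\,a}) = H(\omega_{N-1},\,\omega_N) \le \Gamma$, so the very same sequence $\omega$ certifies $\zeta \in \widehat{\mathcal{N}}(\sigma;\,\emptyset)$.

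The substantive direction is $\widehat{\mathcal{N}}(\sigma;\,\emptyset) \subseteq \widehat{\mathcal{N}}(\sigma)$. The obstruction here is that $\sim$ is symmetric, while an actual cyclic path must respect the direction of the spin rotation: the sequence $(\omega_n)$ certifying membership in the restricted neighborhood may contain steps where only $r_\beta(\omega_{n+1},\,\omega_n) > 0$, i.e.\ $\omega_n = \tau_x \omega_{n+1}$ for some $x$. I will resolve this by the same cyclic substitution used in the proof of Lemma~\ref{l_Phiissym}: replace each such reversed step $(\omega_n,\,\omega_{n+1})$ by the length-$(q-1)$ forward block
\[
\omega_n,\ \tau_x\omega_n,\ \tau_x^2\omega_n,\ \ldots,\ \tau_x^{q-1}\omega_n = \omega_{n+1},
\]
which consists only of legitimate cyclic steps. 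The key computation is that each inserted edge has the same height as the original: by \eqref{e_Hsigmazeta},
\[
H(\tau_x^i \omega_n,\,\tau_x^{i+1}\omega_n) = \max_{a \in S} H\bigl((\tau_x^i \omega_n)^{x,\,a}\bigr) = \max_{a \in S} H(\omega_n^{x,\,a}) = H(\omega_n,\,\omega_{n+1}) \le \Gamma,
\]
because the configurations $(\tau_x^i \omega_n)^{x,\,a}$ for $a \in S$ are precisely the configurations $\omega_n^{x,\,a}$ for $a \in S$. Concatenating these replacements for each reversed step yields a genuine cyclic path $\omega': \sigma \to \zeta$ with $\Phi_{\omega'} \le \Gamma$, hence $\zeta \in \widehat{\mathcal{N}}(\sigma)$. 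The main (and only) obstacle is this orientation issue, which is handled cleanly by the cyclic substitution; the rest of the argument is bookkeeping.
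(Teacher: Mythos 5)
Your proposal is correct and follows essentially the same route as the paper: the forward inclusion is immediate, and the substantive reverse inclusion is handled by replacing each wrongly-oriented step with the length-$(q-1)$ forward orbit block of equal height, exactly the detour device of Lemma \ref{l_Phiissym} (the paper merely packages the concatenation as a formal induction on the step index). No gaps.
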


\begin{proof}
One can notice that this identity is not as trivial as in the MH dynamics,
since in Definition \ref{d_rnhdcyc} $\omega_{n}$ and $\omega_{n+1}$
satisfy $\omega_{n}\sim\omega_{n+1}$ instead of $r_{\beta}(\omega_{n},\,\omega_{n+1})>0$.
Naturally, the proof presented here overcomes this difference.

We fix $\sigma\in\mathcal{P}$ and prove that $\widehat{\mathcal{N}}(\sigma)=\widehat{\mathcal{N}}(\sigma;\,\emptyset)$.
Since $r_{\beta}(\zeta_{1},\,\zeta_{2})>0$ implies $\zeta_{1}\sim\zeta_{2}$
for any $\zeta_{1},\,\zeta_{2}\in\mathcal{X}$, and since $H(\zeta)\le\Gamma$
for all $\zeta\in\widehat{\mathcal{N}}(\sigma)$, it is immediate
that $\widehat{\mathcal{N}}(\sigma)\subseteq\widehat{\mathcal{N}}(\sigma;\,\emptyset)$.
Thus, it suffices to demonstrate that
\[
\widehat{\mathcal{N}}(\sigma)\supseteq\widehat{\mathcal{N}}(\sigma;\,\emptyset)\;.
\]
Take an arbitrary $\zeta\in\widehat{\mathcal{N}}(\sigma;\,\emptyset)$,
so that $H(\zeta)\le\Gamma$ and there exist $\omega_{0},\,\dots,\,\omega_{N}\in\mathcal{X}$
which satisfy $\omega_{0}=\sigma$, $\omega_{N}=\zeta$, $\omega_{n}\sim\omega_{n+1}$
and $H(\omega_{n},\,\omega_{n+1})\le\Gamma$. We claim that for every
$n\in\llbracket0,\,N\rrbracket$, there exists a path $\widetilde{\omega}^{(n)}:\omega_{0}\to\omega_{n}$
such that $\Phi_{\tilde{\omega}^{(n)}}\le\Gamma$. This claim concludes
the proof by simply substituting $n=N$ and noting that $\omega_{0}=\sigma$
and $\omega_{N}=\zeta$, which would then imply that $\zeta\in\widehat{\mathcal{N}}(\sigma)$
as desired.

It remains to prove the claim. We inductively construct a path $\widetilde{\omega}^{(n)}:\sigma=\omega_{0}\to\omega_{n}$
as follows. First, set $\widetilde{\omega}_{0}=\omega_{0}=\sigma$
which immediately proves the case $n=0$. Then, for each $n\in\llbracket0,\,N-1\rrbracket$
suppose that $\widetilde{\omega}^{(n)}:\omega_{0}\to\omega_{n}$ is
constructed. Note that $r_{\beta}(\omega_{n},\,\omega_{n+1})>0$ or
$r_{\beta}(\omega_{n+1},\,\omega_{n})>0$. If the former case holds,
then we simply define $\widetilde{\omega}^{(n+1)}$ by concatenating
$\omega'=(\omega_{n},\,\omega_{n+1})$ at the end of $\widetilde{\omega}^{(n)}$.
If the latter case holds, we concatenate a detour path $\omega'$
of length $q-1$, as explained in \eqref{e_Phiissym}, at the end
of $\widetilde{\omega}^{(n)}$. In any case, $\widetilde{\omega}^{(n+1)}$
is indeed a path from $\omega_{0}$ to $\omega_{n+1}$, and moreover
\[
\Phi_{\widetilde{\omega}^{(n+1)}}=\max\{\Phi_{\widetilde{\omega}^{(n)}},\,\Phi_{\omega'}\}\le\Gamma\;,
\]
where the inequality holds by the induction hypothesis and the fact
that $\Phi_{\omega'}=H(\omega_{n},\,\omega_{n+1})\le\Gamma$ in both
cases. Therefore, we conclude the proof of the lemma.
\end{proof}

\subsection{\label{sec8.3}Orbits and typical configurations}

\begin{figure}
\includegraphics[width=13.5cm]{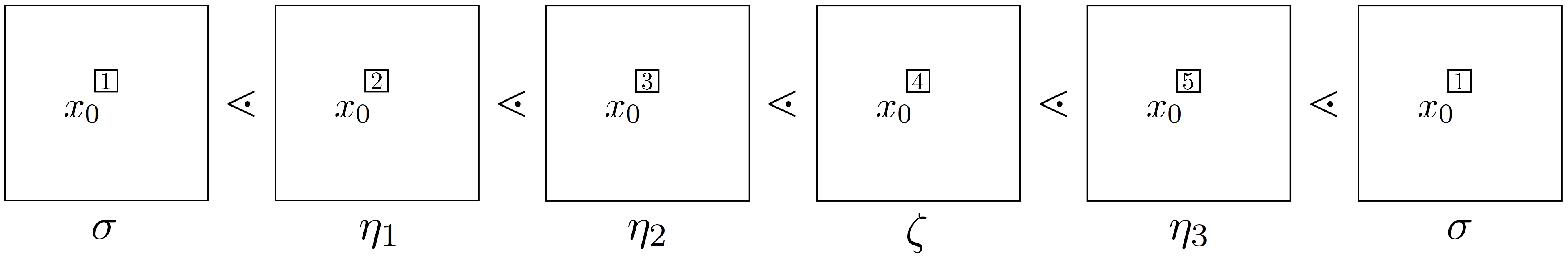}\caption{\label{fig8.1}\textbf{Example of an orbit.} Here, $q=5$. Suppose
that $\zeta=\tau_{x_{0}}^{3}\sigma$ with $\sigma(x_{0})=1$ and $\zeta(x_{0})=4$
as in the figure. Then, the orbit $\mathfrak{O}_{x_{0}}(\sigma)=\mathfrak{O}(\sigma,\,\zeta)$
consists of the presented five configurations. Note that $\sigma\lessdot\eta_{1}\lessdot\eta_{2}\lessdot\zeta\lessdot\eta_{3}\lessdot\sigma$.}
\end{figure}

\begin{defn}[Orbits]
 We refer to Figure \ref{fig8.1}.
\begin{enumerate}
\item For $\sigma\in\mathcal{X}$ and $x\in\Lambda$, the \textit{orbit}
$\mathfrak{O}_{x}(\sigma)$ consists of $q$ configurations which
have same spin values with $\sigma$ on $\Lambda\setminus\{x\}$,
namely,
\[
\mathfrak{O}_{x}(\sigma):=\{\sigma,\,\tau_{x}\sigma,\,\dots,\,\tau_{x}^{q-1}\sigma\}\;.
\]
Note that for all $\zeta_{1},\,\zeta_{2}\in\mathfrak{O}_{x}(\sigma)$,
we have $r_{\beta}^{\mathrm{MH}}(\zeta_{1},\,\zeta_{2})>0$ where
$r_{\beta}^{\mathrm{MH}}(\cdot,\,\cdot)$ denotes the transition rate
of the MH dynamics.
\item Fix $\sigma,\,\zeta\in\mathcal{X}$ such that $\zeta=\tau_{x_{0}}^{i}\sigma$
for some $x_{0}\in\Lambda$ and $i\in\llbracket1,\,q-1\rrbracket$
(i.e., such that $r_{\beta}^{\mathrm{MH}}(\sigma,\,\zeta)>0$). The
\textit{orbit} $\mathfrak{O}(\sigma,\,\zeta)$ containing $\sigma$
and $\zeta$ is defined as
\[
\mathfrak{O}(\sigma,\,\zeta):=\mathfrak{O}_{x_{0}}(\sigma)=\{\sigma,\,\tau_{x_{0}}\sigma,\,\dots,\,\tau_{x_{0}}^{q-1}\sigma\}\;.
\]
It is clear that $\sigma,\,\zeta\in\mathfrak{O}(\sigma,\,\zeta)$,
and that $\mathfrak{O}(\sigma,\,\zeta)=\mathfrak{O}(\zeta,\,\sigma)$.
\item Following the above notation, for $\eta\in\mathfrak{O}(\sigma,\,\zeta)\setminus\{\sigma,\,\zeta\}$
so that $\eta=\tau_{x_{0}}^{j}\sigma$ with $j\in\llbracket1,\,q-1\rrbracket\setminus\{i\}$,
we write $\sigma\lessdot\eta\lessdot\zeta$ if $j\in\llbracket1,\,i-1\rrbracket$
and $\zeta\lessdot\eta\lessdot\sigma$ if $j\in\llbracket i+1,\,q-1\rrbracket$.
Intuitively, $\sigma\lessdot\eta\lessdot\zeta$ (resp. $\zeta\lessdot\eta\lessdot\sigma$)
if one meets $\eta$ during the series of $i$ updates $\sigma\to\zeta$
(resp. $q-i$ updates $\zeta\to\sigma$).
\item For a subset $\mathcal{P}\subseteq\mathcal{X}$, we define
\begin{equation}
\mathfrak{O}(\mathcal{P}):=\bigcup_{\sigma,\,\zeta\in\mathcal{P}:\,r_{\beta}^{\mathrm{MH}}(\sigma,\,\zeta)>0}\mathfrak{O}(\sigma,\,\zeta)\;.\label{e_OP}
\end{equation}
\end{enumerate}
\end{defn}

The notion of orbits is necessary since all spin updates in an orbit
$\mathfrak{O}_{x}(\sigma)$ attain the same level of height. More
precisely, noting that $\mathfrak{O}_{x}(\sigma)=\{\sigma,\,\tau_{x}\sigma,\,\dots,\,\tau_{x}^{q-1}\sigma\}$,
we have for all $i\in\llbracket0,\,q-1\rrbracket$ that (cf. \eqref{e_height})
\begin{equation}
H(\tau_{x}^{i}\sigma,\,\tau_{x}^{i+1}\sigma)=\max_{j\in\llbracket0,\,q-1\rrbracket}H(\tau_{x}^{j}\sigma)=\max_{\mathfrak{O}_{x}(\sigma)}H\;.\label{e_OrbitH}
\end{equation}
Now, we introduce typical configurations subjected to the cyclic dynamics.
\begin{defn}[Typical configurations of cyclic dynamics]
 Recall the sets $\mathcal{R}_{v}^{a,\,b}$ and $\mathcal{Q}_{v}^{a,\,b}$
defined in Definition \ref{d_canreg}.
\begin{enumerate}
\item For each regular configuration $\sigma\in\mathcal{R}_{v}^{a,\,b}$,
we write 
\begin{equation}
\overline{\sigma}:=\bigcup_{x\in\Lambda}\mathfrak{O}_{x}(\sigma)\;.\label{e_sigmabar}
\end{equation}
Then for $v\in\llbracket2,\,L-2\rrbracket$, we define
\[
\overline{\mathcal{R}}_{v}^{a,\,b}:=\bigcup_{\sigma\in\mathcal{R}_{v}^{a,b}}\overline{\sigma}\;\;\;\;\text{and}\;\;\;\;\overline{\mathcal{R}}_{v}:=\bigcup_{a,\,b\in S}\overline{\mathcal{R}}_{v}^{a,\,b}\;.
\]
\item For $v\in\llbracket2,\,L-3\rrbracket$, we define
\[
\overline{\mathcal{Q}}_{v}^{a,\,b}:=\mathfrak{O}(\mathcal{Q}_{v}^{a,\,b})\;\;\;\;\text{and}\;\;\;\;\overline{\mathcal{Q}}_{v}:=\bigcup_{a,\,b\in S}\overline{\mathcal{Q}}_{v}^{a,\,b}\;.
\]
\item The set of (cyclic) \emph{bulk typical configurations} between two
ground states $\mathbf{a}$ and $\mathbf{b}$ is defined by 
\[
\overline{\mathcal{B}}^{a,\,b}:=\bigcup_{v\in\llbracket2,\,L-3\rrbracket}\overline{\mathcal{Q}}_{v}^{a,\,b}\cup\bigcup_{v\in\llbracket2,\,L-2\rrbracket}\overline{\mathcal{R}}_{v}^{a,\,b}\;.
\]
We write
\[
\overline{\mathcal{B}}_{\Gamma}^{a,\,b}:=\Big[\bigcup_{v\in\llbracket2,\,L-3\rrbracket}\overline{\mathcal{Q}}_{v}^{a,\,b}\Big]\setminus\Big[\bigcup_{v\in\llbracket2,\,L-2\rrbracket}\overline{\mathcal{R}}_{v}^{a,\,b}\Big]\;.
\]
Then, we naturally define $\overline{\mathcal{B}}:=\bigcup_{a,\,b\in S}\overline{\mathcal{B}}^{a,\,b}$
and $\overline{\mathcal{B}}_{\Gamma}:=\bigcup_{a,\,b\in S}\overline{\mathcal{B}}_{\Gamma}^{a,\,b}$.
\item For $a\in S$, we define the set of (cyclic) \emph{edge typical configurations}
associated with spin $a$ as $\overline{\mathcal{E}}^{a}:=\widehat{\mathcal{N}}(\mathbf{a};\,\overline{\mathcal{B}}_{\Gamma})$.
Then, we set $\overline{\mathcal{E}}:=\bigcup_{a\in S}\overline{\mathcal{E}}^{a}$.
\end{enumerate}
\end{defn}

\begin{notation*}
For each regular configuration $\xi_{\ell,\,v}^{a,\,b}\in\mathcal{R}_{v}^{a,\,b}$,
we simply denote by $\overline{\xi}_{\ell,\,v}^{a,\,b}$ the collection
$\overline{\xi_{\ell,\,v}^{a,\,b}}$ (cf. \eqref{e_sigmabar}).
\end{notation*}
With this new definition of the typical configurations, we can construct
the saddle structure for the cyclic dynamics as we did in Proposition
\ref{p_typprop} for the MH dynamics. 
\begin{prop}
\label{p_typpropcyc}The following properties hold.
\begin{enumerate}
\item For spins $a,\,b\in S$, we have $\overline{\mathcal{E}}^{a}\cap\overline{\mathcal{E}}^{b}=\emptyset$.
\item For spins $a,\,b\in S$, we have $\overline{\mathcal{E}}^{a}\cap\overline{\mathcal{B}}^{a,\,b}=\overline{\mathcal{R}}_{2}^{a,\,b}$.
\item For three spins $a,\,b,\,c\in S$, we have $\overline{\mathcal{E}}^{a}\cap\overline{\mathcal{B}}^{b,\,c}=\emptyset$.
\item It holds that $\overline{\mathcal{E}}\cup\overline{\mathcal{B}}=\widehat{\mathcal{N}}(\mathcal{S})$. 
\end{enumerate}
\end{prop}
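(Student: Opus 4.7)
The strategy is to mirror the proof of Proposition \ref{p_typprop} for the MH dynamics, adapting it to handle the orbit structure of the cyclic dynamics. The crucial new feature is that by \eqref{e_OrbitH}, every configuration belonging to an orbit $\mathfrak{O}_{x}(\sigma)$ shares the same transition height $H(\eta,\tau_{x}\eta)$ with its neighbors in the orbit. This is precisely why the sets $\overline{\mathcal{B}}^{a,b}$ and $\overline{\mathcal{E}}^{a}$ are defined via the orbit closures $\mathfrak{O}(\cdot)$ and $\overline{\sigma}$. My plan is to prove part (4) first and then derive parts (1)--(3) as corollaries.

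For part (4), the easy inclusion $\overline{\mathcal{E}}\cup\overline{\mathcal{B}}\subseteq\widehat{\mathcal{N}}(\mathcal{S})$ follows immediately from the definitions: each $\overline{\mathcal{E}}^{a}=\widehat{\mathcal{N}}(\mathbf{a};\overline{\mathcal{B}}_{\Gamma})$ is already contained in $\widehat{\mathcal{N}}(\mathbf{a})$, while each bulk typical configuration is visited by one of the cyclic canonical paths of Definition \ref{d_canpathcyc}, whose height is exactly $\Gamma$ by Lemma \ref{l_canpathcyc}. For the reverse inclusion, I would fix $\sigma\in\widehat{\mathcal{N}}(\mathbf{a})$ and a $\Gamma$-path $\omega=(\omega_{n})_{n=0}^{N}:\mathbf{a}\to\sigma$. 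If $\omega$ never meets $\overline{\mathcal{B}}_{\Gamma}$, then Lemma \ref{l_hatNPemptycyc} places $\sigma$ in $\overline{\mathcal{E}}^{a}$. Otherwise, let $n_{0}$ be the last index with $\omega_{n_{0}}\in\overline{\mathcal{B}}_{\Gamma}^{a',b'}$, and argue by induction on $N-n_{0}$ that each subsequent $\omega_{n}$ lies in $\overline{\mathcal{B}}^{a',b'}\cup\overline{\mathcal{R}}_{2}^{a',b'}$; the orbit invariance \eqref{e_OrbitH} of transition heights allows the inductive step to remain inside an orbit, and the structural input (analogous to the NZ expansion algorithm) forces any $H\le\Gamma$ cyclic move out of a bulk typical configuration to produce another bulk typical configuration of the same $(a',b')$-type, or eventually an element of $\overline{\mathcal{R}}_{2}^{a',b'}\subseteq\overline{\mathcal{E}}^{a'}$.

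Given part (4), the intersection statements follow by connectivity. For (1), suppose $\sigma\in\overline{\mathcal{E}}^{a}\cap\overline{\mathcal{E}}^{b}$ with $a\ne b$: take $\Gamma$-paths in $\overline{\mathcal{B}}_{\Gamma}^{c}$ from $\mathbf{a}$ to $\sigma$ and from $\mathbf{b}$ to $\sigma$, reverse one using Lemma \ref{l_Phiissym}, and concatenate to get a $\Gamma$-path $\mathbf{a}\to\mathbf{b}$ avoiding $\overline{\mathcal{B}}_{\Gamma}$; but the classification from Step 3 shows every such path must enter $\overline{\mathcal{B}}_{\Gamma}$ (this is implicit in the proof of Proposition \ref{p_EBlb}), giving a contradiction. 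For (2), the inclusion $\overline{\mathcal{R}}_{2}^{a,b}\subseteq\overline{\mathcal{E}}^{a}\cap\overline{\mathcal{B}}^{a,b}$ is direct since $\overline{\mathcal{R}}_{2}^{a,b}\cap\overline{\mathcal{B}}_{\Gamma}=\emptyset$ by definition of $\overline{\mathcal{B}}_{\Gamma}$; conversely, any $\sigma$ in the intersection that is not in $\overline{\mathcal{R}}_{2}^{a,b}$ would belong to some $\overline{\mathcal{Q}}_{v}^{a,b}\subseteq\overline{\mathcal{B}}_{\Gamma}$ or some $\overline{\mathcal{R}}_{v}^{a,b}$ with $v\ne 2,L-2$, both contradicting $\sigma\in\overline{\mathcal{E}}^{a}$ via the same $\overline{\mathcal{B}}_{\Gamma}$-avoiding argument. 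Part (3) is analogous to (1).

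The main obstacle will be the combinatorial work underlying Step 3: one must adapt the NZ expansion algorithm to the cyclic setting, simultaneously tracking the cluster geometry on $\Lambda$ and the spin values at each site, and show that in the orbit-saturated regime $H\le\Gamma=2K+4$ no saddle configurations appear beyond those enumerated in $\overline{\mathcal{B}}_{\Gamma}$. Because of \eqref{e_OrbitH}, it is in fact enough to carry out the classification for one distinguished representative per orbit, but the presence of a third spin $c\notin\{a,b\}$ (available since $q\ge 3$) allows subtle detour configurations that do not arise in the MH analysis and must be ruled out by a systematic case split on the shape of the protuberance. I would relegate this combinatorial work to the appendix, in parallel with Section \ref{secB.2}.
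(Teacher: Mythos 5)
Your overall architecture coincides with the paper's: the easy inclusions come from the definitions and the cyclic canonical paths; the substantive content splits into a local lemma describing which height-$\le\Gamma$ moves are possible out of a bulk typical configuration (the paper's Lemma \ref{l_typcyc1}, which indeed transfers almost verbatim from Lemma \ref{l_typ1}), a global ``gate'' property asserting $\mathbf{b}\notin\widehat{\mathcal{N}}(\mathbf{a};\,\overline{\mathcal{B}}_{\Gamma})$, and the decomposition Lemma \ref{l_nhddc} (together with the analogue of Lemma \ref{l_typcyc2}) to assemble part (4). Proving (4) before (1)--(3) is a harmless reordering.

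The genuine gap is in your justification of part (1). You claim that the impossibility of a $\Gamma$-path from $\mathbf{a}$ to $\mathbf{b}$ avoiding $\overline{\mathcal{B}}_{\Gamma}$ ``is implicit in the proof of Proposition \ref{p_EBlb}.'' It is not: that proof assumes for contradiction that $\Phi_{\omega}\le2K+3$, so every configuration it classifies has energy at most $2K+3$, whereas a $\Gamma$-path may visit configurations of energy exactly $2K+4$, and these are precisely the candidates for slipping around $\overline{\mathcal{B}}_{\Gamma}$. The paper's proof of part (1) reruns the bridge-counting argument at the maximal time $m$ with $\|\omega_{m}\|_{b}=\lfloor KL/2\rfloor+3$ and must add four new sub-cases \textbf{(C8)}--\textbf{(C11)} (a regular configuration decorated with a protuberance of a third spin, with two protuberances, etc.), ruling each out via divisibility of $\lfloor KL/2\rfloor+3$ modulo $K$, the maximality of $m$, or the fact that any further update from such a configuration has height at least $2K+6$. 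This global argument is the real new combinatorial content of the proposition; it cannot be obtained from the local bulk-structure lemma, which only tells you what happens once the path is already inside $\overline{\mathcal{B}}$, not that the path is forced into $\overline{\mathcal{B}}_{\Gamma}$ in the first place. Two smaller points: in part (2), $\overline{\mathcal{Q}}_{v}^{a,\,b}$ is not contained in $\overline{\mathcal{B}}_{\Gamma}$ (its $h=1$ and $h=K-1$ layers meet $\overline{\mathcal{R}}_{v}$ and $\overline{\mathcal{R}}_{v+1}$), and your case split omits $\overline{\mathcal{R}}_{L-2}^{a,\,b}$, whose exclusion from $\overline{\mathcal{E}}^{a}$ genuinely requires part (1) — so (2) must indeed come after (1), as in the paper.
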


A detailed proof of this proposition is given in Section \ref{secD.2}.

\subsection{Graph structure of edge typical configurations}

Recall that in the investigation of the edge typical configurations
of the MH dynamics, the graph structure (cf. Section \ref{sec6.5})
played a significant role. In this subsection, we explain the corresponding
results for the cyclic dynamics. 

Recall the sets $\mathcal{Z}^{a,\,b}$ and $\mathcal{Z}_{\ell}^{a,\,b}$
from \eqref{e_Zabdef} and Definition \ref{d_Zlabdef}, respectively,
and define 
\[
\overline{\mathcal{Z}}^{a,\,b}:=\mathfrak{O}(\mathcal{Z}^{a,\,b})\;\;\;\text{and}\;\;\;\overline{\mathcal{Z}}_{\ell}^{a,\,b}:=\mathfrak{O}(\mathcal{Z}_{\ell}^{a,\,b})
\]
so that, by the characterization of the set $\mathcal{Z}^{a,\,b}$
in Proposition \ref{p_Zabtree}, we can decompose the set $\overline{\mathcal{Z}}^{a,\,b}$
into (similarly with \eqref{e_Zabdecomp}) 
\begin{equation}
\overline{\mathcal{Z}}^{a,\,b}=\begin{cases}
\bigcup_{\ell\in\mathbb{T}_{L}}\overline{\mathcal{Z}}_{\ell}^{a,\,b} & \text{if }K<L\;,\\
\bigcup_{\ell\in\mathbb{T}_{L}}\overline{\mathcal{Z}}_{\ell}^{a,\,b}\cup\bigcup_{\ell\in\mathbb{T}_{L}}\Theta(\overline{\mathcal{Z}}_{\ell}^{a,\,b}) & \text{if }K=L\;.
\end{cases}\label{e_Zbarabdecomp}
\end{equation}
Now, we define 
\begin{equation}
\overline{\mathcal{Z}}^{a}:=\bigcup_{b\in S\setminus\{a\}}\overline{\mathcal{Z}}^{a,\,b}\;,\;\;\;\;\overline{\mathcal{R}}^{a}:=\bigcup_{b\in S\setminus\{a\}}\overline{\mathcal{R}}_{2}^{a,\,b}\;\;\;\;\text{and}\;\;\;\;\overline{\mathcal{D}}^{a}:=\widehat{\mathcal{N}}(\mathbf{a};\,\overline{\mathcal{Z}}^{a})\;,\label{e_Dbaradef}
\end{equation}
so that we have the following representations of $\overline{\mathcal{E}}^{a}$:
\begin{align*}
\overline{\mathcal{E}}^{a} & =\overline{\mathcal{D}}^{a}\cup\overline{\mathcal{Z}}^{a}\cup\overline{\mathcal{R}}^{a}\\
 & =\begin{cases}
\overline{\mathcal{D}}^{a}\cup\bigcup_{b\in S\setminus\{a\}}\bigcup_{\ell\in\mathbb{T}_{L}}[\overline{\mathcal{Z}}_{\ell}^{a,\,b}\cup\overline{\xi}_{\ell,\,2}^{a,\,b}] & \text{if }K<L\;,\\
\overline{\mathcal{D}}^{a}\cup\bigcup_{b\in S\setminus\{a\}}\bigcup_{\ell\in\mathbb{T}_{L}}[\overline{\mathcal{Z}}_{\ell}^{a,\,b}\cup\overline{\xi}_{\ell,\,2}^{a,\,b}]\cup\bigcup_{b\in S\setminus\{a\}}\bigcup_{\ell\in\mathbb{T}_{L}}[\Theta(\overline{\mathcal{Z}}_{\ell}^{a,\,b})\cup\Theta(\overline{\xi}_{\ell,\,2}^{a,\,b})] & \text{if }K=L\;.
\end{cases}
\end{align*}
A crucial difference here with the structure explained in Section
\ref{sec6} is that \textbf{\emph{$\overline{\mathcal{Z}}^{a,\,b}$
is no longer disjoint with both $\overline{\mathcal{D}}^{a}$ and
$\overline{\mathcal{R}}^{a}$.}} In turn, here we should define the
graph structure on the $\overline{\mathcal{Z}}^{a,\,b}$ itself and
may exclude the sets $\overline{\mathcal{D}}^{a}$ and $\overline{\mathcal{R}}^{a}$.
\begin{defn}
\label{d_ZbarMc}We fix $a,\,b\in S$ and $\ell\in\mathbb{T}_{L}$.
Then, we introduce a graph structure and a Markov chain on $\overline{\mathcal{Z}}_{\ell}^{a,\,b}$. 
\begin{itemize}
\item \textbf{(Graph) }We first assign a graph structure (cf. \eqref{e_EP})
\[
\overline{\mathscr{G}}_{\ell}^{a,\,b}=(\overline{\mathscr{V}}_{\ell}^{a,\,b},\,\overline{\mathscr{E}}_{\ell}^{a,\,b}):=(\overline{\mathcal{Z}}_{\ell}^{a,\,b},\,E(\overline{\mathcal{Z}}_{\ell}^{a,\,b}))\;.
\]
By symmetry, the graph structure $\overline{\mathscr{G}}_{\ell}^{a,\,b}$
does not depend on $\ell\in\mathbb{T}_{L}$ (or the transpose operator
$\Theta$ if $K=L$), while \textbf{\emph{it indeed depends on $a,\,b\in S$}}
(actually on the value of $b-a$ modulo $q$) unlike in Definition
\ref{d_EAMc}. Thus, we omit the subscript $\ell$ and simply write
$\overline{\mathscr{G}}^{a,\,b}=(\overline{\mathscr{V}}^{a,\,b},\,\overline{\mathscr{E}}^{a,\,b})$
when no risk of confusion arises. 
\item \textbf{(Target sets)} We define two disjoint subsets of $\overline{\mathscr{V}}^{a,\,b}=\overline{\mathcal{Z}}_{\ell}^{a,\,b}$
as
\[
\overline{\mathscr{A}}^{a,\,b}=\overline{\mathscr{A}}_{\ell}^{a,\,b}:=\overline{\mathcal{Z}}_{\ell}^{a,\,b}\cap\mathcal{N}(\mathbf{a})\;\;\;\;\text{and}\;\;\;\;\overline{\mathscr{B}}^{a,\,b}=\overline{\mathscr{B}}_{\ell}^{a,\,b}:=\overline{\mathcal{Z}}_{\ell}^{a,\,b}\cap\overline{\xi}_{\ell,\,2}^{a,\,b}\;.
\]
\item \textbf{(Markov chain)} We define a symmetric rate function $\overline{\mathfrak{r}}^{a,\,b}:\overline{\mathscr{V}}^{a,\,b}\times\overline{\mathscr{V}}^{a,\,b}\rightarrow[0,\,\infty)$
in such a way that 
\[
\overline{\mathfrak{r}}^{a,\,b}(\sigma,\,\sigma')=\begin{cases}
1 & \text{if }\{\sigma,\,\sigma'\}\in\overline{\mathscr{E}}^{a,\,b}\text{ and }r_{\beta}(\sigma,\,\sigma')>0\;,\\
0 & \text{otherwise}\;.
\end{cases}
\]
Then, define $(\overline{\mathfrak{Z}}^{a,\,b}(t))_{t\ge0}$ as the
continuous-time Markov chain on $\overline{\mathscr{V}}^{a,\,b}$
with rate $\overline{\mathfrak{r}}^{a,\,b}(\cdot,\,\cdot)$. The Markov
chain $\overline{\mathfrak{Z}}^{a,\,b}(\cdot)$ is invariant under
the uniform distribution on $\overline{\mathscr{V}}^{a,\,b}$, but
it is not reversible with respect to it. We denote by $\overline{\mathfrak{Z}}^{*,\,a,\,b}(\cdot)$
the adjoint Markov process of $\overline{\mathfrak{Z}}^{a,\,b}(\cdot)$.
Denote by $\overline{\mathfrak{f}}_{\cdot,\,\cdot}^{a,\,b}(\cdot)$,
$\overline{\mathfrak{cap}}^{a,\,b}(\cdot,\,\cdot)$ and $\overline{\mathfrak{D}}^{a,\,b}$
the equilibrium potential, capacity and Dirichlet form with respect
to the Markov chain $\overline{\mathfrak{Z}}^{a,\,b}(\cdot)$, respectively.
In addition, we denote by $\overline{\mathfrak{f}}_{\cdot,\,\cdot}^{*,\,a,\,b}(\cdot)$
the equilibrium potential with respect to the adjoint process $\overline{\mathfrak{Z}}^{*,\,a,\,b}(\cdot)$.
Finally, we abbreviate
\[
\overline{\mathfrak{f}}^{a,\,b}:=\overline{\mathfrak{f}}_{\overline{\mathscr{A}}^{a,b},\,\overline{\mathscr{B}}^{a,b}}^{a,\,b}\;\;\;\;\text{and}\;\;\;\;\overline{\mathfrak{f}}^{*,\,a,\,b}=\overline{\mathfrak{f}}_{\overline{\mathscr{A}}^{a,b},\,\overline{\mathscr{B}}^{a,b}}^{*,\,a,\,b}\;.
\]
\end{itemize}
\end{defn}

\subsubsection*{Edge constant}

We are now ready to define the edge constant $\mathfrak{e}_{0}$ that
appears in \eqref{e_edef} for the cyclic dynamics. We first define 

\begin{equation}
\overline{\mathfrak{e}}_{0}^{a,\,b}:=\frac{1}{|\overline{\mathscr{V}}^{a,\,b}|\cdot\overline{\mathfrak{cap}}^{a,\,b}(\overline{\mathscr{A}}^{a,\,b},\,\overline{\mathscr{B}}^{a,\,b})}\;.\label{e_e0barabdef}
\end{equation}
The next proposition not only provides the bound on this constant,
but also proves that this constant, somewhat surprisingly, does not
depend on the choices of $a$ and $b$. 
\begin{prop}
\label{p_e0barabest} The value of $\overline{\mathfrak{e}}_{0}^{a,\,b}$
does not depend on $a,\,b\in S$, and moreover we have $\overline{\mathfrak{e}}_{0}^{a,\,b}<2$.
\end{prop}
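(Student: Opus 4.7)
My plan has two parts, corresponding to the two claims in the proposition.

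\textbf{Invariance under $(a,b)$.} I aim to establish that the rate-weighted graph $(\overline{\mathscr{V}}^{a,b},\overline{\mathscr{E}}^{a,b},\overline{\mathfrak{r}}^{a,b})$ together with the target sets $(\overline{\mathscr{A}}^{a,b},\overline{\mathscr{B}}^{a,b})$ is isomorphic, up to a possible orientation reversal, across different choices of the pair $(a,b)$. The candidate isomorphism $\Psi_{\pi}$ is induced by applying a permutation $\pi:S\to S$ pointwise to configurations: if the residues $(b-a)\bmod q$ and $(b'-a')\bmod q$ coincide, take $\pi$ to be the cyclic shift sending $a$ to $a'$, which commutes with every $\tau_{x}$; otherwise post-compose with the reflection $s\mapsto 2a-s\bmod q$, which conjugates $\tau_{x}$ into $\tau_{x}^{-1}$ and hence reverses orbit orientations. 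Using Proposition~\ref{p_Zabtree}, $\Psi_{\pi}$ maps $\mathcal{Z}_{\ell}^{a,b}$ onto $\mathcal{Z}_{\ell}^{a',b'}$; it commutes with (or conjugates) the orbit operator, so $\Psi_{\pi}(\overline{\mathcal{Z}}_{\ell}^{a,b})=\overline{\mathcal{Z}}_{\ell}^{a',b'}$; it preserves adjacency $\sim$, which detects only whether two configurations differ by a single orbit step; and it maps $\mathcal{N}(\mathbf{a})$ to $\mathcal{N}(\mathbf{a'})$ and $\overline{\xi}_{\ell,2}^{a,b}$ to $\overline{\xi}_{\ell,2}^{a',b'}$. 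Since $\mathrm{Cap}=\mathrm{Cap}^{*}$ (Section~\ref{sec4.1}), any orientation reversal is immaterial for the capacity, and $|\overline{\mathscr{V}}^{a,b}|$ is manifestly invariant. By the definition \eqref{e_e0barabdef}, $\overline{\mathfrak{e}}_{0}^{a,b}=\overline{\mathfrak{e}}_{0}^{a',b'}$.

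\textbf{The bound $\overline{\mathfrak{e}}_{0}<2$.} By the invariance, I fix a convenient pair $(a,b)$ and suppress superscripts. The target is $|\overline{\mathscr{V}}|\cdot\overline{\mathfrak{cap}}(\overline{\mathscr{A}},\overline{\mathscr{B}})>1/2$. Since for each undirected edge in $\overline{\mathscr{E}}$ exactly one of the two directed rates is positive (and equal to $1$), formula \eqref{e_Diri} simplifies to
\[
\overline{\mathfrak{D}}(F)=\frac{1}{2|\overline{\mathscr{V}}|}\sum_{\{\sigma,\sigma'\}\in\overline{\mathscr{E}}}[F(\sigma')-F(\sigma)]^{2},
\]
which is exactly the Dirichlet form of the reversible comparison chain on the undirected graph with uniform invariant measure and unit edge conductances. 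The Dirichlet principle for this reversible chain then yields $\overline{\mathfrak{cap}}(\overline{\mathscr{A}},\overline{\mathscr{B}})\ge(2|\overline{\mathscr{V}}|)^{-1}C_{\mathrm{eff}}(\overline{\mathscr{A}},\overline{\mathscr{B}})$, where $C_{\mathrm{eff}}$ is the effective conductance in the unit-weighted graph. It therefore suffices to verify $C_{\mathrm{eff}}(\overline{\mathscr{A}},\overline{\mathscr{B}})>1$, equivalently that the effective resistance is strictly less than one.

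To obtain this effective-resistance bound, I apply Thomson's principle with an explicit unit flow from $\overline{\mathscr{A}}$ to $\overline{\mathscr{B}}$. The flow is assembled from many nearly edge-disjoint canonical paths of the cyclic dynamics (Definition~\ref{d_canpathcyc}): for each choice of starting site on the strip $\mathbb{T}_{K}\times\{\ell,\ell+1\}$ (and, when $K=L$, each orientation), the cyclic canonical path grows the $b$-cluster site by site, threading through the orbit-expanded structure parametrized by Proposition~\ref{p_Zabtree}. Distributing the unit flow equally over these $\Theta(K)$ paths gives an explicit flow whose energy I then bound from above by a constant strictly less than one, using the standing hypothesis $K\ge 11$.

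\textbf{Main obstacle.} The principal difficulty is the combinatorial construction of the parallel flow decomposition and the quantitative bound on its energy, particularly in verifying that the paths remain sufficiently edge-disjoint once each MH edge has been blown up into a length-$q$ orbit of auxiliary vertices. This analysis parallels and slightly refines the argument underlying Proposition~\ref{p_e0est} for the reversible MH case (and the improvement $\mathfrak{e}_{0}=O(1/K)$ mentioned in Remark~\ref{r_e0est}); the cyclic dynamics adds directed auxiliary configurations but, crucially, does not diminish the number of available routes between $\overline{\mathscr{A}}$ and $\overline{\mathscr{B}}$, so the bound goes through.
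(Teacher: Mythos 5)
Your second step (the bound) is sound in outline and essentially matches the paper's computation: the paper also fixes $(a,b)=(1,2)$, writes $\overline{\mathfrak{cap}}=\overline{\mathfrak{D}}(\overline{\mathfrak{f}})$, restricts the sum to the $K$ vertex-disjoint paths $\big(\xi_{\ell,1;k,n+1}^{1,2,+}\big)_{n=0}^{K-2}$ and applies Cauchy--Schwarz to get $|\overline{\mathscr{V}}|\cdot\overline{\mathfrak{cap}}\ge\frac{K}{2(K-2)}>\frac12$; your Thomson-principle phrasing is an equivalent route, and your worry about threading flows through orbit-expanded edges is unnecessary once $b-a=1$, since consecutive configurations on these paths differ by a single $\tau_x$ and are therefore directly adjacent.

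The genuine gap is in your invariance argument. The only spin permutations that commute with all $\tau_x$ are the cyclic shifts $s\mapsto s+c$, and the reflections $s\mapsto 2a-s$ conjugate $\tau_x$ into $\tau_x^{-1}$; hence the symmetry group you invoke identifies the pair $(a,b)$ only with pairs $(a',b')$ satisfying $b'-a'\equiv\pm(b-a)\pmod q$. For $q\ge4$ this does not exhaust $S\times S$: e.g.\ for $q=4$ the pairs $(1,2)$ and $(1,3)$ have differences $1$ and $2$, which are not related by $r\mapsto -r$, and the directed graphs $\overline{\mathscr{G}}^{1,2}$ and $\overline{\mathscr{G}}^{1,3}$ are genuinely non-isomorphic (the two MH-level states sit at different cyclic distances inside each length-$q$ orbit). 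So your argument proves the independence only for $q=3$. The paper's proof closes exactly this gap by a non-symmetry argument: it takes the trace of $\overline{\mathfrak{Z}}_{\ell}^{a,b}(\cdot)$ on $\mathcal{Z}_{\ell}^{a,b}$ and shows, via \cite[Corollary 6.2]{BL TM}, that tracing a directed $q$-cycle onto two of its states always produces unit rates in both directions regardless of where the two states sit on the cycle; hence the traced chain is the same symmetric unit-rate chain for every $(a,b)$, and the identity \cite[(A.10)]{BL TM2} transfers this back to $\overline{\mathfrak{cap}}^{a,b}$. This is why the paper calls the independence ``somewhat surprising''--it is not a symmetry of the model. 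The gap also propagates into your second step: without the invariance you cannot ``fix a convenient pair,'' and for a pair with $b-a=r\ge2$ the naive parallel-path bound degrades by a factor of order $r$ (each MH step becomes an orbit segment of length $r$), which is no longer enough to conclude $\overline{\mathfrak{e}}_{0}^{a,b}<2$.
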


A proof of this proposition is given in Section \ref{secD.2}. Thanks
to this proposition, we can finally define the constant $\mathfrak{e}_{0}$
that appears in \eqref{e_edef} for the cyclic dynamics:
\begin{equation}
\mathfrak{e}_{0}:=\overline{\mathfrak{e}}_{0}^{a,\,b}\;\;\;\;\text{for any }a,\,b\in S\;.\label{e_e0cycdef}
\end{equation}

\section{\label{sec9}Test Functions: Cyclic Dynamics}

Fix two non-empty, disjoint $\mathcal{P}$ and $\mathcal{Q}$ of $\mathcal{S}$.
The purpose of this section is to construct two test functions $\widetilde{h},\,\widetilde{h}^{*}:\mathcal{X}\to\mathbb{R}$
that appear in Proposition \ref{p_eqpappr}, and then to prove that
these two test functions satisfy three conditions \eqref{e_eqpappr1},
\eqref{e_eqpappr2} and \eqref{e_eqpappr3}. We again use Notation
\ref{n_abc} throughout this section.
\begin{rem}
\label{r_analogue}Of course, the test functions of the cyclic dynamics
are different to (in fact, slightly more complex than) those of the
MH dynamics. However, most parts of the proofs of three conditions
\eqref{e_eqpappr1}, \eqref{e_eqpappr2} and \eqref{e_eqpappr3} are
quite similar or identical, and thus we omit the details for such
cases. \emph{This reveals that the proof of the non-reversible model
based on our strategy does not possess additional difficulty compared
to the proof of the reversible model.}
\end{rem}

\subsection{\label{sec9.1}Test functions}

In this subsection, we construct test functions $\widetilde{h}$ and
$\widetilde{h}^{*}$ which are approximations of the equilibrium potentials
$h_{\mathcal{P},\,\mathcal{Q}}$ and $h_{\mathcal{P},\,\mathcal{Q}}^{*}$,
respectively. The following definition is an analogue of Definition
\ref{d_testf}.
\begin{defn}[Test functions]
\label{d_testfcyc} We construct $\widetilde{h},\,\widetilde{h}^{*}:\mathcal{X}\rightarrow\mathbb{R}$.
Notation \ref{n_abc} is valid throughout. We first assume that $K<L$.
\begin{enumerate}
\item \textbf{Construction on $\overline{\mathcal{E}}$. }We first define
the function $\widetilde{h}$.
\begin{itemize}
\item For $\sigma\in\overline{\mathcal{E}}^{a}$, we define
\begin{equation}
\widetilde{h}(\sigma):=\begin{cases}
1 & \text{if }\sigma\in\overline{\mathcal{D}}^{a}\text{ or }\sigma\in\overline{\mathcal{Z}}^{a,\,a'}\cup\overline{\mathcal{R}}_{2}^{a,\,a'}\text{ for }a'\ne a\;,\\
1-\frac{\mathfrak{e}}{\kappa}(1-\overline{\mathfrak{f}}^{a,\,b}(\sigma)) & \text{if }\sigma\in\overline{\mathcal{Z}}^{a,\,b}\;,\\
1-\frac{|\mathcal{Q}|}{|\mathcal{P}|+|\mathcal{Q}|}\frac{\mathfrak{e}}{\kappa}(1-\overline{\mathfrak{f}}^{a,\,c}(\sigma)) & \text{if }\sigma\in\overline{\mathcal{Z}}^{a,\,c}\;.
\end{cases}\label{e_testfcyc1}
\end{equation}
\item For $\sigma\in\overline{\mathcal{E}}^{b}$, we define
\[
\widetilde{h}(\sigma):=\begin{cases}
0 & \text{if }\sigma\in\overline{\mathcal{D}}^{b}\text{ or }\sigma\in\overline{\mathcal{Z}}^{b,\,b'}\cup\overline{\mathcal{R}}_{2}^{b,\,b'}\text{ for }b'\ne b\;,\\
\frac{\mathfrak{e}}{\kappa}(1-\overline{\mathfrak{f}}^{b,\,a}(\sigma)) & \text{if }\sigma\in\overline{\mathcal{Z}}^{b,\,a}\;,\\
\frac{|\mathcal{P}|}{|\mathcal{P}|+|\mathcal{Q}|}\frac{\mathfrak{e}}{\kappa}(1-\overline{\mathfrak{f}}^{b,\,c}(\sigma)) & \text{if }\sigma\in\overline{\mathcal{Z}}^{b,\,c}\;.
\end{cases}
\]
\item For $\sigma\in\overline{\mathcal{E}}^{c}$, we define
\[
\widetilde{h}(\sigma):=\begin{cases}
\frac{|\mathcal{P}|}{|\mathcal{P}|+|\mathcal{Q}|} & \text{if }\sigma\in\overline{\mathcal{D}}^{c}\text{ or }\sigma\in\overline{\mathcal{Z}}^{c,\,c'}\cup\overline{\mathcal{R}}_{2}^{c,\,c'}\text{ for }c'\ne c\;,\\
\frac{|\mathcal{P}|}{|\mathcal{P}|+|\mathcal{Q}|}+\frac{|\mathcal{Q}|}{|\mathcal{P}|+|\mathcal{Q}|}\frac{\mathfrak{e}}{\kappa}(1-\overline{\mathfrak{f}}^{c,\,a}(\sigma)) & \text{if }\sigma\in\overline{\mathcal{Z}}^{c,\,a}\;,\\
\frac{|\mathcal{P}|}{|\mathcal{P}|+|\mathcal{Q}|}-\frac{|\mathcal{P}|}{|\mathcal{P}|+|\mathcal{Q}|}\frac{\mathfrak{e}}{\kappa}(1-\overline{\mathfrak{f}}^{c,\,b}(\sigma)) & \text{if }\sigma\in\overline{\mathcal{Z}}^{c,\,b}\;.
\end{cases}
\]
\end{itemize}
The function $\widetilde{h}^{*}$ is defined in the same way, except
that we substitute $\overline{\mathfrak{f}}^{*,\,s,\,s'}$ in place
of $\overline{\mathfrak{f}}^{s,\,s'}$ for all $s,\,s'\in S$ above.
\item \textbf{Construction on $\overline{\mathcal{B}}$. }We first set 
\[
\widetilde{h}\equiv\widetilde{h}^{*}\equiv\begin{cases}
1 & \text{on }\overline{\mathcal{B}}^{a,\,a'}\;,\\
0 & \text{on }\overline{\mathcal{B}}^{b,\,b'}\;,\\
\frac{|\mathcal{P}|}{|\mathcal{P}|+|\mathcal{Q}|} & \text{on }\overline{\mathcal{B}}^{c,\,c'}\;.
\end{cases}
\]
Next, we consider the construction on $\overline{\mathcal{B}}^{a,\,b}$
which is carried out by \textbf{\emph{interpolating the value from
$\frac{\mathfrak{b}+\mathfrak{e}}{\kappa}$ at $\overline{\mathcal{R}}_{2}^{a,\,b}$
to $\frac{\mathfrak{e}}{\kappa}$ at $\overline{\mathcal{R}}_{L-2}^{a,\,b}$}}
in the following way.
\begin{itemize}
\item For $\sigma\in\overline{\mathcal{R}}_{v}^{a,\,b}$ with $v\in\llbracket2,\,L-2\rrbracket$,
we set 
\[
\widetilde{h}(\sigma)=\widetilde{h}^{*}(\sigma)=\frac{1}{\kappa}\Big[\frac{L-2-v}{L-4}\mathfrak{b}+\mathfrak{e}\Big]\;.
\]
\item For $\sigma\in\mathcal{Q}_{v}^{a,\,b}$ with $v\in\llbracket2,\,L-3\rrbracket$,
where $\sigma=\xi_{\ell,\,v;\,k,\,h}^{a,\,b,\,\pm}$ for some $\ell\in\mathbb{T}_{L}$,
$k\in\mathbb{T}_{K}$ and $h\in\llbracket1,\,K-1\rrbracket$, we set
\[
\widetilde{h}(\sigma)=\widetilde{h}^{*}(\sigma)=\frac{1}{\kappa}\Big[\frac{(K-2)(L-2-v)-(h-1)}{(K-2)(L-4)}\mathfrak{b}+\mathfrak{e}\Big]\;.
\]
\item For $\sigma\in\overline{\mathcal{Q}}_{v}^{a,\,b}\setminus\mathcal{Q}_{v}^{a,\,b}$
with $v\in\llbracket2,\,L-3\rrbracket$, we have $\sigma\in\mathfrak{O}(\xi_{\ell,\,v;\,k,\,h}^{a,\,b,\,\pm},\,\xi_{\ell,\,v;\,k,\,h+1}^{a,\,b,\,\pm})$
or $\sigma\in\mathfrak{O}(\xi_{\ell,\,v;\,k,\,h}^{a,\,b,\,\pm},\,\xi_{\ell,\,v;\,k-1,\,h+1}^{a,\,b,\,\pm})$
for some $\ell\in\mathbb{T}_{L}$, $k\in\mathbb{T}_{K}$ and $h\in\llbracket1,\,K-2\rrbracket$.
For the former case, we set
\begin{equation}
\widetilde{h}(\sigma):=\begin{cases}
\widetilde{h}(\xi_{\ell,\,v;\,k,\,h+1}^{a,\,b,\,\pm}) & \text{if }\xi_{\ell,\,v;\,k,\,h}^{a,\,b,\,\pm}\lessdot\sigma\lessdot\xi_{\ell,\,v;\,k,\,h+1}^{a,\,b,\,\pm}\;,\\
\widetilde{h}(\xi_{\ell,\,v;\,k,\,h}^{a,\,b,\,\pm}) & \text{if }\xi_{\ell,\,v;\,k,\,h+1}^{a,\,b,\,\pm}\lessdot\sigma\lessdot\xi_{\ell,\,v;\,k,\,h}^{a,\,b,\,\pm}\;.
\end{cases}\label{e_testfcyc2}
\end{equation}
The value $\widetilde{h}^{*}(\sigma)$ is defined in the reversed
way. For the latter case, i.e., the case of $\sigma\in\mathfrak{O}(\xi_{\ell,\,v;\,k,\,h}^{a,\,b,\,\pm},\,\xi_{\ell,\,v;\,k-1,\,h+1}^{a,\,b,\,\pm})$,
the functions are constructed in the same manner.
\end{itemize}
Similarly, constructions on $\overline{\mathcal{B}}^{a,\,c}$ and
$\overline{\mathcal{B}}^{c,\,b}$ can be done by interpolating from
$\frac{|\mathcal{P}|}{|\mathcal{P}|+|\mathcal{Q}|}+\frac{|\mathcal{Q}|}{|\mathcal{P}|+|\mathcal{Q}|}\frac{\mathfrak{b}+\mathfrak{e}}{\kappa}$
to $\frac{|\mathcal{P}|}{|\mathcal{P}|+|\mathcal{Q}|}+\frac{|\mathcal{Q}|}{|\mathcal{P}|+|\mathcal{Q}|}\frac{\mathfrak{e}}{\kappa}$,
and from $\frac{|\mathcal{P}|}{|\mathcal{P}|+|\mathcal{Q}|}\frac{\mathfrak{b}+\mathfrak{e}}{\kappa}$
to $\frac{|\mathcal{P}|}{|\mathcal{P}|+|\mathcal{Q}|}\frac{\mathfrak{e}}{\kappa}$,
respectively.
\item \textbf{Construction on $\mathcal{X}\setminus(\overline{\mathcal{E}}\cup\overline{\mathcal{B}})$.}
We set $\widetilde{h}\equiv\widetilde{h}^{*}\equiv1$ thereon.
\end{enumerate}
For the case of $K=L$, for all $\sigma\in\overline{\mathcal{E}}\cup\overline{\mathcal{B}}$
considered in parts (1) and (2) above, we additionally set $\widetilde{h}(\Theta(\sigma)):=\widetilde{h}(\sigma)$
and $\widetilde{h}^{*}(\Theta(\sigma)):=\widetilde{h}^{*}(\sigma)$.
\end{defn}

\begin{rem}
\label{r_eqpappr3cyc}It is immediate from the definition that \eqref{e_eqpappr3}
holds for the test functions $\widetilde{h}$ and $\widetilde{h}^{*}$.
\end{rem}

It remains to prove conditions \eqref{e_eqpappr1} and \eqref{e_eqpappr2}.
We only verify these two conditions for $\widetilde{h}$, since the
verification for $\widetilde{h}^{*}$ is essentially identical (done
by reversing the order of orbits). Let us first check \eqref{e_eqpappr2}
for $\widetilde{h}$.
\begin{prop}
\label{p_testfcyc}The function $\widetilde{h}$ satisfies \eqref{e_eqpappr2},
i.e., 
\[
D_{\beta}(\widetilde{h})=(1+o(1))\cdot\frac{|\mathcal{P}||\mathcal{Q}|}{\kappa(|\mathcal{P}|+|\mathcal{Q}|)}e^{-\Gamma\beta}\;.
\]
\end{prop}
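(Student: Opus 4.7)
The plan is to mirror the proof of Proposition~\ref{p_testf} for the MH dynamics, with the main adaptations coming from the orbit structure of the cyclic chain. First, decompose $D_\beta(\widetilde{h})$, as in \eqref{e_decDiri}, into contributions from $\{\eta,\xi\}\subset\mathcal{X}\setminus(\overline{\mathcal{E}}\cup\overline{\mathcal{B}})$, from mixed pairs, and from $\{\eta,\xi\}\subset\overline{\mathcal{E}}\cup\overline{\mathcal{B}}$. The first sum vanishes because $\widetilde{h}\equiv 1$ on the complement by Definition~\ref{d_testfcyc}(3). For the mixed sum, combine Proposition~\ref{p_typpropcyc}(4), which gives $\overline{\mathcal{E}}\cup\overline{\mathcal{B}} = \widehat{\mathcal{N}}(\mathcal{S})$, with Lemma~\ref{l_Nhatbdrycyc} to deduce $H(\eta,\xi)\ge\Gamma+1$, so by \eqref{e_Hcdt} and $0\le\widetilde{h}\le 1$ the total contribution is $O(e^{-(\Gamma+1)\beta}) = o(e^{-\Gamma\beta})$.

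The third sum, carrying the main term, splits via Proposition~\ref{p_typpropcyc}(2)--(3) into a bulk piece over $E(\overline{\mathcal{B}})$ and an edge piece over $E(\overline{\mathcal{E}})$. For the bulk piece, the sums over $\overline{\mathcal{B}}^{a,a'}$, $\overline{\mathcal{B}}^{b,b'}$, $\overline{\mathcal{B}}^{c,c'}$ vanish because $\widetilde{h}$ is constant on each such set, leaving only the mixed types $\overline{\mathcal{B}}^{a,b}$, $\overline{\mathcal{B}}^{a,c}$, $\overline{\mathcal{B}}^{c,b}$. The essential observation is that the test function is engineered so that within each orbit in $\overline{\mathcal{Q}}_v^{a,b}\setminus\mathcal{Q}_v^{a,b}$, the rule \eqref{e_testfcyc2} assigns to the $q$ orbit members only the two endpoint values of the pre-canonical step; in view of \eqref{e_OrbitH}, exactly one of the $q$ orbit transitions has a nonzero increment and it occurs at height $\Gamma$. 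Summing over $v\in\llbracket 2,L-3\rrbracket$, $\ell\in\mathbb{T}_L$, $k\in\mathbb{T}_K$, $h\in\llbracket 1,K-1\rrbracket$, and the two $\pm$ signs — together with two canonical contributions at each regular slab as in the MH case — and using the normalization $(K-2)(L-4)$ from Definition~\ref{d_testfcyc}(2) along with the cyclic bulk constant $\mathfrak{b}$ in \eqref{e_bdef}, this assembles to
\[
\sum_{\{\eta,\xi\}\in E(\overline{\mathcal{B}})}\mu_\beta(\eta)r_\beta(\eta,\xi)[\widetilde{h}(\xi)-\widetilde{h}(\eta)]^2 = (1+o(1))\cdot\frac{|\mathcal{P}||\mathcal{Q}|\mathfrak{b}}{(|\mathcal{P}|+|\mathcal{Q}|)\kappa^2}e^{-\Gamma\beta},
\]
via exactly the same aggregation over $\mathcal{P},\mathcal{Q},\mathcal{S}\setminus(\mathcal{P}\cup\mathcal{Q})$ that yields \eqref{e_mce3}; the case $K=L$ doubles the count consistently with the $\nu_0$ factor built into $\mathfrak{b}$.

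For the edge piece, decompose $E(\overline{\mathcal{E}})$ by ground state and apply Definition~\ref{d_testfcyc}(1). Since $\widetilde{h}$ is constant on $\overline{\mathcal{D}}^s$, only edges internal to a component $\overline{\mathcal{Z}}_\ell^{s,s'}$ or crossing its boundary into $\mathcal{N}(\mathbf{s})$ or $\overline{\xi}_{\ell,2}^{s,s'}$ contribute. On such an edge, $\widetilde{h}$ is an affine function of $\overline{\mathfrak{f}}^{s,s'}$ with slope $\pm\mathfrak{e}/\kappa$ (times a linear coefficient from Definition~\ref{d_testfcyc}(1)), and the weight $\mu_\beta(\eta)r_\beta(\eta,\xi)$ equals $(1+o(1))\cdot q^{-1}e^{-\Gamma\beta}$ by Theorem~\ref{t_mu} together with \eqref{e_Hcdt}. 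Collecting these yields a contribution equal to $(1+o(1))\cdot q^{-1}e^{-\Gamma\beta}(\mathfrak{e}/\kappa)^2$ times the Dirichlet form $\overline{\mathfrak{D}}^{s,s'}(\overline{\mathfrak{f}}^{s,s'}) = \overline{\mathfrak{cap}}^{s,s'}(\overline{\mathscr{A}}^{s,s'},\overline{\mathscr{B}}^{s,s'}) = [|\overline{\mathscr{V}}^{s,s'}|\,\mathfrak{e}_0]^{-1}$, by Definition~\ref{d_ZbarMc}, \eqref{e_e0barabdef}, and \eqref{e_e0cycdef}. Summing over $\ell$ gives an extra factor of $L$ (or $2L$ when $K=L$), which combined with \eqref{e_edef} reduces each $(s,s')$-contribution to a clean multiple of $\mathfrak{e}/\kappa^2$; the aggregation over $(\mathbf{a},\mathbf{b})$, $(\mathbf{a},\mathbf{c})$, $(\mathbf{c},\mathbf{b})$ is combinatorially identical to the MH case and produces
\[
\sum_{\{\eta,\xi\}\in E(\overline{\mathcal{E}})}\mu_\beta(\eta)r_\beta(\eta,\xi)[\widetilde{h}(\xi)-\widetilde{h}(\eta)]^2 = (1+o(1))\cdot\frac{2|\mathcal{P}||\mathcal{Q}|\mathfrak{e}}{(|\mathcal{P}|+|\mathcal{Q}|)\kappa^2}e^{-\Gamma\beta}.
\]
Adding the bulk and edge contributions and using $\mathfrak{b}+2\mathfrak{e}=\kappa$ from \eqref{e_kappadef} yields the claimed asymptotics.

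The main obstacle is the orbit bookkeeping in the bulk estimate: one must verify rigorously that \eqref{e_testfcyc2} produces exactly one value-changing transition per orbit across all $(\ell,k,h,v,\pm)$, and that the cumulative count precisely cancels the denominator $(K-2)(L-4)$ to match $\mathfrak{b}$. A secondary difficulty is handling the interface between $\overline{\mathcal{E}}$ and $\overline{\mathcal{B}}$: one must check that the boundary sets $\overline{\mathscr{A}}^{s,s'} = \overline{\mathcal{Z}}_\ell^{s,s'}\cap\mathcal{N}(\mathbf{s})$ and $\overline{\mathscr{B}}^{s,s'} = \overline{\mathcal{Z}}_\ell^{s,s'}\cap\overline{\xi}_{\ell,2}^{s,s'}$, on which $\overline{\mathfrak{f}}^{s,s'}$ is pinned, align exactly with the locations where $\widetilde{h}$ is glued to the ground-state value on one side and to the bulk construction on the other — otherwise extraneous boundary terms would spoil the identification of the edge contribution with $\overline{\mathfrak{cap}}^{s,s'}$.
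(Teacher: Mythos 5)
Your proposal follows exactly the paper's route: the same three-way decomposition of $D_{\beta}(\widetilde{h})$, the same treatment of the first two sums via Definition \ref{d_testfcyc}-(3), Lemma \ref{l_Nhatbdrycyc} and Proposition \ref{p_typpropcyc}-(4), and then the bulk/edge computation of the third sum carried over from Proposition \ref{p_testf} with the orbit bookkeeping (which the paper itself omits, declaring it ``essentially the same''). Your orbit count and the gluing checks (e.g.\ $1-\mathfrak{e}/\kappa=(\mathfrak{b}+\mathfrak{e})/\kappa$ via $\kappa=\mathfrak{b}+2\mathfrak{e}$) are correct, so the argument is sound and matches the paper's.
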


\begin{proof}
We decompose 
\[
D_{\beta}(\widetilde{h})=\frac{1}{2}\Big[\sum_{\sigma,\,\zeta\in\mathcal{X}\setminus(\overline{\mathcal{E}}\cup\overline{\mathcal{B}})}+\sum_{\substack{\sigma\in\overline{\mathcal{E}}\cup\overline{\mathcal{B}},\,\zeta\in\mathcal{X}\setminus(\overline{\mathcal{E}}\cup\overline{\mathcal{B}})\text{ or}\\
\zeta\in\overline{\mathcal{E}}\cup\overline{\mathcal{B}},\,\sigma\in\mathcal{X}\setminus(\overline{\mathcal{E}}\cup\overline{\mathcal{B}})
}
}+\sum_{\sigma,\,\zeta\in\overline{\mathcal{E}}\cup\overline{\mathcal{B}}}\Big]\mu_{\beta}(\sigma)r_{\beta}(\sigma,\,\zeta)[\widetilde{h}(\zeta)-\widetilde{h}(\sigma)]^{2}\;,
\]
where all summations are carried over pairs $\sigma,\,\zeta$ such
that $\sigma\sim\zeta$. Since $\widetilde{h}$ is defined as constant
on $\mathcal{X}\setminus(\overline{\mathcal{E}}\cup\overline{\mathcal{B}})$,
the first summation is $0$. For the second summation, by \eqref{e_Hcdt},
Lemma \ref{l_Nhatbdrycyc} and Proposition \ref{p_typpropcyc}-(4),
we have $\mu_{\beta}(\sigma)r_{\beta}(\sigma,\,\zeta)=O(e^{-(\Gamma+1)\beta})=o(e^{-\Gamma\beta})$
and thus this summation is negligible. It only remains to prove 
\[
\frac{1}{2}\sum_{\sigma,\,\zeta\in\overline{\mathcal{E}}\cup\overline{\mathcal{B}}}\mu_{\beta}(\sigma)r_{\beta}(\sigma,\,\zeta)[\widetilde{h}(\zeta)-\widetilde{h}(\sigma)]^{2}=(1+o(1))\cdot\frac{|\mathcal{P}||\mathcal{Q}|}{\kappa(|\mathcal{P}|+|\mathcal{Q}|)}e^{-\Gamma\beta}\;.
\]
The proof of this estimate is essentially the same with the corresponding
part in the proof of Proposition \ref{p_testf}, and we omit the detail.
\end{proof}

\subsection{Proof of $H^{1}$-approximation}

In this section, we prove \eqref{e_eqpappr1} for $\widetilde{h}$
and thus complete the proof of Proposition \ref{p_eqpappr}. The storyline
is identical to the one given in Section \ref{sec7.3}. Note that
the computation carried out in \eqref{e_Dbhh} is valid without reversibility
and thus we have
\[
D_{\beta}(h_{\mathcal{P},\,\mathcal{Q}}-\widetilde{h})=D_{\beta}(\widetilde{h})-\sum_{\sigma\in\mathcal{X}}h_{\mathcal{P},\,\mathcal{Q}}(\sigma)(-\mathcal{L}_{\beta}\widetilde{h})(\sigma)\mu_{\beta}(\sigma)\;.
\]
Hence, as in \eqref{e_WTS}, it suffices to prove that 
\begin{equation}
\sum_{\sigma\in\mathcal{X}}h_{\mathcal{P},\,\mathcal{Q}}(\sigma)\sum_{\zeta\in\mathcal{X}}\mu_{\beta}(\sigma)r_{\beta}(\sigma,\,\zeta)[\widetilde{h}(\sigma)-\widetilde{h}(\zeta)]=(1+o(1))\cdot\frac{|\mathcal{P}||\mathcal{Q}|}{\kappa(|\mathcal{P}|+|\mathcal{Q}|)}e^{-\Gamma\beta}\;.\label{e_WTScyc}
\end{equation}
To this end, we first demonstrate that the equilibrium potential $h_{\mathcal{P},\,\mathcal{Q}}$
is nearly constant on each neighborhood $\mathcal{N}(\sigma)$ for
any $\sigma\in\mathcal{X}$, which is an analogue of Lemma \ref{l_eqpot}.
\begin{lem}
\label{l_eqpotcyc}For all $\sigma\in\mathcal{X}$, it holds that
$\max_{\zeta\in\mathcal{N}(\sigma)}\big|h_{\mathcal{P},\,\mathcal{Q}}(\zeta)-h_{\mathcal{P},\,\mathcal{Q}}(\sigma)\big|=o(1)$.
\end{lem}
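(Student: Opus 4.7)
The plan is to prove this bound by a direct path-based Dirichlet form estimate, exploiting the fact that the energy barrier $\Gamma$ is strictly larger than the height of any path realizing $\zeta\in\mathcal{N}(\sigma)$. Abbreviate $h:=h_{\mathcal{P},\mathcal{Q}}$ and fix $\sigma\in\mathcal{X}$. If $\mathcal{N}(\sigma)=\{\sigma\}$ there is nothing to prove, so pick any $\zeta\in\mathcal{N}(\sigma)\setminus\{\sigma\}$; by definition there is a path $\omega=(\omega_n)_{n=0}^N:\sigma\to\zeta$ with $\Phi_\omega<\Gamma$. Since the Hamiltonian is integer-valued, so is the height along edges, and hence $H(\omega_n,\omega_{n+1})\le \Gamma-1$ for every $n$. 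Without loss of generality we take $\omega$ simple, so that $N\le|\mathcal{X}|$, a constant independent of $\beta$.

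The core single-edge estimate reads as follows. From \eqref{e_Diri}, for each edge of $\omega$,
\[
\tfrac{1}{2}\,\mu_\beta(\omega_n)\,r_\beta(\omega_n,\omega_{n+1})\,[h(\omega_{n+1})-h(\omega_n)]^2 \;\le\; D_\beta(h) \;=\; \mathrm{Cap}_\beta(\mathcal{P},\mathcal{Q}).
\]
By \eqref{e_Hcdt} together with Theorem \ref{t_mu}, the rate factor is bounded from below by $c\,e^{-\beta(\Gamma-1)}$. For the upper bound on the capacity I would argue directly from the already-constructed test function: the Dirichlet principle applied to the (reversible) symmetrized dynamics, whose Dirichlet form coincides with $D_\beta$, gives $\mathrm{Cap}_\beta^s(\mathcal{P},\mathcal{Q})\le D_\beta(\widetilde h)=O(e^{-\Gamma\beta})$ by Proposition \ref{p_testfcyc}, and the sector condition for the cyclic dynamics with constant $q^2$, noted right after Lemma \ref{l_DFC}, upgrades this to $\mathrm{Cap}_\beta(\mathcal{P},\mathcal{Q})=O(e^{-\Gamma\beta})$. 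Combining these estimates yields $|h(\omega_{n+1})-h(\omega_n)|=O(e^{-\beta/2})$ per edge.

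Telescoping along $\omega$ and bounding $N\le|\mathcal{X}|$ then gives
\[
|h(\zeta)-h(\sigma)| \;\le\; \sum_{n=0}^{N-1}|h(\omega_{n+1})-h(\omega_n)| \;=\; O(e^{-\beta/2}) \;=\; o(1),
\]
uniformly in $\zeta\in\mathcal{N}(\sigma)$, which is exactly the claim.

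The only delicate point is logical placement: the sharp asymptotics of Theorem \ref{t_Cap} have not yet been proved for the cyclic dynamics at this stage (in fact, the present lemma feeds into that proof), so I must be careful to invoke only the \emph{upper} bound on $\mathrm{Cap}_\beta(\mathcal{P},\mathcal{Q})$, which as noted above is available unconditionally via $\widetilde h$ and the sector condition. With that in mind, no circularity arises, and the identical argument, run with $r_\beta$ replaced by $r_\beta^*$ (which satisfies the same height identity \eqref{e_Hcdt} and the same sector bound, and for which $\mathrm{Cap}_\beta^*=\mathrm{Cap}_\beta$), handles the adjoint equilibrium potential $h^*_{\mathcal{P},\mathcal{Q}}$ should it be needed in the sequel.
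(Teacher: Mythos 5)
Your proof is correct, and it takes a genuinely different route from the paper's. The paper disposes of this lemma by reference: it reruns the proof of Lemma \ref{l_eqpot}, replacing the reversible escape estimate of \cite{NZB} with its non-reversible generalization \cite[Propositions 3.10 and 3.18-(3)]{CNSoh}, i.e.\ it uses $\max_{\zeta\in\mathcal{N}(\sigma)}\mathbb{P}_{\zeta}[\tau_{\mathcal{X}\setminus\mathcal{N}(\sigma)}<\tau_{\sigma}]=o(1)$ together with the strong Markov property. You instead bound the oscillation of $h_{\mathcal{P},\,\mathcal{Q}}$ analytically: a single ordered edge of a sub-$\Gamma$ path contributes at least $c\,e^{-(\Gamma-1)\beta}[h(\omega_{n+1})-h(\omega_{n})]^{2}$ to $D_{\beta}(h_{\mathcal{P},\,\mathcal{Q}})=\mathrm{Cap}_{\beta}(\mathcal{P},\,\mathcal{Q})$, and the capacity is $O(e^{-\Gamma\beta})$ by the Dirichlet principle for the symmetrized chain applied to $\widetilde{h}$ (Proposition \ref{p_testfcyc}) upgraded via the sector condition; telescoping over a path of $\beta$-independent length gives $O(e^{-\beta/2})$. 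This is closer in spirit to the alternative sketched in Remark \ref{r_eqpot} than to the proof the paper actually runs. What your route buys: it is self-contained within the paper's potential theory (no import of \cite{CNSoh}), it is quantitative (an exponential rate rather than bare $o(1)$), it works verbatim for arbitrary $\sigma\in\mathcal{X}$ without worrying whether $\mathcal{N}(\sigma)$ meets $\mathcal{P}\cup\mathcal{Q}$, and you correctly identified and avoided the circularity with Theorem \ref{t_Cap} by using only the independently proved upper bound $D_{\beta}(\widetilde{h})=O(e^{-\Gamma\beta})$. The one caveat is that your capacity upper bound leans on the sector condition with constant $q^{2}$, which the paper asserts after Lemma \ref{l_DFC} but deliberately does not prove; a fully self-contained version of your argument would need to verify it (decompose $\mathcal{L}_{\beta}$ into single-site cyclic generators on the orbits $\mathfrak{O}_{x}(\sigma)$, each of which satisfies the sector condition with a constant of order $q^{2}$). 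The paper's route, by contrast, is one line long given the external reference and needs no sector condition at all.
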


Note that the argument given in Lemma \ref{l_eqpot} relies on the
reversibility only because of the cited result \cite[Theorem 3.2-(iii)]{NZB}.
Thus, in this lemma, it suffices to replace this reference with \cite[Propositions 3.10 and 3.18-(3)]{CNSoh}
which is a non-reversible generalization of \cite[Theorem 3.2-(iii)]{NZB}.
In addition, we emphasize that the proof outlined in Remark \ref{r_eqpot}
can also be applied to this lemma as well. 

We now start to estimate the left-hand side of \eqref{e_WTScyc}.
As done in Section \ref{sec7.3}, write 
\begin{equation}
\phi(\sigma):=\sum_{\zeta\in\mathcal{X}}\mu_{\beta}(\sigma)r_{\beta}(\sigma,\,\zeta)[\widetilde{h}(\sigma)-\widetilde{h}(\zeta)]\;,\label{e_psi}
\end{equation}
so that we can rewrite \eqref{e_WTScyc} as
\begin{equation}
\sum_{\sigma\in\mathcal{X}}h_{\mathcal{P},\,\mathcal{Q}}(\sigma)\phi(\sigma)=(1+o(1))\cdot\frac{|\mathcal{P}||\mathcal{Q}|}{\kappa(|\mathcal{P}|+|\mathcal{Q}|)}e^{-\Gamma\beta}\;.\label{e_WTScyc2}
\end{equation}
The following lemma can be proved in the same manner with Lemma \ref{l_fl1};
it suffices to use Lemma \ref{l_Nhatbdrycyc} instead of Lemma \ref{l_Nhatbdry}.
\begin{lem}
\label{l_flcyc1}For all $\sigma\in\mathcal{X}\setminus\widehat{\mathcal{N}}(\mathcal{S})$,
we have $\phi(\sigma)=o(e^{-\Gamma\beta})$.
\end{lem}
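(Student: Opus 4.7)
The plan is to mirror the argument of Lemma \ref{l_fl1}, replacing the role of Lemma \ref{l_Nhatbdry} with its cyclic counterpart, Lemma \ref{l_Nhatbdrycyc}. First, I would observe that by Definition \ref{d_testfcyc}-(3) the test function $\widetilde{h}$ is constant on $\mathcal{X} \setminus (\overline{\mathcal{E}} \cup \overline{\mathcal{B}})$, which equals $\mathcal{X} \setminus \widehat{\mathcal{N}}(\mathcal{S})$ by Proposition \ref{p_typpropcyc}-(4). Consequently, in the defining sum \eqref{e_psi} for $\phi(\sigma)$, every $\zeta \sim \sigma$ with $\zeta \notin \widehat{\mathcal{N}}(\mathcal{S})$ contributes $0$. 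Hence only pairs with $\zeta \in \widehat{\mathcal{N}}(\mathcal{S})$ remain, and it suffices to estimate those.

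Next, I would apply Lemma \ref{l_Nhatbdrycyc} with the roles swapped: taking $\mathbf{s} \in \mathcal{S}$ so that $\widehat{\mathcal{N}}(\mathcal{S}) = \widehat{\mathcal{N}}(\mathbf{s})$ by Proposition \ref{p_nhd}, the choice $\zeta_{1} = \zeta \in \widehat{\mathcal{N}}(\mathbf{s})$ and $\zeta_{2} = \sigma \notin \widehat{\mathcal{N}}(\mathbf{s})$ with $\zeta_{1} \sim \zeta_{2}$ yields $H(\sigma, \zeta) = H(\zeta, \sigma) > \Gamma$. Since the Hamiltonian and heights take integer values, this forces $H(\sigma, \zeta) \ge \Gamma + 1$. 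Combining with identity \eqref{e_Hcdt} and Theorem \ref{t_mu} gives
\[
\mu_{\beta}(\sigma) r_{\beta}(\sigma, \zeta) = Z_{\beta}^{-1} e^{-\beta H(\sigma, \zeta)} = O(e^{-(\Gamma + 1)\beta})
\]
whenever $r_{\beta}(\sigma, \zeta) > 0$.

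Finally, the test function $\widetilde{h}$ is uniformly bounded in $\beta$ (its values lie in a bounded range determined by $\mathfrak{b}/\kappa$, $\mathfrak{e}/\kappa$, and the constants $|\mathcal{P}|/(|\mathcal{P}| + |\mathcal{Q}|)$), so $|\widetilde{h}(\sigma) - \widetilde{h}(\zeta)|$ is bounded by an absolute constant. Since the number of $\zeta \sim \sigma$ is at most $|\Lambda| \cdot (q - 1)$, a uniform bound independent of $\beta$, summing the above estimate over all relevant $\zeta$ yields
\[
|\phi(\sigma)| \le C e^{-(\Gamma + 1)\beta} = o(e^{-\Gamma\beta}),
\]
which is the desired conclusion. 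I do not anticipate any genuine obstacle here: the argument is the direct cyclic transcription of Lemma \ref{l_fl1}, and Lemma \ref{l_Nhatbdrycyc} has been tailored precisely to supply the height bound that non-reversibility would otherwise complicate.
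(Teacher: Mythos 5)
Your proposal is correct and follows exactly the route the paper intends: the paper's own proof of this lemma simply states that it is proved in the same manner as Lemma \ref{l_fl1}, with Lemma \ref{l_Nhatbdrycyc} replacing Lemma \ref{l_Nhatbdry}, which is precisely your argument. Your use of \eqref{e_Hcdt} to convert the height bound $H(\sigma,\zeta)\ge\Gamma+1$ into $\mu_{\beta}(\sigma)r_{\beta}(\sigma,\zeta)=O(e^{-(\Gamma+1)\beta})$ is the right adaptation to the non-reversible setting.
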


Next, we decompose $\phi=\phi_{1}+\phi_{2}$ where for $\sigma\in\widehat{\mathcal{N}}(\mathcal{S})$,
\begin{align*}
\phi_{1}(\sigma) & :=\sum_{\zeta\in\widehat{\mathcal{N}}(\mathcal{S})}\mu_{\beta}(\sigma)r_{\beta}(\sigma,\,\zeta)[\widetilde{h}(\sigma)-\widetilde{h}(\zeta)]\;,\\
\phi_{2}(\sigma) & :=\sum_{\zeta\notin\widehat{\mathcal{N}}(\mathcal{S})}\mu_{\beta}(\sigma)r_{\beta}(\sigma,\,\zeta)[\widetilde{h}(\sigma)-\widetilde{h}(\zeta)]\;.
\end{align*}
Next, we can show that $\phi_{2}$ is negligible as in Lemma \ref{l_fl2}. 
\begin{lem}
\label{l_flcyc2}For all $\sigma\in\widehat{\mathcal{N}}(\mathcal{S})$,
we have $\phi_{2}(\sigma)=o(e^{-\Gamma\beta})$.
\end{lem}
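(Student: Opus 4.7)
The plan is to mimic the proof of Lemma~\ref{l_fl2} in the reversible case, replacing the two inputs that were specific to the MH dynamics (the detailed-balance identity \eqref{e_cdtMH} and Lemma~\ref{l_Nhatbdry}) with their non-reversible counterparts, namely the generic height identity \eqref{e_Hcdt} and Lemma~\ref{l_Nhatbdrycyc}. Since the test function $\widetilde{h}$ constructed in Definition~\ref{d_testfcyc} is bounded (its values are convex combinations of $0$, $\tfrac{|\mathcal{P}|}{|\mathcal{P}|+|\mathcal{Q}|}$ and $1$, plus small perturbations through $\overline{\mathfrak{f}}^{a,b}\in[0,1]$), there is a universal constant $C$ with $|\widetilde{h}(\sigma)-\widetilde{h}(\zeta)|\le C$ for all $\sigma,\zeta$; the whole estimate will then be driven purely by the Gibbs weight of the saddle edge.

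First I would fix $\sigma\in\widehat{\mathcal{N}}(\mathcal{S})$, so that $\sigma\in\widehat{\mathcal{N}}(\mathbf{s})$ for some $\mathbf{s}\in\mathcal{S}$. For every $\zeta\notin\widehat{\mathcal{N}}(\mathcal{S})$ with $r_{\beta}(\sigma,\zeta)>0$, we have $\sigma\sim\zeta$ and $\zeta\notin\widehat{\mathcal{N}}(\mathbf{s})$, so Lemma~\ref{l_Nhatbdrycyc} gives $H(\sigma,\zeta)>\Gamma$. Since $H(\sigma,\zeta)$ is integer-valued (being a maximum of integer-valued $H(\cdot)$'s, see \eqref{e_Hsigmazeta}), this strict inequality upgrades to $H(\sigma,\zeta)\ge\Gamma+1$. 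Combining with the identity \eqref{e_Hcdt} and the fact that $Z_{\beta}=q+o(1)$ by Theorem~\ref{t_mu}, I obtain
\begin{equation*}
\mu_{\beta}(\sigma)\,r_{\beta}(\sigma,\zeta)=Z_{\beta}^{-1}e^{-\beta H(\sigma,\zeta)}\le Z_{\beta}^{-1}e^{-\beta(\Gamma+1)}=O(e^{-\beta(\Gamma+1)})\;.
\end{equation*}

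The sum defining $\phi_{2}(\sigma)$ contains at most $|\Lambda|(q-1)$ terms (one per possible rotation $\tau_{x}^{i}\sigma$, $x\in\Lambda$, $i\in\llbracket1,q-1\rrbracket$), a bound that is independent of $\beta$. Hence
\begin{equation*}
|\phi_{2}(\sigma)|\le C\cdot|\Lambda|(q-1)\cdot O(e^{-\beta(\Gamma+1)})=O(e^{-\beta(\Gamma+1)})=o(e^{-\beta\Gamma})\;,
\end{equation*}
which is the desired bound. I do not foresee any genuine obstacle; the only point requiring a mild care is the integrality of the height function which upgrades the strict inequality $H(\sigma,\zeta)>\Gamma$ coming from Lemma~\ref{l_Nhatbdrycyc} into a quantitative gain of a factor $e^{-\beta}$. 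Everything else is a verbatim repetition of Lemma~\ref{l_fl2}, which again illustrates the point of Remark~\ref{r_analogue} that the non-reversible case proceeds without additional difficulty under our $H^{1}$-approximation strategy.
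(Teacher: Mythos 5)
Your argument is correct and is exactly the proof the paper intends: it repeats the proof of Lemma \ref{l_fl2}, replacing the MH-specific detailed-balance identity \eqref{e_cdtMH} by the height identity \eqref{e_Hcdt} and Lemma \ref{l_Nhatbdry} by Lemma \ref{l_Nhatbdrycyc}, with the integrality of $H(\sigma,\zeta)$ upgrading $H(\sigma,\zeta)>\Gamma$ to $H(\sigma,\zeta)\ge\Gamma+1$. No gap; this matches the paper's route.
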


This can be proved in the same manner with Lemma \ref{l_fl2}. In
addition to the replacement of Lemma \ref{l_Nhatbdry} with Lemma
\ref{l_Nhatbdrycyc}, we need to replace Proposition \ref{p_typprop}-(4)
with Proposition \ref{p_typpropcyc}-(4). Summing up, instead of \eqref{e_WTScyc2},
it suffices to show that 
\begin{equation}
\sum_{\sigma\in\widehat{\mathcal{N}}(\mathcal{S})}h_{\mathcal{P},\,\mathcal{Q}}(\sigma)\phi_{1}(\sigma)=(1+o(1))\cdot\frac{|\mathcal{P}||\mathcal{Q}|}{\kappa(|\mathcal{P}|+|\mathcal{Q}|)}e^{-\Gamma\beta}\;.\label{e_WTScyc3}
\end{equation}
Next, we investigate $\phi_{1}$ in several lemmas. Let us first consider
$\phi_{1}$ on the bulk typical configurations. 
\begin{lem}
\label{l_flcyc3}For each $v\in\llbracket2,\,L-3\rrbracket$ and $a,\,b\in S$,
it holds that \textbf{$\phi_{1}\equiv0$} on $\overline{\mathcal{Q}}_{v}^{a,\,b}\setminus(\overline{\mathcal{R}}_{v}^{a,\,b}\cup\overline{\mathcal{R}}_{v+1}^{a,\,b})$. 
\end{lem}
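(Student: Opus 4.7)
The plan is to decompose the set $\overline{\mathcal{Q}}_v^{a,b}\setminus(\overline{\mathcal{R}}_v^{a,b}\cup\overline{\mathcal{R}}_{v+1}^{a,b})$ into two disjoint parts and verify $\phi_1(\sigma)=0$ on each by direct enumeration, mirroring Lemma \ref{l_fl3}. Specifically, I split into \textbf{Type (a)}: pre-canonical configurations $\sigma=\xi_{\ell,v;k,h}^{a,b,\pm}$ (and their transposes when $K=L$) with $h\in\llbracket 2,K-2\rrbracket$, so that $H(\sigma)=\Gamma-2$; and \textbf{Type (b)}: orbit intermediates $\sigma\in\mathfrak{O}(\xi,\xi')\setminus\{\xi,\xi'\}$, where $\xi,\xi'\in\mathcal{Q}_v^{a,b}$ differ at a single site $x_0$, so $\sigma$ has a single anomalous spin $c\in S\setminus\{a,b\}$ at $x_0$ and satisfies $H(\sigma)=\Gamma$. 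The boundary cases $h\in\{1,K-1\}$ lie in $\overline{\mathcal{R}}_v^{a,b}\cup\overline{\mathcal{R}}_{v+1}^{a,b}$ (since the corresponding orbits contain a regular configuration) and are excluded from the hypothesis, so this partition is exhaustive.

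For Type (a), I will first carry out a site-by-site inspection of the local neighborhood of each $x\in\Lambda$ in $\sigma$ and show that $\tau_x\sigma\in\widehat{\mathcal{N}}(\mathcal{S})$ only for the four canonical sites $x\in\{(k+h,\ell+v),(k-1,\ell+v),(k+h-1,\ell+v),(k,\ell+v)\}$: at every other $x$, either $H(\tau_x\sigma)>\Gamma$ or $\tau_x\sigma$ fails to fit the typical structure, so in either case $\tau_x\sigma\notin\widehat{\mathcal{N}}(\mathcal{S})$ by Proposition \ref{p_typpropcyc}-(4) and the term drops from $\phi_1$. At each of the four contributing sites $x$ has exactly two $a$-neighbors and two $b$-neighbors, so the orbit maximum $\max_{c\in S}H(\sigma^{x,c})$ is attained at any third spin $c\in S\setminus\{a,b\}$ and equals $\Gamma$; by \eqref{e_Hcdt} this yields $\mu_\beta(\sigma)r_\beta(\sigma,\tau_x\sigma)=Z_\beta^{-1}e^{-\beta\Gamma}$. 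By \eqref{e_testfcyc2} the value $\widetilde{h}(\tau_x\sigma)$ coincides with that of the canonical target in $\mathcal{Q}_v^{a,b}$ of protuberance length $h\pm 1$; plugging in the explicit interpolation formula on $\overline{\mathcal{B}}^{a,b}$, the four differences $\widetilde{h}(\sigma)-\widetilde{h}(\tau_x\sigma)$ equal $\pm\mathfrak{b}/[\kappa(K-2)(L-4)]$, with two extensions contributing $+$ and two retractions contributing $-$; summing yields $\phi_1(\sigma)=0$.

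For Type (b), write $\sigma=\tau_{x_0}^j\xi$ with $j\in\llbracket 1,q-1\rrbracket\setminus\{r\}$, where $r\equiv b-a\pmod{q}$. The orbit transition $\sigma\to\tau_{x_0}\sigma$ stays inside $\mathfrak{O}(\xi,\xi')$ and either lies on the same $\lessdot$-half of the orbit as $\sigma$ or coincides with one of the endpoints $\xi,\xi'$; in every sub-case \eqref{e_testfcyc2} enforces $\widetilde{h}(\tau_{x_0}\sigma)=\widetilde{h}(\sigma)$, so this term contributes $0$. For any other site $x\ne x_0$, the configuration $\tau_x\sigma$ carries two defects (one at $x$ and one at $x_0$), and since $H(\sigma)$ already saturates the budget at $\Gamma$, a brief inspection of the neighborhood of $x$ shows that the local contribution at $x$ increases by at least $2$, so $H(\tau_x\sigma)>\Gamma$ and $\tau_x\sigma\notin\widehat{\mathcal{N}}(\mathcal{S})$. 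Hence $\phi_1(\sigma)$ reduces to the orbit term alone and vanishes.

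The principal obstacle is the exhaustive local enumeration in Type (a): for each possible location of $x$ relative to the bulk and the protuberance (bulk interior, bulk boundary, protuberance interior, protuberance endpoints, row above the protuberance, and distant sites) one must verify that, unless $x$ is one of the four canonical sites, either $H(\tau_x\sigma)>\Gamma$ or $\tau_x\sigma$ is atypical and hence lies outside $\widehat{\mathcal{N}}(\mathcal{S})$. This is tedious but conceptually parallel to the MH analysis in Lemma \ref{l_fl3}; the simplifying feature of the cyclic rule $H(\sigma,\tau_x\sigma)=\max_{c\in S}H(\sigma^{x,c})$ is that it automatically incorporates the worst-case third-spin option $c\in S\setminus\{a,b\}$, which suffices to rule out most non-canonical transitions by energy alone.
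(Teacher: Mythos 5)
Your proposal is correct and follows essentially the same route as the paper: the paper also splits each orbit $\mathfrak{O}(\xi_{\ell,\,v;\,k,\,h}^{a,\,b,\,\pm},\,\xi_{\ell,\,v;\,k,\,h\pm1}^{a,\,b,\,\pm})$ into the two canonical endpoints (your Type (a), where the four admissible transitions contribute $\pm\frac{\mathfrak{b}}{\kappa(K-2)(L-4)}\cdot\frac{e^{-\Gamma\beta}}{Z_{\beta}}$ and cancel in pairs) and the orbit intermediates carrying a third spin (your Type (b), where $\widetilde{h}$ is constant along each $\lessdot$-half by \eqref{e_testfcyc2} and all off-orbit rotations exit $\widehat{\mathcal{N}}(\mathcal{S})$). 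The exclusion of $h\in\{1,\,K-1\}$ via $\overline{\mathcal{R}}_{v}^{a,\,b}\cup\overline{\mathcal{R}}_{v+1}^{a,\,b}$ matches the paper's treatment as well.
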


\begin{proof}
We only prove that $\phi_{1}(\sigma)=0$ for all $\sigma\in\mathfrak{O}(\xi_{\ell,\,v;\,k,\,h}^{a,\,b,\,+},\,\xi_{\ell,\,v;\,k,\,h+1}^{a,\,b,\,+})$
with $v\in\llbracket2,\,L-3\rrbracket$ and $h\in\llbracket1,\,K-2\rrbracket$,
since the other orbits can be handled in the same way. Write $\xi_{\ell,\,v;\,k,\,h+1}^{a,\,b,\,+}=\tau_{x}^{m}\xi_{\ell,\,v;\,k,\,h}^{a,\,b,\,+}$
for some $x\in\Lambda$ and $m\in\llbracket1,\,q-1\rrbracket$\footnote{Actually, $m=b-a$ or $m=q+b-a$.},
so that we can write 
\[
\sigma=\tau_{x}^{i}\xi_{\ell,\,v;\,k,\,h}^{a,\,b,\,+}\;\;\;\;\text{for some }i\in\llbracket0,\,q-1\rrbracket\;.
\]

\noindent \textbf{(Case 1:} $i\neq0,\,m$\textbf{)} We immediately
have $\widetilde{h}(\tau_{x}\sigma)=\widetilde{h}(\sigma)$ by definition
and thus 
\[
\phi_{1}(\sigma)=\mu_{\beta}(\sigma)r_{\beta}(\sigma,\,\tau_{x}\sigma)[\widetilde{h}(\sigma)-\widetilde{h}(\tau_{x}\sigma)]=0\;,
\]
where $\phi_{1}(\sigma)$ consists of only one term since we have
$\tau_{y}\sigma\notin\widehat{\mathcal{N}}(\mathcal{S})$ for all
$y\in\Lambda\setminus x$.

\noindent \textbf{(Case 2: $\sigma=\xi_{\ell,\,v;\,k,\,h}^{a,\,b,\,+}$,
so that $h\ne1$ since $\sigma\notin\overline{\mathcal{R}}_{v}^{a,\,b}$)}
In this case, there are four updates (that does not exceed the energy
level $\Gamma$) from $\sigma$ along the orbits $\mathfrak{O}(\xi_{\ell,\,v;\,k,\,h}^{a,\,b,\,+},\,\xi_{\ell,\,v;\,k,\,h+1}^{a,\,b,\,+})$,
$\mathfrak{O}(\xi_{\ell,\,v;\,k,\,h}^{a,\,b,\,+},\,\xi_{\ell,\,v;\,k-1,\,h+1}^{a,\,b,\,+})$,
$\mathfrak{O}(\xi_{\ell,\,v;\,k,\,h-1}^{a,\,b,\,+},\,\xi_{\ell,\,v;\,k,\,h}^{a,\,b,\,+})$
and $\mathfrak{O}(\xi_{\ell,\,v;\,k+1,\,h-1}^{a,\,b,\,+},\,\xi_{\ell,\,v;\,k,\,h}^{a,\,b,\,+})$.
By a direct computation with the definition of $\widetilde{h}$, we
can check that for all these updates, which we denote as $\tau_{x_{1}},\,\tau_{x_{2}},\,\tau_{x_{3}},\,\tau_{x_{4}}$
respectively, we have 
\begin{equation}
\mu_{\beta}(\sigma)r_{\beta}(\sigma,\,\tau_{x_{i}}\sigma)[\widetilde{h}(\sigma)-\widetilde{h}(\tau_{x_{i}}\sigma)]=\pm\frac{e^{-\Gamma\beta}}{Z_{\beta}}\cdot\frac{\mathfrak{b}}{\kappa(K-2)(L-4)}\;\;\;\;;\;i\in\llbracket1,\,4\rrbracket\;,\label{e_flcyc3}
\end{equation}
where the sign is plus for $i\in\{1,\,2\}$ and minus for $i\in\{3,\,4\}$.
Therefore,
\begin{align*}
\phi_{1}(\sigma) & =\sum_{i=1}^{4}\mu_{\beta}(\sigma)r_{\beta}(\sigma,\,\tau_{x_{i}}\sigma)[\widetilde{h}(\sigma)-\widetilde{h}(\tau_{x_{i}}\sigma)]\\
 & =\frac{e^{-\Gamma\beta}}{Z_{\beta}}\cdot\frac{\mathfrak{b}}{\kappa(K-2)(L-4)}\cdot(1+1-1-1)=0\;.
\end{align*}

\noindent \textbf{(Case 3: $\sigma=\xi_{\ell,\,v;\,k,\,h+1}^{a,\,b,\,+}$,
so that $h\ne K-2$ since $\sigma\notin\overline{\mathcal{R}}_{v+1}^{a,\,b}$)}
This case can be handled in the same manner with \textbf{(Case 2)}.
\end{proof}
Next, we handle $\overline{\mathcal{R}}_{v}^{a,\,b}$ for $a,\,b\in S$
and $v\in\llbracket2,\,L-2\rrbracket$.
\begin{lem}
\label{l_flcyc4}For each $v\in\llbracket2,\,L-2\rrbracket$ and $a,\,b\in S$,
it holds that
\begin{equation}
\sum_{\sigma\in\overline{\mathcal{R}}_{v}^{a,b}}\phi_{1}(\sigma)=0\;.\label{e_flcyc4}
\end{equation}
Moreover, there exists a constant $C>0$ such that $|\phi_{1}(\sigma)|\le Ce^{-\Gamma\beta}$
for all $\sigma\in\overline{\mathcal{R}}_{v}^{a,\,b}$.
\end{lem}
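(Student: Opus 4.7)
The uniform bound $|\phi_1(\sigma)|\le Ce^{-\Gamma\beta}$ is immediate: for each $\sigma\in\overline{\mathcal{R}}_v^{a,b}$ at most $KL$ cyclic neighbors $\tau_y\sigma$ contribute, and each contribution is bounded by $\mu_\beta(\sigma)r_\beta(\sigma,\tau_y\sigma)\cdot|\widetilde{h}(\sigma)-\widetilde{h}(\tau_y\sigma)|\le Z_\beta^{-1}e^{-\Gamma\beta}$ via \eqref{e_Hcdt} (since $\tau_y\sigma\in\widehat{\mathcal{N}}(\mathcal{S})$ forces the edge height to be at most $\Gamma$), together with $\widetilde{h}\in[0,1]$ and $Z_\beta\to q$.

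The heart of the proof is the identity $\sum_{\sigma\in\overline{\mathcal{R}}_v^{a,b}}\phi_1(\sigma)=0$. The key observation from Definition \ref{d_testfcyc}-(2) is that $\widetilde{h}$ is \emph{constant} on $\overline{\mathcal{R}}_v^{a,b}$, equal to $c_v:=\kappa^{-1}\bigl[\tfrac{L-2-v}{L-4}\mathfrak{b}+\mathfrak{e}\bigr]$. Hence all cyclic transitions with both endpoints in $\overline{\mathcal{R}}_v^{a,b}$ contribute zero to the sum. In particular, for a base configuration $\sigma=\xi_{\ell,v}^{a,b}\in\mathcal{R}_v^{a,b}$, \emph{every} cyclic neighbor $\tau_y\sigma$ lies in the orbit cluster $\overline{\sigma}\subset\overline{\mathcal{R}}_v^{a,b}$, so $\phi_1(\xi_{\ell,v}^{a,b})=0$ pointwise. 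The only surviving contributions come from orbit extensions $\sigma=\tau_x^i\xi_{\ell,v}^{a,b}$ with $i\ne0$, via off-orbit updates $\tau_z\sigma$ for $z\ne x$; such $\tau_z\sigma$ lands in $\overline{\mathcal{Q}}_v^{a,b}\cup\overline{\mathcal{Q}}_{v-1}^{a,b}$ (or, for $v\in\{2,L-2\}$, in $\overline{\mathcal{E}}^a\cup\overline{\mathcal{E}}^b$) whenever it is in $\widehat{\mathcal{N}}(\mathcal{S})$ at all.

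My plan is to evaluate this boundary flux \emph{orbit-by-orbit}. A direct substitution $h=1$ (resp.\ $h=K-1$) in the interpolation formula of Definition \ref{d_testfcyc}-(2) reveals $\widetilde{h}(\xi_{\ell,v;k,1}^{a,b,\pm})=c_v$ and $\widetilde{h}(\xi_{\ell,v;k,K-1}^{a,b,\pm})=c_{v+1}$, so that the immediate ``$\mathcal{Q}$-side'' endpoints of each outgoing orbit carry exactly the levels of $\overline{\mathcal{R}}_v^{a,b}$ and $\overline{\mathcal{R}}_{v+1}^{a,b}$ respectively. The remaining orbit extensions inside $\overline{\mathcal{Q}}$ receive the asymmetric values dictated by the orbit ordering ``$\lessdot$'' in \eqref{e_testfcyc2}. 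Using that the rotation rate $\mu_\beta\cdot r_\beta$ is constant along each single-site orbit (by \eqref{e_OrbitH} and \eqref{e_Hcdt}), the orbit-summed contribution along each outgoing orbit collapses to an effective MH-type quantity, in direct analogy with the pointwise-vanishing computations of Lemma \ref{l_fl3} (for $v\in\llbracket3,L-3\rrbracket$) and Lemma \ref{l_fl4} (for $v\in\{2,L-2\}$). The edge boundary terms at $v\in\{2,L-2\}$ are handled exactly as in \eqref{e_fl4.1}--\eqref{e_fl4.2} via the identity $|\overline{\mathscr{V}}^{a,b}|\cdot\overline{\mathfrak{cap}}^{a,b}(\overline{\mathscr{A}}^{a,b},\overline{\mathscr{B}}^{a,b})=\mathfrak{e}_0^{-1}$ from \eqref{e_e0barabdef}--\eqref{e_e0cycdef} and the definition \eqref{e_edef} of $\mathfrak{e}$.

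The principal obstacle is this careful orbit-wise bookkeeping. In contrast to the MH case, where $\phi_1$ vanishes pointwise on $\mathcal{R}_v^{a,b}$, non-reversibility of the cyclic dynamics forces individual $\phi_1(\sigma)$ to be of order $e^{-\Gamma\beta}$ on orbit extensions, because $\widetilde{h}$ along each outgoing orbit is split asymmetrically by $\lessdot$. Recovering the cancellation requires summing over the full orbit: the cyclic invariance of $\mu_\beta$ from Proposition \ref{p_muinv} acts as a local balance ensuring that the ``forward'' and ``backward'' halves of each orbit recombine into a symmetric MH-type expression, which then cancels by the same linear identity used in the MH case.
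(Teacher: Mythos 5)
Your proposal is correct and follows essentially the same route as the paper: the uniform bound is immediate from \eqref{e_Hcdt}, the constancy of $\widetilde{h}$ on $\overline{\mathcal{R}}_{v}^{a,\,b}$ reduces everything to the flux into $\overline{\mathcal{Q}}_{v-1}^{a,\,b}\cup\overline{\mathcal{Q}}_{v}^{a,\,b}$ (resp.\ into the edge sets when $v\in\{2,\,L-2\}$), this flux cancels only after summation rather than pointwise, and the boundary cases $v\in\{2,\,L-2\}$ are dispatched via the capacity identity exactly as in Lemma \ref{l_fl4}. One small correction of emphasis: the nonzero values of $\phi_{1}$ on $\overline{\mathcal{R}}_{v}^{a,\,b}$ sit only at the two-spin configurations $\xi_{\ell,\,v;\,k,\,1}^{a,\,b,\,\pm}$ and $\xi_{\ell,\,v-1;\,k,\,K-1}^{a,\,b,\,+},\,\xi_{\ell+1,\,v-1;\,k,\,K-1}^{a,\,b,\,-}$ --- the intermediate third-spin orbit configurations contribute zero because their unique admissible move continues the orbit at equal $\widetilde{h}$ --- so the cancellation is between the size-$1$ and size-$(K-1)$ protuberance families (a consequence of $\mu_{\beta}(\sigma)r_{\beta}(\sigma,\,\zeta)=Z_{\beta}^{-1}e^{-\Gamma\beta}$ on all these edges and the linearity of $\widetilde{h}$ in $h$), not between forward and backward halves of a single orbit or any invariance property of $\mu_{\beta}$.
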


\begin{proof}
Let us first assume that $v\neq2,\,L-2$. Recall that $\overline{\mathcal{R}}_{v}^{a,\,b}=\bigcup_{\sigma\in\mathcal{R}_{v}^{a,b}}\overline{\sigma}$.
By the definition of $\widetilde{h}$, it holds that the function
$\phi_{1}$ vanishes outside 
\[
\bigcup_{\ell\in\mathbb{T}_{L}}\bigcup_{k\in\mathbb{T}_{K}}\{\xi_{\ell,\,v;\,k,\,1}^{a,\,b,\,+},\,\xi_{\ell,\,v;\,k,\,1}^{a,\,b,\,-},\,\xi_{\ell,\,v-1;\,k,\,K-1}^{a,\,b,\,+},\,\xi_{\ell+1,\,v-1;\,k,\,K-1}^{a,\,b,\,-}\}\;.
\]
Thus, to prove \eqref{e_flcyc4}, it suffices to verify that for all
$\ell\in\mathbb{T}_{L}$ and $k\in\mathbb{T}_{K}$,
\begin{equation}
\phi_{1}(\xi_{\ell,\,v;\,k,\,1}^{a,\,b,\,+})+\phi_{1}(\xi_{\ell,\,v;\,k,\,1}^{a,\,b,\,-})+\phi_{1}(\xi_{\ell,\,v-1;\,k,\,K-1}^{a,\,b,\,+})+\phi_{1}(\xi_{\ell+1,\,v-1;\,k,\,K-1}^{a,\,b,\,-})=0\;.\label{e_flcyc4.2}
\end{equation}
By definition of $\widetilde{h}$ on the bulk typical configurations,
we have
\begin{align*}
\phi_{1}(\xi_{\ell,\,v;\,k,\,1}^{a,\,b,\,+})=\phi_{1}(\xi_{\ell,\,v;\,k,\,1}^{a,\,b,\,-}) & =\frac{e^{-\Gamma\beta}}{Z_{\beta}}\cdot\frac{\mathfrak{b}}{\kappa(K-2)(L-4)}\;,\\
\phi_{1}(\xi_{\ell,\,v-1;\,k,\,K-1}^{a,\,b,\,+})=\phi_{1}(\xi_{\ell+1,\,v-1;\,k,\,K-1}^{a,\,b,\,-}) & =-\frac{e^{-\Gamma\beta}}{Z_{\beta}}\cdot\frac{\mathfrak{b}}{\kappa(K-2)(L-4)}\;.
\end{align*}
Thus, \eqref{e_flcyc4.2} is a direct consequence of this computation.
Moreover, we can also verify the second statement from this computation
as well via Theorem \ref{t_mu} and \eqref{e_bdef}.

The proof of cases $v=2,\,L-2$ is a notational modification of that
of Lemma \ref{l_fl4}, and thus we omit the details.
\end{proof}
Now, we consider the function $\phi_{1}$ at the edge typical configurations.
\begin{lem}
\label{l_flcyc5} We have \textbf{$\phi_{1}(\sigma)=0$} for
\begin{enumerate}
\item $\sigma\in(\overline{\mathcal{Z}}^{a,\,b}\cup\overline{\mathcal{Z}}^{a,\,c}\cup\overline{\mathcal{Z}}^{c,\,b})\setminus\mathcal{N}(\mathcal{S})$
(cf. Notation \ref{n_abc}) and
\item $\sigma\in\overline{\mathcal{D}}^{s}\setminus\mathcal{N}(\mathbf{s})$
with $s\in S$.
\end{enumerate}
\end{lem}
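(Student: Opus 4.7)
The plan is to adapt the proofs of Lemmas \ref{l_fl5} and \ref{l_fl6} to the cyclic setting, accommodating the orbit structure. For case (1), since $\widetilde{h}$ on $\overline{\mathcal{Z}}^{a,b}$ is an affine function of the auxiliary equilibrium potential $\overline{\mathfrak{f}}^{a,b}$ (cf. \eqref{e_testfcyc1}), I expect $\phi_1(\sigma)$ to reduce to a constant multiple of $(\overline{\mathfrak{L}}^{a,b}\overline{\mathfrak{f}}^{a,b})(\sigma)$, which vanishes on the interior of the auxiliary graph. For case (2), $\widetilde{h}$ is constant on each dead-end set $\overline{\mathcal{D}}^s$, so the statement $\phi_1(\sigma)=0$ amounts to showing that every neighbor of $\sigma \in \overline{\mathcal{D}}^s \setminus \mathcal{N}(\mathbf{s})$ inside $\widehat{\mathcal{N}}(\mathcal{S})$ lies again in $\overline{\mathcal{D}}^s$.

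For case (1), I would fix $\sigma \in \overline{\mathcal{Z}}_\ell^{a,b}$ with $\sigma \notin \mathcal{N}(\mathcal{S})$; the cases $\sigma \in \overline{\mathcal{Z}}^{a,c}$ and $\sigma \in \overline{\mathcal{Z}}^{c,b}$ proceed identically up to the affine constant, as in the MH proof. The first step is to identify the neighbors $\zeta \in \widehat{\mathcal{N}}(\mathcal{S})$ contributing to $\phi_1(\sigma)$: because $\sigma \notin \mathcal{N}(\mathbf{a})$ and the strip structure from Proposition \ref{p_Zabtree} is preserved under single-site cyclic updates that do not cross height $\Gamma+1$, every such $\zeta$ must remain in $\overline{\mathcal{Z}}_\ell^{a,b}$. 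Moreover each contributing transition traverses height $\Gamma$ by \eqref{e_Hcdt} together with \eqref{e_OrbitH}, yielding $\mu_\beta(\sigma)\, r_\beta(\sigma,\zeta)=Z_\beta^{-1}e^{-\Gamma\beta}$. A direct substitution using \eqref{e_testfcyc1} then gives
\[
\phi_1(\sigma) \;=\; \frac{e^{-\Gamma\beta}}{Z_\beta}\cdot\frac{\mathfrak{e}}{\kappa}\cdot(\overline{\mathfrak{L}}^{a,b}\overline{\mathfrak{f}}^{a,b})(\sigma).
\]
Since $\sigma \notin \mathcal{N}(\mathbf{a})$ ensures $\sigma \notin \overline{\mathscr{A}}^{a,b}$ and the subcase $\sigma \in \overline{\mathscr{B}}^{a,b} \subseteq \overline{\mathcal{R}}_2^{a,b}$ is already covered by Lemma \ref{l_flcyc4}, the standard property of equilibrium potentials yields $(\overline{\mathfrak{L}}^{a,b}\overline{\mathfrak{f}}^{a,b})(\sigma)=0$, hence $\phi_1(\sigma)=0$.

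For case (2), the argument mirrors Lemma \ref{l_fl6}. Since $\widetilde{h}$ is constant on $\overline{\mathcal{D}}^s$---with value depending only on whether $\mathbf{s}$ lies in $\mathcal{P}$, $\mathcal{Q}$, or neither---the claim reduces to showing that every $\zeta \in \widehat{\mathcal{N}}(\mathcal{S})$ with $\zeta \sim \sigma$ belongs to $\overline{\mathcal{D}}^s$. Recalling the definition $\overline{\mathcal{D}}^s = \widehat{\mathcal{N}}(\mathbf{s};\,\overline{\mathcal{Z}}^s)$ in \eqref{e_Dbaradef}, I would concatenate the edge $(\sigma,\zeta)$---or a reversing detour as in the proof of Lemma \ref{l_Phiissym}---to a $\Gamma$-path $\mathbf{s} \to \sigma$ in $(\overline{\mathcal{Z}}^s)^c$; this produces a $\Gamma$-path $\mathbf{s} \to \zeta$ in $(\overline{\mathcal{Z}}^s)^c$, placing $\zeta$ in $\overline{\mathcal{D}}^s$ as long as $\zeta \notin \overline{\mathcal{Z}}^s$. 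It therefore suffices to exclude the possibility $\zeta \in \overline{\mathcal{Z}}^s$, which follows from the geometric observation that neighbors of a dead-end configuration outside $\mathcal{N}(\mathbf{s})$ cannot acquire the strip structure characterizing $\overline{\mathcal{Z}}^s$ without first returning through $\mathcal{N}(\mathbf{s})$.

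The main obstacle is the geometric verification underlying both cases: confirming that the only sub-$\Gamma$ exits from $\overline{\mathcal{Z}}_\ell^{a,b}$ (in case (1)) or $\overline{\mathcal{D}}^s$ (in case (2)) lead into $\mathcal{N}(\mathbf{a})$ or $\mathcal{N}(\mathbf{s})$ respectively---exits ruled out by the hypothesis $\sigma \notin \mathcal{N}(\mathcal{S})$. Under the cyclic dynamics, a single update $\tau_x$ visits an entire orbit of intermediate configurations, so one must check that no such orbit escapes the relevant set below height $\Gamma$; once this structural fact is in hand, the remaining computational steps are straightforward adaptations of the MH arguments in Lemmas \ref{l_fl5} and \ref{l_fl6}.
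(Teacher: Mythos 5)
Your proposal follows essentially the same route as the paper: part (1) is reduced, exactly as in Lemma \ref{l_fl5}, to the harmonicity of the auxiliary equilibrium potential $\overline{\mathfrak{f}}^{a,\,b}$ away from $\overline{\mathscr{A}}^{a,\,b}\cup\overline{\mathscr{B}}^{a,\,b}$ (with the $\overline{\mathscr{B}}^{a,\,b}$ overlap deferred to Lemma \ref{l_flcyc4}), and part (2) follows from $\widetilde{h}$ being constant on $\overline{\mathcal{D}}^{s}$ as in Lemma \ref{l_fl6}. The paper's proof is just a two-line reference to those MH lemmas, whereas you spell out the orbit-level geometric verifications it leaves implicit; this is correct and consistent with the paper's argument.
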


\begin{proof}
Part (1) can be proved in the same way as we proved Lemma \ref{l_fl5}.
On the other hand, part (2) directly follows from the fact that $\widetilde{h}$
is defined as constant on $\overline{\mathcal{D}}^{s}$ (cf. Definition
\ref{d_testfcyc}-(1)).
\end{proof}
It remains to investigate $\phi_{1}$ on the sets $\mathcal{N}(\mathbf{s})$,
$\mathbf{s}\in\mathcal{S}$.
\begin{lem}
\label{l_flcyc6}For $\mathbf{a},\,\mathbf{b},\,\mathbf{c}$ chosen
according to Notation \ref{n_abc}, we have
\begin{align}
\sum_{\sigma\in\mathcal{N}(\mathbf{a})}\phi_{1}(\sigma) & =(1+o(1))\cdot\frac{|\mathcal{Q}|}{\kappa(|\mathcal{P}|+|\mathcal{Q}|)}e^{-\Gamma\beta}\;,\label{e_fc1}\\
\sum_{\sigma\in\mathcal{N}(\mathbf{b})}\phi_{1}(\sigma) & =-(1+o(1))\cdot\frac{|\mathcal{P}|}{\kappa(|\mathcal{P}|+|\mathcal{Q}|)}e^{-\Gamma\beta}\;,\label{e_fc2}\\
\sum_{\sigma\in\mathcal{N}(\mathbf{c})}\phi_{1}(\sigma) & =o(e^{-\Gamma\beta})\;.\label{e_fc3}
\end{align}
Moreover, there exists a positive constant $C$ so that for all $\sigma\in\mathcal{N}(\mathcal{S})$,
$|\phi_{1}(\sigma)|\le Ce^{-\Gamma\beta}$.
\end{lem}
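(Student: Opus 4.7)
The plan is to follow the strategy of Lemma \ref{l_fl7}, using the cyclic auxiliary chain $\overline{\mathfrak{Z}}^{a,b}$ from Definition \ref{d_ZbarMc} in place of $\mathfrak{Z}$, while carefully handling the orbit structure. First, for $\sigma\in\mathcal{N}(\mathbf{a})$ I would identify which transitions $\sigma\to\zeta$ actually contribute to $\phi_1(\sigma)$. By Definition \ref{d_testfcyc}, $\widetilde{h}$ is constantly equal to $1$ on all of $\overline{\mathcal{D}}^{a}\supseteq\mathcal{N}(\mathbf{a})$, and single-step cyclic transitions from $\mathcal{N}(\mathbf{a})$ cannot reach $\overline{\mathcal{B}}$. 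Hence only transitions $\sigma\to\zeta$ with $\zeta\in\overline{\mathcal{Z}}^{a,b'}$ for some $\mathbf{b}'\in\mathcal{S}\setminus\{\mathbf{a}\}$ contribute. Combining the orbit identity \eqref{e_OrbitH} with $\overline{\mathcal{Z}}^{a,b'}=\mathfrak{O}(\mathcal{Z}^{a,b'})$, such a pair $(\sigma,\zeta)$ must lie inside a single orbit of $\overline{\mathcal{Z}}^{a,b'}$; in particular $\sigma\in\overline{\mathscr{A}}^{a,b'}$ for the appropriate $\ell$, and $(\sigma,\zeta)$ corresponds exactly to a transition of the auxiliary chain emanating from $\overline{\mathscr{A}}^{a,b'}$.

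The key algebraic step is to convert the remaining sum into an auxiliary-chain quantity. Since every edge of the graph $\overline{\mathscr{G}}^{a,b'}$ has height $\Gamma$, \eqref{e_Hcdt} gives the exact identity $\mu_{\beta}(\sigma)r_{\beta}(\sigma,\zeta)=Z_{\beta}^{-1}e^{-\Gamma\beta}\overline{\mathfrak{r}}^{a,b'}(\sigma,\zeta)$ for $\sigma,\zeta\in\overline{\mathscr{V}}^{a,b'}$. Using Definition \ref{d_testfcyc} and the normalization $\overline{\mathfrak{f}}^{a,b'}\equiv1$ on $\overline{\mathscr{A}}^{a,b'}$, the contribution associated with $\overline{\mathcal{Z}}_{\ell}^{a,b'}$ takes the form
\[
\frac{e^{-\Gamma\beta}}{Z_{\beta}}\cdot\frac{\mathfrak{e}}{\kappa}\cdot\lambda_{b'}\sum_{\sigma\in\overline{\mathscr{A}}^{a,b'}}\sum_{\zeta\in\overline{\mathscr{V}}^{a,b'}}\overline{\mathfrak{r}}^{a,b'}(\sigma,\zeta)[\overline{\mathfrak{f}}^{a,b'}(\sigma)-\overline{\mathfrak{f}}^{a,b'}(\zeta)],
\]
where $\lambda_{b'}=1$ if $\mathbf{b}'\in\mathcal{Q}$ and $\lambda_{b'}=|\mathcal{Q}|/(|\mathcal{P}|+|\mathcal{Q}|)$ if $\mathbf{b}'\in\mathcal{S}\setminus(\mathcal{P}\cup\mathcal{Q})$. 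Invoking the standard identity $\sum_{\sigma\in\overline{\mathscr{A}}^{a,b'}}\sum_{\zeta}\overline{\mathfrak{r}}^{a,b'}(\sigma,\zeta)[\overline{\mathfrak{f}}^{a,b'}(\sigma)-\overline{\mathfrak{f}}^{a,b'}(\zeta)]=|\overline{\mathscr{V}}^{a,b'}|\overline{\mathfrak{cap}}^{a,b'}(\overline{\mathscr{A}}^{a,b'},\overline{\mathscr{B}}^{a,b'})=1/\mathfrak{e}_{0}$ from \eqref{e_e0barabdef}--\eqref{e_e0cycdef}, this reduces to $\lambda_{b'}Z_{\beta}^{-1}e^{-\Gamma\beta}\mathfrak{e}/(\kappa\mathfrak{e}_{0})$.

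Finally I would sum over $\ell\in\mathbb{T}_{L}$ (doubled for $K=L$ via the transposed orbits) and over $\mathbf{b}'\in\mathcal{S}\setminus\{\mathbf{a}\}$. The $L$- and $\nu_{0}$-factors cancel against the definition $\mathfrak{e}=\nu_{0}\mathfrak{e}_{0}/L$, and Theorem \ref{t_mu} gives $Z_{\beta}=q+o(1)$, so each choice of $(a,b')$ contributes $(1+o(1))e^{-\Gamma\beta}/(q\kappa)\cdot\lambda_{b'}$. Weighting by the number of available $\mathbf{b}'$ of each type produces
\[
\frac{e^{-\Gamma\beta}(1+o(1))}{q\kappa}\cdot\frac{|\mathcal{Q}|(|\mathcal{P}|+|\mathcal{Q}|)+(q-|\mathcal{P}|-|\mathcal{Q}|)|\mathcal{Q}|}{|\mathcal{P}|+|\mathcal{Q}|}=(1+o(1))\cdot\frac{|\mathcal{Q}|}{\kappa(|\mathcal{P}|+|\mathcal{Q}|)}e^{-\Gamma\beta},
\]
which is \eqref{e_fc1}. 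The proofs of \eqref{e_fc2} and \eqref{e_fc3} proceed identically with sign changes read off from Definition \ref{d_testfcyc}; in particular, for $\mathcal{N}(\mathbf{c})$ the contributions from $\mathbf{a}\in\mathcal{P}$ and $\mathbf{b}\in\mathcal{Q}$ cancel exactly and give zero rather than merely $o(e^{-\Gamma\beta})$. The uniform bound $|\phi_{1}(\sigma)|\le Ce^{-\Gamma\beta}$ follows since $\widetilde{h}$ is bounded (as $\mathfrak{e}/\kappa=O_{K}(1)$ and $\overline{\mathfrak{f}}^{a,b'}\in[0,1]$) and each $\sigma$ has only $O(1)$ contributing neighbors, each weighted by $O(e^{-\Gamma\beta})$. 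The main technical obstacle is the orbit-correspondence claim in the first step, which ensures that every transition relevant for $\phi_{1}$ is captured by the auxiliary chain $\overline{\mathfrak{Z}}^{a,b'}$ started from its boundary set.
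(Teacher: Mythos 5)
Your proposal is correct and follows essentially the same route as the paper: the paper's own proof of this lemma is a one-line reference declaring it "identical to that of Lemma \ref{l_fl7}," and what you write out is precisely that adaptation — reducing each contributing transition from $\mathcal{N}(\mathbf{s})$ to an edge of the auxiliary chain emanating from $\overline{\mathscr{A}}^{s,s'}$, applying the identity $\sum_{\sigma\in\overline{\mathscr{A}}}\sum_{\zeta}\overline{\mathfrak{r}}(\sigma,\zeta)[\overline{\mathfrak{f}}(\sigma)-\overline{\mathfrak{f}}(\zeta)]=|\overline{\mathscr{V}}|\,\overline{\mathfrak{cap}}(\overline{\mathscr{A}},\overline{\mathscr{B}})=1/\mathfrak{e}_{0}$, and summing over $\ell$ and the target spins so that the $L$ and $\nu_{0}$ factors cancel against $\mathfrak{e}=\nu_{0}\mathfrak{e}_{0}/L$. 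Your explicit flagging of the orbit-correspondence step (that a height-$\le\Gamma$ transition out of $\mathcal{N}(\mathbf{s})$ into $\overline{\mathcal{Z}}^{s,s'}$ stays within a single orbit and hence starts from $\overline{\mathscr{A}}^{s,s'}$) is exactly the point where the cyclic case differs from the MH case, and it is handled correctly by \eqref{e_OrbitH} together with Lemma \ref{l_e0barabest}.
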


\begin{proof}
The proof of this lemma is identical to that of Lemma \ref{l_fl7}
and we omit the detail. 
\end{proof}
Finally, we are ready to prove Proposition \ref{p_eqpappr} for the
cyclic dynamics. 
\begin{proof}[Proof of Proposition \ref{p_eqpappr} for the cyclic dynamics]
 By Remark \ref{r_eqpappr3cyc} and Proposition \ref{p_testfcyc},
if suffices to verify \eqref{e_eqpappr1}. We explained above that
\eqref{e_eqpappr1} follows if we can prove \eqref{e_WTScyc3}. By
Lemmas \ref{l_flcyc3} and \ref{l_flcyc5}, we have

\begin{align*}
\sum_{\sigma\in\widehat{\mathcal{N}}(\mathcal{S})}h_{\mathcal{P},\,\mathcal{Q}}(\sigma)\phi_{1}(\sigma) & =o(e^{-\Gamma\beta})+\sum_{s\in S}\sum_{\sigma\in\mathcal{N}(\mathbf{s})}h_{\mathcal{P},\,\mathcal{Q}}(\sigma)\phi_{1}(\sigma)\\
 & =o(e^{-\Gamma\beta})+\sum_{s\in S}\Big[h_{\mathcal{P},\,\mathcal{Q}}(\mathbf{s})\sum_{\sigma\in\mathcal{N}(\mathbf{s})}\phi_{1}(\sigma)\Big]\;,
\end{align*}
where the first identity follows from Lemma \ref{l_eqpotcyc} and
the last part of Lemma \ref{l_flcyc4}, and where the second identity
follows from Lemma \ref{l_eqpotcyc} and the last part of Lemma \ref{l_flcyc6}.
Now, inserting \eqref{e_fc1}, \eqref{e_fc2} and \eqref{e_fc3} completes
the proof.
\end{proof}

\appendix

\section{\label{secA}Decomposition Lemma}

Before proceeding to the proofs of results stated in Sections \ref{sec6}
and \ref{sec8}, we provide a decomposition of neighborhoods which
is crucially used in later discussions. Let $\mathcal{P}_{1}$, $\mathcal{P}_{2}$
and $\mathcal{Q}$ be pairwise disjoint subsets of $\mathcal{X}$.
Then, by the definition of neighborhoods, we immediately have

\begin{equation}
\widehat{\mathcal{N}}(\mathcal{P}_{1}\cup\mathcal{P}_{2};\,\mathcal{Q})=\widehat{\mathcal{N}}(\mathcal{P}_{1};\,\mathcal{Q})\cup\widehat{\mathcal{N}}(\mathcal{P}_{2};\,\mathcal{Q})\;.\label{e_set1}
\end{equation}

The following lemma is a refinement of this decomposition. Note that
the proof works for both the MH and cyclic dynamics.
\begin{lem}
\label{l_nhddc}Suppose that $\mathcal{P}_{1}$, $\mathcal{P}_{2}$
and $\mathcal{Q}$ are pairwise disjoint subsets of $\mathcal{X}$.
Then, it holds that 
\[
\widehat{\mathcal{N}}(\mathcal{P}_{1}\cup\mathcal{P}_{2};\,\mathcal{Q})=\widehat{\mathcal{N}}(\mathcal{P}_{1};\,\mathcal{P}_{2}\cup\mathcal{Q})\cup\widehat{\mathcal{N}}(\mathcal{P}_{2};\,\mathcal{P}_{1}\cup\mathcal{Q})\;.
\]
In particular, if $\mathcal{Q}=\emptyset$ then we have
\[
\widehat{\mathcal{N}}(\mathcal{P}_{1}\cup\mathcal{P}_{2})=\widehat{\mathcal{N}}(\mathcal{P}_{1};\,\mathcal{P}_{2})\cup\widehat{\mathcal{N}}(\mathcal{P}_{2};\,\mathcal{P}_{1})\;.
\]
\end{lem}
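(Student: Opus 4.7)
The plan is to prove the two inclusions separately, exploiting the "last visit" trick for the non-trivial direction. The easier inclusion $\supseteq$ follows from a monotonicity observation: enlarging the forbidden set $\mathcal{Q}$ can only shrink the restricted neighborhood $\widehat{\mathcal{N}}(\,\cdot\,;\mathcal{Q})$, while enlarging the source set can only enlarge it. Thus, for each $i\in\{1,2\}$, one has
\[
\widehat{\mathcal{N}}(\mathcal{P}_i;\mathcal{P}_{3-i}\cup\mathcal{Q}) \;\subseteq\; \widehat{\mathcal{N}}(\mathcal{P}_i;\mathcal{Q}) \;\subseteq\; \widehat{\mathcal{N}}(\mathcal{P}_1\cup\mathcal{P}_2;\mathcal{Q})\;,
\]
and taking the union over $i$ yields the desired inclusion.

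For the non-trivial direction $\subseteq$, I will pick an arbitrary $\xi\in\widehat{\mathcal{N}}(\mathcal{P}_1\cup\mathcal{P}_2;\mathcal{Q})$ together with a witness $\Gamma$-path $\omega=(\omega_n)_{n=0}^{N}$ in $\mathcal{Q}^c$ with $\omega_0\in\mathcal{P}_1\cup\mathcal{P}_2$ and $\omega_N=\xi$. The key step is to truncate $\omega$ at its last visit to $\mathcal{P}_1\cup\mathcal{P}_2$: set $n^{*}:=\max\{n\in\llbracket 0,N\rrbracket: \omega_n\in\mathcal{P}_1\cup\mathcal{P}_2\}$, which is well-defined because $n=0$ is admissible. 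The tail $(\omega_{n^{*}},\omega_{n^{*}+1},\dots,\omega_N)$ is again a $\Gamma$-path (it inherits the height bound from $\omega$) connecting $\omega_{n^{*}}$ to $\xi$, it is contained in $\mathcal{Q}^c$ by assumption, and by maximality of $n^{*}$ it avoids $\mathcal{P}_1\cup\mathcal{P}_2$ after the initial configuration. Using the pairwise disjointness of $\mathcal{P}_1$, $\mathcal{P}_2$ and $\mathcal{Q}$, this tail lies in $(\mathcal{P}_{3-i}\cup\mathcal{Q})^c$ where $i\in\{1,2\}$ is chosen so that $\omega_{n^{*}}\in\mathcal{P}_i$, and hence $\xi\in\widehat{\mathcal{N}}(\mathcal{P}_i;\mathcal{P}_{3-i}\cup\mathcal{Q})$, as required.

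For the second display, the special case $\mathcal{Q}=\emptyset$ follows immediately by combining the just-proved identity with Lemma \ref{l_hatNPempty} (for the MH dynamics) or Lemma \ref{l_hatNPemptycyc} (for the cyclic dynamics), both of which identify $\widehat{\mathcal{N}}(\mathcal{P})$ with $\widehat{\mathcal{N}}(\mathcal{P};\emptyset)$. Since the argument above uses only the abstract definition of restricted neighborhoods through $\Gamma$-paths avoiding a forbidden set, it applies verbatim to both the MH and the cyclic settings (Definitions \ref{d_rnhd} and \ref{d_rnhdcyc}); no separate case analysis is needed. I do not anticipate any genuine obstacle, as the content of the lemma is purely combinatorial and the "last visit" decomposition is the natural mechanism for routing each endpoint through the source set that the witnessing path ultimately leaves.
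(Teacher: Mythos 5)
Your proof is correct and follows essentially the same path-surgery idea as the paper: both arguments extract a suffix of the witness path that starts in one $\mathcal{P}_i$ and thereafter avoids $\mathcal{P}_{3-i}\cup\mathcal{Q}$, the only difference being that you truncate directly at the last visit to $\mathcal{P}_1\cup\mathcal{P}_2$ while the paper argues by contradiction with a minimal-length witness and its first visit to $\mathcal{P}_2$. Your citation of Lemmas \ref{l_hatNPempty} and \ref{l_hatNPemptycyc} for the case $\mathcal{Q}=\emptyset$ is also the intended one.
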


\begin{proof}
First, we consider the first identity. By \eqref{e_set1} and the
definition of restricted neighborhoods, we immediately have that
\begin{equation}
\widehat{\mathcal{N}}(\mathcal{P}_{1}\cup\mathcal{P}_{2};\,\mathcal{Q})\supseteq\widehat{\mathcal{N}}(\mathcal{P}_{1};\,\mathcal{P}_{2}\cup\mathcal{Q})\cup\widehat{\mathcal{N}}(\mathcal{P}_{2};\,\mathcal{P}_{1}\cup\mathcal{Q})\;.\label{e_set2}
\end{equation}
To prove the reversed inclusion, we assume to the contrary that there
exists $\sigma\in\mathcal{X}$ such that 
\begin{equation}
\sigma\in\widehat{\mathcal{N}}(\mathcal{P}_{1}\cup\mathcal{P}_{2};\,\mathcal{Q})\setminus\big[\widehat{\mathcal{N}}(\mathcal{P}_{1};\,\mathcal{P}_{2}\cup\mathcal{Q})\cup\widehat{\mathcal{N}}(\mathcal{P}_{2};\,\mathcal{P}_{1}\cup\mathcal{Q})\big]\;.\label{e_set3}
\end{equation}
By \eqref{e_set1}, we may assume without loss of generality that
\begin{equation}
\sigma\in\widehat{\mathcal{N}}(\mathcal{P}_{1};\,\mathcal{Q})\setminus\big[\widehat{\mathcal{N}}(\mathcal{P}_{1};\,\mathcal{P}_{2}\cup\mathcal{Q})\cup\widehat{\mathcal{N}}(\mathcal{P}_{2};\,\mathcal{P}_{1}\cup\mathcal{Q})\big]\;.\label{e_assm}
\end{equation}
Note that $\sigma\in\widehat{\mathcal{N}}(\mathcal{P}_{1};\,\mathcal{Q})$
implies automatically that $H(\sigma)\le\Gamma$. Then, the fact that
$\sigma\notin\widehat{\mathcal{N}}(\mathcal{P}_{1};\,\mathcal{P}_{2}\cup\mathcal{Q})\cup\widehat{\mathcal{N}}(\mathcal{P}_{2};\,\mathcal{P}_{1}\cup\mathcal{Q})$
implies that we readily have $\sigma\notin\mathcal{P}_{1}\cup\mathcal{P}_{2}$.

Now since $\sigma\in\widehat{\mathcal{N}}(\mathcal{P}_{1};\,\mathcal{Q})$,
we can find $(\omega_{n})_{n=0}^{N}$ in $\mathcal{X}\setminus\mathcal{Q}$
with $N\ge1$ such that $\omega_{0}\in\mathcal{P}_{1}$, $\omega_{N}=\sigma$,
$\omega_{n}\sim\omega_{n+1}$ and $H(\omega_{n},\,\omega_{n+1})\le\Gamma$.
Let us assume that $N$ is the smallest of all such sequences. If
$\omega_{n}\notin\mathcal{P}_{2}$ for all $n\in\llbracket1,\,N-1\rrbracket$,
then $(\omega_{n})_{n=0}^{N}\subseteq\mathcal{X}\setminus(\mathcal{P}_{2}\cup\mathcal{Q})$
so that we get a contradiction since we must have $\sigma\notin\widehat{\mathcal{N}}(\mathcal{P}_{1};\,\mathcal{P}_{2}\cup\mathcal{Q})$.
Therefore, we can find $n_{0}\in\llbracket1,\,N-1\rrbracket$ such
that $\omega_{n_{0}}\in\mathcal{P}_{2}$. Since we have $\omega_{n}\notin\mathcal{P}_{1}$
for all $n\in\llbracket1,\,N\rrbracket$ by the minimality of the
length $N$, we can notice that $(\omega_{n})_{n=n_{0}}^{N}\subseteq(\mathcal{P}_{1}\cup\mathcal{Q})^{c}$
and thus $\sigma\in\widehat{\mathcal{N}}(\mathcal{P}_{2};\,\mathcal{P}_{1}\cup\mathcal{Q})$.
Thus, we get a contradiction to \eqref{e_assm}. Hence, we have proved
the reversed inclusion of \eqref{e_set2} and the proof of the first
identity is completed.\medskip{}

Finally, the second identity of the lemma is immediate from Lemmas
\ref{l_Nhatbdry} and \ref{l_Nhatbdrycyc}.
\end{proof}

\section{\label{secB}Proof of Results in Section \ref{sec6}}

\subsection{\label{secB.1}Preliminaries}

First, we give in Proposition \ref{p_lowE} a characterization of
configurations which have energy less than $\Gamma$. For $a\in S$,
we write 
\begin{equation}
\|\eta\|_{a}=\sum_{x\in\Lambda}\mathbf{1}\{\eta(x)=a\}\;,\label{e_spinnum}
\end{equation}
which denotes the number of sites in $\eta$ with spin $a$. We recall
some notation from \cite[Section 2.1]{NZ}.

\begin{figure}
\includegraphics[width=11.5cm]{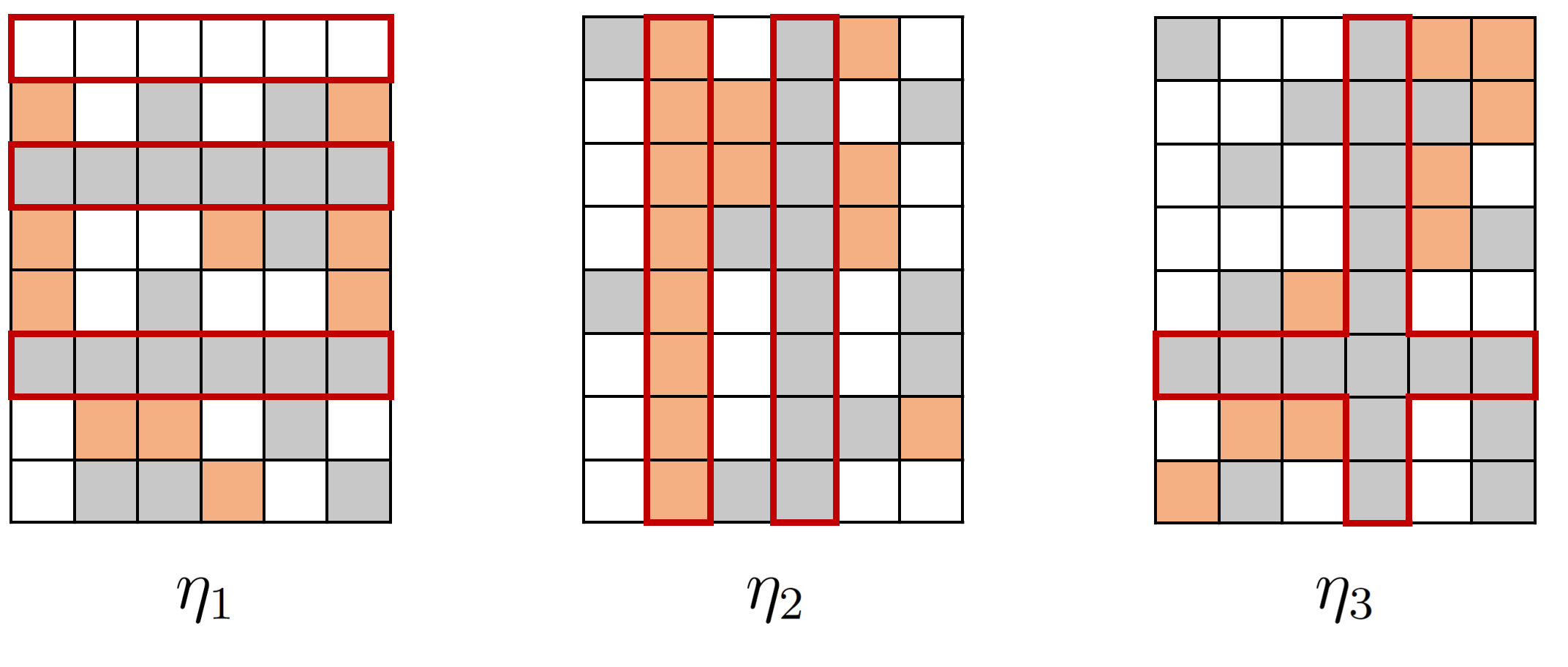}\caption{\label{figB.1}\textbf{Figures for Notation \ref{n_bridge}.} Configurations
$\eta_{1}$, $\eta_{2}$ and $\eta_{3}$ have horizontal bridges,
vertical bridges and a cross, respectively. Let $S=\{1,\,2,\,3\}$
and let white, gray and orange boxes denote sites with spin $1$,
$2$ and $3$, respectively. Then e.g., for configuration $\eta_{1}$,
it holds that $B_{1}(\eta_{1})=1$, $B_{2}(\eta_{1})=2$, $B_{3}(\eta_{1})=0$,
$\Delta H_{r_{1}}(\eta_{1})=5$ and $\Delta H_{c_{2}}(\eta_{1})=6$.
Note that one should be careful about the periodic boundary condition
when computing $\Delta H_{r_{1}}(\eta_{1})$ and $\Delta H_{c_{2}}(\eta_{1})$.}
\end{figure}

\begin{notation}
\label{n_bridge}We refer to Figure \ref{figB.1} for an illustration
of the notions introduced below.
\begin{itemize}
\item For a configuration $\eta\in\mathcal{X}$, a \textit{bridge}, which
is a \textit{horizontal }or \textit{vertical bridge}, is a row or
column, respectively, in which all spins are the same. If a bridge
consists of spin $a\in S$, we call this bridge an $a$-bridge. Then,
we denote by $B_{a}(\eta)$ the number of $a$-bridges with respect
to $\eta$.
\item In a configuration $\eta$, a \textit{cross} is the union of a horizontal
bridge and a vertical bridge. A cross consisting of spin $a\in S$
is called an $a$-cross. Moreover, $\eta$ is called \textit{cross-free}
if it does not have a cross.
\item We denote by $r_{1},\,\dots,\,r_{L}$ the rows and $c_{1},\,\dots,\,c_{K}$
the columns of $\Lambda=\mathbb{T}_{K}\times\mathbb{T}_{L}$, starting
from the lowest and the leftmost one, respectively. For $v\in\llbracket1,\,L\rrbracket$,
$h\in\llbracket1,\,K\rrbracket$ and $\eta\in\mathcal{X}$, we define
\[
\Delta H_{r_{v}}(\eta)=\sum_{\{x,\,y\}\subset r_{v}:\,x\sim y}\mathbf{1}\{\eta(x)\ne\eta(y)\}\;\;\;\;\text{and}\;\;\;\;\Delta H_{c_{h}}(\eta)=\sum_{\{x,\,y\}\subset c_{h}:\,x\sim y}\mathbf{1}\{\eta(x)\ne\eta(y)\}
\]
so that 
\begin{equation}
H(\eta)=\sum_{v\in\llbracket1,\,L\rrbracket}\Delta H_{r_{v}}(\eta)+\sum_{h\in\llbracket1,\,K\rrbracket}\Delta H_{c_{h}}(\eta)\;.\label{e_decH}
\end{equation}
An edge belonging to a row (resp. column) is called a horizontal (resp.
vertical) edge.
\end{itemize}
\end{notation}

The following lower bound for the Hamiltonian is useful.
\begin{lem}
\label{l_Hlb}It holds that 
\[
H(\eta)\ge2\Big[K+L-\sum_{a\in S}B_{a}(\eta)\Big]\;.
\]
\end{lem}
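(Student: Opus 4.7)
The plan is to exploit the decomposition \eqref{e_decH} of the Hamiltonian into row and column contributions, and to bound each $\Delta H_{r_v}$ and $\Delta H_{c_h}$ separately using the fact that each row and column is a cycle (owing to the periodic boundary condition \eqref{e_Lambda}).

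First, I would split the bridge count by orientation. Let $B_a^r(\eta)$ and $B_a^c(\eta)$ denote the number of horizontal and vertical $a$-bridges of $\eta$, respectively, so that $B_a(\eta)=B_a^r(\eta)+B_a^c(\eta)$. The number of rows which are bridges (of any color) is then $\sum_{a\in S}B_a^r(\eta)\le L$, and similarly for columns.

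Next, I would establish the key row-wise inequality: for each $v\in\llbracket 1,L\rrbracket$,
\[
\Delta H_{r_v}(\eta)\;\ge\;2\cdot\mathbf{1}\{r_v\text{ is not a bridge}\}\;.
\]
This follows because $r_v\simeq\mathbb{T}_K$ is a discrete cycle, so if the restriction $\eta|_{r_v}$ is not constant, then walking once around the cycle the spin value must change an even number of times, and hence at least twice. The identical statement holds for columns $c_h\simeq\mathbb{T}_L$. Summing over $v$ and $h$ yields
\[
\sum_{v=1}^{L}\Delta H_{r_v}(\eta)\ge 2\Big(L-\sum_{a\in S}B_a^r(\eta)\Big)\;,\quad\sum_{h=1}^{K}\Delta H_{c_h}(\eta)\ge 2\Big(K-\sum_{a\in S}B_a^c(\eta)\Big)\;.
\]

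Adding these two inequalities and invoking \eqref{e_decH} gives exactly
\[
H(\eta)\ge 2\Big(K+L-\sum_{a\in S}B_a(\eta)\Big)\;,
\]
which is the claim. There is no serious obstacle here; the only mild subtlety is recognizing that the periodic boundary forces an even (and hence $\ge 2$) number of disagreeing edges on any non-monochromatic row or column, which is precisely where the factor of $2$ comes from.
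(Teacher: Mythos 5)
Your proof is correct and is essentially the paper's own argument: the paper also deduces the bound directly from \eqref{e_decH} together with the observation that $\Delta H_{r_{v}}(\eta)\ge2$ (resp. $\Delta H_{c_{h}}(\eta)\ge2$) whenever $r_{v}$ (resp. $c_{h}$) is not a bridge. Your write-up merely makes explicit the parity argument on the torus and the split of $B_{a}(\eta)$ into horizontal and vertical bridges, which the paper leaves implicit.
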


\begin{proof}
It follows directly from \eqref{e_decH} since $\Delta H_{r_{v}}(\eta)\ge2$
(resp. $\Delta H_{c_{h}}(\eta)\ge2)$ if $r_{v}$ (resp. $c_{h}$)
is not a bridge.
\end{proof}
Now, we classify the configurations with low energy.
\begin{prop}
\label{p_lowE}Suppose that $\eta\in\mathcal{X}$ satisfies $H(\eta)<\Gamma$.
Then, $\eta$ satisfies exactly one of the following types.
\begin{itemize}
\item \textbf{\textup{(L1)}} There exist $a,\,b\in S$ and $v\in\llbracket2,\,L-2\rrbracket$
such that $\eta\in\mathcal{R}_{v}^{a,\,b}$. Here, $\mathcal{N}(\eta)$
is a singleton, i.e., $\mathcal{N}(\eta)=\{\eta\}$.
\item \textbf{\textup{(L2)}} There exist $a,\,b\in S$ such that $\eta\in\mathcal{R}_{1}^{a,\,b}$.
In this case, $\mathcal{N}(\eta)=\mathcal{N}(\mathbf{a})$.
\item \textbf{\textup{(L3)}} For some $a\in S$, $\eta$ has an $a$-cross.
Then, $\mathcal{N}(\eta)=\mathcal{N}(\mathbf{a})$ and
\begin{equation}
\sum_{b\ne a}\Vert\eta\Vert_{b}\le\frac{H(\eta)^{2}}{16}\le\frac{(2K+1)^{2}}{16}\;.\label{e_lowE}
\end{equation}
\end{itemize}
\end{prop}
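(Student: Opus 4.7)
The plan is to combine Lemma~\ref{l_Hlb} with a combinatorial case analysis based on the bridge pattern, and then to verify the neighborhood and isoperimetric claims separately. Since $H(\eta) < \Gamma = 2K+2$, Lemma~\ref{l_Hlb} yields $K + L - \sum_{a \in S} B_a(\eta) < K+1$, hence $\eta$ has at least $L$ bridges in total. The crucial geometric observation is that any horizontal $a$-bridge and any vertical $a'$-bridge must share a common site, forcing $a = a'$; so the simultaneous presence of horizontal and vertical bridges already produces a cross of a common spin.

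Let $B_h = \sum_a B_a^h(\eta)$ and $B_v = \sum_a B_a^v(\eta)$ denote the counts of horizontal and vertical bridges. If $B_v = 0$ then $B_h = L$, every row is monochromatic, and $\eta$ is encoded by a cyclic sequence $(a_1, \ldots, a_L)$ of spins; from $H(\eta) = K \cdot \#\{v : a_v \neq a_{v+1}\} < 2K+2$ and the evenness of this cyclic count one gets $0$ or $2$ transitions, which places $\eta$ either in a monochromatic state (itself carrying a cross, to be handled by (L3)) or in $\mathcal{R}_v^{a,b}$ with $a \neq b$ and $v \in \llbracket 1, L-1 \rrbracket$, yielding (L1) or (L2). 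The case $B_h = 0$ is symmetric and, since $K \leq L$ and $B_v \leq K$, is only possible when $K = L$; it reduces to the previous analysis under transposition. In all remaining cases $B_h, B_v \geq 1$, so by the observation above a cross exists and we fall in (L3). The three types are disjoint since strip configurations in $\mathcal{R}_v^{a,b}$ with $a \neq b$ and $1 \leq v \leq L-1$ contain no cross.

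It remains to verify the neighborhood descriptions and the isoperimetric bound. For (L1), $H(\eta) = 2K$ and a direct inspection shows that flipping \emph{any} single spin of $\eta$ either creates an interior protuberance (raising $H$ by $4$) or breaks a strip boundary (raising $H$ by exactly $2$); hence every $\xi \sim \eta$ with $\xi \neq \eta$ satisfies $H(\xi) \geq \Gamma$, forcing $\Phi(\eta, \xi) \geq \Gamma$ and thus $\mathcal{N}(\eta) = \{\eta\}$. For (L2), starting from $\eta \in \mathcal{R}_1^{a,b}$ an explicit erosion path flips the single row of spin $b$ one site at a time: the first flip is energy-neutral (two matching edges are lost and two new ones are created), while each subsequent flip of a $b$-site adjacent to the growing $a$-block strictly decreases $H$, so the whole path has height $\leq 2K < \Gamma$, giving $\mathbf{a} \in \mathcal{N}(\eta)$ and therefore $\mathcal{N}(\eta) = \mathcal{N}(\mathbf{a})$ by the transitivity of the relation $\Phi(\cdot, \cdot) < \Gamma$. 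For (L3), a similar erosion starting from the sites adjacent to the $a$-cross and working outward (flipping boundary non-$a$ sites to $a$) exhibits a path to $\mathbf{a}$ whose height never exceeds $H(\eta) < \Gamma$, so again $\mathcal{N}(\eta) = \mathcal{N}(\mathbf{a})$.

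Finally, the isoperimetric estimate in (L3) is obtained as follows. Let $D := \{x : \eta(x) \neq a\}$, so $|D| = \sum_{b \neq a} \|\eta\|_b$. The $a$-cross consists of one full row and one full column of spin $a$, and its removal cuts $\Lambda$ into a topological disc of shape $(K-1) \times (L-1)$ which contains $D$. Writing $P$ for the number of edges between $D$ and its complement, every such edge contributes to $H(\eta)$, so $P \leq H(\eta)$. The classical discrete isoperimetric inequality on a simply-connected planar rectangle then gives $|D| \leq (P/4)^2$, hence $|D| \leq H(\eta)^2/16$; the second inequality in \eqref{e_lowE} follows from $H(\eta) \leq 2K+1$. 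The main obstacle will be the careful handling of the boundary-wrap cases in the second step (especially when $K = L$, where transposition intertwines nontrivially with the horizontal/vertical dichotomy) and the verification that the erosion paths constructed in (L2) and (L3) can always be completed without exceeding the desired height threshold.
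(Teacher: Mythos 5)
Your proposal follows essentially the same route as the paper: Lemma~\ref{l_Hlb} to force at least $L$ bridges, a trichotomy on horizontal versus vertical bridges (with the observation that a horizontal and a vertical bridge must intersect and hence share a spin, producing a cross), explicit low-energy paths for the neighborhood claims, and the discrete isoperimetric inequality for \eqref{e_lowE}. Your direct application of isoperimetry to $D=\{x:\eta(x)\ne a\}$ (using only the boundary edges $P\le H(\eta)$ and the fact that the cross confines $D$ to a simply connected box) is a slight streamlining of the paper's recoloring of all non-$a$ spins to a fixed $b_0$, and is fine.

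Two points need repair. First, a minor slip: for $q\ge3$ the number of transitions in a cyclic spin sequence need not be even (e.g.\ three blocks give three transitions), so "evenness" is not the right justification; what you actually need is that $K\cdot t<2K+2$ forces $t\le2$ and that $t=1$ is impossible cyclically, which does yield $t\in\{0,2\}$. Second, and more substantively, the erosion path in case (L3) is under-specified in a way that matters: if you flip the non-$a$ sites "working outward from the cross" layer by layer, a site that touches the cross through only one edge (say $(1,j)$ is its unique $a$-neighbor at the moment of flipping) loses one disagreeing edge but can gain three, raising the energy by $2$ and potentially exceeding $H(\eta)$. The path must be ordered so that every flipped site already has \emph{two} $a$-neighbors when it is flipped; the paper achieves this by sweeping $\llbracket2,K\rrbracket\times\llbracket2,L\rrbracket$ in lexicographic order, so that the left and lower neighbors of each flipped site are already $a$ (either from the cross or from earlier flips), making the Hamiltonian non-increasing along the path. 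You flagged this verification as the remaining obstacle, but as stated your (L3) path does not yet close the argument; with the lexicographic ordering it does.
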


\begin{proof}
Fix $\eta\in\mathcal{X}$ with $H(\eta)<\Gamma=2K+2$. It is obvious
that no configuration can be of more than one type. Thus, we first
show that $\eta$ falls into exactly one among three categories, and
then we will prove that configurations of each category satisfy each
succeeding statement in the list.

By Lemma \ref{l_Hlb}, $\eta$ has at least $L$ bridges. We take
one of them and let this be an $a$-bridge for some $a\in S$. Now,
we consider three cases separately.\medskip{}

\noindent \textbf{(Case 1: $\eta$ has a horizontal $a$-bridge without
a vertical $a$-bridge)} Since $\Delta H_{c_{h}}(\eta)\ge2$ for all
$h\in\llbracket1,\,K\rrbracket$ and since $H(\eta)\le\Gamma-1=2K+1$,
we get from \eqref{e_decH} that 
\[
\sum_{v\in\llbracket1,\,L\rrbracket}\Delta H_{r_{v}}(\eta)\le1\;.
\]
Since we cannot have $\Delta H_{r_{v}}(\eta)=1$, we must have $\Delta H_{r_{v}}(\eta)=0$
for all $v\in\llbracket1,\,L\rrbracket$. This implies that $\eta\in\mathcal{R}_{v}^{a,\,b}$
for some $b\ne a$ and $v\in\llbracket0,\,L-1\rrbracket$ ($v=L$
is excluded because $\eta$ must have an $a$-bridge). We consider
three sub-cases separately.
\begin{itemize}
\item If $v\in\llbracket2,\,L-2\rrbracket$, then $\eta$ satisfies \textbf{(L1)}.
\item If $v=0$, then $\eta=\mathbf{a}$ and thus $\eta$ clearly has an
$a$-cross, so that $\eta$ satisfies \textbf{(L3)}.
\item If $v=1$ or $v=L-1$, then $\eta$ satisfies \textbf{(L2)} (if $v=L-1$
then $\eta\in\mathcal{R}_{L-1}^{a,\,b}=\mathcal{R}_{1}^{b,\,a}$).
\end{itemize}
\medskip{}
\textbf{(Case 2: $\eta$ has a vertical $a$-bridge without a horizontal
$a$-bridge)} Since $\Delta H_{r_{v}}(\eta)\ge2$ for all $v\in\llbracket1,\,L\rrbracket$,
we get from \eqref{e_decH} that $2K+2>2L$ and hence we must have
$K=L$. The rest of the proof is now identical to \textbf{(Case 1)};
it suffices to switch the role of columns and rows.\medskip{}

\noindent \textbf{(Case 3: $\eta$ has an $a$-cross)} Here, $\eta$
readily satisfies \textbf{(L3)}.\medskip{}

Now, we verify the conditions of each category. If $\eta$ satisfies
\textbf{(L1)}, then it is clear that $\mathcal{N}(\eta)$ is a singleton
since any configuration obtained from $\eta$ by flipping a spin has
energy greater than or equal to $\Gamma$. If $\eta$ satisfies \textbf{(L2)},
then a part of a canonical path connecting $\mathbf{a}$ and $\eta$
is a $(\Gamma-2)$-path, and thus $\eta\in\mathcal{N}(\mathbf{a})$.

Finally, suppose that $\eta$ satisfies \textbf{(L3)}. For this case,
without loss of generality we may assume that $\mathbb{T}_{K}\times\{1\}$
and $\{1\}\times\mathbb{T}_{L}$ form the $a$-cross. Then, we update
each spin to $a$ in $\llbracket2,\,K\rrbracket\times\llbracket2,\,L\rrbracket$
in the ascending lexicographic order. The presence of $a$-bridges
assures us that the Hamiltonian does not increase during the entire
procedure. In the end we reach $\mathbf{a}$, and the maximal energy
along the path is attained at the initial configuration, which is
$H(\eta)<\Gamma$. Hence, we can conclude that $\eta\in\mathcal{N}(\mathbf{a})$.

Next, we verify the inequality \eqref{e_lowE}. Define $\widetilde{\eta}\in\mathcal{X}$
as the configuration obtained from $\eta$ by replacing all non-$a$
spins by some fixed $b_{0}\in S\setminus\{a\}$, i.e., for $x\in\Lambda$,
\[
\widetilde{\eta}(x)=\begin{cases}
a & \text{if }\eta(x)=a\;,\\
b_{0} & \text{if }\eta(x)\ne a\;.
\end{cases}
\]
Then, it is straightforward from \eqref{e_decH} that $H(\widetilde{\eta})\le H(\eta)$.
Moreover, as $\widetilde{\eta}$ also has an $a$-cross, we may regard
clusters of spins $b_{0}$ in $\widetilde{\eta}$ as an object in
$\llbracket2,\,K\rrbracket\times\llbracket2,\,L\rrbracket\subset\mathbb{Z}^{2}$.
Thus, we can apply the well-known isoperimetric lemma \cite[Corollary 2.5]{AC}
to deduce
\[
\|\widetilde{\eta}\|_{b_{0}}\le\frac{H(\widetilde{\eta})^{2}}{16}\le\frac{H(\eta)^{2}}{16}\le\frac{(2K+1)^{2}}{16}\;,
\]
where the last inequality follows from the assumption that $H(\eta)\le\Gamma-1=2K+1$.
We also remark that the equality of the first inequality holds when
the only cluster of spin $b_{0}$ in $\widetilde{\eta}$ forms a square.
Since $\sum_{b\ne a}\Vert\eta\Vert_{b}=\|\widetilde{\eta}\|_{b_{0}}$,
\eqref{e_lowE} is now proved and we conclude the proof.
\end{proof}
To conclude this subsection, we record a lemma regarding depths of
valleys other than the metastable ones. This lemma is not needed in
the current study since the metastable configurations are already
identified in \cite{NZ}; nevertheless, we choose to include it here
since it is essential in the characterization of the three-dimensional
metastable valleys in our companion article \cite{KS 3D}.
\begin{lem}
\label{l_depth}Let $\eta\in\mathcal{X}$ and $a\in S$. For any standard
sequence $(A_{m})_{m=0}^{KL}$ of sets connecting $\emptyset$ and
$\Lambda$ and for $m\in\llbracket0,\,KL\rrbracket$, we define $\omega_{m}\in\mathcal{X}$
in such a manner that 
\[
\omega_{m}(x)=\begin{cases}
a & \text{if }x\in A_{m}\;,\\
\eta(x) & \text{if }x\in\Lambda\setminus A_{m}\;.
\end{cases}
\]
Then, we have that $H(\omega_{m})\le H(\eta)+\Gamma$ for all $m\in\llbracket0,\,KL\rrbracket$.
\end{lem}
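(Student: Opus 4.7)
The plan is to prove the pointwise inequality $H(\omega_m)\le H(\eta)+|\partial A_m|$ by a direct edge-counting argument, and then to observe that $|\partial A_m|\le\Gamma=2K+2$ for every $m$, the bound coming solely from the combinatorial shape of the standard sequence. Here $|\partial A_m|$ denotes the number of edges of $\Lambda$ with exactly one endpoint in $A_m$.

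For the first step, I would split the edge set of $\Lambda$ into three classes according to whether an edge $\{x,y\}$ has both endpoints in $A_m$, both endpoints in $\Lambda\setminus A_m$, or exactly one endpoint in $A_m$. On the first class $\omega_m\equiv a$, so the contribution to $H(\omega_m)$ is $0$, which is $\le \mathbf{1}\{\eta(x)\ne\eta(y)\}$. On the second class $\omega_m\equiv\eta$, so the contribution equals $\mathbf{1}\{\eta(x)\ne\eta(y)\}$. On the third class, writing $x\in A_m$ and $y\notin A_m$, the contribution to $H(\omega_m)$ is $\mathbf{1}\{\eta(y)\ne a\}\le 1$, while the contribution to $H(\eta)$ is nonnegative. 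Summing yields $H(\omega_m)-H(\eta)\le|\partial A_m|$.

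For the second step, I would argue by the structure of a standard sequence. Writing $m=Kv+h$ with $0\le h\le K$, we have $A_{Kv}=\mathbb{T}_K\times P_v$ for an arc $P_v\in\mathfrak{S}_L$ of size $v$, and $A_m=(\mathbb{T}_K\times P_v)\cup(Q_h\times\{\ell\})$ where $\{\ell\}=P_{v+1}\setminus P_v$ and $Q_h$ is an arc in $\mathbb{T}_K$ of size $h$. A direct enumeration then gives:
\begin{itemize}
\item If $m=0$ or $m=KL$, $|\partial A_m|=0$.
\item If $h=0$ and $0<v<L$, $|\partial A_m|=2K$.
\item If $0<h<K$ and $P_v\cup\{\ell\}\subsetneq\mathbb{T}_L$ with $P_v\ne\emptyset$, horizontal boundary edges inside row $\ell$ contribute $2$; vertical edges between $\mathbb{T}_K\times P_v$ and row $\ell$ contribute $K-h$; vertical edges between row $\ell$ and its neighbor outside $P_v\cup\{\ell\}$ contribute $h$; vertical edges between $\mathbb{T}_K\times P_v$ and the row on the far side of $P_v$ contribute $K$; in total $2K+2=\Gamma$.
\item In the degenerate subcases $P_v=\emptyset$ or $P_v\cup\{\ell\}=\mathbb{T}_L$, one of the four contributions above disappears, so $|\partial A_m|\le 2K$.
\end{itemize}
Combining, $|\partial A_m|\le\Gamma$ in every case, proving the lemma.

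No step here is genuinely hard; the only care required is in the case analysis for $|\partial A_m|$, where one must carefully handle the periodic boundary conditions and the edge cases $P_v\in\{\emptyset,\mathbb{T}_L\}$ that arise at the start and end of the standard sequence. It is also worth noting that the bound is tight: when $\eta=\mathbf{b}$ with $b\ne a$, the path $(\omega_m)$ is exactly a canonical path of the MH dynamics, which achieves $\Phi_\omega=\Gamma$ by Lemma~\ref{l_canpath}(3).
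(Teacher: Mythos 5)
Your proof is correct. It takes a mildly different route from the paper's: you first establish the general recoloring bound $H(\omega_{m})\le H(\eta)+|\partial A_{m}|$ by classifying edges according to their position relative to $A_{m}$, and then compute the edge boundary $|\partial A_{m}|$ of a standard-sequence set case by case, checking it never exceeds $2K+2=\Gamma$. The paper instead works with the line decomposition \eqref{e_decH} of the Hamiltonian: it observes that rows contained in $A_{m}$ have zero row-energy, rows disjoint from the newly filled row are unchanged, the single partially filled row increases by at most $2$ (recoloring an interval in a one-dimensional torus creates at most two new disagreements), and each of the $K$ columns likewise increases by at most $2$, giving $2+2K$ without ever computing $|\partial A_{m}|$ explicitly. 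The two arguments exploit the same underlying facts — monochromaticity inside $A_{m}$ and the fact that $A_{m}$ meets every line in an interval — but your version is more modular: the first step holds for an arbitrary subset of $\Lambda$, and all the structure of standard sequences is isolated in the boundary computation, at the cost of a slightly longer case analysis (which you handle correctly, including the degenerate cases $P_{v}=\emptyset$ and $P_{v}\cup\{\ell\}=\mathbb{T}_{L}$ and the periodic boundary). Your closing remark on tightness via the canonical path is also accurate.
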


\begin{proof}
As $\omega_{KL}=\mathbf{a}$, the lemma is immediate for $m=KL$.
Thus, we prove this for $m\in\llbracket0,\,KL-1\rrbracket$.

Suppose that $m\in\llbracket K\ell,\,K(\ell+1)-1\rrbracket$ for some
$\ell\in\llbracket0,\,L-1\rrbracket$ and write $\xi=\omega_{m}$.
Suppose that $A_{K\ell}=\mathbb{T}_{K}\times P_{\ell}$, $A_{K(\ell+1)}=\mathbb{T}_{K}\times P_{\ell+1}$
and $P_{\ell+1}\setminus P_{\ell}=\{\ell_{0}\}$. By \eqref{e_decH},
we can write 
\begin{equation}
H(\xi)-H(\eta)=\sum_{v\in\llbracket1,\,L\rrbracket}[\Delta H_{r_{v}}(\xi)-\Delta H_{r_{v}}(\eta)]+\sum_{h\in\llbracket1,\,K\rrbracket}[\Delta H_{c_{h}}(\xi)-\Delta H_{c_{h}}(\eta)]\;.\label{e_depth1}
\end{equation}
We start by considering the first summation at the right-hand side.
For $v\in P_{\ell}$, we have $\Delta H_{r_{v}}(\xi)=0$ and thus
the summand is non-positive. For $v\in\mathcal{X}\setminus P_{\ell+1}$,
the configurations $\eta$ and $\xi$ have the same $v$-th row and
thus the summand is $0$. Finally, for $v=\ell_{0}$, observe that
$\xi$ is obtained from $\eta$ by changing spins at consecutive sites
to $b$, and thus we can readily conclude that the energy has been
increased by at most $2$. Summing up, we obtain
\begin{equation}
\sum_{v\in\llbracket1,\,L\rrbracket}[\Delta H_{r_{v}}(\xi)-\Delta H_{r_{v}}(\eta)]\le2\;.\label{e_depth2}
\end{equation}
By the same reasoning with the case of $v=\ell_{0}$ above, we can
also observe that each summand of the second summation of \eqref{e_depth1}
is at most $2$ and hence 
\begin{equation}
\sum_{h\in\llbracket1,\,K\rrbracket}[\Delta H_{c_{h}}(\xi)-\Delta H_{c_{h}}(\eta)]\le2K\;.\label{e_depth3}
\end{equation}
By \eqref{e_depth1}, \eqref{e_depth2} and \eqref{e_depth3}, we
get $H(\xi)-H(\eta)\le2K+2=\Gamma$, and we have proved the lemma.
\end{proof}

\subsection{\label{secB.2}Typical configurations}

In this subsection, we prove Proposition \ref{p_typprop}. We first
give two lemmas.
\begin{lem}
\label{l_typ1}For $a,\,b\in S$, suppose that $\eta_{1}\in\mathcal{B}^{a,\,b}$
and $\eta_{2}\in\mathcal{X}$ satisfy $\eta_{1}\sim\eta_{2}$ and
$H(\eta_{2})\le\Gamma$. Then, the following statements hold.
\begin{enumerate}
\item If $\eta_{1}\in\mathcal{R}_{v}^{a,\,b}$ for $v\in\llbracket2,\,L-2\rrbracket$,
then $\eta_{2}\in\mathcal{Q}_{v-1}^{a,\,b}\cup\mathcal{Q}_{v}^{a,\,b}$.
In particular, if $v\in\llbracket3,\,L-3\rrbracket$ then $\eta_{2}\in\mathcal{B}_{\Gamma}^{a,\,b}$.
\item If $\eta_{1}\in\mathcal{B}_{\Gamma}^{a,\,b}$, then $\eta_{2}\in\mathcal{B}^{a,\,b}$.
\end{enumerate}
In particular, it necessarily holds that $\eta_{2}\in\mathcal{C}^{a,\,b}$.
\end{lem}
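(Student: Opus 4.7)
The plan is to prove both parts of Lemma \ref{l_typ1} by an exhaustive local case analysis. For the MH dynamics, $\eta_1 \sim \eta_2$ means $\eta_2 = \eta_1^{x, c}$ for a unique site $x \in \Lambda$ and spin $c \in S \setminus \{\eta_1(x)\}$, and the energy change $H(\eta_2) - H(\eta_1)$ depends only on the spin values of $x$ and its four neighbors. A flip to a third spin $c \notin \{a, b\}$ is immediately ruled out in every situation I will consider: each site in $\xi_{\ell, v}^{a,b}$ or $\xi_{\ell, v; k, h}^{a,b,\pm}$ has at least three neighbors in $\{a, b\}$, so flipping it to $c$ creates at least three new disagreements while destroying at most one, yielding energy change $\ge +2$. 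Thus I only need to track flips $a \leftrightarrow b$. The cases $\pm$ and the transpose $\Theta(\cdot)$ (for $K = L$) are symmetric and need no separate treatment.

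For part (1), $\eta_1 = \xi_{\ell, v}^{a,b}$ has $H(\eta_1) = 2K = \Gamma - 2$, so I need energy change $\le 2$. Classifying the flipped site $x$ by its row, the cases with change $\le 2$ are precisely: (i) $x$ lies in row $\ell + v$ or $\ell - 1$ (the two rows of the $a$-region adjacent to the $b$-strip) and is flipped from $a$ to $b$, which produces a size-$1$ protuberance, giving change $+2$ and landing in $\mathcal{Q}_v^{a,b}$; (ii) $x$ lies in row $\ell$ or $\ell + v - 1$ (the top or bottom row of the $b$-strip) and is flipped from $b$ to $a$, which leaves a size-$(K-1)$ protuberance over a strip of height $v - 1$, giving change $+2$ and landing in $\mathcal{Q}_{v-1}^{a,b}$. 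Every other flip either modifies an interior $a$ or $b$ site with four like-spin neighbors, or an $a$-site at graph distance $\ge 2$ from the strip, all of which produce change $\ge +4$. This gives the first assertion, and the ``in particular'' part then follows since $v \in \llbracket 3, L-3 \rrbracket$ forces $v - 1, v \in \llbracket 2, L - 3 \rrbracket$, the index range defining $\mathcal{B}_\Gamma^{a,b}$.

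For part (2), $\eta_1 = \xi_{\ell, v; k, h}^{a,b,\pm}$ has $H(\eta_1) = \Gamma$, so I need non-positive change. An analogous enumeration shows that the admissible flips are exactly the four local modifications of the protuberance: extend the protuberance by one site at either end (change $0$, giving $\mathcal{Q}_v^{a,b}$; or change $-2$, giving $\mathcal{R}_{v+1}^{a,b}$ when $h = K - 1$ and the row closes off), or shrink it at either end (change $0$, giving $\mathcal{Q}_v^{a,b}$; or change $-2$, giving $\mathcal{R}_v^{a,b}$ when $h = 1$). Since $v \in \llbracket 2, L-3 \rrbracket$ guarantees $v, v + 1 \in \llbracket 2, L - 2 \rrbracket$, each outcome lies in $\mathcal{B}^{a,b}$. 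The final ``in particular'' conclusion $\eta_2 \in \mathcal{C}^{a,b}$ is automatic from $\mathcal{R}_v^{a,b} \cup \mathcal{Q}_v^{a,b} \subset \mathcal{C}^{a,b}$. The proof carries no deep idea — the only real obstacle is the careful bookkeeping across all relative positions of $x$ with respect to the $a$/$b$ interface (and, in part (2), the protuberance boundary), while remembering the periodic topology of $\Lambda$ and treating the degenerate cases $h = 1$ and $h = K - 1$ consistently with the definitions of $\mathcal{R}_v^{a,b}$ and $\mathcal{Q}_v^{a,b}$.
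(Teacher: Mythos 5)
Your proposal is correct and follows essentially the same route as the paper: an exhaustive local enumeration of the single-spin flips of $\xi_{\ell,\,v}^{a,\,b}$ and $\xi_{\ell,\,v;\,k,\,h}^{a,\,b,\,\pm}$ whose energy increment is admissible, with the $\pm$ and transposed cases handled by symmetry (the paper is in fact terser, deferring the enumeration to ``observation from Figure \ref{fig6.1}''). One small quantitative slip: for a flip to a third spin $c\notin\{a,b\}$ at a site with $d$ disagreeing neighbors the energy change is exactly $4-d$, so at the tip of a size-one protuberance ($d=3$) it is $+1$, not $\ge+2$ as you claim; this does not affect the conclusion, since part (2) only requires excluding changes that are non-positive.
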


\begin{proof}
We first assume that $K<L$ and consider two cases separately.\medskip{}

\noindent \textbf{(Case 1: $\eta_{1}=\xi_{\ell,\,v}^{a,\,b}$ for
some $\ell\in\mathbb{T}_{L}$ and $v\in\llbracket2,\,L-2\rrbracket$)}
We can observe from the illustration given in Figure \ref{fig6.1}
that the only way of flipping a spin of $\eta_{1}$ in such a way
that the resulting configuration has energy at most $\Gamma$ is either
to attach a protuberance of spin $b$ to the cluster of spin $b$
of $\eta_{1}$ or to attach a protuberance of spin $a$ to the cluster
of spin $a$ of $\eta_{1}$. This implies that $\eta_{2}$ must be
one of the following forms: 
\[
\xi_{\ell,\,v;\,k,\,1}^{a,\,b,\,\pm}\;,\;\xi_{\ell,\,v-1;\,k,\,K-1}^{a,\,b,\,+}\;,\;\xi_{\ell+1,\,v-1;\,k,\,K-1}^{a,\,b,\,-}\;\;\;\;;\;k\in\mathbb{T}_{K}\;.
\]
Hence, $\eta_{2}\in\mathcal{Q}_{v-1}^{a,\,b}\cup\mathcal{Q}_{v}^{a,\,b}$.
This proves the first assertion of part (1). The second assertion
is straightforward.\medskip{}

\noindent \textbf{(Case 2: $\eta_{1}=\xi_{\ell,\,v;\,k,\,h}^{a,\,b,\,\pm}$
for some $\ell\in\mathbb{T}_{L}$, $v\in\llbracket2,\,L-3\rrbracket$,
$k\in\mathbb{T}_{K}$ and $h\in\llbracket1,\,K-1\rrbracket$)} In
this case, we can also observe from the illustration given in Figure
\ref{fig6.1} that the only way of flipping a spin of $\eta_{1}$
without increasing the energy is to expand or shrink the protuberance
of spin $b$ attached at $\xi_{\ell,\,v}^{a,\,b}$, and therefore
$\eta_{2}=\xi_{\ell,\,v;\,k,\,h-1}^{a,\,b,\,\pm}$, $\xi_{\ell,\,v;\,k,\,h+1}^{a,\,b,\,\pm}$,
$\xi_{\ell,\,v;\,k+1,\,h-1}^{a,\,b,\,\pm}$ or $\xi_{\ell,\,v;\,k-1,\,h+1}^{a,\,b,\,\pm}$.
This proves that $\eta_{2}\in\mathcal{B}^{a,\,b}$ and hence part
(2) is now verified. Thus, we conclude the case of $K<L$.\medskip{}

The case of $K=L$ can be treated in the same manner. We just have
to consider also the configurations transposed by the operator $\Theta$.
\end{proof}
The previous lemma implies the following result.
\begin{lem}
\label{l_typ2}It holds that $\widehat{\mathcal{N}}(\mathcal{B};\,\mathcal{C}\setminus\mathcal{B})=\mathcal{B}$.
\end{lem}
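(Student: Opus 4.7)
The inclusion $\mathcal{B}\subseteq\widehat{\mathcal{N}}(\mathcal{B};\,\mathcal{C}\setminus\mathcal{B})$ is immediate from Definition \ref{d_rnhd} by taking the trivial (length-zero) path, so the entire content of the lemma lies in the reverse inclusion. My plan is to prove this reverse inclusion by a direct induction along a witnessing path, using Lemma \ref{l_typ1} as the only nontrivial input.

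Fix $\xi\in\widehat{\mathcal{N}}(\mathcal{B};\,\mathcal{C}\setminus\mathcal{B})$ and pick a path $\omega=(\omega_{n})_{n=0}^{N}$ in $(\mathcal{C}\setminus\mathcal{B})^{c}$ with $\omega_{0}\in\mathcal{B}$, $\omega_{N}=\xi$ and $\Phi_{\omega}\le\Gamma$. By \eqref{e_MHheight}, $\Phi_{\omega}\le\Gamma$ is equivalent to $H(\omega_{n})\le\Gamma$ for every $n$, and by definition of a path we have $\omega_{n}\sim\omega_{n+1}$. I would show, by induction on $n$, that $\omega_{n}\in\mathcal{B}$; taking $n=N$ then yields $\xi\in\mathcal{B}$.

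For the induction step, suppose $\omega_{n}\in\mathcal{B}^{a,\,b}$ for some $a,\,b\in S$ (which is uniquely determined up to swapping $a$ and $b$). Since $\omega_{n}\sim\omega_{n+1}$ and $H(\omega_{n+1})\le\Gamma$, Lemma \ref{l_typ1} forces $\omega_{n+1}\in\mathcal{C}^{a,\,b}$. I then split into two sub-cases according to the position of $\omega_{n}$:
\begin{itemize}
\item If $\omega_{n}\in\mathcal{R}_{v}^{a,\,b}$ for some $v\in\llbracket2,\,L-2\rrbracket$, part (1) of Lemma \ref{l_typ1} gives $\omega_{n+1}\in\mathcal{Q}_{v-1}^{a,\,b}\cup\mathcal{Q}_{v}^{a,\,b}$. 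When $v\in\llbracket3,\,L-3\rrbracket$ both sets lie in $\mathcal{B}_{\Gamma}^{a,\,b}\subseteq\mathcal{B}$. When $v=2$ we have $\mathcal{Q}_{1}^{a,\,b}\subseteq\mathcal{C}\setminus\mathcal{B}$, so the hypothesis $\omega_{n+1}\notin\mathcal{C}\setminus\mathcal{B}$ forces $\omega_{n+1}\in\mathcal{Q}_{2}^{a,\,b}\subseteq\mathcal{B}$. The case $v=L-2$ is symmetric, ruling out $\mathcal{Q}_{L-2}^{a,\,b}$ and leaving $\omega_{n+1}\in\mathcal{Q}_{L-3}^{a,\,b}\subseteq\mathcal{B}$.
\item If $\omega_{n}\in\mathcal{B}_{\Gamma}^{a,\,b}$, part (2) of Lemma \ref{l_typ1} gives $\omega_{n+1}\in\mathcal{B}^{a,\,b}\subseteq\mathcal{B}$ with no further argument needed.
\end{itemize}
In every case $\omega_{n+1}\in\mathcal{B}$, closing the induction.

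There is no serious obstacle: Lemma \ref{l_typ1} already classifies every low-energy neighbor of a bulk typical configuration, and the only thing the restricted-neighborhood hypothesis $\omega_{n+1}\notin\mathcal{C}\setminus\mathcal{B}$ is asked to do is to knock out the two boundary layers $\mathcal{Q}_{1}^{a,\,b}$ and $\mathcal{Q}_{L-2}^{a,\,b}$ that could otherwise escape $\mathcal{B}$ from $\mathcal{R}_{2}^{a,\,b}$ and $\mathcal{R}_{L-2}^{a,\,b}$. The mildest care point is simply bookkeeping the boundary values of $v$ in the first sub-case; everything else is a direct application of Lemma \ref{l_typ1}.
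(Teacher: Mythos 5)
Your proof is correct and follows essentially the same route as the paper: both trace the witnessing path and use Lemma \ref{l_typ1} to show that the first step out of $\mathcal{B}$ would have to land in the forbidden set (the paper phrases this as a first-exit contradiction after reducing to a fixed pair $(a,b)$ and invoking only the final clause $\eta_{2}\in\mathcal{C}^{a,\,b}$ of Lemma \ref{l_typ1}, while you run an explicit induction with the boundary cases $v=2$ and $v=L-2$). The one assertion you leave implicit is that $\mathcal{Q}_{1}^{a,\,b}$ and $\mathcal{Q}_{L-2}^{a,\,b}$ are disjoint from all of $\mathcal{B}$, not merely from $\mathcal{B}^{a,\,b}$ — true, since every configuration in $\mathcal{B}$ has at least two monochromatic lines of each of its two spins whereas those sets have only one full $b$-line (resp. $a$-line) — so there is no genuine gap.
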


\begin{proof}
Since the Hamiltonian of configurations belonging to $\mathcal{B}$
does not exceed $\Gamma$, it is immediate that 
\[
\widehat{\mathcal{N}}(\mathcal{B};\,\mathcal{C}\setminus\mathcal{B})\supseteq\mathcal{B}\;.
\]
Thus, it suffices to show the opposite inclusion. Since
\[
\widehat{\mathcal{N}}(\mathcal{B};\,\mathcal{C}\setminus\mathcal{B})=\bigcup_{a,\,b\in S}\widehat{\mathcal{N}}(\mathcal{B}^{a,\,b};\,\mathcal{C}\setminus\mathcal{B})\subseteq\bigcup_{a,\,b\in S}\widehat{\mathcal{N}}(\mathcal{B}^{a,\,b};\,\mathcal{C}^{a,\,b}\setminus\mathcal{B}^{a,\,b})\;,
\]
it suffices to show that for $a,\,b\in S$, 
\begin{equation}
\widehat{\mathcal{N}}(\mathcal{B}^{a,\,b};\,\mathcal{C}^{a,\,b}\setminus\mathcal{B}^{a,\,b})\subseteq\mathcal{B}^{a,\,b}\;.\label{e_typ2.1}
\end{equation}
Take $\eta\in\widehat{\mathcal{N}}(\mathcal{B}^{a,\,b};\,\mathcal{C}^{a,\,b}\setminus\mathcal{B}^{a,\,b})$.
Then, we have a $\Gamma$-path $(\omega_{n})_{n=0}^{N}$ in $\mathcal{X}\setminus(\mathcal{C}^{a,\,b}\setminus\mathcal{B}^{a,\,b})=(\mathcal{X}\setminus\mathcal{C}^{a,\,b})\cup\mathcal{B}^{a,\,b}$
from $\mathcal{B}^{a,\,b}$ to $\eta$. Suppose the contrary that
$\eta\notin\mathcal{B}^{a,\,b}$. Then, as $\omega_{0}\in\mathcal{B}^{a,\,b}$,
there exists $n_{0}\in\llbracket0,\,N-1\rrbracket$ such that $\omega_{n_{0}}\in\mathcal{B}^{a,\,b}$
and $\omega_{n_{0}+1}\in\mathcal{X}\setminus\mathcal{B}^{a,\,b}$.
Since $(\omega_{n})_{n=0}^{N}$ is a path in $(\mathcal{X}\setminus\mathcal{C}^{a,\,b})\cup\mathcal{B}^{a,\,b}$,
we get 
\[
\omega_{n_{0}+1}\in(\mathcal{X}\setminus\mathcal{B}^{a,\,b})\cap\big[(\mathcal{X}\setminus\mathcal{C}^{a,\,b})\cup\mathcal{B}^{a,\,b}\big]=\mathcal{X}\setminus\mathcal{C}^{a,\,b}\;.
\]
On the other hand, since $\omega_{n_{0}}\in\mathcal{B}^{a,\,b}$ we
must have $\omega_{n_{0}+1}\in\mathcal{C}^{a,\,b}$ by Lemma \ref{l_typ1}
and thus we get a contradiction. Therefore, we must have $\eta\in\mathcal{B}^{a,\,b}$
and thus we get \eqref{e_typ2.1}.
\end{proof}
Now, we are ready to present a proof of Proposition \ref{p_typprop}.
\begin{proof}[Proof of Proposition \ref{p_typprop}]
 We first prove that

\begin{equation}
\{\mathbf{b}\}\cap\widehat{\mathcal{N}}(\mathbf{a};\,\mathcal{B}_{\Gamma})=\emptyset\;.\label{e_nonint}
\end{equation}
Let us suppose the contrary that $(\omega_{n})_{n=0}^{N}$ is a $\Gamma$-path
from $\mathbf{a}$ to $\mathbf{b}$ in $\mathcal{X}\setminus\mathcal{B}_{\Gamma}$.
We now follow the idea of \cite[Proposition 2.6]{NZ}. Define $u:\llbracket0,\,N\rrbracket\rightarrow\mathbb{R}$
as 
\[
u(n)=B_{b}(\omega_{n})\;\;\;\;;\;n\in\llbracket0,\,N\rrbracket\;,
\]
where $B_{b}(\cdot)$ is defined in Notation \ref{n_bridge}. Then,
we have that 
\begin{equation}
u(0)=0\;,\;u(N)=K+L\text{ and }|u(n+1)-u(n)|\le2\text{ for all }n\in\llbracket0,\,N-1\rrbracket\;.\label{e_nonint1}
\end{equation}
Thus, the following instant $n^{*}$ is well defined:
\begin{equation}
n^{*}=\min\{n\in\llbracket0,\,N-1\rrbracket:u(n),\;u(n+1)\ge2\}\;.\label{e_typprop1.1}
\end{equation}
Note that, since we need to change at least $2K-1$ spins from $\mathbf{a}$
to get $u(t)\ge2,$ we clearly have $n^{*}\ge2K-1$. By \eqref{e_nonint1},
we have $B_{b}(\omega_{n^{*}})=2$ or $3$. We divide the proof into
three cases as in Proposition \ref{p_lowE}.\medskip{}

\noindent \textbf{(Case 1: $\omega_{n^{*}}$ has a horizontal $b$-bridge
without a vertical $b$-bridge) }For this case, if $B_{b}(\omega_{n^{*}})=3$,
we have $B_{b}(\omega_{n^{*}-1})\ge2$ and thus we get a contradiction
to the minimality of $n^{*}$. Thus, we must have $B_{b}(\omega_{n^{*}})=2$.

Since $\omega_{n^{*}}$ cannot have any vertical bridges, we get $\Delta H_{c_{h}}(\omega_{n^{*}})\ge2$
for all $h\in\llbracket1,\,K\rrbracket$. In view of \eqref{e_decH}
and the fact that $H(\omega_{n^{*}})\le\Gamma=2K+2$, we can readily
conclude that the only possibility is $\omega_{n^{*}}\in\mathcal{R}_{2}^{a_{0},\,b}$
or $\mathcal{Q}_{2}^{a_{0},\,b}$ for some $a_{0}\ne b$. The latter
case yields an immediate contradiction since $\mathcal{Q}_{2}^{a_{0},\,b}\subseteq\mathcal{B}_{\Gamma}^{a_{0},\,b}\subseteq\mathcal{B}_{\Gamma}$.
Summing up, we must have $\omega_{n^{*}}\in\mathcal{R}_{2}^{a_{0},\,b}$
for some $a_{0}\ne b$. Since $H(\omega_{n^{*}+1})\le\Gamma$, the
only possibility to get $\omega_{n^{*}+1}$ from $\omega_{n^{*}}$
is either to change a spin $a_{0}$ neighboring the cluster of spin
$b$ to $b$, or to change a spin $b$ neighboring the cluster of
spin $a_{0}$ to $a_{0}$. Both cases yield a contradiction, since
in the former we get $\omega_{n^{*}+1}\in\mathcal{B}_{\Gamma}^{a,\,b}\subseteq\mathcal{B}_{\Gamma}$
while in the latter $u(n^{*}+1)=1$.\medskip{}

\noindent \textbf{(Case 2: $\omega_{n^{*}}$ has a vertical $b$-bridge
without a horizontal $b$-bridge)} This case is similar to \textbf{(Case
1)} and we omit the detail.\medskip{}

\noindent \textbf{(Case 3: $\omega_{n^{*}}$ has a $b$-cross)} In
this case, $\omega_{n^{*}}$ does not have a bridge whose spin is
not $b$ and thus, by \eqref{e_nonint1}, the configuration $\omega_{n^{*}}$
has at most $3$ bridges. Therefore, by Lemma \ref{l_Hlb}, it holds
that 
\[
H(\omega_{n^{*}})\ge2(K+L-3)>\Gamma\;.
\]
This contradicts the assumption that $(\omega_{n})_{n=0}^{N}$ is
a $\Gamma$-path. Therefore, we conclude the proof of \eqref{e_nonint}.\medskip{}

\noindent (1) This is direct from \eqref{e_nonint}, since otherwise
we can construct a $\Gamma$-path from $\mathbf{a}$ to $\mathbf{b}$
avoiding $\mathcal{B}_{\Gamma}$ by reversing and concatenating.\medskip{}

\noindent (2) First, we have $\mathcal{B}^{a,\,b}\supseteq\mathcal{R}_{2}^{a,\,b}$
from the definition of the set $\mathcal{B}^{a,\,b}$. Moreover, since
a canonical path connecting $\mathcal{R}_{2}^{a,\,b}$ and $\mathbf{a}$
is a $\Gamma$-path in $\mathcal{X}\setminus\mathcal{B}_{\Gamma}$,
we also have $\mathcal{E}^{a}\supseteq\mathcal{R}_{2}^{a,\,b}$. This
concludes that 
\begin{equation}
\mathcal{E}^{a}\cap\mathcal{B}^{a,\,b}\supseteq\mathcal{R}_{2}^{a,\,b}\;.\label{e_int1}
\end{equation}
Now, we claim that the reversed inclusion also holds. To this end,
we start by observing that, since $\mathcal{B}_{\Gamma}^{a,\,b}$
and $\mathcal{E}^{a}$ are disjoint from the definition \eqref{e_Ea},
\[
\mathcal{E}^{a}\cap\mathcal{B}^{a,\,b}\subseteq\mathcal{B}^{a,\,b}\setminus\mathcal{B}_{\Gamma}^{a,\,b}=\bigcup_{v\in\llbracket2,\,L-2\rrbracket}\mathcal{R}_{v}^{a,\,b}\;.
\]
For $\eta\in\mathcal{R}_{v}^{a,\,b}$ with $v\in\llbracket3,\,L-3\rrbracket$,
we cannot have a $\Gamma$-path in $\mathcal{X}\setminus\mathcal{B}_{\Gamma}$
from $\mathbf{a}$ to $\eta$ by part (1) of Lemma \ref{l_typ1}.
Thus, such an $\eta$ cannot belong to $\mathcal{E}^{a}$ and we deduce
that 
\begin{equation}
\mathcal{E}^{a}\cap\mathcal{B}^{a,\,b}\subseteq\mathcal{R}_{2}^{a,\,b}\cup\mathcal{R}_{L-2}^{a,\,b}\;.\label{e_int2}
\end{equation}
Note that we have $\mathcal{E}^{b}\supseteq\mathcal{R}_{L-2}^{a,\,b}$
by the same reason with $\mathcal{E}^{a}\supseteq\mathcal{R}_{2}^{a,\,b}$
that we proved before; hence, any configuration $\eta\in\mathcal{R}_{L-2}^{a,\,b}$
cannot belong to $\mathcal{E}^{a}$ by part (1). This observation
and \eqref{e_int2} implies that $\mathcal{E}^{a}\cap\mathcal{B}^{a,\,b}\subseteq\mathcal{R}_{2}^{a,\,b}$.
This along with \eqref{e_int1} completes the proof of part (2). \medskip{}

\noindent (3) We can deduce $\mathcal{E}^{a}\cap\mathcal{B}^{b,\,c}\subseteq\mathcal{R}_{2}^{b,\,c}\cup\mathcal{R}_{L-2}^{b,\,c}$
by the the same logic as we obtained \eqref{e_int2}. Then, since
$\mathcal{R}_{2}^{b,\,c}\subseteq\mathcal{E}^{b}$ and $\mathcal{R}_{L-2}^{b,\,c}\subseteq\mathcal{E}^{c}$,
part (1) implies that $\mathcal{E}^{a}\cap\mathcal{B}^{b,\,c}=\emptyset$.

\medskip{}

\noindent (4) The inclusion $\mathcal{E}\subseteq\widehat{\mathcal{N}}(\mathcal{S})$
is immediate from the definition of $\mathcal{E}$, and the inclusion
$\mathcal{B}\subseteq\widehat{\mathcal{N}}(\mathcal{S})$ is direct
from the fact that any configuration in $\mathcal{B}^{a,\,b}$ is
obtained starting from $\mathbf{a}$ via a canonical path which is
a $\Gamma$-path (cf. Lemma \ref{l_canpath}-(3)). Thus, we get
\begin{equation}
\mathcal{E}\cup\mathcal{B}\subseteq\widehat{\mathcal{N}}(\mathcal{S})\;.\label{e_uni1}
\end{equation}
On the other hand, by Lemma \ref{l_nhddc} with $\mathcal{P}_{1}=\mathcal{C}\setminus\mathcal{B}$,
$\mathcal{P}_{2}=\mathcal{B}$ and $\mathcal{Q}=\emptyset$ and Lemma
\ref{l_hatNPempty}, we can write 
\begin{align}
\widehat{\mathcal{N}}(\mathcal{C}) & =\widehat{\mathcal{N}}(\mathcal{B};\,\mathcal{C}\setminus\mathcal{B})\cup\widehat{\mathcal{N}}(\mathcal{C}\setminus\mathcal{B};\,\mathcal{B})\;.\label{e_uni2}
\end{align}
First, by Lemma \ref{l_typ2}, 
\begin{equation}
\widehat{\mathcal{N}}(\mathcal{B};\,\mathcal{C}\setminus\mathcal{B})=\mathcal{B}\;.\label{e_uni3}
\end{equation}
On the other hand, since any configuration in $\mathcal{C}\setminus\mathcal{B}$
is obtained starting from a ground state by a part of a canonical
path which is a $\Gamma$-path in $\mathcal{X}\setminus\mathcal{B}_{\Gamma}$,
we get 
\begin{equation}
\widehat{\mathcal{N}}(\mathcal{C}\setminus\mathcal{B};\,\mathcal{B})\subseteq\widehat{\mathcal{N}}(\mathcal{C}\setminus\mathcal{B};\,\mathcal{B}_{\Gamma})=\widehat{\mathcal{N}}(\mathcal{S};\,\mathcal{B}_{\Gamma})=\mathcal{E}\;.\label{e_uni4}
\end{equation}
By combining \eqref{e_uni2}, \eqref{e_uni3} and \eqref{e_uni4},
we get $\widehat{\mathcal{N}}(\mathcal{C})\subseteq\mathcal{B}\cup\mathcal{E}$.
Since $\mathcal{S}\subset\mathcal{C}$, the opposite inclusion of
\eqref{e_uni1} holds, and the proof is completed.
\end{proof}

\subsection{\label{secB.3}Edge typical configurations}

Here, we prove Propositions \ref{p_Zabtree} and \ref{p_Eadecomp}.
\begin{proof}[Proof of Proposition \ref{p_Zabtree}]
 To begin with, we consider the case of $K<L$. First, we prove the
only if part. Fix $\eta\in\mathcal{Z}^{a,\,b}$ and take a path $(\omega_{n})_{n=0}^{N}$
in $\mathcal{X}\setminus\mathcal{B}_{\Gamma}$ such that $\omega_{0}=\xi_{\ell,\,2}^{a,\,b}$
for some $\ell\in\mathbb{T}_{L}$, $\omega_{N}=\eta$ and $H(\omega_{n})=\Gamma$
for all $n\in\llbracket1,\,N\rrbracket$. It suffices to prove that
$\omega_{n}$ satisfies all requirements \textbf{{[}Z1{]}}, \textbf{{[}Z2{]}}
and \textbf{{[}Z3{]}} for all $n\in\llbracket1,\,N\rrbracket$. We
prove this by induction on $n$. First, consider the case $n=1$.
Then, since $\omega_{0}=\xi_{\ell,\,2}^{a,\,b}$ and $\omega_{1}\notin\mathcal{B}_{\Gamma}$,
the proof of Lemma \ref{l_typ1} implies that
\[
\omega_{1}\in\bigcup_{k\in\mathbb{T}_{K}}\{\xi_{\ell,\,1;\,k,\,K-1}^{a,\,b,\,+},\,\xi_{\ell+1,\,1;\,k,\,K-1}^{a,\,b,\,-}\}\;.
\]
In any case \textbf{{[}Z1{]}} and \textbf{{[}Z2{]}} are obvious, and
since $G_{a}(\eta)$ is a singleton subset of $\mathbb{T}_{K}\times\{\ell,\,\ell+1\}$,
\textbf{{[}Z3{]}} also holds. These observations conclude the first
step of $n=1$.

Now, suppose that the three conditions hold for $\omega_{n}$, and
then we consider $\omega_{n+1}$. Note that $H(\omega_{n})=H(\omega_{n+1})=\Gamma$.
We denote by $z\in\Lambda$ the site where the spin update happens
from $\omega_{n}$ to $\omega_{n+1}$, which does not change the energy.
By simple inspection, any spin update outside $\mathbb{T}_{K}\times\{\ell,\,\ell+1\}$
strictly increases the energy, and thus $z\in\mathbb{T}_{K}\times\{\ell,\,\ell+1\}$.
Moreover, any spin update to a third spin (other than $a$ and $b$)
strictly increases the energy, since the fact that $G_{a}(\omega_{n})$
is a sub-tree in $\mathbb{T}_{K}\times\{\ell,\,\ell+1\}$ implies
that there is no isolated spin. Therefore, the spin update at site
$z$ must be $b\to a$ or $a\to b$ inside $\mathbb{T}_{K}\times\{\ell,\,\ell+1\}$.
This observation verifies both \textbf{{[}Z1{]}} and \textbf{{[}Z2{]}}
for the new configuration $\omega_{n+1}$.

To check \textbf{{[}Z3{]}} for $\omega_{n+1}$, we divide into two
cases.
\begin{itemize}
\item If the update at $z$ is $b\to a$, then $z\notin G_{a}(\omega_{n})$
and $G_{a}(\omega_{n+1})=G_{a}(\omega_{n})\cup\{z\}$. Since $H(\omega_{n})=H(\omega_{n+1})$,
$z$ must have exactly two neighboring sites with spin $a$ and two
neighboring sites with spin $b$. Since $z$ has exactly one neighbor
outside $\mathbb{T}_{K}\times\{\ell,\,\ell+1\}$ which has spin $a$
by condition \textbf{{[}Z1{]}}, $z$ must have exactly one neighbor
with spin $a$ inside $\mathbb{T}_{K}\times\{\ell,\,\ell+1\}$. This
implies that $G_{a}(\omega_{n+1})=G_{a}(\omega_{n})\cup\{z\}$ is
still a tree.
\item If the update at $z$ is $a\to b$, then $z\in G_{a}(\omega_{n})$
and $G_{a}(\omega_{n+1})=G_{a}(\omega_{n})\setminus\{z\}$. The same
logic as above indicates that $z$ has exactly one neighbor with spin
$a$ inside $\mathbb{T}_{K}\times\{\ell,\,\ell+1\}$. This is equivalent
to saying that $z$ is an external vertex of the tree $G_{a}(\omega_{n})$
and that $\{z\}\subset G_{a}(\omega_{n})$ (with proper inclusion).
Therefore, $G_{a}(\omega_{n+1})=G_{a}(\omega_{n})\setminus\{z\}$
is also a tree. This concludes the proof of the only if part.
\end{itemize}
Now, we prove the if part. Suppose that \textbf{{[}Z1{]}}, \textbf{{[}Z2{]}}
and \textbf{{[}Z3{]}} hold for a configuration $\eta\in\mathcal{X}$.
Then, since $G_{a}(\eta)$ is a tree in $\mathbb{T}_{K}\times\{\ell,\,\ell+1\}$,
we may enumerate
\[
G_{a}(\eta)=\{x_{1},\,x_{2},\,\dots,\,x_{m}\}
\]
in a way that $\{x_{i},\,\dots,\,x_{m}\}$ is still a tree for all
$i\in\llbracket1,\,m\rrbracket$. Indeed, we can just inductively
choose an external vertex $x_{i}$ in each $G_{a}(\eta)\setminus\{x_{1},\,\dots,\,x_{i-1}\}$.
Then, starting from $\eta$, we define a path $(\omega_{n})_{n=0}^{m}$
such that for each $n\in\llbracket1,\,m\rrbracket$, $\omega_{n}$
is obtained from $\omega_{n-1}$ by flipping the spin $a$ on $x_{n}$
to spin $b$ (i.e., $\omega_{n}=\omega_{n-1}^{x_{n},\,b}$). Then,
by the same logic used in the two cases above, we have $H(\omega_{n})=H(\omega_{n-1})$
for $n\in\llbracket1,\,m-1\rrbracket$. On the other hand, the final
update from $\omega_{m-1}$ to $\omega_{m}$ decreases the energy
by $2$ since in $\omega_{m-1}$, $x_{m}$ has three neighboring sites
with spin $b$ and one neighboring site with spin $a$. Moreover,
$G_{a}(\omega_{m})=\emptyset$ which implies that $\omega_{m}=\xi_{\ell,\,2}^{a,\,b}\in\mathcal{R}_{2}^{a,\,b}$.
Therefore, the reversed path $(\omega_{m-n})_{n=0}^{m}$ guarantees
by definition that $\omega_{0}=\eta\in\mathcal{Z}^{a,\,b}$. This
concludes the proof of the case of $K<L$.

Finally, the case of $K=L$ can be dealt with in an identical manner;
the only difference is that we may also have $\omega_{0}=\Theta(\xi_{\ell,\,2}^{a,\,b})$
for some $\ell\in\mathbb{T}_{L}$, so that in \textbf{{[}Z2{]}} we
may have $P_{b}(\eta)\subseteq\{\ell,\,\ell+1\}\times\mathbb{T}_{K}$.
Everything else works in the same way.
\end{proof}
Before proceeding to the proof of Proposition \ref{p_Eadecomp}, we
give an additional property of the set $\mathcal{Z}^{a,\,b}$.
\begin{lem}
\label{l_Zab}Let $a,\,b\in S$ and $\eta\in\mathcal{Z}^{a,\,b}$.
Then, the followings hold. 
\begin{enumerate}
\item There exists a $\Gamma$-path from $\mathbf{a}$ to $\eta$ in $\mathcal{X}\setminus\mathcal{B}_{\Gamma}$.
\item If another $\xi\in\mathcal{X}$ satisfies $\eta\sim\xi$ and $H(\xi)\le\Gamma$,
then we have $\xi\in\mathcal{Z}^{a,\,b}\cup\mathcal{N}(\mathbf{a})\cup\mathcal{R}_{2}^{a,\,b}$.
\end{enumerate}
\end{lem}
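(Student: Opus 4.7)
The plan is to exploit the combinatorial characterization of $\mathcal{Z}^{a,b}$ given by Proposition \ref{p_Zabtree} together with the single-flip analysis already carried out in Lemma \ref{l_typ1} and in the induction step of that proposition. Throughout, the energy bookkeeping will rest on the local identity $H(\xi)-H(\eta)=d_{\eta(z)}(z)-d_{\xi(z)}(z)$, where $z$ is the unique site updated between $\eta$ and $\xi$ and $d_t(z)$ counts the $\eta$-neighbors of $z$ carrying spin $t$.

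For part (1), the idea is to prepend a $\Gamma$-path from $\mathbf{a}$ to $\mathcal{R}_2^{a,b}$ to the $\Gamma$-path supplied by Definition \eqref{e_Zabdef}. Concretely, since $\eta\in\mathcal{Z}^{a,b}$ there exist $\ell\in\mathbb{T}_L$ and a $\Gamma$-path in $\mathcal{X}\setminus\mathcal{B}_\Gamma$ from $\xi_{\ell,2}^{a,b}$ to $\eta$; I will take the initial segment of a canonical path from $\mathbf{a}$ to $\mathbf{b}$ (Definition \ref{d_canpath}) ending at $\xi_{\ell,2}^{a,b}$. By Lemma \ref{l_canpath} its height is at most $\Gamma$, and it passes only through $\mathcal{R}_v^{a,b}$ and $\mathcal{Q}_v^{a,b}$ with $v\in\{0,1\}$. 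Since $\mathcal{B}_\Gamma^{a,b}=\bigcup_{v=2}^{L-3}\mathcal{Q}_v^{a,b}$ and the configurations along this segment involve only spins $a$ and $b$, the segment avoids $\mathcal{B}_\Gamma$ altogether, and concatenation yields the desired path.

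For part (2), the above identity together with the structural conditions $P_b(\eta)\subseteq\mathbb{T}_K\times\{\ell,\ell+1\}$ and $G_a(\eta)$ a subtree of the ladder severely restricts the admissible flips subject to $H(\xi)\le\Gamma$. I will first rule out flips with $z$ lying outside the thickened strip $\mathbb{T}_K\times\{\ell-1,\dots,\ell+2\}$, where $z$ has four $a$-neighbors so that any update raises the energy beyond $\Gamma$, and flips to a third spin $c\notin\{a,b\}$ in the strip, using the tree property of $G_a(\eta)$ to force $d_{\eta(z)}(z)>d_c(z)=0$. The remaining updates split into two families: the $a\leftrightarrow b$ flips inside $\mathbb{T}_K\times\{\ell,\ell+1\}$, which by the case analysis already performed in the proof of Proposition \ref{p_Zabtree} preserve the tree property and hence land in $\mathcal{Z}^{a,b}\cup\mathcal{R}_2^{a,b}$; and the $b\to a$ updates on the outer boundary rows $\ell-1,\ell+2$ or at sites whose removal trivializes the cluster, which reduce the configuration to an element of $\mathcal{R}_1^{a,b}$ or to a subcritical droplet identified with a member of $\mathcal{N}(\mathbf{a})$ by the explicit descent of case (L3) in the proof of Proposition \ref{p_lowE}.

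The main obstacle will be the bookkeeping in the borderline situations where $G_a(\eta)$ has very few vertices or spans both rows of the strip in such a way that a single flip dismantles the two-row cluster. In those cases I will enumerate the local patterns of $z$ relative to $G_a(\eta)$ and $P_b(\eta)$ and exhibit, for each pattern, an explicit descent from $\xi$ to $\mathbf{a}$ of height strictly less than $\Gamma$, built from the standard shrinking moves of Proposition \ref{p_lowE}. Since the inequality $d_{\eta(z)}(z)-d_{\xi(z)}(z)\le 2$ pins down finitely many admissible degree profiles at $z$, the enumeration is finite and the classification $\xi\in\mathcal{Z}^{a,b}\cup\mathcal{N}(\mathbf{a})\cup\mathcal{R}_2^{a,b}$ follows.
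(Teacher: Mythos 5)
Your part (1) is correct but takes a different route from the paper. The paper descends directly from $\eta$ to $\mathbf{a}$: it enumerates the $b$-sites of the strip in an order that keeps $G_{a}$ connected and flips them to $a$ one by one, so that each flipped site has at least two $a$-neighbours and the energy never increases. You instead go upward, prepending to the defining path of \eqref{e_Zabdef} the initial segment of a canonical path from $\mathbf{a}$ to $\xi_{\ell,2}^{a,b}$. This works: the prefix has height at most $\Gamma$ by Lemma \ref{l_canpath} and meets only $\mathcal{Q}_{0}^{a,b}\cup\mathcal{Q}_{1}^{a,b}$, which is disjoint from $\mathcal{B}_{\Gamma}=\bigcup_{v\in\llbracket2,L-3\rrbracket}\mathcal{Q}_{v}$; you only need to record that the increasing sequence $(P_{v})$ can be chosen so that the canonical path terminates at the particular $\xi_{\ell,2}^{a,b}$ (or $\Theta(\xi_{\ell,2}^{a,b})$ when $K=L$) attached to $\eta$.

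For part (2) there is a genuine gap, and it sits exactly where you replace the path-based definition of $\mathcal{Z}^{a,b}$ by the combinatorial characterization. The paper dispatches the case $H(\xi)=\Gamma$ in one line: $\xi\notin\mathcal{B}_{\Gamma}$ by Lemma \ref{l_typ1}, so appending the edge $(\eta,\xi)$ to the defining path of $\eta$ exhibits $\xi\in\mathcal{Z}^{a,b}$ directly from \eqref{e_Zabdef}, with no flip classification at all. You instead try to verify \textbf{[Z1]}--\textbf{[Z3]} for $\xi$, and in particular you claim that flips to a third spin $c$ are excluded because the tree property of $G_{a}(\eta)$ forces the updated site to have at least one neighbour of its own spin. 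That implication fails for $q\ge3$: take $G_{a}(\eta)$ to be the five-vertex path through $(k-1,\ell)$, $(k-1,\ell+1)$, $(k,\ell+1)$, $(k+1,\ell+1)$, $(k+1,\ell)$. This is a subtree, and $\eta\in\mathcal{Z}^{a,b}$ (flip these five sites from $\xi_{\ell,2}^{a,b}$ in the reverse order and check the energy stays at $\Gamma$ after the first step), yet the $b$-site $(k,\ell)$ has all four neighbours equal to $a$, so recolouring it to $c$ leaves the energy at $\Gamma$. The resulting $\xi$ violates \textbf{[Z1]}, so your classification has nowhere to put it, whereas the paper's definition-based argument absorbs it into $\mathcal{Z}^{a,b}$ immediately. (The same isolated-$b$-site issue lurks in the ``no update to a third spin'' step of the only-if direction of Proposition \ref{p_Zabtree}, so you cannot simply import that induction step.) You should therefore restructure: handle $H(\xi)=\Gamma$ by path extension as the paper does, and reserve the local analysis, or better a direct appeal to Proposition \ref{p_lowE}, for $H(\xi)<\Gamma$, where your identification of the possible outcomes as $\mathcal{R}_{2}^{a,b}$, $\mathcal{R}_{1}^{a,b}\subseteq\mathcal{N}(\mathbf{a})$, or an $a$-cross configuration in $\mathcal{N}(\mathbf{a})$ is correct. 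Finally, note that your family of ``$b\to a$ updates on the outer boundary rows $\ell-1,\ell+2$'' is vacuous, since \textbf{[Z2]} places no $b$-spins outside the strip.
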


\begin{proof}
(1) Without loss of generality, we assume that $K<L$. According to
the notation and statements from Proposition \ref{p_Zabtree}, $G_{a}(\eta)$,
the collection of sites in $\mathbb{T}_{K}\times\{\ell,\,\ell+1\}$
with spin $a$, is a tree. Then, we may enumerate
\[
\big[\mathbb{T}_{K}\times\{\ell,\,\ell+1\}\big]\setminus G_{a}(\eta)=\{y_{1},\,\dots,\,y_{M}\}
\]
in a way that $G_{a}(\eta)\cup\{y_{1},\,\dots,\,y_{i}\}$ is connected
for all $i\in\llbracket1,\,M\rrbracket$. Indeed, we can just inductively
choose a neighboring site to $G_{a}(\eta)\cup\{y_{1},\,\dots,\,y_{i}\}$.
Then, we define a path $(\omega_{n})_{n=0}^{M}$ such that for each
$n\in\llbracket1,\,M\rrbracket$, $\omega_{n}$ is obtained from $\omega_{n-1}$
by flipping the spin $b$ on $y_{n}$ to spin $a$ (i.e., $\omega_{n}=\omega_{n-1}^{y_{n},\,a}$).
By our construction, on each step $n$, the site $y_{n}$ has at least
two neighboring sites with spin $a$ (one in $G_{a}(\eta)\cup\{y_{1},\,\dots,\,y_{n-1}\}$
and another outside $\mathbb{T}_{K}\times\{\ell,\,\ell+1\}$), which
implies that $H(\omega_{n})\le H(\omega_{n-1})$. Moreover, it is
clear that $H(\omega_{M})=\mathbf{a}$. Therefore, $(\omega_{n})_{n=0}^{M}$
is a $H(\eta)$-path from $\eta$ to $\mathbf{a}$. Since $H(\eta)=\Gamma$
and it is clear by construction that $\omega_{n}\notin\mathcal{B}_{\Gamma}$,
we conclude that $(\omega_{n})_{n=0}^{M}$ satisfies the requirements
of part (1).\medskip{}

\noindent (2) Let $(\omega_{n})_{n=0}^{N}$ be a path in $\mathcal{X}\setminus\mathcal{B}_{\Gamma}$
from $\mathcal{R}_{2}^{a,\,b}$ to $\eta$ such that $H(\omega_{n})=\Gamma$
for $n\in\llbracket1,\,N\rrbracket$ (cf. \eqref{e_Zabdef}). Suppose
first that $H(\xi)=\Gamma$. Then by part (3) of Lemma \ref{l_typ1},
it follows that $\xi\notin\mathcal{B}_{\Gamma}$. Hence, by letting
$\omega_{N+1}=\xi$, the path $(\omega_{n})_{n=0}^{N+1}$ in $\mathcal{X}\setminus\mathcal{B}_{\Gamma}$
is from $\mathcal{R}_{2}^{a,\,b}$ to $\xi$ and satisfies $H(\omega_{n})=\Gamma$
for $n\in\llbracket1,\,N+1\rrbracket$. Thus, by \eqref{e_Zabdef},
we get $\xi\in\mathcal{Z}^{a,\,b}$.

Next, we consider the case $H(\xi)<\Gamma$. Using Proposition \ref{p_lowE}
and considering the conditions \textbf{{[}Z1{]}}, \textbf{{[}Z2{]}}
and \textbf{{[}Z3{]}} in Proposition \ref{p_Zabtree} that $\eta$
must satisfy, we can easily deduce that $\xi\in\mathcal{R}_{2}^{a,\,b}$
or $\xi$ has an $a$-cross. In the latter case, again by Proposition
\ref{p_lowE} it holds that $\xi\in\mathcal{N}(\mathbf{a})$. Thus,
we conclude the proof.
\end{proof}
Now, we prove Proposition \ref{p_Eadecomp}.
\begin{proof}[Proof of Proposition \ref{p_Eadecomp}]
 First, we demonstrate that
\begin{equation}
\mathcal{E}^{a}=\widehat{\mathcal{N}}(\mathbf{a};\,\mathcal{Z}^{a}\cup\mathcal{R}^{a}\cup\mathcal{B}_{\Gamma})\cup\mathcal{Z}^{a}\cup\mathcal{R}^{a}\;.\label{e_proof0}
\end{equation}
Recall that $\mathcal{E}^{a}=\widehat{\mathcal{N}}(\mathbf{a};\,\mathcal{B}_{\Gamma})=\widehat{\mathcal{N}}(\mathcal{N}(\mathbf{a});\,\mathcal{B}_{\Gamma})$.
Since $\mathcal{Z}^{a}$ is connected to $\mathcal{N}(\mathbf{a})$
by a $\Gamma$-path outside $\mathcal{B}_{\Gamma}$ by Lemma \ref{l_Zab}-(1),
we have $\widehat{\mathcal{N}}(\mathcal{Z}^{a};\,\mathcal{B}_{\Gamma})=\widehat{\mathcal{N}}(\mathcal{N}(\mathbf{a});\,\mathcal{B}_{\Gamma})$.
Moreover, $\mathcal{R}^{a}$ is connected to $\mathbf{a}$ by a part
of a canonical path outside $\mathcal{B}_{\Gamma}$, so that $\widehat{\mathcal{N}}(\mathcal{R}^{a};\,\mathcal{B}_{\Gamma})=\widehat{\mathcal{N}}(\mathcal{N}(\mathbf{a});\,\mathcal{B}_{\Gamma})$.
Thus, we obtain
\[
\mathcal{E}^{a}=\widehat{\mathcal{N}}\big(\mathcal{N}(\mathbf{a})\cup\mathcal{Z}^{a}\cup\mathcal{R}^{a};\,\mathcal{B}_{\Gamma}\big)\;.
\]
Then, by Lemma \ref{l_nhddc} with $\mathcal{P}_{1}=\mathcal{N}(\mathbf{a})$,
$\mathcal{P}_{2}=\mathcal{Z}^{a}\cup\mathcal{R}^{a}$ and $\mathcal{Q}=\mathcal{B}_{\Gamma}$,
we obtain
\[
\mathcal{E}^{a}=\widehat{\mathcal{N}}(\mathcal{N}(\mathbf{a});\,\mathcal{Z}^{a}\cup\mathcal{R}^{a}\cup\mathcal{B}_{\Gamma})\cup\widehat{\mathcal{N}}(\mathcal{Z}^{a}\cup\mathcal{R}^{a};\,\mathcal{N}(\mathbf{a})\cup\mathcal{B}_{\Gamma})\;.
\]
Since it is clear that $\widehat{\mathcal{N}}(\mathcal{N}(\mathbf{a});\,\mathcal{Z}^{a}\cup\mathcal{R}^{a}\cup\mathcal{B}_{\Gamma})=\widehat{\mathcal{N}}(\mathbf{a};\,\mathcal{Z}^{a}\cup\mathcal{R}^{a}\cup\mathcal{B}_{\Gamma})$,
it suffices to prove that
\begin{equation}
\widehat{\mathcal{N}}(\mathcal{Z}^{a}\cup\mathcal{R}^{a};\,\mathcal{N}(\mathbf{a})\cup\mathcal{B}_{\Gamma})=\mathcal{Z}^{a}\cup\mathcal{R}^{a}\;.\label{e_proof}
\end{equation}
Subjected to the energy barrier $\Gamma$, the first exit from $\mathcal{Z}^{a}$
is to $\mathcal{N}(\mathbf{a})\cup\mathcal{R}^{a}$ by Lemma \ref{l_Zab}-(2),
and the first exit from $\mathcal{R}^{a}$ is to $\mathcal{Z}^{a}\cup\mathcal{B}^{\Gamma}$
by Lemma \ref{l_typ1}-(1) and \eqref{e_Zabdef}. Thus, we can deduce
\eqref{e_proof} by the same logic that we used to prove \eqref{e_typ2.1}.
Thus, we have proved \eqref{e_proof0}.

Now, to conclude the proof of Proposition \ref{p_Eadecomp} it remains
to prove that $\widehat{\mathcal{N}}(\mathbf{a};\,\mathcal{Z}^{a}\cup\mathcal{R}^{a}\cup\mathcal{B}_{\Gamma})=\mathcal{D}^{a}$.
Note that $\mathcal{D}^{a}=\widehat{\mathcal{N}}(\mathbf{a};\,\mathcal{Z}^{a})$.
It is clear that $\widehat{\mathcal{N}}(\mathbf{a};\,\mathcal{Z}^{a}\cup\mathcal{R}^{a}\cup\mathcal{B}_{\Gamma})\subseteq\widehat{\mathcal{N}}(\mathbf{a};\,\mathcal{Z}^{a})$.
Suppose that there exists $\eta\in\widehat{\mathcal{N}}(\mathbf{a};\,\mathcal{Z}^{a})\setminus\widehat{\mathcal{N}}(\mathbf{a};\,\mathcal{Z}^{a}\cup\mathcal{R}^{a}\cup\mathcal{B}_{\Gamma})$.
This implies that not only there exists a $\Gamma$-path $\omega$
from $\mathbf{a}$ to $\eta$ in $\mathcal{X}\setminus\mathcal{Z}^{a}$,
but also all such paths must visit $\mathcal{R}^{a}\cup\mathcal{B}_{\Gamma}$.
Thus, we may define
\[
n_{1}:=\min\{n\ge0:\omega_{n}\in\mathcal{R}^{a}\cup\mathcal{B}_{\Gamma}\}\;.
\]
By Lemma \ref{l_typ1} and \eqref{e_Zabdef}, we readily have three
possibilities:
\[
\omega_{n_{1}-1}\in\mathcal{Z}^{a}\cup\bigcup_{b\in S\setminus\{a\}}\bigcup_{v\in\llbracket3,\,L-3\rrbracket}\mathcal{R}_{v}^{a,\,b}\cup\bigcup_{b\in S\setminus\{a\}}\mathcal{R}_{L-2}^{a,\,b}\;.
\]
The first possibility, $\omega_{n_{1}-1}\in\mathcal{Z}^{a}$, is clearly
impossible by the definition of $\omega$. The second possibility,
$\omega_{n_{1}-1}\in\mathcal{R}_{v}^{a,\,b}$ for some $b\ne a$ and
$v\in\llbracket3,\,L-3\rrbracket$, is also impossible since then
by Lemma \ref{l_typ1}-(1), $\omega_{n_{1}-2}\in\mathcal{B}_{\Gamma}$
which contradicts the minimality of $n_{1}$. Finally, the third possibility,
$\omega_{n_{1}-1}\in\mathcal{R}_{L-2}^{a,\,b}$ for some $b\ne a$,
implies that $\omega_{n_{1}-1}\in\mathcal{E}^{b}$ and this is again
impossible since $\mathcal{E}^{a}\cap\mathcal{E}^{b}=\emptyset$ by
Proposition \ref{p_typprop}-(1). Therefore, all cases reject the
possibility that such $\eta$ exists, and we conclude the proof.
\end{proof}

\subsection{\label{secB.4}Graph structure}

In this subsection, we prove Propositions \ref{p_proj} and \ref{p_e0est}.
\begin{proof}[Proof of Proposition \ref{p_proj}]
 By symmetry, it suffices to prove only for the projection $\Pi_{\ell}^{a,\,b}:\mathcal{N}(\mathbf{a})\cup\mathcal{Z}_{\ell}^{a,\,b}\cup\{\xi_{\ell,\,2}^{a,\,b}\}\rightarrow\mathscr{V}$.
We first consider part (1). Suppose that $\eta_{1},\,\eta_{2}\ne\mathbf{a}$.
If $\eta_{1}\nsim\eta_{2}$, then both sides of \eqref{e_proj1} are
clearly $0$ and there is nothing to prove. If $\eta_{1}\sim\eta_{2}$
so that $\{\eta_{1},\,\eta_{2}\}\in\mathscr{E}$, then by \eqref{e_cdtMH}
and \eqref{e_EaMC}, we can write
\[
\frac{1}{q}e^{-\Gamma\beta}\mathfrak{r}(\Pi_{\ell}^{a,\,b}(\eta_{1}),\,\Pi_{\ell}^{a,\,b}(\eta_{2}))=\frac{1}{q}e^{-\Gamma\beta}=\frac{Z_{\beta}}{q}\cdot\mu_{\beta}(\eta_{1})r_{\beta}(\eta_{1},\,\eta_{2})\;,
\]
since $\min\{\mu_{\beta}(\eta_{1}),\,\mu_{\beta}(\eta_{2})\}=\frac{1}{Z_{\beta}}e^{-\Gamma\beta}$.
By Theorem \ref{t_mu}, the right-hand side of the last display becomes
$(1+o(1))\cdot\mu_{\beta}(\eta_{1})r_{\beta}(\eta_{1},\,\eta_{2})$.

We next consider part (2) so that $\eta_{1}\ne\mathbf{a}$. Similarly,
we may assume $\{\eta_{1},\,\mathbf{a}\}\in\mathscr{E}$ since otherwise
both sides of \eqref{e_proj2} become $0$. Then by \eqref{e_EaMC},
we can write 
\begin{align*}
\frac{1}{q}e^{-\Gamma\beta}\mathfrak{r}(\Pi_{\ell}^{a,\,b}(\eta_{1}),\,\mathbf{a}) & =\frac{1}{q}e^{-\Gamma\beta}|\{\xi\in\mathcal{N}(\mathbf{a}):\xi\sim\eta_{1}\}|\\
 & =|\{\xi\in\mathcal{N}(\mathbf{a}):\xi\sim\eta_{1}\}|\cdot\frac{Z_{\beta}}{q}\cdot\mu_{\beta}(\eta_{1})\;.
\end{align*}
Since $\min\{\mu_{\beta}(\eta_{1}),\,\mu_{\beta}(\xi)\}=\mu_{\beta}(\eta_{1})=\frac{1}{Z_{\beta}}e^{-\Gamma\beta}$
for all $\xi\in\mathcal{N}(\mathbf{a})$, by \eqref{e_cdtMH} and
Theorem \ref{t_mu}, the right-hand side equals
\[
(1+o(1))\cdot\sum_{\xi\in\mathcal{N}(\mathbf{a})}\mu_{\beta}(\eta_{1})r_{\beta}(\eta_{1},\,\xi)\;.
\]
This concludes the proof.
\end{proof}
\begin{proof}[Proof of Proposition \ref{p_e0est}]
 We fix $a,\,b\in S$ and $\ell\in\mathbb{T}_{L}$. Recall \eqref{e_eqpotMH}.
Note that
\[
\mathfrak{cap}(\mathbf{a},\,\xi_{\ell,\,2}^{a,\,b})=\mathfrak{D}(\mathfrak{f})=\sum_{\{x,\,y\}\subset\mathscr{V}}\frac{1}{|\mathscr{V}|}\mathfrak{r}(x,\,y)[\mathfrak{f}(y)-\mathfrak{f}(x)]^{2}\;.
\]
For each $k\in\mathbb{T}_{K}$, consider a path $\omega^{k}=(\omega_{n}^{k})_{n=0}^{K}$
such that
\[
\omega_{0}^{k}=\mathbf{a}\;\;\;\;\text{and}\;\;\;\;\omega_{n}^{k}=\xi_{\ell,\,1;\,k,\,n}^{a,\,b,\,+}\;\;\;\;\text{for }n\in\llbracket1,\,K\rrbracket\;.
\]
This is indeed a path from $\mathbf{a}$ to $\xi_{\ell,\,2}^{a,\,b}$
subjected to the process $\mathfrak{Z}(\cdot)$ since $\omega_{0}^{k}=\xi_{\ell,\,1}^{a,\,b}\in\mathcal{N}(\mathbf{a})$
and $\omega_{K}^{k}=\xi_{\ell,\,1;\,k,\,K}^{a,\,b,\,+}=\xi_{\ell,\,2}^{a,\,b}$.
Therefore, we may calculate along only these $K$ paths:
\[
\mathfrak{cap}(\mathbf{a},\,\xi_{\ell,\,2}^{a,\,b})\ge\sum_{k\in\mathbb{T}_{K}}\sum_{n=0}^{K-1}\frac{1}{|\mathscr{V}|}\mathfrak{r}(\omega_{n}^{k},\,\omega_{n+1}^{k})[\mathfrak{f}(\omega_{n+1}^{k})-\mathfrak{f}(\omega_{n}^{k})]^{2}=\frac{1}{|\mathscr{V}|}\sum_{k\in\mathbb{T}_{K}}\sum_{n=0}^{K-1}[\mathfrak{f}(\omega_{n+1}^{k})-\mathfrak{f}(\omega_{n}^{k})]^{2}\;.
\]
By the Cauchy--Schwarz inequality, we obtain
\[
\mathfrak{cap}(\mathbf{a},\,\xi_{\ell,\,2}^{a,\,b})\ge\frac{1}{|\mathscr{V}|}\sum_{k\in\mathbb{T}_{K}}\frac{1}{K}\Big[\sum_{n=0}^{K-1}[\mathfrak{f}(\omega_{n+1}^{k})-\mathfrak{f}(\omega_{n}^{k})]\Big]^{2}=\frac{1}{|\mathscr{V}|}\sum_{k\in\mathbb{T}_{K}}\frac{1}{K}[\mathfrak{f}(\mathbf{a})-\mathfrak{f}(\xi_{\ell,\,2}^{a,\,b})]^{2}\;.
\]
Since $\mathfrak{f}(\mathbf{a})=1$ and $\mathfrak{f}(\xi_{\ell,\,2}^{a,\,b})=0$,
we deduce that $\mathfrak{cap}(\mathbf{a},\,\xi_{\ell,\,2}^{a,\,b})\ge|\mathscr{V}|^{-1}$
which concludes the proof.
\end{proof}

\subsection{\label{secB.5}Gateway configurations}

In this subsection, we define the notion of \emph{gateway configurations}.
These are not needed in the current study, but they are essential
in the three-dimensional Ising/Potts models in our companion article
\cite{KS 3D}.
\begin{defn}[Gateway configurations]
\label{d_gate} Recall the collection $\mathcal{Z}^{a,\,b}$ defined
in \eqref{e_Zabdef}. Define the gateway $\mathcal{G}^{a,\,b}$ between
$\mathbf{a}$ and $\mathbf{b}$ as
\begin{equation}
\mathcal{G}^{a,\,b}=\mathcal{Z}^{a,\,b}\cup\mathcal{B}^{a,\,b}\cup\mathcal{Z}^{b,\,a}\;,\label{e_gatedef}
\end{equation}
where this expression gives a decomposition of $\mathcal{G}^{a,\,b}$.
Since $\mathcal{B}^{a,\,b}=\mathcal{B}^{b,\,a}$, we have $\mathcal{G}^{a,\,b}=\mathcal{G}^{b,\,a}$.
A configuration belonging to $\mathcal{G}^{a,\,b}$ is called a\textit{
gateway configuration} between $\mathbf{a}$ and $\mathbf{b}$.
\end{defn}

\begin{rem}
\label{r_gate}In the terminology which was first suggested in \cite{MNOS}
and widely used since then, \textit{gate} configurations are the ones
which serve as effective checkpoints of the transition, along with
the property that their energy equals the exact energy barrier (i.e.,
$\Gamma$ in our case). On the other hand, in our sense (cf. Definition
\ref{d_gate}), \textit{gateway} configurations may have energy less
than $\Gamma$; indeed, we have $\mathcal{R}_{v}^{a,\,b}\subset\mathcal{G}^{a,\,b}$
for $v\in\llbracket2,\,L-2\rrbracket$ where each configuration in
$\mathcal{R}_{v}^{a,\,b}$, $v\in\llbracket2,\,L-2\rrbracket$ has
energy $\Gamma-2<\Gamma$. Nevertheless, our definition of gateway
configurations still works properly since the gateway configurations
with energy less than $\Gamma$ must immediately visit the ones with
energy $\Gamma$ before preceding further. This fact is interpreted
in part (1) of Lemma \ref{l_typ1}.
\end{rem}

\begin{lem}
\label{l_gate}For $a,\,b\in S$, suppose that two configurations
$\eta$ and $\xi$ satisfy 
\[
\eta\in\mathcal{G}^{a,\,b}\;,\;\xi\notin\mathcal{G}^{a,\,b}\;,\;\eta\sim\xi\;\text{and}\;H(\xi)\le\Gamma\;.
\]
Then, we have either $\xi\in\mathcal{N}(\mathbf{a})$ and $\eta\in\mathcal{Z}^{a,\,b}$
or $\xi\in\mathcal{N}(\mathbf{b})$ and $\eta\in\mathcal{Z}^{b,\,a}$.
In particular, we cannot have $\eta\in\mathcal{B}^{a,\,b}$.
\end{lem}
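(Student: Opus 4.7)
The plan is a case analysis on the three-piece decomposition $\mathcal{G}^{a,b} = \mathcal{Z}^{a,b} \cup \mathcal{B}^{a,b} \cup \mathcal{Z}^{b,a}$ from \eqref{e_gatedef}. For each piece I will localize the neighbour $\xi$ by invoking the first-exit results already established, namely Lemma~\ref{l_typ1} and Lemma~\ref{l_Zab}-(2). The two outer pieces will immediately yield the two alternatives in the statement, while the bulk piece $\mathcal{B}^{a,b}$ will be ruled out entirely, which is exactly the ``in particular'' clause.

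If $\eta \in \mathcal{Z}^{a,b}$, Lemma~\ref{l_Zab}-(2) places $\xi$ in $\mathcal{Z}^{a,b} \cup \mathcal{N}(\mathbf{a}) \cup \mathcal{R}_2^{a,b}$. Since $\mathcal{R}_2^{a,b} \subseteq \mathcal{B}^{a,b} \subseteq \mathcal{G}^{a,b}$ and $\mathcal{Z}^{a,b} \subseteq \mathcal{G}^{a,b}$, the assumption $\xi \notin \mathcal{G}^{a,b}$ forces $\xi \in \mathcal{N}(\mathbf{a})$. The case $\eta \in \mathcal{Z}^{b,a}$ is handled by the identical argument with $a$ and $b$ swapped, producing $\xi \in \mathcal{N}(\mathbf{b})$.

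The heart of the proof is the remaining case $\eta \in \mathcal{B}^{a,b}$, which I must show is impossible. Applying Lemma~\ref{l_typ1}: if $\eta \in \mathcal{B}_\Gamma^{a,b}$, part~(2) gives $\xi \in \mathcal{B}^{a,b} \subseteq \mathcal{G}^{a,b}$, a contradiction; if $\eta \in \mathcal{R}_v^{a,b}$ with $v \in \llbracket 3, L-3 \rrbracket$, part~(1) gives $\xi \in \mathcal{Q}_{v-1}^{a,b} \cup \mathcal{Q}_v^{a,b} \subseteq \mathcal{B}_\Gamma^{a,b} \subseteq \mathcal{G}^{a,b}$, again a contradiction. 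The only subcases that do not close immediately are $\eta \in \mathcal{R}_2^{a,b}$ and $\eta \in \mathcal{R}_{L-2}^{a,b}$, for which Lemma~\ref{l_typ1}-(1) allows $\xi \in \mathcal{Q}_1^{a,b}$ or $\xi \in \mathcal{Q}_{L-2}^{a,b}$ respectively; these boundary protuberance sets sit outside $\mathcal{B}^{a,b}$ by the indexing in \eqref{e_Bab}, so the set-containment bookkeeping alone is not enough.

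This is the main technical obstacle, and I will overcome it by verifying directly that
\[
\mathcal{Q}_1^{a,b} \subseteq \mathcal{Z}^{a,b} \quad \text{and} \quad \mathcal{Q}_{L-2}^{a,b} \subseteq \mathcal{Z}^{b,a},
\]
after which both $\xi$-options fall back inside $\mathcal{G}^{a,b}$ and the contradiction closes. The check is against the tree-ladder characterization [Z1]--[Z3] of Proposition~\ref{p_Zabtree}: for any $\xi_{\ell,1;k,h}^{a,b,\pm} \in \mathcal{Q}_1^{a,b}$, the $b$-sites lie in two consecutive rows of $\Lambda$ (the full $b$-row together with the adjacent row carrying the $h$-segment), so [Z2] holds with the appropriate base row $\ell'$, and the complementary $a$-sites within those two rows form a single contiguous arc of $\mathbb{T}_K$ of length $K-h \ge 1$, which is a sub-tree of the ladder graph, establishing [Z3]; condition [Z1] is immediate. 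The inclusion for $\mathcal{Q}_{L-2}^{a,b}$ follows by the same verification after exchanging $a$ and $b$ and viewing the configuration from the $\mathbf{b}$-side. With these two inclusions in hand, the $\eta \in \mathcal{B}^{a,b}$ case is eliminated and the proof is complete.
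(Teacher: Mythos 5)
Your proof is correct and follows the same route the paper intends: the paper's proof is literally the one-line citation of Lemmas \ref{l_typ1} and \ref{l_Zab}, which you have fleshed out. In particular, you correctly isolated and resolved the only non-obvious point, namely that the boundary neighbours in $\mathcal{Q}_1^{a,\,b}$ and $\mathcal{Q}_{L-2}^{a,\,b}$ of configurations in $\mathcal{R}_2^{a,\,b}$ and $\mathcal{R}_{L-2}^{a,\,b}$ already lie in $\mathcal{Z}^{a,\,b}$ and $\mathcal{Z}^{b,\,a}$, respectively, so that the case $\eta\in\mathcal{B}^{a,\,b}$ is excluded.
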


\begin{proof}
The proof is straightforward from Lemmas \ref{l_typ1} and \ref{l_Zab}.
\end{proof}

\section{Dirichlet form on gateway configurations}

In this short section, we record a result which is not needed in the
current study, but necessary in our companion article \cite{KS 3D}.
\emph{In this subsection, we assume that $q=2$.}
\begin{prop}
Recall the test function $\widetilde{h}:\mathcal{X}=\{1,\,2\}^{\Lambda}\to\mathbb{R}$
from Definition \ref{d_testf}. Then, it holds that
\[
\sum_{\{\eta,\,\xi\}\subset\mathcal{X}:\,\{\eta,\,\xi\}\cap\mathcal{G}^{1,2}\ne\emptyset}\mu_{\beta}(\eta)r_{\beta}(\eta,\,\xi)\{\widetilde{h}(\xi)-\widetilde{h}(\eta)\}^{2}=\frac{1+o(1)}{2\kappa}e^{-\Gamma\beta}\;.
\]
\end{prop}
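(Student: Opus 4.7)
The plan is to reduce the identity to Proposition~\ref{p_testf} and then control a small remainder. By the detailed balance condition \eqref{e_cdtMH} for the MH dynamics, the Dirichlet form admits the symmetric representation
\begin{equation*}
D_\beta(\widetilde{h}) \;=\; \sum_{\{\eta,\xi\}\subset\mathcal{X}} \mu_\beta(\eta)\,r_\beta(\eta,\xi)\,\{\widetilde{h}(\xi)-\widetilde{h}(\eta)\}^2
\end{equation*}
over unordered pairs. Since $q=2$ forces $|\mathcal{P}|=|\mathcal{Q}|=1$, Proposition~\ref{p_testf} yields $D_\beta(\widetilde{h}) = \frac{1+o(1)}{2\kappa}e^{-\Gamma\beta}$. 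Hence it will suffice to show that the complementary sum, taken over pairs $\{\eta,\xi\}$ with $\{\eta,\xi\}\cap\mathcal{G}^{1,2}=\emptyset$, is $o(e^{-\Gamma\beta})$.

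First I would split these complementary pairs according to their position relative to $\widehat{\mathcal{N}}(\mathcal{S})$. If both $\eta,\xi\in\mathcal{X}\setminus\widehat{\mathcal{N}}(\mathcal{S})$, then Definition~\ref{d_testf}-(3) combined with Proposition~\ref{p_typprop}-(4) makes $\widetilde{h}$ identically $1$ on this region and the summand vanishes. If exactly one of them lies outside $\widehat{\mathcal{N}}(\mathcal{S})$, Lemma~\ref{l_Nhatbdry} forces its Hamiltonian to be at least $\Gamma+1$; then the detailed balance identity \eqref{e_cdtMH}, the bound $0\le\widetilde{h}\le 1$, and the fact that $\mathcal{X}$ has fixed cardinality together produce a total contribution of $O(e^{-(\Gamma+1)\beta})=o(e^{-\Gamma\beta})$.

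The substantive case is when both $\eta,\xi\in\widehat{\mathcal{N}}(\mathcal{S})\setminus\mathcal{G}^{1,2}$. Here my first step would be to identify this set explicitly: since $q=2$ implies $\mathcal{Z}^{1,2},\mathcal{Z}^{2,1}\subset\mathcal{G}^{1,2}$ and $\mathcal{R}_2^{1,2}\subset\mathcal{B}^{1,2}\subset\mathcal{G}^{1,2}$, a short bookkeeping using Proposition~\ref{p_typprop}-(4) and the decomposition in Proposition~\ref{p_Eadecomp} gives
\begin{equation*}
\widehat{\mathcal{N}}(\mathcal{S})\setminus\mathcal{G}^{1,2} \;=\; \mathcal{D}^1\cup\mathcal{D}^2.
\end{equation*}
When both $\eta,\xi$ lie in the same $\mathcal{D}^s$, Remark~\ref{r_deadconst} ensures $\widetilde{h}$ is constant there (equal to $1$ on $\mathcal{D}^1$, to $0$ on $\mathcal{D}^2$), so the summand again vanishes. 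Everything thus reduces to the sub-case $\eta\in\mathcal{D}^1$, $\xi\in\mathcal{D}^2$ with $\eta\sim\xi$.

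The main obstacle will be ruling out this last sub-case, i.e., showing that no edges exist between $\mathcal{D}^1$ and $\mathcal{D}^2$. I expect to argue by contradiction. The proof of Proposition~\ref{p_Eadecomp} actually establishes $\mathcal{D}^s=\widehat{\mathcal{N}}(\mathbf{s};\,\mathcal{Z}^s\cup\mathcal{R}^s\cup\mathcal{B}_\Gamma)$, so under the hypothesis there would exist $\Gamma$-paths $\omega^1:\mathbf{1}\to\eta$ and $\omega^2:\mathbf{2}\to\xi$, each lying entirely in $\mathcal{X}\setminus\mathcal{B}_\Gamma$. Reversing $\omega^2$ (legitimate by reversibility of the MH dynamics) and splicing it to $\omega^1$ via the assumed edge $\eta\sim\xi$ would yield a $\Gamma$-path from $\mathbf{1}$ to $\mathbf{2}$ that never enters $\mathcal{B}_\Gamma$. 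This directly contradicts the key identity $\{\mathbf{2}\}\cap\widehat{\mathcal{N}}(\mathbf{1};\mathcal{B}_\Gamma)=\emptyset$ established as equation \eqref{e_nonint} inside the proof of Proposition~\ref{p_typprop}, finishing the argument.
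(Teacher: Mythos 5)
Your proof is correct and follows essentially the same route as the paper: reduce to the Dirichlet form estimate of Proposition \ref{p_testf} (with $|\mathcal{P}|=|\mathcal{Q}|=1$) and observe, via the decomposition $\mathcal{E}\cup\mathcal{B}=\mathcal{D}^{1}\cup\mathcal{G}^{1,\,2}\cup\mathcal{D}^{2}$ and the constancy of $\widetilde{h}$ on each $\mathcal{D}^{s}$, that the pairs missing from the target sum contribute nothing (or $o(e^{-\Gamma\beta})$). You are in fact slightly more careful than the paper, which does not explicitly rule out edges between $\mathcal{D}^{1}$ and $\mathcal{D}^{2}$ nor treat pairs meeting $\mathcal{G}^{1,\,2}$ with one endpoint outside $\widehat{\mathcal{N}}(\mathcal{S})$; your path-splicing argument (or, more directly, $\mathcal{E}^{1}\cap\mathcal{E}^{2}=\emptyset$) correctly handles the former and Lemma \ref{l_Nhatbdry} the latter.
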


\begin{proof}
It has already been verified in the proof of Proposition \ref{p_testf}
that
\begin{equation}
\sum_{\{\eta,\,\xi\}\subset\mathcal{E}\cup\mathcal{B}}\mu_{\beta}(\eta)r_{\beta}(\eta,\,\xi)\{\widetilde{h}(\xi)-\widetilde{h}(\eta)\}^{2}=\frac{1+o(1)}{2\kappa}e^{-\Gamma\beta}\;.\label{eq1}
\end{equation}
Then, by the definition of $\widetilde{h}$ given in Definition \ref{d_testf},
$\widetilde{h}\equiv1$ on $\mathcal{D}^{1}$ and $\widetilde{h}\equiv0$
on $\mathcal{D}^{2}$. Thus, the summation in the left-hand side vanishes
if
\begin{equation}
\{\eta,\,\xi\}\subset\mathcal{D}^{1}\;\;\;\;\text{or}\;\;\;\;\{\eta,\,\xi\}\subset\mathcal{D}^{2}\;.\label{eq2}
\end{equation}
Proposition \ref{p_Eadecomp} and Definition \ref{d_gate} imply the
following decomposition:
\[
\mathcal{E}\cup\mathcal{B}=\mathcal{D}^{1}\cup\mathcal{G}^{1,\,2}\cup\mathcal{D}^{2}\;.
\]
Therefore, by \eqref{eq2}, the left-hand side of \eqref{eq1} equals
\[
\sum_{\{\eta,\,\xi\}\subset\mathcal{X}:\,\{\eta,\,\xi\}\cap\mathcal{G}^{1,2}\ne\emptyset}\mu_{\beta}(\eta)r_{\beta}(\eta,\,\xi)\{\widetilde{h}(\xi)-\widetilde{h}(\eta)\}^{2}\;,
\]
which concludes the proof of the proposition.
\end{proof}

\section{\label{secD}Proof of Results in Section \ref{sec8}}

\subsection{\label{secD.1}Energy barrier}
\begin{proof}[Proof of Proposition \ref{p_EBlb}]
 We fix such a path $\omega$, so that $\omega_{0}=\mathbf{a}$ and
$\omega_{N}=\mathbf{b}$ for some $\mathbf{b}\in\breve{\mathbf{a}}$,
and suppose the contrary that $\Phi_{\omega}\le2K+3$. By the formula
\eqref{e_Phiomega}, we note that
\begin{equation}
\max_{m\in\llbracket0,\,KL\rrbracket}H(\omega_{m})\le\max_{m\in\llbracket0,\,KL-1\rrbracket}\max_{a\in S}H(\omega_{m}^{x_{m},\,a})=\Phi_{\omega}\le2K+3\;,\label{e_2K+3}
\end{equation}
where $x_{m}$ is the location of the spin update from $\omega_{m}$
to $\omega_{m+1}$. Therefore, each configuration $\omega_{m}$ has
energy at most $2K+3$.

We claim that there exists $m\in\llbracket0,\,KL\rrbracket$ such
that $\omega_{m}\in\mathcal{Q}_{v}^{a_{0},\,b}$ for some $a_{0}\ne b$
and $v\in\llbracket2,\,L-3\rrbracket$, and that the size of protuberance
of spin $b$ belongs to $\llbracket2,\,K-2\rrbracket$. First, we
assume that the claim holds and then prove the theorem. If the claim
holds, then we can easily check that $H(\omega_{m})=2K+2$ and for
every $x\in\Lambda$,
\[
\max_{a\in S}H(\omega_{m}^{x,\,a})\in\{2K+4,\,2K+5,\,2K+6\}\;.
\]
Here, it is used that $q\ge3$; this no longer holds if $q=2$. This
implies that 
\[
\Phi_{\omega}\ge\max_{a\in S}H(\omega_{m}^{x_{m},\,a})\ge2K+4\;,
\]
which contradicts the assumption and thus concludes the proof.

It remains to verify the claim. To this end, we take maximal $m\in\llbracket0,\,KL\rrbracket$
such that $\|\omega_{m}\|_{b}=\lfloor\frac{KL}{2}\rfloor+3$ (cf.
\eqref{e_spinnum}). We divide the proof into three cases.\medskip{}

\noindent \textbf{(Case 1: $\omega_{m}$ has a horizontal $b$-bridge
without a vertical $b$-bridge) }For this case, since $\Delta H_{c_{h}}(\omega_{m})\ge2$
for all $h\in\llbracket1,\,K\rrbracket$, by \eqref{e_decH}, the
fact that $H(\omega_{m})\le2K+3$, and the fact that $\Delta H_{r_{v}}(\omega_{m})\ge2$
if $\Delta H_{r_{v}}(\omega_{m})>0$, we deduce that the possibilities
lie among:
\begin{itemize}
\item \textbf{(C1)} $\sum_{v\in\mathbb{T}_{L}}\Delta H_{r_{v}}(\omega_{m})=0$
and $\sum_{h\in\mathbb{T}_{K}}\Delta H_{c_{h}}(\omega_{m})=2K$,
\item \textbf{(C2)} $\sum_{v}\Delta H_{r_{v}}(\omega_{m})=0$ and $\sum_{h}\Delta H_{c_{h}}(\omega_{m})=2K+1$,
\item \textbf{(C3)} $\sum_{v}\Delta H_{r_{v}}(\omega_{m})=0$ and $\sum_{h}\Delta H_{c_{h}}(\omega_{m})=2K+2$,
\item \textbf{(C4)} $\sum_{v}\Delta H_{r_{v}}(\omega_{m})=0$ and $\sum_{h}\Delta H_{c_{h}}(\omega_{m})=2K+3$,
\item \textbf{(C5)} $\sum_{v}\Delta H_{r_{v}}(\omega_{m})=2$ and $\sum_{h}\Delta H_{c_{h}}(\omega_{m})=2K$,
\item \textbf{(C6)} $\sum_{v}\Delta H_{r_{v}}(\omega_{m})=2$ and $\sum_{h}\Delta H_{c_{h}}(\omega_{m})=2K+1$,
\item \textbf{(C7)} $\sum_{v}\Delta H_{r_{v}}(\omega_{m})=3$ and $\sum_{h}\Delta H_{c_{h}}(\omega_{m})=2K$.
\end{itemize}
In sub-cases \textbf{(C1)}, \textbf{(C2)}, \textbf{(C3)} and \textbf{(C4)},
all rows must be bridges so that all columns are the same. This is
impossible since $\|\omega_{m}\|_{b}=\lfloor\frac{KL}{2}\rfloor+3$
cannot be a multiple of $K\ge11$. In sub-case \textbf{(C5)}, it necessarily
holds that $\Delta H_{c_{h}}(\omega_{m})=2$ for all $h\in\llbracket1,\,K\rrbracket$.
Then, it is easy to deduce that $\omega_{m}\in\mathcal{Q}_{v}^{a_{0},\,b}$
for some $a_{0}\ne b$. Moreover, the size of protuberance of $b$
cannot be $1$ or $K-1$ since $\lfloor\frac{KL}{2}\rfloor+3$ cannot
be equivalent to $\pm1$ modulo $K$. Thus, the claim is proved in
this case. In sub-case \textbf{(C6)}, the only possible configuration
is the one which is obtained from a regular configuration by attaching
a protuberance of a third spin. This is impossible by the same logic
that $\lfloor\frac{KL}{2}\rfloor+3$ cannot be equivalent to $0$
or $\pm1$ modulo $K$. Finally, in sub-case \textbf{(C7)}, there
exists $v_{0}\in\mathbb{T}_{L}$ such that $\Delta H_{r_{v_{0}}}(\omega_{m})=3$
and $\Delta H_{r_{v}}(\omega_{m})=0$ for all $v\ne v_{0}$. This
implies that there are three types of spins in the row $r_{v_{0}}$
and all the other rows are bridges, but this contradicts the fact
that $\Delta H_{c_{h}}(\omega_{m})$ must be $2$ for all $h\in\mathbb{T}_{K}$.
Thus, this sub-case is impossible.

Therefore, we conclude the proof of the claim in this case.

\noindent \medskip{}

\noindent \textbf{(Case 2: $\omega_{m}$ has a vertical $b$-bridge
without a horizontal $b$-bridge)} This case can be handled by the
identical manner to \textbf{(Case 1)} and we omit the details.\medskip{}

\noindent \textbf{(Case 3: $\omega_{m}$ has a $b$-cross)} In this
case, as in the proof of Proposition \ref{p_lowE}, define $\widetilde{\sigma}\in\mathcal{X}$
as the configuration obtained from $\omega_{m}$ by replacing all
non-$b$ spins with some fixed $a_{0}\in S\setminus\{b\}$, i.e.,
for $x\in\Lambda$,
\[
\widetilde{\sigma}(x)=\begin{cases}
b & \text{if }\omega_{m}(x)=b\;,\\
a_{0} & \text{if }\omega_{m}(x)\ne b\;,
\end{cases}
\]
so that $H(\widetilde{\sigma})\le H(\omega_{m})$. Then, we can apply
the well-known isoperimetric inequality (e.g. \cite[Corollary 2.5]{AC})
to the $a_{0}$-clusters of $\widetilde{\sigma}$ to deduce
\[
H(\widetilde{\sigma})\ge4\sqrt{\|\widetilde{\sigma}\|_{a_{0}}}=4\sqrt{KL-\lfloor\frac{KL}{2}\rfloor-3}>2K+4\;,
\]
where we used $L\ge K\ge11$ at the last inequality. Thus, we obtain
$H(\omega_{m})>2K+3$ which contradicts \eqref{e_2K+3}. This completes
the proof. 
\end{proof}

\subsection{\label{secD.2}Typical configurations}

From now on, we provide proofs of the characterization of typical
configurations subjected to the cyclic dynamics, which are formulated
in Section \ref{sec8.3}. Since the structure is highly similar to
the typical configurations with respect to the MH dynamics, we will
omit the details if there are no specific novel ideas involved compared
to the corresponding previous versions in Section \ref{sec6} and
Appendix \ref{secB}.

First, we prove Proposition \ref{p_typpropcyc}. We need two lemmas.
\begin{lem}
\label{l_typcyc1}For $a,\,b\in S$, suppose that $\sigma_{1}\in\overline{\mathcal{B}}^{a,\,b}$
and $\sigma_{2}\in\mathcal{X}$ satisfy $\sigma_{1}\sim\sigma_{2}$
and $H(\sigma_{1},\,\sigma_{2})\le\Gamma$. Then, the following statements
hold.
\begin{enumerate}
\item If $\sigma_{1}\in\overline{\mathcal{R}}_{v}^{a,\,b}$ for $v\in\llbracket2,\,L-2\rrbracket$,
then $\sigma_{2}\in\overline{\mathcal{R}}_{v}^{a,\,b}\cup\overline{\mathcal{Q}}_{v-1}^{a,\,b}\cup\overline{\mathcal{Q}}_{v}^{a,\,b}$.
\item If $\sigma_{1}\in\overline{\mathcal{B}}_{\Gamma}^{a,\,b}$, then $\sigma_{2}\in\overline{\mathcal{B}}^{a,\,b}$.
\end{enumerate}
\end{lem}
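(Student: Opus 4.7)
The plan is to reduce Lemma~\ref{l_typcyc1} to its MH counterpart, Lemma~\ref{l_typ1}, by exploiting the orbit structure of cyclic updates. The central observation is that the cyclic height $H(\sigma_1, \sigma_2) = \max_{c \in S} H(\sigma_1^{x, c})$ for $\sigma_2 = \tau_x \sigma_1$ (or the backward analogue) coincides with $\max_{\zeta \in \mathfrak{O}_x(\sigma_1)} H(\zeta)$ by \eqref{e_OrbitH}, and therefore depends only on the local neighborhood of $x$ in $\sigma_1$; this local dependence makes the constraint $H(\sigma_1, \sigma_2) \le \Gamma = 2K + 4$ extremely restrictive whenever $\sigma_1$ itself is already elevated above its base canonical energy.

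For part~(1), I would parametrize $\sigma_1 \in \overline{\mathcal{R}}_v^{a,b}$ as $\sigma_1 = \tau_y^i \xi$ with $\xi \in \mathcal{R}_v^{a,b}$, $y \in \Lambda$ and $i \in \llbracket 0, q-1 \rrbracket$, and treat the cases $i = 0$ and $i > 0$ separately. In the case $i = 0$, a direct inspection of the local patterns around $x$ (interior of the $a$- or $b$-strip, or $a$-$b$ interface) shows that the cyclic constraint $\max_c H(\xi^{x,c}) \le \Gamma$ holds for every $x \in \Lambda$, and $\sigma_2 = \tau_x \xi \in \mathfrak{O}_x(\xi) \subset \overline{\xi} \subset \overline{\mathcal{R}}_v^{a,b}$ automatically by the orbit-closure definition (with a possibly finer membership in $\overline{\mathcal{Q}}_{v-1}^{a,b} \cup \overline{\mathcal{Q}}_v^{a,b}$ when the rotation yields a spin $b$ adjacent to the $b$-cluster). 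In the case $i > 0$, $\sigma_1$ already has energy at least $2K + 2$, and the constraint $\max_c H(\sigma_1^{x,c}) \le \Gamma$ forces $x$ to be either $y$ itself---in which case $\sigma_2 = \tau_y^{i \pm 1} \xi \in \mathfrak{O}_y(\xi) \subset \overline{\mathcal{R}}_v^{a,b}$---or one of the neighbors of $y$. The neighbor sub-case is handled by direct computation of $\max_c H(\sigma_1^{x,c})$: most instances fail the cyclic constraint, and in the surviving ones $\sigma_2$ is identified explicitly as a canonical configuration inside $\mathcal{Q}_{v-1}^{a,b} \cup \mathcal{Q}_v^{a,b}$, to which part~(1) of Lemma~\ref{l_typ1} applies. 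Part~(2) follows the identical template with the base family $\mathcal{R}_v^{a,b}$ replaced by $\mathcal{Q}_v^{a,b}$ along the MH-flip site $x_0$, and part~(2) of Lemma~\ref{l_typ1} invoked instead of part~(1).

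The main obstacle is the $x$-neighbor-of-$y$ analysis in the $i > 0$ regime, where the rotated spin at $y$ perturbs the energy balance around adjacent sites and can, in principle, admit low-height cyclic updates with no MH counterpart. The resolution is a bounded combinatorial check: for each local pattern of $y$ (interior of an $a$- or $b$-strip, or on the $a$-$b$ interface), each value of $i \in \llbracket 1, q-1 \rrbracket$, and each of the at most four neighbors $x$ of $y$, one computes $\max_c H(\sigma_1^{x,c})$ explicitly and verifies one of two outcomes: either the height exceeds $\Gamma$ and the update is forbidden, or $\sigma_2$ is identified as a configuration with a slightly enlarged $b$-cluster lying in the claimed orbit closure. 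The number of sub-cases is finite and uniformly bounded in $K$, $L$ and $q$; the same enumeration, with an additional split over the protuberance length of the base $\eta \in \mathcal{Q}_v^{a,b}$, completes part~(2).
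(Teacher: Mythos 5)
Your proposal is correct and follows essentially the same route as the paper, which simply states that the lemma "can be proved in the exact same way as Lemma \ref{l_typ1}" and omits the details: namely, one uses \eqref{e_OrbitH} to turn the cyclic height constraint into a constraint on the whole orbit at the update site, splits according to whether $\sigma_{1}$ is a base canonical configuration or an elevated orbit point, and performs the bounded local inspection of $\max_{c}H(\sigma_{1}^{x,\,c})$ around each candidate site. Your write-up is in fact more explicit than the paper's; the only (harmless) imprecision is that in the surviving neighbor sub-cases $\sigma_{2}$ need not itself be a canonical configuration of $\mathcal{Q}_{v-1}^{a,\,b}\cup\mathcal{Q}_{v}^{a,\,b}$ but only a point of the orbit closures $\overline{\mathcal{Q}}_{v-1}^{a,\,b}\cup\overline{\mathcal{Q}}_{v}^{a,\,b}$, which is what the statement requires.
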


\begin{proof}
This lemma can be proved in the exact same way as Lemma \ref{l_typ1},
and thus we omit the proof.
\end{proof}
We employ the previous lemma to prove the following crucial lemma.
\begin{lem}
\label{l_typcyc2}It holds that $\widehat{\mathcal{N}}(\overline{\mathcal{B}};\,\overline{\mathcal{E}}\setminus\overline{\mathcal{B}})=\overline{\mathcal{B}}$.
\end{lem}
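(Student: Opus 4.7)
The plan is to mimic the argument of Lemma \ref{l_typ2}, substituting Lemma \ref{l_typcyc1} for Lemma \ref{l_typ1}. The inclusion $\widehat{\mathcal{N}}(\overline{\mathcal{B}};\,\overline{\mathcal{E}}\setminus\overline{\mathcal{B}})\supseteq\overline{\mathcal{B}}$ is immediate once we check that $H(\sigma)\le\Gamma$ for all $\sigma\in\overline{\mathcal{B}}$. This follows from a direct orbit computation: a regular configuration $\xi_{\ell,v}^{a,b}$ has energy $2K$, and rotating a single spin can increase the energy by at most $4$, giving $H\le 2K+4=\Gamma$; similarly the protuberance configurations in $\mathcal{Q}_v^{a,b}$ have energy $2K+2$, and the orbit step at the update site (whose neighborhood contains two spins of each relevant colour) raises the energy by at most $2$. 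The zero-length path at each such $\sigma$ then witnesses $\sigma\in\widehat{\mathcal{N}}(\sigma;\,\overline{\mathcal{E}}\setminus\overline{\mathcal{B}})$.

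For the reverse inclusion, I first decompose $\widehat{\mathcal{N}}(\overline{\mathcal{B}};\,\overline{\mathcal{E}}\setminus\overline{\mathcal{B}})=\bigcup_{a,b}\widehat{\mathcal{N}}(\overline{\mathcal{B}}^{a,b};\,\overline{\mathcal{E}}\setminus\overline{\mathcal{B}})$ and fix $a,b$. Suppose for contradiction that some $\sigma$ lies in $\widehat{\mathcal{N}}(\overline{\mathcal{B}}^{a,b};\,\overline{\mathcal{E}}\setminus\overline{\mathcal{B}})\setminus\overline{\mathcal{B}}^{a,b}$, witnessed by a path $(\omega_n)_{n=0}^{N}$ with $\omega_0\in\overline{\mathcal{B}}^{a,b}$, $\omega_N=\sigma$, $\omega_n\sim\omega_{n+1}$, $H(\omega_n,\omega_{n+1})\le\Gamma$, and every $\omega_n\notin\overline{\mathcal{E}}\setminus\overline{\mathcal{B}}$. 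Let $n_0$ be the first index with $\omega_{n_0}\in\overline{\mathcal{B}}^{a,b}$ but $\omega_{n_0+1}\notin\overline{\mathcal{B}}^{a,b}$. Apply Lemma \ref{l_typcyc1}: part (2) immediately contradicts the exit property if $\omega_{n_0}\in\overline{\mathcal{B}}_{\Gamma}^{a,b}$; otherwise $\omega_{n_0}\in\overline{\mathcal{R}}_v^{a,b}$ for some $v\in\llbracket2,L-2\rrbracket$ and part (1) forces $\omega_{n_0+1}\in\overline{\mathcal{R}}_v^{a,b}\cup\overline{\mathcal{Q}}_{v-1}^{a,b}\cup\overline{\mathcal{Q}}_v^{a,b}$. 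Since $\omega_{n_0+1}\notin\overline{\mathcal{B}}^{a,b}$, only the edge cases survive, namely $v=2$ with $\omega_{n_0+1}\in\overline{\mathcal{Q}}_1^{a,b}$, or $v=L-2$ with $\omega_{n_0+1}\in\overline{\mathcal{Q}}_{L-2}^{a,b}$. In either case I will show $\omega_{n_0+1}\in\overline{\mathcal{E}}\setminus\overline{\mathcal{B}}$, which contradicts the hypothesis on the path.

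The main (minor) obstacle is establishing the containment $\overline{\mathcal{Q}}_1^{a,b}\cup\overline{\mathcal{Q}}_{L-2}^{a,b}\subseteq\overline{\mathcal{E}}\setminus\overline{\mathcal{B}}$, since we cannot yet appeal to Proposition \ref{p_typpropcyc} (which depends on this lemma). Membership in $\overline{\mathcal{E}}^a$ (resp. $\overline{\mathcal{E}}^b$) is obtained by exhibiting explicitly the initial (resp. final) segment of a cyclic canonical path as in Definition \ref{d_canpathcyc}; such a segment reaches these orbits at height $\Gamma$ while remaining in a zero-row or one-row strip of spin $b$, and therefore avoids $\overline{\mathcal{B}}_{\Gamma}$ (which by definition requires the strip to span at least two rows, $v\in\llbracket2,L-3\rrbracket$). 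Disjointness from $\overline{\mathcal{B}}$ follows from a direct spin-content inspection of the configurations in $\mathcal{Q}_1^{a,b}$ and $\mathcal{Q}_{L-2}^{a,b}$ together with the single rotated spin introduced by the orbit operation: the resulting spin pattern cannot match the horizontal-strip geometry of $\overline{\mathcal{B}}^{a',b'}$ for any $\{a',b'\}\neq\{a,b\}$, nor of $\overline{\mathcal{B}}^{a,b}$ itself. Once these two subsidiary facts are in hand, the induction-free contradiction above closes the proof.
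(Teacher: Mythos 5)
Your overall strategy is the same as the paper's: prove the reverse inclusion by locating the first exit from $\overline{\mathcal{B}}^{a,\,b}$, use Lemma \ref{l_typcyc1} to see that such an exit can only occur from $\overline{\mathcal{R}}_{2}^{a,\,b}$ or $\overline{\mathcal{R}}_{L-2}^{a,\,b}$ into $\overline{\mathcal{Q}}_{1}^{a,\,b}$ or $\overline{\mathcal{Q}}_{L-2}^{a,\,b}$, and then argue that the landing configuration lies in the forbidden set $\overline{\mathcal{E}}\setminus\overline{\mathcal{B}}$; the paper's proof is a two-line version of exactly this, and your $\supseteq$ direction and the claim $\overline{\mathcal{Q}}_{1}^{a,\,b}\subseteq\overline{\mathcal{E}}^{a}$ via initial segments of cyclic canonical paths (traversing orbits in both directions, using that Definition \ref{d_rnhdcyc} only requires $\omega_{n}\sim\omega_{n+1}$) are fine.

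There is, however, one false assertion in your subsidiary claim: $\overline{\mathcal{Q}}_{1}^{a,\,b}\cup\overline{\mathcal{Q}}_{L-2}^{a,\,b}$ is \emph{not} disjoint from $\overline{\mathcal{B}}^{a,\,b}$ itself. Concretely, $\xi_{\ell,\,1;\,k,\,K-1}^{a,\,b,\,+}$ belongs to $\mathcal{Q}_{1}^{a,\,b}$ (hence to $\overline{\mathcal{Q}}_{1}^{a,\,b}$, being an endpoint of the orbit joining it to $\xi_{\ell,\,1;\,k,\,K-2}^{a,\,b,\,+}$), but it coincides with $\xi_{\ell,\,2}^{a,\,b}$ with a single spin changed from $b$ to $a$, so it lies in $\overline{\xi}_{\ell,\,2}^{a,\,b}\subseteq\overline{\mathcal{R}}_{2}^{a,\,b}\subseteq\overline{\mathcal{B}}^{a,\,b}$; symmetrically $\xi_{\ell,\,L-2;\,k,\,1}^{a,\,b,\,\pm}\in\overline{\mathcal{R}}_{L-2}^{a,\,b}$. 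This does not sink the argument, because the configuration $\omega_{n_{0}+1}$ you must handle is already known to lie \emph{outside} $\overline{\mathcal{B}}^{a,\,b}$ by the choice of $n_{0}$; what you actually need is the weaker statement that $\big(\overline{\mathcal{Q}}_{1}^{a,\,b}\cup\overline{\mathcal{Q}}_{L-2}^{a,\,b}\big)\setminus\overline{\mathcal{B}}^{a,\,b}$ is disjoint from $\overline{\mathcal{B}}^{a',\,b'}$ for every $\{a',\,b'\}\neq\{a,\,b\}$. That weaker statement does follow from spin counting: a configuration of $\overline{\mathcal{Q}}_{1}^{a,\,b}$ has at most $2K$ sites with spin different from $a$ and at most one site with spin outside $\{a,\,b\}$, whereas membership in $\overline{\mathcal{B}}^{a',\,b'}$ forces at least $2K-1$ sites of each of $a'$ and $b'$. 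Restate your subsidiary fact in this weaker form and the contradiction closes as you intend.
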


\begin{proof}
It suffices to show $\widehat{\mathcal{N}}(\overline{\mathcal{B}};\,\overline{\mathcal{E}}\setminus\overline{\mathcal{B}})\subseteq\overline{\mathcal{B}}$.
If we suppose the contrary, then there must be a first exit from the
set $\overline{\mathcal{B}}$ outside $\overline{\mathcal{E}}\setminus\overline{\mathcal{B}}$.
This is impossible, since Lemma \ref{l_typcyc1} implies that we first
exit from $\overline{\mathcal{B}}$ via a configuration in $\overline{\mathcal{R}}_{2}\subset\overline{\mathcal{E}}$.
This concludes the proof.
\end{proof}
\begin{proof}[Proof of Proposition \ref{p_typpropcyc}]
 (1) It suffices to prove that $\{\mathbf{b}\}\cap\widehat{\mathcal{N}}(\mathbf{a};\,\overline{\mathcal{B}}_{\Gamma})=\emptyset$.
Suppose the contrary that $(\omega_{n})_{n=0}^{N}$ is a sequence
from $\mathbf{a}$ to $\mathbf{b}$ in $\mathcal{X}\setminus\overline{\mathcal{B}}_{\Gamma}$
so that $\omega_{n}\sim\omega_{n+1}$ and $H(\omega_{n},\,\omega_{n+1})\le\Gamma$.
Following the proof of Proposition \ref{p_EBlb}, we again claim that
there exists $m\in\llbracket0,\,KL\rrbracket$ such that $\omega_{m}\in\mathcal{Q}_{v}^{a_{0},\,b}$
for some $a_{0}\ne b$ and $v\in\llbracket2,\,L-3\rrbracket$ and
that the size of protuberance of spin $b$ belongs to $\llbracket2,\,K-2\rrbracket$.
Such configuration belongs to $\overline{\mathcal{B}}_{\Gamma}$,
and thus we are able to conclude the proof by contradiction.

As before, we take maximal $m\in\llbracket0,\,KL\rrbracket$ such
that $\|\omega_{m}\|_{b}=\lfloor\frac{KL}{2}\rfloor+3$ (cf. \eqref{e_spinnum}).
The only difference here with the proof of Proposition \ref{p_EBlb}
is that we also allow $\omega_{m}$ to have energy $2K+4$. In turn,
we have four additional sub-cases added to the seven sub-cases in
\textbf{(Case 1)} in the proof of Proposition \ref{p_EBlb}, namely,
\begin{itemize}
\item \textbf{(C8)} $\sum_{v}\Delta H_{r_{v}}(\omega_{m})=0$ and $\sum_{h}\Delta H_{c_{h}}(\omega_{m})=2K+4$,
\item \textbf{(C9)} $\sum_{v}\Delta H_{r_{v}}(\omega_{m})=2$ and $\sum_{h}\Delta H_{c_{h}}(\omega_{m})=2K+2$,
\item \textbf{(C10)} $\sum_{v}\Delta H_{r_{v}}(\omega_{m})=3$ and $\sum_{h}\Delta H_{c_{h}}(\omega_{m})=2K+1$,
\item \textbf{(C11)} $\sum_{v}\Delta H_{r_{v}}(\omega_{m})=4$ and $\sum_{h}\Delta H_{c_{h}}(\omega_{m})=2K$.
\end{itemize}
\begin{figure}
\includegraphics[height=2.8cm]{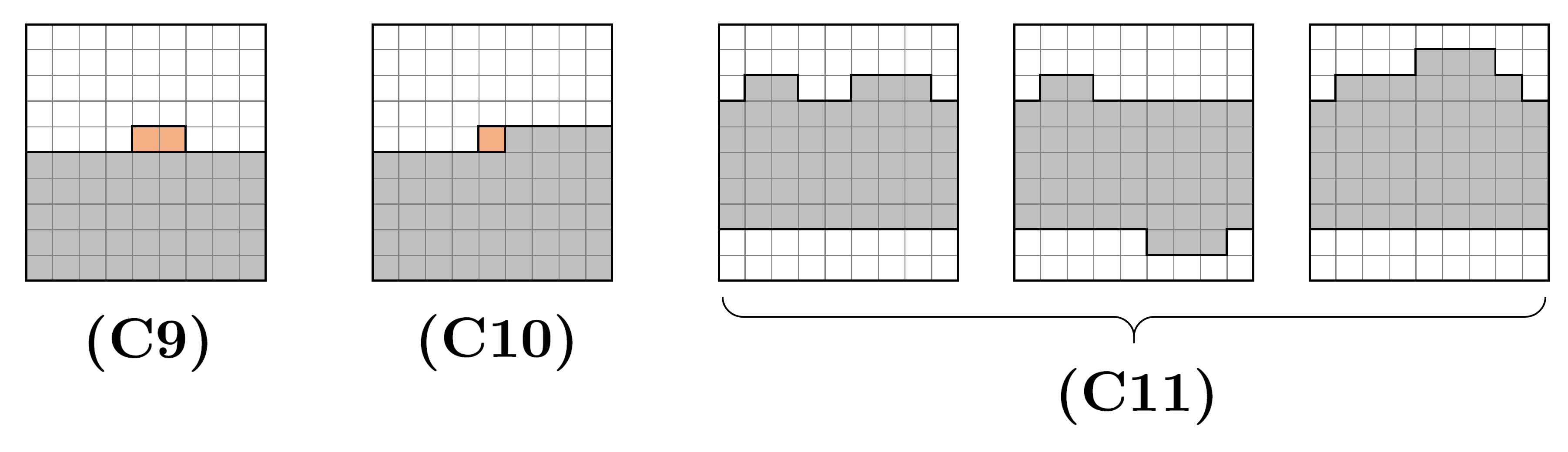}

\caption{\label{figC.1}\textbf{Illustration for the proof of Proposition \ref{p_typpropcyc}.}
The figures represent configurations belonging to sub-cases \textbf{(C9)},\textbf{
(C10)} and \textbf{(C11)}, respectively.}
\end{figure}

Sub-case \textbf{(C8)} is impossible since $\|\omega_{m}\|_{b}$ cannot
be a multiple of $K$. For sub-case \textbf{(C9)}, the only possible
type is illustrated in Figure \ref{figC.1}-left and thus in this
case we must have $\|\omega_{m}\|_{b}=\lfloor\frac{KL}{2}\rfloor+3\equiv0$
or $\pm2$ modulo $K$. This is impossible for $K\ge11$.

For \textbf{(C10)}, the only possible type of configuration is given
in Figure \ref{figC.1}-middle. Then, to obtain the next configuration
$\omega_{m+1}$ without exceeding the energy barrier $\Gamma=2K+4$,
it is mandatory that the spin flip $\omega_{m}\to\omega_{m+1}$ happens
on the single protuberance spin. Since $m$ is maximal, the resulting
spin must be $b$, so that we obtain $\omega_{m+1}\in\overline{\mathcal{B}}_{\Gamma}$
which contradicts our assumption (here we employed the fact that $\|\omega_{m}\|_{b}=\lfloor\frac{KL}{2}\rfloor+3$
cannot be equivalent to $-2$ modulo $K$). Finally, for sub-case
\textbf{(C11)}, the possible types of configurations are given in
Figure \ref{figC.1}-right. Note that any update from a configuration
of one of these types attains energy barrier at least $2K+6$, and
thus we obtain a contradiction. This completes the proof of part (1).
\medskip{}

\noindent (2) By definition, we have $\overline{\mathcal{R}}_{2}^{a,\,b}\subset\overline{\mathcal{B}}^{a,\,b}$.
Moreover, it is verified in Definition \ref{d_canpathcyc} and Lemma
\ref{l_canpathcyc} that a part of a canonical path from $\mathbf{a}$
to $\mathbf{b}$ becomes a $\Gamma$-path from $\mathbf{a}$ to $\mathcal{R}_{2}^{a,\,b}$
in $\mathcal{X}\setminus\overline{\mathcal{B}}_{\Gamma}$, so that
$\mathcal{R}_{2}^{a,\,b}\subset\overline{\mathcal{E}}^{a}$. This
automatically implies that $\overline{\mathcal{R}}_{2}^{a,\,b}\subset\overline{\mathcal{E}}^{a}$.
Hence, we have proved 
\begin{equation}
\overline{\mathcal{E}}^{a}\cap\overline{\mathcal{B}}^{a,\,b}\supseteq\overline{\mathcal{R}}_{2}^{a,\,b}\;.\label{e_typcyc1}
\end{equation}
To prove the other direction, observe that
\[
\overline{\mathcal{E}}^{a}\cap\overline{\mathcal{B}}^{a,\,b}\subseteq\overline{\mathcal{B}}^{a,\,b}\setminus\overline{\mathcal{B}}_{\Gamma}^{a,\,b}=\bigcup_{v\in\llbracket2,\,L-2\rrbracket}\overline{\mathcal{R}}_{v}^{a,\,b}\;.
\]
For $\sigma\in\overline{\mathcal{R}}_{v}^{a,\,b}$ with $v\in\llbracket3,\,L-3\rrbracket$,
we cannot have a $\Gamma$-path in $\mathcal{X}\setminus\overline{\mathcal{B}}_{\Gamma}$
from $\mathbf{a}$ to $\sigma$ by part (1) of Lemma \ref{l_typcyc1}.
Thus, such $\sigma$ cannot belong to $\overline{\mathcal{E}}^{a}$
and we deduce that 
\begin{equation}
\overline{\mathcal{E}}^{a}\cap\overline{\mathcal{B}}^{a,\,b}\subseteq\overline{\mathcal{R}}_{2}^{a,\,b}\cup\overline{\mathcal{R}}_{L-2}^{a,\,b}\;.\label{e_typcyc1.5}
\end{equation}
Since $\overline{\mathcal{R}}_{L-2}^{a,\,b}\subset\overline{\mathcal{E}}^{b}$
and $\overline{\mathcal{E}}^{a}\cap\overline{\mathcal{E}}^{b}=\emptyset$
by part (1), we conclude that 
\begin{equation}
\overline{\mathcal{E}}^{a}\cap\overline{\mathcal{B}}^{a,\,b}\subseteq\overline{\mathcal{R}}_{2}^{a,\,b}\;.\label{e_typcyc2}
\end{equation}
The proof is completed by \eqref{e_typcyc1} and \eqref{e_typcyc2}.\medskip{}

\noindent (3) By the same logic as we obtained \eqref{e_typcyc1.5},
we deduce that $\overline{\mathcal{E}}^{a}\cap\overline{\mathcal{B}}^{b,\,c}\subseteq\overline{\mathcal{R}}_{2}^{b,\,c}\cup\overline{\mathcal{R}}_{L-2}^{b,\,c}$.
Since $\overline{\mathcal{R}}_{2}^{b,\,c}\subset\overline{\mathcal{E}}^{b}$
and $\overline{\mathcal{R}}_{L-2}^{b,\,c}\subset\overline{\mathcal{E}}^{c}$,
part (1) implies that $\overline{\mathcal{E}}^{a}\cap\overline{\mathcal{B}}^{b,\,c}=\emptyset$.

\medskip{}

\noindent (4) It suffices to prove that 
\begin{equation}
\widehat{\mathcal{N}}(\overline{\mathcal{E}}\cup\overline{\mathcal{B}})\subseteq\overline{\mathcal{E}}\cup\overline{\mathcal{B}}\;,\label{e_typcyc3}
\end{equation}
since it is clear that $\overline{\mathcal{E}}\cup\overline{\mathcal{B}}\subseteq\widehat{\mathcal{N}}(\overline{\mathcal{E}}\cup\overline{\mathcal{B}})=\widehat{\mathcal{N}}(\mathcal{S})$.
Indeed, the canonical paths defined in Definition \ref{d_canpathcyc}
assure us that $\widehat{\mathcal{N}}(\overline{\mathcal{B}})=\widehat{\mathcal{N}}(\mathcal{S})$
and by definition and Lemma \ref{l_hatNPemptycyc}, $\widehat{\mathcal{N}}(\overline{\mathcal{E}})\subseteq\widehat{\mathcal{N}}(\mathcal{S})$.
To prove \eqref{e_typcyc3}, by Lemma \ref{l_nhddc} with $\mathcal{P}_{1}=\overline{\mathcal{B}}$,
$\mathcal{P}_{2}=\overline{\mathcal{E}}\setminus\overline{\mathcal{B}}$
and $\mathcal{Q}=\emptyset$ and Lemma \ref{l_hatNPemptycyc}, 
\begin{align*}
\widehat{\mathcal{N}}(\overline{\mathcal{E}}\cup\overline{\mathcal{B}}) & =\widehat{\mathcal{N}}(\overline{\mathcal{B}};\,\overline{\mathcal{E}}\setminus\overline{\mathcal{B}})\cup\widehat{\mathcal{N}}(\overline{\mathcal{E}}\setminus\overline{\mathcal{B}};\,\overline{\mathcal{B}})\;.
\end{align*}
This completes the proof of \eqref{e_typcyc3} by Lemma \ref{l_typcyc2},
since we have $\widehat{\mathcal{N}}(\overline{\mathcal{E}}\setminus\overline{\mathcal{B}};\,\overline{\mathcal{B}})\subseteq\widehat{\mathcal{N}}(\overline{\mathcal{E}};\,\overline{\mathcal{B}}_{\Gamma})=\overline{\mathcal{E}}$.
\end{proof}
Finally, we proceed to prove Proposition \ref{p_e0barabest}. First,
we state a few lemmas. For $a\in S$, we denote by $\mathcal{N}^{\mathrm{MH}}(\mathbf{a})$
the neighborhood of $\mathbf{a}$ in the sense of the MH dynamics
(cf. Definition \ref{d_nhd}).
\begin{lem}
\label{l_NMHNcyc}For every $a\in S$, it holds that $\mathcal{N}^{\mathrm{MH}}(\mathbf{a})\subseteq\mathcal{N}(\mathbf{a})$.
\end{lem}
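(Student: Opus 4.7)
The plan is to take any MH-path from $\mathbf{a}$ to $\xi$ staying strictly below the MH-barrier $2K+2$ and turn it into a cyclic path from $\mathbf{a}$ to $\xi$ staying strictly below the cyclic barrier $\Gamma=2K+4$. More precisely, given $\xi\in\mathcal{N}^{\mathrm{MH}}(\mathbf{a})$, fix an MH-path $(\eta_m)_{m=0}^{N}$ from $\mathbf{a}$ to $\xi$ with $\max_{m}\max\{H(\eta_m),H(\eta_{m+1})\}\le 2K+1$. For each step $\eta_{m+1}=\eta_m^{x_m,b_m}$, let $r_m\in\llbracket 1,q-1\rrbracket$ be the unique integer with $b_m\equiv \eta_m(x_m)+r_m\pmod q$, and insert the cyclic orbit fragment $(\eta_m,\tau_{x_m}\eta_m,\tau_{x_m}^{2}\eta_m,\dots,\tau_{x_m}^{r_m}\eta_m=\eta_{m+1})$. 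The concatenation of these fragments is a cyclic path $\omega:\mathbf{a}\to\xi$, and I will show $\Phi_{\omega}\le 2K+3<\Gamma$.

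By the identity \eqref{e_OrbitH}, every transition inside the inserted orbit at step $m$ has height equal to $M_m:=\max_{a\in S}H(\eta_m^{x_m,a})$, so it suffices to prove that $M_m\le 2K+3$ for each $m$. This is the key local estimate and the heart of the argument. Decompose $H(\eta_m^{x_m,a})=H_0+(4-n_a)$, where $H_0$ is the energy contribution from edges not incident to $x_m$ and $n_a$ is the number of neighbors of $x_m$ with spin $a$ in $\eta_m$. Writing $c_m=\eta_m(x_m)$, the MH bound gives $H_0\le 2K-3+n_{c_m}$ and $H_0\le 2K-3+n_{b_m}$, hence
\begin{equation*}
H_0\le 2K-3+\min(n_{c_m},n_{b_m}).
\end{equation*}
On the other hand $M_m=H_0+4-\min_{a\in S}n_a$, so the desired bound $M_m\le 2K+3$ reduces to
\begin{equation*}
\min(n_{c_m},n_{b_m})\le 2+\min_{a\in S}n_a.
\end{equation*}
If the minimizer $a^{*}$ of $n_a$ equals $c_m$ or $b_m$, this is trivial. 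Otherwise $n_{c_m}+n_{b_m}\le 4-n_{a^{*}}$ because the four neighbors of $x_m$ must accommodate all three spins, which gives $\min(n_{c_m},n_{b_m})\le 2-n_{a^{*}}/2\le 2+n_{a^{*}}$, as required.

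The main obstacle is this last combinatorial step; once it is in hand, the rest is bookkeeping. Assembling the fragments across $m=0,\ldots,N-1$ produces a cyclic path from $\mathbf{a}$ to $\xi$ whose height is bounded by $\max_m M_m\le 2K+3$, which in particular is strictly less than $\Gamma=2K+4$. Hence $\Phi(\mathbf{a},\xi)<\Gamma$ in the cyclic dynamics, i.e., $\xi\in\mathcal{N}(\mathbf{a})$, completing the proof.
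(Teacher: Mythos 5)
Your proof is correct and follows essentially the same route as the paper's: expand each MH spin flip into a cyclic orbit fragment, note via \eqref{e_OrbitH} that every transition in the fragment has height $\max_{c}H(\eta_m^{x_m,c})$, and bound this by $2K+3$ using the fact that among the four neighbors of $x_m$ the spins $c_m$ and $b_m$ cannot both appear three or more times. Your inequality $\min(n_{c_m},n_{b_m})\le 2+\min_a n_a$ is exactly the paper's estimate $\max_{c}H(\widetilde{\omega}_m^{x_m,c})\le\max\{H(\widetilde{\omega}_m),H(\widetilde{\omega}_{m+1})\}+2$ rewritten in the $H_0$-decomposition, so the two arguments coincide.
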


\begin{proof}
We take $\sigma\in\mathcal{N}^{\mathrm{MH}}(\mathbf{a})$ and prove
that $\sigma\in\mathcal{N}(\mathbf{a})$. There exists a path $\widetilde{\omega}:\mathbf{a}\to\sigma$
subject to the MH dynamics such that $\Phi_{\widetilde{\omega}}\le2K+1$.
Now, we define a path $\omega:\mathbf{a}\to\sigma$ by enlarging each
MH-spin update in $\widetilde{\omega}$ by iterating spin rotations
on the corresponding site. Then, to prove that $\sigma\in\mathcal{N}(\mathbf{a})$,
it suffices to demonstrate that $\Phi_{\omega}\le2K+3$.

Consider each pair $(\widetilde{\omega}_{m},\,\widetilde{\omega}_{m+1})$
with $\widetilde{\omega}_{m+1}=\widetilde{\omega}_{m}^{x_{m},\,b}$
for some $x_{m}\in\Lambda$ and $b\in S$, from which the enlarged
path becomes $(\omega_{n},\,\omega_{n+1},\,\dots,\,\omega_{n+r})$
(so that $\omega_{n}=\widetilde{\omega}_{m}$ and $\omega_{n+r}=\widetilde{\omega}_{m+1}$).
Then, by \eqref{e_Hsigmazeta},
\[
H(\omega_{n+i},\,\omega_{n+i+1})=\max_{c\in S}H(\widetilde{\omega}_{m}^{x_{m},\,c})\;\;\;\;\text{for all }i\in\llbracket0,\,r-1\rrbracket\;.
\]
Since $\widetilde{\omega}_{m}(x_{m})\ne\widetilde{\omega}_{m+1}(x_{m})$
and $\widetilde{\omega}_{m}(x)=\widetilde{\omega}_{m+1}(x)$ for all
$x\ne x_{m}$, it holds that
\[
\min\Big\{\sum_{x\in\Lambda:\,x\sim x_{m}}\mathbf{1}\{\widetilde{\omega}_{m}(x)=\widetilde{\omega}_{m}(x_{m})\},\,\sum_{x\in\Lambda:\,x\sim x_{m}}\mathbf{1}\{\widetilde{\omega}_{m+1}(x)=\widetilde{\omega}_{m+1}(x_{m})\}\Big\}\le2\;.
\]
Indeed, if the minimum is bigger than $2$, then $x_{m}$ must have
at least three neighboring sites with spin $\widetilde{\omega}_{m}(x_{m})$
and also at least three neighboring sites with spin $\widetilde{\omega}_{m+1}(x_{m})$,
which is absurd since $x_{m}$ has exactly four neighboring sites
in $\Lambda$. Thus, we deduce that
\[
\max_{c\in S}H(\widetilde{\omega}_{m}^{x_{m},\,c})\le\max\{H(\widetilde{\omega}_{m}),\,H(\widetilde{\omega}_{m+1})\}+2\;.
\]
This concludes the proof since we have
\[
\Phi_{\omega}=\max_{n\ge0}H(\omega_{n},\,\omega_{n+1})\le\max_{m\ge0}\big[\max\{H(\widetilde{\omega}_{m}),\,H(\widetilde{\omega}_{m+1})\}+2\big]=\Phi_{\widetilde{\omega}}+2\le2K+3\;.
\]
\end{proof}
\begin{lem}
\label{l_e0barabest}Fix $a,\,b\in S$ and $\ell\in\mathbb{T}_{L}$.
The following statements hold.
\begin{enumerate}
\item The sets $\overline{\mathscr{A}}_{\ell}^{a,\,b}$ and $\overline{\mathscr{B}}_{\ell}^{a,\,b}$
belong to $\mathcal{Z}_{\ell}^{a,\,b}$.
\item It holds that
\begin{equation}
\overline{\mathscr{A}}_{\ell}^{a,\,b}=\big\{\sigma\in\mathcal{Z}_{\ell}^{a,\,b}:\exists\zeta\in\mathcal{N}^{\mathrm{MH}}(\mathbf{a})\text{ such that }r_{\beta}^{\mathrm{MH}}(\sigma,\,\zeta)>0\big\}\;,\label{e_Alab}
\end{equation}
\item It holds that
\begin{equation}
\overline{\mathscr{B}}_{\ell}^{a,\,b}=\big\{\sigma\in\mathcal{Z}_{\ell}^{a,\,b}:r_{\beta}^{\mathrm{MH}}(\sigma,\,\xi_{\ell,\,2}^{a,\,b})>0\big\}\;.\label{e_Blab}
\end{equation}
\end{enumerate}
\end{lem}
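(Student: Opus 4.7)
The plan is to prove part (1) first via orbit decomposition, derive part (3) as a direct consequence, and then establish part (2) by combining Lemma \ref{l_NMHNcyc} with a careful bad-edge analysis. For part (1), I would unfold $\overline{\mathcal{Z}}_{\ell}^{a,b} = \mathfrak{O}(\mathcal{Z}_{\ell}^{a,b})$ so that every $\sigma$ therein either lies in $\mathcal{Z}_{\ell}^{a,b}$ or is a third-spin intermediate in some orbit $\mathfrak{O}_x(\eta)$ with $\eta$ and its MH-neighbor $\eta^{x,\cdot}$ both in $\mathcal{Z}_{\ell}^{a,b}$. Since this MH-adjacency preserves the energy $2K + 2$ and the tree structure, a leaf-style argument on $G_a$ forces $x$ to have exactly one in-ladder $a$-neighbor and two in-ladder $b$-neighbors, with its outside-ladder neighbor always carrying spin $a$; a direct bad-edge count then gives that every third-spin configuration in $\mathfrak{O}_x(\eta)$ has energy exactly $2K + 4 = \Gamma$, and that $|G_a(\eta)| \ge 2$.

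This energy identification yields both halves of part (1). Since $\mathcal{N}(\mathbf{a}) \subseteq \{\zeta : H(\zeta) < \Gamma\}$ by definition, no third-spin intermediate lies in $\overline{\mathscr{A}}_{\ell}^{a,b}$, so $\overline{\mathscr{A}}_{\ell}^{a,b} \subseteq \mathcal{Z}_{\ell}^{a,b}$. On the other hand, a third-spin intermediate in $\mathfrak{O}_x(\eta)$ differs from $\xi_{\ell,2}^{a,b}$ on the set $G_a(\eta) \cup \{x\}$, which has size at least $2$ because $|G_a(\eta)| \ge 2$; this contradicts membership in $\overline{\xi}_{\ell,2}^{a,b} = \bigcup_{y \in \Lambda} \mathfrak{O}_y(\xi_{\ell,2}^{a,b})$, whose elements differ from $\xi_{\ell,2}^{a,b}$ at at most one site. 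Hence $\overline{\mathscr{B}}_{\ell}^{a,b} \subseteq \mathcal{Z}_{\ell}^{a,b}$ as well. Part (3) follows at once: a configuration $\sigma \in \overline{\mathscr{B}}_{\ell}^{a,b} \subseteq \mathcal{Z}_{\ell}^{a,b}$ must differ from $\xi_{\ell,2}^{a,b}$ at at most one site, which by Proposition \ref{p_Zabtree} forces $|G_a(\sigma)| = 1$; such configurations are precisely the MH-neighbors of $\xi_{\ell,2}^{a,b}$ obtained by flipping the single tree site from $a$ to $b$, giving \eqref{e_Blab}.

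For part (2), part (1) identifies $\overline{\mathscr{A}}_{\ell}^{a,b} = \mathcal{Z}_{\ell}^{a,b} \cap \mathcal{N}(\mathbf{a})$. For the inclusion $\supseteq$, given $\sigma \in \mathcal{Z}_{\ell}^{a,b}$ with MH-neighbor $\zeta \in \mathcal{N}^{\mathrm{MH}}(\mathbf{a})$ at flip site $x$, Lemma \ref{l_NMHNcyc} provides a cyclic path $\mathbf{a} \to \zeta$ of height $\le 2K + 3$, which I would extend by the cyclic orbit $\zeta \to \sigma$ at $x$; the constraint $H(\zeta) \le 2K + 1$ combined with Proposition \ref{p_lowE}'s (L2)/(L3) classification of $\zeta$ forces $x$ to have at least three $a$-neighbors in $\zeta$, bounding $\max_{c \in S} H(\zeta^{x,c}) \le H(\zeta) + 4 - 3 \le 2K + 3$ via a direct bad-edge count. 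For the inclusion $\subseteq$, given a cyclic path $\omega : \mathbf{a} \to \sigma$ with $\Phi_\omega \le 2K + 3$, the last cyclic step $\omega_{N-1} \to \sigma$ at some site $x$ satisfies $\max_{c \in S} H(\sigma^{x, c}) \le 2K + 3$, which forces $\sigma(x)$ to have at most one same-spin neighbor; the MH-flip $\sigma^{x, c^*}$ at the spin $c^*$ matching the most neighbors then yields a configuration of energy $\le 2K$ falling into (L2) or (L3) of Proposition \ref{p_lowE}, which one verifies is in $\mathcal{N}^{\mathrm{MH}}(\mathbf{a})$ via the connection to $\mathbf{a}$ supplied by the cyclic path $\omega$. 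The main obstacle will be this detailed bad-edge bookkeeping in part (2), especially verifying that the optimal MH-flip at $x$ lands in the $\mathbf{a}$-component $\mathcal{N}^{\mathrm{MH}}(\mathbf{a})$ rather than in the MH-neighborhood of some other ground state; parts (1) and (3) follow cleanly as direct corollaries of the leaf-structure identification of MH-adjacency within $\mathcal{Z}_{\ell}^{a,b}$.
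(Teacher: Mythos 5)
Your overall route is the same as the paper's: part (1) via the structure of third-spin orbit intermediates, part (2) by inspecting the last step of the relevant path and flipping at the distinguished site, and part (3) as a by-product. In fact your handling of the $\overline{\mathscr{B}}_{\ell}^{a,\,b}$-half of part (1), and hence of part (3), is cleaner than the paper's: the observation that a third-spin intermediate in $\mathfrak{O}_{x}(\eta)$ differs from $\xi_{\ell,\,2}^{a,\,b}$ exactly on $G_{a}(\eta)\cup\{x\}$, a set of cardinality at least two, while every element of $\overline{\xi}_{\ell,\,2}^{a,\,b}$ differs from $\xi_{\ell,\,2}^{a,\,b}$ in at most one site, is correct and bypasses the energy bookkeeping. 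The identification of the flip site as having one in-ladder $a$-neighbor, two in-ladder $b$-neighbors and an out-ladder $a$-neighbor, hence energy $2K+4$ for the intermediates, is also right and matches the paper.

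There is, however, a genuine gap in the inclusion $\subseteq$ of \eqref{e_Alab}. You assert that the majority-spin flip $\sigma^{x,\,c^{*}}$ has energy at most $2K$ and ``falls into (L2) or (L3) of Proposition \ref{p_lowE}.'' This fails precisely when $|G_{a}(\sigma)|=1$ and the distinguished site is the unique tree site $x$ with $\sigma(x)=a$: then $c^{*}=b$ and $\sigma^{x,\,b}=\xi_{\ell,\,2}^{a,\,b}\in\mathcal{R}_{2}^{a,\,b}$, which is of type \textbf{(L1)}, whose MH-neighborhood is the singleton $\{\xi_{\ell,\,2}^{a,\,b}\}$ and which therefore does not lie in $\mathcal{N}^{\mathrm{MH}}(\mathbf{a})$. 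One also checks that for such a $\sigma$ the site $x$ is the \emph{only} site with at most one same-spin neighbor, so the argument cannot be saved by choosing a different last step. The missing step is to rule this case out, i.e.\ to show that a configuration of $\mathcal{Z}_{\ell}^{a,\,b}$ with $|G_{a}(\sigma)|=1$ (an element of $\overline{\mathscr{B}}_{\ell}^{a,\,b}$) does not belong to $\mathcal{N}(\mathbf{a})$ at all; this is exactly the case the paper isolates and excludes before performing the flip, and it is not the danger you flag (landing in the neighborhood of a \emph{different} ground state). A second, minor point: in the $\supseteq$ direction the displayed bound $\max_{c}H(\zeta^{x,\,c})\le H(\zeta)+4-3\le2K+3$ is garbled — the increase upon updating $x$ to a third spin is $3$ (from one bad edge to four), so you need $H(\zeta)\le2K$ rather than $H(\zeta)\le2K+1$; this does hold, because the flip $\sigma\to\zeta$ at a site with at least three $a$-neighbors lowers the energy by at least $2$ from $H(\sigma)=2K+2$, but it should be stated.
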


\begin{proof}
(1) Recall from Definition \ref{d_ZbarMc} that
\[
\overline{\mathscr{A}}_{\ell}^{a,\,b}=\overline{\mathcal{Z}}_{\ell}^{a,\,b}\cap\mathcal{N}(\mathbf{a})\;\;\;\;\text{and}\;\;\;\;\overline{\mathscr{B}}_{\ell}^{a,\,b}=\overline{\mathcal{Z}}_{\ell}^{a,\,b}\cap\overline{\xi}_{\ell,\,2}^{a,\,b}\;.
\]
It suffices to prove that a configuration $\sigma\in\overline{\mathcal{Z}}_{\ell}^{a,\,b}\setminus\mathcal{Z}_{\ell}^{a,\,b}$
cannot belong to $\mathcal{N}(\mathbf{a})\cup\overline{\xi}_{\ell,\,2}^{a,\,b}$.
Indeed, such $\sigma$ has a single spin $c\in S\setminus\{a,\,b\}$
which is obtained by a spin update from a configuration in $\mathcal{Z}_{\ell}^{a,\,b}$,
and thus we can verify that this update increases the energy by $2$
and thus $H(\sigma)=2K+4$. This clearly implies that $\sigma\notin\mathcal{N}(\mathbf{a})$,
since every configuration in $\mathcal{N}(\mathbf{a})$ has energy
at most $2K+3$. Moreover, suppose the contrary that $\sigma\in\overline{\xi}_{\ell,\,2}^{a,\,b}$.
Then, since $\sigma$ has a single spin $c$ and $H(\xi_{\ell,\,2}^{a,\,b})=2K$,
$\sigma$ must be obtained from $\xi_{\ell,\,2}^{a,\,b}$ by an update
from a spin ($a$ or $b$) to $c$ which increases the energy by $4$.
This contradicts the previous observation, and thus we conclude that
$\sigma\notin\overline{\xi}_{\ell,\,2}^{a,\,b}$.\medskip{}

\noindent (2) First, we prove the $\supseteq$-part of \eqref{e_Alab}.
To this end, suppose that $\sigma\in\mathcal{Z}_{\ell}^{a,\,b}$ and
$\zeta\in\mathcal{N}^{\mathrm{MH}}(\mathbf{a})$ satisfy $r_{\beta}^{\mathrm{MH}}(\sigma,\,\zeta)>0$.
By property \textbf{{[}Z1{]}} of Proposition \ref{p_Zabtree}, $\sigma$
consists of spins $a$ and $b$ only. Moreover, the fact that $r_{\beta}^{\mathrm{MH}}(\sigma,\,\zeta)>0$
and $H(\zeta)<H(\sigma)$ implies that the MH-spin flip $\sigma\to\zeta$
occurs on some $x_{0}\in\Lambda$ in either way:
\begin{itemize}
\item $a\to b$ where $a$ has three neighboring spins $b$ and one neighboring
spin $a$,
\item $b\to a$ where $b$ has three neighboring spins $a$ and one neighboring
spin $b$.
\end{itemize}
In any cases, we may obtain $\sigma$ by iterating the spin rotation
$\tau_{x_{0}}$ to $\zeta$. Then, by \eqref{e_Hsigmazeta} and the
presence of three same spins on the nearest neighbors of $x_{0}$,
the height along that path $\zeta\to\sigma$ equals (cf. \eqref{e_OrbitH})
\[
\max_{\mathfrak{O}(\zeta,\,\sigma)}H=2K+3\;.
\]
Next, by Lemma \ref{l_NMHNcyc}, we have $\zeta\in\mathcal{N}(\mathbf{a})$
and thus there exists a $(2K+3)$-path from $\mathbf{a}$ to $\zeta$.
Therefore, concatenating this path before the previously-defined path
$\zeta\to\sigma$, we obtain a $(2K+3)$-path from $\mathbf{a}$ to
$\sigma$. This deduces that $\sigma\in\mathcal{N}(\mathbf{a})$,
and thus we have proved the $\supseteq$-part.

To prove the $\subseteq$-part of \eqref{e_Alab}, fix $\sigma\in\overline{\mathscr{A}}_{\ell}^{a,\,b}=\overline{\mathcal{Z}}_{\ell}^{a,\,b}\cap\mathcal{N}(\mathbf{a})$.
By part (1), we know that $\sigma\in\mathcal{Z}_{\ell}^{a,\,b}\cap\mathcal{N}(\mathbf{a})$,
so that we may apply Proposition \ref{p_Zabtree}. Since $\sigma\in\mathcal{N}(\mathbf{a})$,
there exists $\sigma_{0}\in\mathcal{X}$ such that $r_{\beta}(\sigma_{0},\,\sigma)>0$
and $H(\sigma_{0},\,\sigma)\le2K+3$. Then, it is immediate that $r_{\beta}^{\mathrm{MH}}(\sigma,\,\sigma_{0})>0$.
According to properties \textbf{{[}Z1{]}} and \textbf{{[}Z2{]}} in
Proposition \ref{p_Zabtree}, this can only happen when the MH-spin
flip $\sigma\to\sigma_{0}$ occurs on $x_{0}\in\Lambda$ in either
way:
\begin{itemize}
\item $a\to c$ for some $c\ne a$, where $a$ has three neighboring spins
$b$ and one neighboring spin $a$,
\item $b\to c$ for some $c\ne b$, where $b$ has three neighboring spins
$a$ and one neighboring spin $b$.
\end{itemize}
According to Proposition \ref{p_Zabtree}, the former case implies
that $x_{0}$ is the only site with spin $a$ on the strip $\mathbb{T}_{K}\times\{\ell,\,\ell+1\}$
(or $\{\ell,\,\ell+1\}\times\mathbb{T}_{K}$ if $K=L$), and thus
$\sigma\in\overline{\mathcal{R}}_{2}^{a,\,b}$, which is clearly impossible.
Therefore, the second case must hold.

Now, we define a new configuration $\sigma_{1}$ as
\[
\sigma_{1}(x)=\begin{cases}
\sigma(x) & \text{if }x\ne x_{0}\;,\\
a & \text{if }x=x_{0}\;.
\end{cases}
\]
Then, $r_{\beta}^{\mathrm{MH}}(\sigma,\,\sigma_{1})>0$. Moreover,
since $x_{0}$ has three neighboring spins $a$, $H(\sigma_{1})=2K$.
Thus, by Proposition \ref{p_lowE}, we readily deduce that $\sigma_{1}\in\mathcal{N}^{\mathrm{MH}}(\mathbf{a})$.
This concludes the proof of part (2).\medskip{}

\noindent (3) This can be proved in a nearly same manner to part (2),
and thus we omit the proof.
\end{proof}
Finally, we present a proof of Proposition \ref{p_e0barabest}.
\begin{proof}[Proof of Proposition \ref{p_e0barabest}]
 To prove that the constant $\overline{\mathfrak{e}}_{0}^{a,\,b}$
is independent of $a,\,b\in S$, we first denote by $\widetilde{\mathfrak{Z}}_{\ell}^{a,\,b}(\cdot)$
the trace of the process $\overline{\mathfrak{Z}}_{\ell}^{a,\,b}(\cdot)$
to the subset $\mathcal{Z}_{\ell}^{a,\,b}\subset\overline{\mathcal{Z}}_{\ell}^{a,\,b}$
(cf. Section \ref{sec3.3}). Then, by \cite[Corollary 6.2]{BL TM},
the jump rate $\widetilde{\mathfrak{r}}_{\ell}^{a,\,b}(\cdot,\,\cdot)$
of the process $\widetilde{\mathfrak{Z}}_{\ell}^{a,\,b}(\cdot)$ can
be written as 
\[
\widetilde{\mathfrak{r}}_{\ell}^{a,\,b}(\sigma_{1},\,\sigma_{2})=\overline{\mathfrak{r}}_{\ell}^{a,\,b}(\sigma_{1},\,\sigma_{2})+\sum_{\zeta\in\overline{\mathcal{Z}}_{\ell}^{a,b}\setminus\mathcal{Z}_{\ell}^{a,b}}\overline{\mathfrak{r}}_{\ell}^{a,\,b}(\sigma_{1},\,\zeta)\cdot\mathbf{P}_{\zeta}[\tau_{\mathcal{Z}_{\ell}^{a,b}}=\tau_{\sigma_{2}}]
\]
for all $\sigma_{1},\,\sigma_{2}\in\mathcal{Z}_{\ell}^{a,\,b}$, where
$\mathbf{P}_{\zeta}$ denotes the law of the process $\overline{\mathfrak{Z}}_{\ell}^{a,\,b}(\cdot)$
starting from $\zeta$. Thus, by the cyclic structure of the set $\overline{\mathcal{Z}}_{\ell}^{a,\,b}$,
we can observe that 
\[
\widetilde{\mathfrak{r}}_{\ell}^{a,\,b}(\sigma_{1},\,\sigma_{2})=\mathbf{1}\big\{ r_{\beta}^{\mathrm{MH}}(\sigma_{1},\,\sigma_{2})>0\big\}\;,
\]
and hence the trace process $\widetilde{\mathfrak{Z}}_{\ell}^{a,\,b}(\cdot)$
is equivalent to the Markov chain defined in Definition \ref{d_EAMc}
restricted to $\mathcal{Z}_{\ell}^{a,\,b}$. In particular, it does
not depend on $a,\,b\in S$.

Denote by $\widetilde{\mathfrak{cap}}^{a,\,b}(\cdot,\,\cdot)$ the
capacity with respect to the process $\widetilde{\mathfrak{Z}}_{\ell}^{a,\,b}(\cdot)$
on $\mathcal{Z}_{\ell}^{a,\,b}$. Then, by \cite[(A.10)]{BL TM2},
we can write (recall that $\overline{\mathscr{A}}_{\ell}^{a,\,b}$
and $\overline{\mathscr{B}}_{\ell}^{a,\,b}$ are subsets of $\mathcal{Z}_{\ell}^{a,\,b}$
by Lemma \ref{l_e0barabest}-(1)), 
\begin{equation}
\frac{1}{\overline{\mathfrak{e}}_{0}^{a,\,b}}=|\overline{\mathscr{V}}_{\ell}^{a,\,b}|\cdot\overline{\mathfrak{cap}}^{a,\,b}(\overline{\mathscr{A}}_{\ell}^{a,\,b},\,\overline{\mathscr{B}}_{\ell}^{a,\,b})=\big(|\overline{\mathscr{A}}_{\ell}^{a,\,b}|+|\overline{\mathscr{B}}_{\ell}^{a,\,b}|\big)\cdot\widetilde{\mathfrak{cap}}^{a,\,b}(\overline{\mathscr{A}}_{\ell}^{a,\,b},\,\overline{\mathscr{B}}_{\ell}^{a,\,b})\;.\label{e_e0abtrace}
\end{equation}
By Proposition \ref{p_Zabtree}, Definition \ref{d_Zlabdef} and Lemma
\ref{l_e0barabest}, we notice that the Markov chain $\widetilde{\mathfrak{Z}}_{\ell}^{a,\,b}(\cdot)$
and the sets $\overline{\mathscr{A}}_{\ell}^{a,\,b}$ and $\overline{\mathscr{B}}_{\ell}^{a,\,b}$
have the same structure for all $a,\,b\in S$. Therefore, we conclude
from the expression \eqref{e_e0abtrace} that the constant $\overline{\mathfrak{e}}_{0}^{a,\,b}$
does not depend on $a,\,b\in S$, and we have proved the first statement
of the proposition.

Next, we estimate $\overline{\mathfrak{e}}_{0}^{a,\,b}$. Since it
does not depend on $a,\,b\in S$ by the first statement, we may assume
that $a=1$ and $b=2$. By the same logic with the proof of Proposition
\ref{p_e0est} (given in Section \ref{secB.4}), we write
\[
\overline{\mathfrak{cap}}_{\ell}^{1,\,2}(\overline{\mathscr{A}}_{\ell}^{1,\,2},\,\overline{\mathscr{B}}_{\ell}^{1,\,2})=\overline{\mathfrak{D}}^{1,\,2}(\overline{\mathfrak{f}}^{1,\,2})=\frac{1}{2}\sum_{x,\,y\in\overline{\mathscr{V}}^{1,2}}\frac{1}{|\overline{\mathscr{V}}^{1,\,2}|}\overline{\mathfrak{r}}^{1,\,2}(x,\,y)[\overline{\mathfrak{f}}^{1,\,2}(y)-\overline{\mathfrak{f}}^{1,\,2}(x)]^{2}\;.
\]
For each $k\in\mathbb{T}_{K}$, we consider a path $\omega^{(k)}=(\omega_{n}^{(k)})_{n=0}^{K-2}$
where $\omega_{n}^{(k)}=\xi_{\ell,\,1;\,k,\,n+1}^{1,\,2,\,+}$ for
$n\in\llbracket0,\,K-2\rrbracket$. Note that $\xi_{\ell,\,1;\,k,\,1}^{1,\,2,\,+}\in\overline{\mathscr{A}}_{\ell}^{1,\,2}$
and $\xi_{\ell,\,1;\,k,\,K-1}^{1,\,2,\,+}\in\overline{\mathscr{B}}_{\ell}^{1,\,2}$.
Thus, we have 
\[
\overline{\mathfrak{cap}}_{\ell}^{1,\,2}(\overline{\mathscr{A}}_{\ell}^{1,\,2},\,\overline{\mathscr{B}}_{\ell}^{1,\,2})\ge\frac{1}{2}\sum_{k\in\mathbb{T}_{K}}\sum_{n=0}^{K-3}\frac{1}{|\overline{\mathscr{V}}^{1,\,2}|}[\overline{\mathfrak{f}}^{1,\,2}(\omega_{n+1}^{(k)})-\overline{\mathfrak{f}}^{1,\,2}(\omega_{n}^{(k)})]^{2}\;.
\]
By the Cauchy--Schwarz inequality,
\begin{align*}
\overline{\mathfrak{cap}}_{\ell}^{1,\,2}(\overline{\mathscr{A}}_{\ell}^{1,\,2},\,\overline{\mathscr{B}}_{\ell}^{1,\,2}) & \ge\frac{1}{2|\overline{\mathscr{V}}^{1,\,2}|}\sum_{k\in\mathbb{T}_{K}}\frac{1}{K-2}\Big[\sum_{n=0}^{K-3}[\overline{\mathfrak{f}}^{1,\,2}(\omega_{n+1}^{(k)})-\overline{\mathfrak{f}}^{1,\,2}(\omega_{n}^{(k)})]\Big]^{2}=\frac{K}{2(K-2)|\overline{\mathscr{V}}^{1,\,2}|}\;,
\end{align*}
where the last equality follows from 
\[
\overline{\mathfrak{f}}^{1,\,2}(\omega_{0}^{(k)})=\overline{\mathfrak{f}}^{1,\,2}(\xi_{\ell,\,1;\,k,\,1}^{1,\,2,\,+})=1\;\;\;\;\text{and}\;\;\;\;\overline{\mathfrak{f}}^{1,\,2}(\omega_{K-2}^{(k)})=\overline{\mathfrak{f}}^{1,\,2}(\xi_{\ell,\,1;\,k,\,K-1}^{1,\,2,\,+})=0\;.
\]
This proves that $\overline{\mathfrak{cap}}_{\ell}^{1,\,2}(\overline{\mathscr{A}}_{\ell}^{1,\,2},\,\overline{\mathscr{B}}_{\ell}^{1,\,2})\cdot|\overline{\mathscr{V}}^{1,\,2}|>\frac{1}{2}$,
and the proof is completed.
\end{proof}
\begin{acknowledgement*}
SK was supported by NRF-2019-Fostering Core Leaders of the Future
Basic Science Program/Global Ph.D. Fellowship Program and the National
Research Foundation of Korea (NRF) grant funded by the Korean government
(MSIT) (No. 2022R1F1A106366811, No. 2022R1A5A6000840). IS was supported
by the National Research Foundation of Korea (NRF) grant funded by
the Korean government (MSIT) (No. 2022R1F1A106366811, 2022R1A5A6000840).
\end{acknowledgement*}

\end{document}